\documentclass[a4paper,11pt]{amsart}
\usepackage{fixltx2e}
\usepackage{amsbsy,amscd,amsfonts,amsmath,amssymb,amstext,amsthm}
\usepackage{mathalfa,mathrsfs}
\usepackage{gensymb}
\usepackage{geometry,tikz}
\usepackage{dsfont,euscript}
\usepackage{hyperref,url}
\usepackage{graphics,graphicx}
\usepackage{color,xcolor}
\usepackage{epsf,pdfsync,psfrag}
\usepackage{esint,verbatim,xspace}
\usepackage{enumerate,enumitem}
\usepackage{eurosym,latexsym}
\usepackage{bibtopic,textcomp}
\usepackage[latin1]{inputenc}
\newcommand{\N}{\mathbb{N}}

\newcommand{\R}{\mathbb{R}}
\newcommand{\C}{\mathbb{C}}

\newcommand{\PP}{\mathbb{P}}

\newcommand{\cC}{\mathcal{C}}

\newcommand{\cL}{\mathcal{L}}

\newcommand{\fkh}{\mathfrak{h}}

\newcommand{\Ric}{\operatorname{Ric}}

\newcommand{\del}{\partial}
\newcommand{\db}{\bar{\partial}}
\newcommand{\deldb}{\partial \bar{\partial}}
\newcommand{\ideldb}{\sqrt{-1} \partial \bar{\partial}}

\newcommand{\impl}{\implies}

\newcommand{\Vol}{\operatorname{Vol}}

\newcommand{\sfc}{\mathsf{C}}

\newcommand{\ttp}{\mathtt{p}}

\newcommand{\ttf}{\mathtt{f}}
\newcommand{\tth}{\mathtt{h}}
\newcommand{\ttg}{\mathtt{g}}

\newcommand{\ttd}{\mathtt{d}}

\newcommand{\prj}{\mathbb{P} \left(L \oplus \mathcal{O}\right)}
\newcommand{\so}{S_0}
\newcommand{\soo}{S_\infty}
\newcommand{\xsosoo}{X \smallsetminus \left(S_0 \cup S_\infty\right)}
\newcommand{\xsoo}{X \smallsetminus S_\infty}
\newcommand{\xso}{X \smallsetminus S_0}
\newcommand{\bo}{\beta_0}
\newcommand{\boo}{\beta_\infty}
\newcommand{\pcsoo}{2 \pi \left(\mathsf{C} + m S_\infty\right)}
\newcommand{\kcsoo}{k \left(\mathsf{C} + m S_\infty\right)}
\newcommand{\acsoo}{a \mathsf{C} + b S_\infty}

\newcommand{\cp}{\mathbb{C} \mathbb{P}}

\newcommand{\stzero}{\left\lbrace 0 \right\rbrace}
\newcommand{\stone}{\left\lbrace 1 \right\rbrace}
\newcommand{\clom}{\left[0, m\right]}
\newcommand{\opom}{\left(0, m\right)}
\newcommand{\cllml}{\left[1, m + 1\right]}
\newcommand{\oplml}{\left(1, m + 1\right)}

\newcommand{\rms}{Riemann surface}
\newcommand{\rmn}{Riemannian}

\newcommand{\hmn}{Hermitian}

\newcommand{\hmnm}{Hermitian metric}

\newcommand{\kr}{K\"ahler}
\newcommand{\krmf}{K\"ahler manifold}
\newcommand{\krm}{K\"ahler metric}
\newcommand{\krf}{K\"ahler form}
\newcommand{\krc}{K\"ahler current}
\newcommand{\krcl}{K\"ahler class}
\newcommand{\krp}{K\"ahler potential}
\newcommand{\hbch}{Hirzebruch}
\newcommand{\chrn}{Chern}

\newcommand{\funcl}{functional}
\newcommand{\invc}{invariance}
\newcommand{\invt}{invariant}
\newcommand{\ext}{extremal}
\newcommand{\hol}{holomorphic}
\newcommand{\holy}{holomorphy}

\newcommand{\cohoml}{cohomology}

\newcommand{\cohomll}{cohomological}

\newcommand{\plyhomoy}{polyhomogeneity}

\newcommand{\plyhomo}{polyhomogeneous}

\newcommand{\conrml}{conormal}
\newcommand{\har}{harmonic}

\newcommand{\psh}{plurisubharmonic}
\newcommand{\intbly}{integrability}

\newcommand{\intble}{integrable}
\newcommand{\orble}{orientable}
\newcommand{\nbd}{neighbourhood}
%
%
%
\setlength{\textwidth}{7.085truein}
\setlength{\textheight}{9.8365truein}
\setlength{\evensidemargin}{-0.35in}
\setlength{\oddsidemargin}{-0.35in}
\setlength{\headheight}{0.78truein}
\setlength{\headsep}{0.23truein}
\setlength{\marginparsep}{-0.35in}
\setlength{\marginparwidth}{-0.1in}
\hfuzz=14pt
\vbadness=5000
\hoffset=-0.05in
\voffset=-1in
\makeatletter
\renewcommand\subsection
{
\@startsection{subsection}{2}
\z@{0.1875\linespacing \@plus 0.1875\linespacing}
{0.125\linespacing}{\centering\normalfont\scshape}
}
\makeatother
\makeatletter
\@namedef{subjclassname@2020}{\textup{2020} Mathematics Subject Classification}
\makeatother
\makeatletter

\makeatother
\makeatletter

\makeatother
\makeatletter

\makeatother
\theoremstyle{plain}
\newtheorem{lemma}{Lemma}

\newtheorem{theorem}{Theorem}
\newtheorem{corollary}{Corollary}
\newtheorem{conjecture}{Conjecture}
\theoremstyle{remark}
\newtheorem{remark}{Remark}

\newtheorem{motivation}{Motivation}
\newtheorem{question}{Question}

\newtheorem*{case*}{Case}

\newtheorem*{claim*}{Claim}
\theoremstyle{definition}
\newtheorem{definition}{Definition}

%

%
\begin{document}
\allowdisplaybreaks[4]
\title[Existence of Conical Higher cscK Metrics on a Minimal Ruled Surface]{Existence of Conical Higher cscK Metrics on a Minimal Ruled Surface}
\author[Rajas Sandeep Sompurkar]{Rajas Sandeep Sompurkar}
\address{Department of Mathematics, Indian Institute of Science Education and Research (IISER) Pune, Dr. Homi Bhabha Road, Pashan, Pune - 411 008, Maharashtra, India}
\email{\href{mailto:rajassompurkar@gmail.com}{rajassompurkar@gmail.com}, \href{mailto:rajas.sompurkar@acads.iiserpune.ac.in}{rajas.sompurkar@acads.iiserpune.ac.in}}
\urladdr{\url{https://sites.google.com/view/rajassompurkar/home}}
\curraddr{\url{https://orcid.org/0009-0003-9668-5752}}
\thanks{The author was previously supported by the DST-INSPIRE Fellowship No. IF 180232 during his Ph.D. hosted at IISc Bengaluru, and then was supported by an Institute Postdoc. Fellowship with Employee ID No. 20255006 at IISER Pune. The author is currently supported by the DAE-NBHM Postdoc. Fellowship with Sanction Order No. 0204/4/2026/R\&D-II/2872 also hosted at IISER Pune.}
\date{May 7, 2026}
\begin{abstract}
A `higher extremal K\"ahler metric' is defined (motivated by analogy with the definition of an extremal K\"ahler metric) as a K\"ahler metric whose top Chern form equals a globally defined smooth function multiplied by its volume form such that the gradient of the smooth function is a holomorphic vector field. A special case of this kind of metric is a `higher constant scalar curvature K\"ahler (higher cscK) metric' which is defined (again by analogy with the definition of a constant scalar curvature K\"ahler (cscK) metric) as one whose top Chern form is a constant multiple of its volume form or equivalently whose top Chern form is harmonic. In our previous paper on higher extremal K\"ahler metrics we had looked at a certain class of minimal ruled surfaces called as `pseudo-Hirzebruch surfaces' all of which contain two special divisors (viz. the zero and infinity divisors) and serve as the primary example manifolds in the momentum construction method (the Calabi ansatz procedure) which is used for producing explicit examples of the above-mentioned kinds of canonical K\"ahler metrics. We had proven that every K\"ahler class on such a surface admits a momentum-constructed higher extremal K\"ahler metric which is not higher cscK and we had further proven by using the `top Bando-Futaki invariant' that higher cscK metrics do not exist in any K\"ahler class on the surface. In this paper we will see that if we allow our metrics to develop `conical singularities' along at least one of the two special divisors of the surface then we do get `conical higher cscK metrics' in each K\"ahler class of the surface by the momentum construction method. We will show that our constructed metrics satisfy the ``polyhomogeneous condition'' for conical K\"ahler metrics and we will interpret the conical higher cscK equation ``globally on the surface'' in terms of the currents of integration along the zero and infinity divisors. We will introduce the `top $\log$ Bando-Futaki invariant' and then try to prove some standard expected results about it, with the final aim being to employ it to arrive at a certain linear relationship, that we conjecture must exist, between the cone angles of the conical singularities along the zero and infinity divisors.
\end{abstract}
\subjclass[2020]{Primary 53C55; Secondary 53C25, 58J60, 35R01, 32Q15, 32J15, 34B30}
\keywords{higher cscK metrics, higher extremal K\"ahler metrics, higher scalar curvature, K\"ahler metrics with conical singularities, Bedford-Taylor product of currents, momentum construction method, harmonic top Chern current, top $\log$ Bando-Futaki invariant}
\maketitle
\numberwithin{equation}{subsection}
\numberwithin{figure}{subsection}
\numberwithin{table}{subsection}
\numberwithin{lemma}{subsection}
\numberwithin{proposition}{subsection}
\numberwithin{result}{subsection}
\numberwithin{theorem}{subsection}
\numberwithin{corollary}{subsection}
\numberwithin{conjecture}{subsection}
\numberwithin{remark}{subsection}
\numberwithin{note}{subsection}
\numberwithin{motivation}{subsection}
\numberwithin{question}{subsection}
\numberwithin{answer}{subsection}
\numberwithin{case}{subsection}
\numberwithin{claim}{subsection}
\numberwithin{definition}{subsection}
\numberwithin{example}{subsection}
\numberwithin{hypothesis}{subsection}
\numberwithin{statement}{subsection}
\numberwithin{ansatz}{subsection}
\section{Introduction}\label{sec:Intro}
\subsection{Objective of the Paper}\label{subsec:Obj}
In this paper we aim to construct higher cscK metrics (see Definition \ref{def:hcscK}) on the pseudo-Hirzebruch surface (a minimal ruled surface) of genus $2$ and degree $-1$ \cite{Fujiki:1992:eKruledmani,Tonnesen:1998:eKminruledsurf}, which are smooth away from the zero and infinity divisors and which have got conical singularities with arbitrary positive cone angles along the zero and infinity divisors. We will employ the well-known momentum construction (the Calabi ansatz) \cite{Hwang:1994:cscK,Hwang:2002:MomentConstruct}, whereby in the ODE boundary value problem for the momentum profile we will substitute the correct boundary conditions involving the cone angles, which are required for the resultant metric to develop conical singularities along the zero and infinity divisors \cite{Edwards:2019:ContractDivConicKRFlow,Hashimoto:2019:cscKConeSing,Li:2012:eKEngyFunctProjBund,Rubinstein:2022:KEedgeHirzebruch,Schlitzer:2023:dHYM}. In Theorem \ref{thm:mainconesing} and Corollary \ref{cor:mainconesing} we will see that in every \krcl{} of the minimal ruled surface we can get momentum-constructed conical higher cscK metrics for a set of pairs of positive values of the cone angles depending on the value of the parameter characterizing the \krcl{}. The momentum construction method attributed to Hwang-Singer \cite{Hwang:2002:MomentConstruct} applied to the case of \krm{s} with conical singularities (specifically conical higher cscK metrics) is outlined in Subsection \ref{subsec:MomentConstructConeSing} along with all the relevant references. The resultant ODE boundary value problem in the conical higher cscK case (which is given by equation (\ref{eq:ODEBVPConeSing})) is analyzed with its complete solution in Subsection \ref{subsec:AnalysisODEBVPConeSing} and Section \ref{sec:ProofConeSing}, where we need to do some heavy lifting analysis with respect to the various parameters involved in the boundary value problem because the ODE is not readily \intble{} and is non-autonomous and hence the existence of a solution satisfying all the boundary conditions is far from obvious. \par
We will explain in detail in Subsection \ref{subsec:KhlrConeSing} what it exactly means for a \krm{} on a compact complex manifold to develop a conical singularity along a simple closed (complex) hypersurface. We have listed some four definitions of conical \krm{s} having varying degrees of `strongness', which were (according to the needs of our situation) gathered and pieced together from the available literature on conical \krm{s}, e.g. the works of Brendle \cite{Brendle:2013:Ricflatedgesing}, Campana-Guenancia-P\u{a}un \cite{Campana:2013:ConeSingNormCrossDiv}, Datar \cite{Datar:2014:CanonConeSing}, Donaldson \cite{Donaldson:2012:ConeSingDiv}, Hashimoto \cite{Hashimoto:2019:cscKConeSing}, Jeffres-Mazzeo-Rubinstein \cite{Jeffres:2016:KEedgesing}, Song-Wang \cite{Song:2016:RicciGLBconicKE}, Zheng \cite{Zheng:2015:UniqueConeSing} among many others. We will see in Section \ref{sec:polyhomoConeSing} that the conical \krm{s} yielded by the momentum construction (with real analytic momentum profile) satisfy the strongest among these definitions, which we have dubbed as ``\plyhomo{} smooth'' (Definition \ref{def:coneKr4}) due to close analogy with Jeffres-Mazzeo-Rubinstein \cite{Jeffres:2016:KEedgesing}; Theorems 1 and 2. \par
In Section \ref{sec:logFutlogMab} we will write down the expression (\ref{eq:logBFdef}) for the `top $\log$ Bando-Futaki invariant' specially customized to the case of momentum-constructed conical higher cscK metrics on the minimal ruled surface under consideration. This will be done by imitating the expression for the $\log$ Futaki invariant obtained in the works of Hashimoto \cite{Hashimoto:2019:cscKConeSing} (for momentum-constructed conical cscK metrics on the projective completion of a certain kind of \hol{} line bundle over a \kr{}-Einstein Fano manifold), Aoi-Hashimoto-Zheng \cite{Aoi:2025:cscKConeSing} (for conical cscK metrics on smooth projective varieties) and Donaldson \cite{Donaldson:2012:ConeSingDiv}, Li \cite{Li:2015:logKStab} (for conical \kr{}-Einstein metrics singular along an anticanonical divisor of a Fano manifold). We will need to forcefully set this top $\log$ Bando-Futaki invariant to zero, to derive a certain linear relationship (given by equation (\ref{eq:boboomstrghtline})), that we conjecture should exist, between the positive cone angles of the conical higher cscK metrics along the zero and infinity divisors of the surface. \par
But in order to get to the expression (\ref{eq:logBFdef}) for the top $\log$ Bando-Futaki invariant, we will need a ``global interpretation'' of the top Chern form and the `higher scalar curvature' (to be explained in Subsection \ref{subsec:CanonKhlrSmooth} along with all the related preliminaries) of a conical higher cscK metric on the minimal ruled surface in terms of the currents of integration along its zero and infinity divisors (exactly as one would expect by analogy with the case of the $\log$ Futaki invariant acting as an obstruction to the existence of conical cscK or conical \kr{}-Einstein metrics \cite{Aoi:2025:cscKConeSing,Donaldson:2012:ConeSingDiv,Hashimoto:2019:cscKConeSing,Li:2015:logKStab}). This is precisely the expression of currents (\ref{eq:Cherncurrentomega''}) globally characterizing the higher scalar curvature (which is derived in Subsection \ref{subsec:Curvcurrent}), and (\ref{eq:Cherncurrentomega''}) clearly looks analogous to the corresponding equation for momentum-constructed conical (usual) cscK metrics obtained by Hashimoto \cite{Hashimoto:2019:cscKConeSing} as well as to the one for momentum-constructed conical \kr{}-Einstein metrics derived in some works like Rubinstein-Zhang \cite{Rubinstein:2022:KEedgeHirzebruch}, Song-Wang \cite{Song:2016:RicciGLBconicKE}. But as we shall discuss in Subsection \ref{subsec:CanonKhlrConeSing}, in the cases of conical cscK and conical higher cscK metrics, the global expression for the respective notion of curvature (viz. the scalar and higher scalar curvatures) involves taking wedge products of current terms (which are not defined in general), unlike the case of conical \kr{}-Einstein metrics in which only the current of integration over the hypersurface of conical singularity appears (see Hashimoto \cite{Hashimoto:2019:cscKConeSing}, Li \cite{Li:2018:conicMab}, Zheng \cite{Zheng:2015:UniqueConeSing} for the complications that arise in interpreting the wedge products of current terms arising in the conical cscK equation). We will justify the wedge products of current terms involved in the global expression (\ref{eq:Cherncurrentomega''}) in our conical higher cscK case in terms of Bedford-Taylor theory \cite{Bedford:1982:cpctypsh,Bedford:1976:DirichletMongeAmpere} in Subsection \ref{subsec:currenteq}. In Subsection \ref{subsec:SmoothapproxConehcscK} we will explicitly provide approximations to momentum-constructed conical higher cscK metrics by momentum-constructed smooth \krm{s} on the minimal ruled surface, whose smooth top Chern forms converge in the sense of currents to our expected expression (\ref{eq:Cherncurrentomega''}) for the top Chern current of the conical higher cscK metric \cite{Campana:2013:ConeSingNormCrossDiv,Edwards:2019:ContractDivConicKRFlow,Shen:2016:SmoothApproxConeSingRicciLB,Wang:2016:SmoothApproxConeKRFlow}, thereby providing another interpretation to the wedge products of the concerned current terms. \par
Another important thing to be mentioned here is that the top $\log$ Bando-Futaki invariant as well as the respective $\log$ Futaki invariants for conical cscK and conical \kr{}-Einstein metrics require \cohomll{} invariance of the higher scalar curvature and the scalar and Ricci curvatures respectively in order for these algebraic objects to be invariants of the \krcl{} \cite{Aoi:2025:cscKConeSing,Donaldson:2012:ConeSingDiv,Hashimoto:2019:cscKConeSing,Li:2015:logKStab}. The global equations for all three of these notions of curvature given in terms of the current of integration along the divisor of conical singularity should then land up in the correct \cohoml{} classes of the underlying complex manifold, and we will explicitly check this thing in Subsection \ref{subsec:CohomolInvcurrent} for the higher scalar curvature of momentum-constructed conical higher cscK metrics which is given by the expression (\ref{eq:Cherncurrentomega''}).
\subsection{Background of the Paper}\label{subsec:Background}
In our previous paper \cite{Sompurkar:2023:heKsmooth} that is closely related to the present one, we had applied the momentum construction method \cite{Hwang:2002:MomentConstruct} for constructing (smooth) higher \ext{} \krm{s} (see Definition \ref{def:heK}) which turned out to be not higher cscK in each \krcl{} of the same minimal ruled surface (the pseudo-\hbch{} surface \cite{Tonnesen:1998:eKminruledsurf}). We had used the top Bando-Futaki invariant \cite{Bando:2006:HarmonObstruct} (which provides an obstruction to the existence of higher cscK metrics in exactly the same way as the usual Futaki invariant \cite{Calabi:1985:eK2,Futaki:1983:ObstructKE} does in the case of usual cscK or \kr{}-Einstein metrics) to prove that smooth higher cscK metrics (even without Calabi symmetry) do not exist in any \krcl{} of the minimal ruled surface. It is for this reason that in this paper we are exploring the problem of constructing higher cscK metrics with conical singularities along (at least one of) the two special divisors of the surface. The problem of constructing higher \ext{} \krm{s} which we had dealt with in \cite{Sompurkar:2023:heKsmooth} is summarized below with its important results just to set the results of this paper in context: \par
For many \kr{} geometric PDEs (like the ones arising in the definitions of canonical \kr{} metrics) a fertile testing ground is provided by the momentum construction method of Hwang-Singer \cite{Hwang:2002:MomentConstruct} which is used to produce concrete examples of these kinds of metrics on a certain special kind of minimal ruled (complex) surface (more generally on a certain special class of ruled complex manifolds) by exploiting the nice symmetries present in the surface and by imposing the Calabi ansatz on the metric to be constructed (see \cite{Hwang:2002:MomentConstruct} for the details). Following the exposition given in Sz\'ekelyhidi \cite{Szekelyhidi:2014:eKintro}; Section 4.4 of a special case of the momentum construction method for extremal \krm{s} on the minimal ruled surface (a problem that was tackled by T{\o}nnesen-Friedman \cite{Tonnesen:1998:eKminruledsurf}), Pingali \cite{Pingali:2018:heK}; Section 2 had posed the following analogous problem of constructing higher extremal \krm{s} on this surface by this method and the author in his earlier work \cite{Sompurkar:2023:heKsmooth} had provided a complete solution to this problem along with a comparison between the results obtained in the extremal \kr{} and the higher extremal \kr{} analogues of the problem: \par
Let $\left(\Sigma, \omega_\Sigma\right)$ be a genus $2$ Riemann surface equipped with a K\"ahler metric of constant scalar curvature $-2$ (and hence surface area $2 \pi$) and $\left(L, h\right)$ be a degree $-1$ holomorphic line bundle on $\Sigma$ equipped with a Hermitian metric whose curvature form is $-\omega_\Sigma$. Consider the \textit{minimal ruled (complex) surface} $X = \mathbb{P} \left(L \oplus \mathcal{O}\right)$ where $\mathcal{O}$ is the trivial line bundle on $\Sigma$ and $\mathbb{P}$ denotes vector bundle projectivization. Let $\mathsf{C}$ be the typical fibre of $X$, $S_0 = \mathbb{P} \left(\left\lbrace 0 \right\rbrace \oplus \mathcal{O}\right)$ be the \textit{zero divisor} of $X$ and $S_\infty = \mathbb{P} \left(L \oplus \left\lbrace 0 \right\rbrace\right)$ be the \textit{infinity divisor} of $X$, so that $\mathsf{C}$ is a copy of the Riemann sphere $S^2$ (or the complex projective line $\C \PP^1$) sitting in $X$, and $S_0$ and $S_\infty$ are copies of $\Sigma$ sitting in $X$ (with $\Sigma$ being identified with $S_0$ as a (complex) curve in $X$ and $\soo$ being dubbed as the copy of $\Sigma$ in $X$ ``sitting at infinity''). By using the Leray-Hirsch theorem \cite{Demailly:2012:CmplxDifferGeom} and the Nakai-Moishezon criterion \cite{Barth:2004:CmpctCmplxSurf} (see Buchdahl \cite{Buchdahl:1999:CmpctKahler} and Lamari \cite{Lamari:1999:Kcone} for the more general result on compact \kr{} surfaces), the \textit{K\"ahler cone} (i.e. the set of all K\"ahler classes) of $X$ is (up to Poincar\'e duality) precisely the following set (see Fujiki \cite{Fujiki:1992:eKruledmani}; Proposition 1, Lemma 5 and T{\o}nnesen-Friedman \cite{Tonnesen:1998:eKminruledsurf}; Lemma 1 for the more specific result on (minimal) ruled manifolds and (minimal) ruled surfaces respectively):
\begin{equation}\label{eq:KConeX0}
H^{\left(1,1\right)} \left(X, \mathbb{R}\right)^+ = \left\lbrace \hspace{2pt} a \mathsf{C} + b S_\infty \hspace{4pt} \big\vert \hspace{4pt} a, b > 0 \hspace{2pt} \right\rbrace \subseteq H^{\left(1,1\right)} \left(X, \mathbb{R}\right) \subseteq H^2 \left(X, \mathbb{R}\right) = \mathbb{R} \mathsf{C} \oplus \mathbb{R} S_\infty
\end{equation} \par
The higher \ext{} \kr{} equation for a \krm{} $\eta$ on the surface $X$ according to Definition \ref{def:heK} is the following ($c_2 \left(\eta\right)$ being the top \chrn{} form of $\eta$ given by the expression (\ref{eq:deftopChern})):
\begin{equation}\label{eq:topChern0}
c_2 \left(\eta\right) = \frac{\lambda \left(\eta\right)}{2 \left(2 \pi\right)^2} \eta^2
\end{equation}
where $\lambda \left(\eta\right) : X \to \R$ is a smooth function called as the \textit{``higher scalar curvature''} of $\eta$ (defined by equation (\ref{eq:defhcscKheK})) such that $\nabla^{\left(1,0\right)} \lambda \left(\eta\right) = \left(\bar{\partial} \lambda \left(\eta\right)\right)^{\sharp}$ is a real \hol{} vector field on $X$. According to Hwang-Singer \cite{Hwang:2002:MomentConstruct}; Sections 1 and 2, the \textit{Calabi ansatz} for the K\"ahler metric $\eta$ on $X \smallsetminus \left(\so \cup \soo\right)$ is given as follows (as written in for example \cite{Pingali:2018:heK,Szekelyhidi:2014:eKintro}):
\begin{equation}\label{eq:ansatz0}
\eta = \mathtt{p}^* \omega_\Sigma + \sqrt{-1} \partial \bar{\partial} \rho \left(s\right)
\end{equation}
where $\mathtt{p} : X \to \Sigma$ is the fibre bundle projection, $s$ is the logarithm of the square of the fibrewise norm function on $L$ induced by $h$ and $\rho : \R \to \R$ is a strictly convex smooth function chosen suitably such that the function $\R \to \R$, $s \mapsto s + \rho \left(s\right)$ is strictly increasing. The \krm{} $\eta$ given by (\ref{eq:ansatz0}) is supposed to extend smoothly across $\so$ and $\soo$, is (considering (\ref{eq:KConeX0})) required to be in the \krcl{} $\acsoo$ for some $a, b > 0$ and is further required to be higher extremal \kr{}. For the sake of calculations we take $a = 2 \pi$, $b = 2 m \pi$ for some $m > 0$, and we can obtain the results in all the \krcl{es} as the property of being higher extremal \kr{} (or even higher cscK) is invariant under rescaling the metric by a positive constant (see \cite{Sompurkar:2023:heKsmooth}; Subsection 2.3 and Section 4). Imposing these conditions on $\eta$, doing a certain set of calculations in `bundle-adapted' local holomorphic coordinates \cite{Hwang:2002:MomentConstruct} and applying a certain change of variables (called as the \textit{momentum construction}) from $\rho \left(s\right)$, $s \in \R$ to $\psi \left(x\right) = \rho'' \left(s\right)$, $x = 1 + \rho' \left(s\right) \in \left[1, m + 1\right]$ involving the Legendre transform (as done in \cite{Hwang:2002:MomentConstruct,Pingali:2018:heK,Szekelyhidi:2014:eKintro} and followed in \cite{Sompurkar:2023:heKsmooth}), the problem of finding the metric $\eta$ with the desired properties finally boils down to solving the following ODE boundary value problem for $\psi \left(x\right)$, $x \in \left[1, m + 1\right]$ where $A, B, C \in \R$ are arbitrary constants \cite{Pingali:2018:heK,Sompurkar:2023:heKsmooth}:
\begin{equation}\label{eq:ODEBVP0}
\begin{gathered}
\left(2 x + \psi\right) \psi' = A \frac{x^4}{3} + B \frac{x^3}{2} + C x \hspace{2pt}; \hspace{4pt} x \in \left[1, m + 1\right] \\
\psi \left(1\right) = \psi \left(m + 1\right) = 0 \\
\psi' \left(1\right) = - \psi' \left(m + 1\right) = 1 \\
\psi \left(x\right) > 0 \hspace{2pt}; \hspace{4pt} x \in \left(1, m + 1\right)
\end{gathered}
\end{equation}
The ODE in (\ref{eq:ODEBVP0}) is unfortunately not directly integrable (unlike the usual \ext{} \kr{} case of T{\o}nnesen-Friedman \cite{Tonnesen:1998:eKminruledsurf}) and requires a very delicate and tricky analysis for getting the existence of a solution satisfying all the (boundary) conditions of (\ref{eq:ODEBVP0}) for an arbitrary $m > 0$ (see Pingali \cite{Pingali:2018:heK}; Section 2, \cite{Sompurkar:2023:heKsmooth}; Subsection 2.3 and Section 3). \par
Referring to \cite{Sompurkar:2023:heKsmooth}; Subsection 2.2, the higher scalar curvature of the momentum-constructed metric $\eta$ can be written as a linear polynomial in the variable $x$ with the coefficients being in terms of the constants $A, B, C$ appearing in the right hand side of the ODE in (\ref{eq:ODEBVP0}), precisely as $\lambda \left(\eta\right) = A x + B$. We had also obtained the following expressions for $A, B$ in terms of $C, m$ in \cite{Sompurkar:2023:heKsmooth}; Subsection 2.3, equation (2.3.2) by simply substituting the boundary conditions in the ODE in (\ref{eq:ODEBVP0}):
\begin{equation}\label{eq:ABCm0}
\begin{gathered}
A\left(C\right) = \frac{3 C}{m}\left(1 - \frac{1}{\left(m + 1\right)^2}\right) - \frac{6}{m}\left(1 + \frac{1}{\left(m + 1\right)^2}\right) \\
B\left(C\right) = -\frac{2 C}{m}\left(m + 1 - \frac{1}{\left(m + 1\right)^2}\right) + \frac{4}{m}\left(m + 1 + \frac{1}{\left(m + 1\right)^2}\right)
\end{gathered}
\end{equation}
The main result of our previous work \cite{Sompurkar:2023:heKsmooth}; Theorem 2.3.2, \textit{Remark} 2.3.1 states that for each $m > 0$ there exist unique $A, B, C \in \R$ (depending only on $m$) with $A \neq 0$ (this fact is from Pingali \cite{Pingali:2018:heK}; Theorem 1.1, \textit{Remark} 1.1) such that the ODE boundary value problem (\ref{eq:ODEBVP0}) has a unique smooth solution $\psi : \left[1, m + 1\right] \to \R$.
\begin{theorem}[Pingali \cite{Pingali:2018:heK}; Theorem 1.1, \cite{Sompurkar:2023:heKsmooth}; Theorem 2.3.2]\label{thm:heKsmooth}
For each $m > 0$ there exists a unique $C = C \left(m\right) \in \R$ for which the ODE boundary value problem (\ref{eq:ODEBVP0}), with $A = A \left(C \left(m\right)\right) \in \R$, $B = B \left(C \left(m\right)\right) \in \R$ being given by the expressions (\ref{eq:ABCm0}), has a unique smooth solution $\psi \left(x\right)$ on $\cllml$ satisfying all the (boundary) conditions. Further these values of $A, B, C$ which yield the required solution to (\ref{eq:ODEBVP0}) satisfy $C > 2$, $A > 0$, $B < 0$ for any $m > 0$.
\end{theorem}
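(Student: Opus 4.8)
The plan is to collapse the four-point problem (\ref{eq:ODEBVP0}) into a one-parameter shooting problem in $C$ and then run a monotonicity-plus-continuity argument. First I would observe that the relations (\ref{eq:ABCm0}) are \emph{exactly} the substitution of the endpoint data into the ODE: imposing $\psi(1)=0$ together with $\psi'(1)=1$ forces $A/3+B/2+C=2$, while imposing $\psi(m+1)=0$ together with $\psi'(m+1)=-1$ forces $A(m+1)^3/3+B(m+1)^2/2+C=-2$, and solving these two linear equations for $A,B$ in terms of $C$ produces precisely (\ref{eq:ABCm0}). Hence, with $A=A(C)$ and $B=B(C)$ fixed this way, the first-order ODE with the single initial condition $\psi(1)=0$ has, by Picard--Lindel\"of (the right-hand side is smooth and $2x+\psi=2>0$ at $x=1$), a unique solution $\psi_C$, automatically satisfying $\psi'(1)=1$; and $\psi'(m+1)=-1$ follows the instant $\psi_C(m+1)=0$. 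Thus everything reduces to finding $C$ with $\Phi(C):=\psi_C(m+1)=0$, with uniqueness and smoothness of $\psi$ for fixed $C$ coming for free from standard ODE theory as long as $2x+\psi>0$.

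Next I would prove that $\Phi$ is strictly monotone, which yields uniqueness of $C$. Writing the right-hand side as $P(x)=C\,p_1(x)+p_0(x)$ (affine in $C$ because $A,B$ are), I note that $p_1=\partial_C P$, so differentiating the two identities above in $C$ gives $p_1(1)=p_1(m+1)=0$. Since $p_1(x)/x$ is then a cubic whose leading coefficient is the positive multiple $\tfrac{1}{m}\bigl(1-(m+1)^{-2}\bigr)$ of $x^3$ and whose two known roots are $1$ and $m+1$, its third root is negative (the product of the roots is negative), and a sign count gives $p_1<0$ on $\oplml$. The variational function $\chi:=\partial_C\psi_C$ solves the linear problem $(2x+\psi)\chi'+\psi'\chi=p_1$ with $\chi(1)=0$, whose solution through a positive integrating factor $\mu$ is $\chi(x)=\mu(x)^{-1}\int_1^x \mu(t)\,p_1(t)/(2t+\psi(t))\,dt$; as the integrand is strictly negative on $\oplml$ we get $\chi<0$ there, so $\Phi$ is strictly decreasing and (\ref{eq:ODEBVP0}) has at most one solution.

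For existence I would show $\Phi$ changes sign. For $C$ very negative, $P=C\,p_1+p_0>0$ throughout $\oplml$ (since $p_1<0$), so $\psi_C$ stays positive and $\Phi(C)>0$; for $C$ large, $P$ becomes strongly negative on the interior, forcing $\psi_C$ to peak early and descend to a negative value at $m+1$, so $\Phi(C)<0$. Combined with continuity and strict monotonicity, the intermediate value theorem then produces the unique root $C=C(m)$. \textbf{The main obstacle lives precisely here:} because the ODE is non-autonomous and not explicitly integrable, I must supply genuine a priori control keeping $2x+\psi$ bounded away from $0$ (equivalently, preventing $\psi$ from descending to $-2x$ and the slope from blowing up) uniformly over the relevant range of $C$, and I must make the limiting signs of $\Phi$ rigorous rather than heuristic. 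This is the delicate, non-integrability-driven analysis that the problem forces on us.

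Finally I would extract positivity and the sign conclusions, ideally by running the shooting inside the region $\{C:A(C)>0\}$, whose threshold $C=2\bigl((m+1)^2+1\bigr)/\bigl((m+1)^2-1\bigr)$ is the largest of the three relevant thresholds: it dominates both the threshold $2\bigl((m+1)^3+1\bigr)/\bigl((m+1)^3-1\bigr)$ for $B(C)<0$ and the value $2$. On that region $Q(x):=P(x)/x=Ax^3/3+Bx^2/2+C$ is a cubic with $A>0,\ B<0$, and since $Q(1)=2>0$ while $Q(m+1)=-2<0$ its shape forces exactly one sign change on $\oplml$; hence $\psi_C'$ switches once from $+$ to $-$, $\psi_C$ is a single bump, $\psi_C>0$ on $\oplml$, and $2x+\psi_C>2x>0$, so the solution is regular on all of $\cllml$. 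It therefore suffices to localize the root to the right of the $A=0$ locus, which by strict monotonicity of $\Phi$ delivers $A>0$, and then $B<0$ and $C>2$ come for free. I expect this last sign determination to be the second hard point; I would attack it by a comparison argument at the $A=0$ locus together with the integral identity $\int_1^{m+1}\psi\,dx=-\tfrac12\int_1^{m+1}P\,dx$, valid at the root (where $\psi$ vanishes at both ends) and obtained by integrating $\tfrac{d}{dx}\bigl(2x\psi+\tfrac12\psi^2\bigr)=P+2\psi$ across $\cllml$, which together with $\psi>0$ gives $\int_1^{m+1}P<0$ and pins down the admissible sign of $C(m)$.
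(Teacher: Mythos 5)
Your skeleton — collapsing (\ref{eq:ODEBVP0}) to a one-parameter shooting problem in $C$ via the redundancy of the derivative boundary conditions, proving strict monotonicity of $C \mapsto \psi_C(m+1)$ through the variational equation with an integrating factor, closing with the intermediate value theorem, and extracting the signs from the integral identity obtained by integrating $\frac{d}{dx}\bigl(2x\psi + \tfrac12\psi^2\bigr) = P + 2\psi$ across $\cllml$ — is essentially the strategy of the paper and its sources (the paper runs the identical machinery for the conical analogue: the monotonicity estimate is Theorem \ref{thm:StrctIncrConeSing}, and the sign-of-parameter conclusion via the integrated ODE with $\psi > 0$ vanishing at both ends is exactly the final step of Corollary \ref{cor:final}). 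But your existence step contains a concrete error and leaves the genuinely hard half unproved. The error: with $A(C), B(C)$ given by (\ref{eq:ABCm0}), the identities $P(1) = 2$ and $P(m+1) = -2(m+1)$ hold \emph{for every} $C$ — indeed you use the second one yourself to get $\psi'(m+1) = -1$ for free — and since $p_1$ vanishes at both endpoints, $P$ changes sign on $\oplml$ for every choice of $C$; your claim that $P > 0$ throughout $\oplml$ for $C \ll 0$ is therefore false. The conclusion $\Phi(C) > 0$ in that regime survives, but only via the \emph{integrated} positivity $\int_1^x P \geq 0$ (note $\partial_C \int_1^x P = \int_1^x p_1 < 0$), which keeps the solution bounded below and globally defined — this is precisely the mechanism of Lemma \ref{lem:polynom1conesing} and Lemma \ref{lem:VPPConeSing} in the conical case.

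The deeper gap is on the $\Phi(C) < 0$ side, which the paper explicitly flags as the difficult inequality. For $C$ large the solution in general does \emph{not} reach $x = m+1$: it breaks down when $2x + \psi \downarrow 0$ (in the transformed variable, $v \downarrow 0$), and the analogue of Theorem \ref{thm:scrA} shows that beyond a threshold parameter global existence genuinely fails, so $\Phi$ is simply undefined where you want to evaluate it. The correct route is structural: prove the breakdown dichotomy (continuation iff $v$ stays bounded away from $0$, Theorem \ref{thm:continue}), show via a compactness argument (Arzel\`a--Ascoli, Theorem \ref{thm:UnifConv}) that the endpoint map is continuous up to the boundary of the global-existence set, show that set is an open interval $\left(M, \cdot\right)$, and compute that the endpoint value tends to the breakdown value — below target — as the parameter approaches $M$ (Corollary \ref{cor:final}). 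You name this obstacle honestly ("a priori control keeping $2x+\psi$ bounded away from $0$") but supply no mechanism for it, and since this is where nearly all of the actual work in the paper's proof lives, the proposal as written does not establish existence of the root $C(m)$. One mitigating remark on your last paragraph: the integral identity is sharper than you suspect — at the root, $\int_1^{m+1} P = -2\int_1^{m+1}\psi < 0$ forces $C$ strictly above the locus $\int_1^{m+1} P = 0$, and a direct computation shows this locus coincides with the threshold $A(C) = 0$, so all three sign conclusions $A > 0$, $B < 0$, $C > 2$ would indeed follow from that single inequality once existence and $\psi > 0$ are in hand.
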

{\noindent This in turn gives (\cite{Sompurkar:2023:heKsmooth}; Corollary 2.3.1) for each $m > 0$ the existence of a \krm{} $\eta$ satisfying the ansatz (\ref{eq:ansatz0}) and extending smoothly across $\so$ and $\soo$, which is higher \ext{} \kr{} but not higher cscK and which lies in the \krcl{} $\pcsoo$, and then rescaling these metrics by suitable positive constants (\cite{Sompurkar:2023:heKsmooth}; Corollary 2.3.2) gives the existence of the required metrics in all the \krcl{es} of $X$ given by the expression (\ref{eq:KConeX0}).}
\begin{corollary}[\cite{Sompurkar:2023:heKsmooth}; Corollary 4.1]\label{cor:heKsmooth}
For all $a, b > 0$ there exists a smooth K\"ahler metric $\eta$ in the \krcl{} $a \mathsf{C} + b S_\infty$ on the minimal ruled surface $X = \prj$ satisfying the ansatz (\ref{eq:ansatz0}), which is higher extremal K\"ahler but not higher cscK, i.e. $\nabla^{\left(1,0\right)} \lambda \left(\eta\right) = \frac{d}{d x} \left(\lambda \left(\eta\right)\right) = A$ is a non-zero real \hol{} vector field on $X$.
\end{corollary}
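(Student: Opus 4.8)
The plan is to derive Corollary \ref{cor:heKsmooth} from Theorem \ref{thm:heKsmooth} by a scaling argument, using the fact that the property of being higher \ext{} \kr{} (and of failing to be higher cscK) is invariant under multiplying the metric by a positive constant. Theorem \ref{thm:heKsmooth} already produces, for each $m > 0$, a momentum-constructed \krm{} $\eta$ satisfying the ansatz (\ref{eq:ansatz0}), extending smoothly across $\so$ and $\soo$, lying in the \krcl{} $\pcsoo$, and higher \ext{} \kr{} with $\lambda \left(\eta\right) = A x + B$ and $A \neq 0$. So it remains only to reach an arbitrary \krcl{} $\acsoo$ with $a, b > 0$.

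First I would set up the reduction at the level of \krcl{es}. Given $a, b > 0$, write
\begin{equation*}
\acsoo = \frac{a}{2 \pi} \cdot 2 \pi \left(\mathsf{C} + \frac{b}{a} S_\infty\right),
\end{equation*}
so that putting $m := b / a > 0$ and $c := a / \left(2 \pi\right) > 0$ exhibits $\acsoo$ as $c$ times the \krcl{} $\pcsoo$. This is a \bij{} correspondence between the pairs $\left(a, b\right)$ with $a, b > 0$ and the pairs $\left(c, m\right)$ with $c, m > 0$, so every \krcl{} in the \kr{} cone (\ref{eq:KConeX0}) arises this way. Applying Theorem \ref{thm:heKsmooth} with this $m$ yields a higher \ext{} (non-higher-cscK) metric $\eta_m$ in $\pcsoo$, and I would define the candidate metric as $\eta := c \eta_m$, which lies in $\acsoo$ by $\R$-linearity of \cohoml{} classes and is again smooth across $\so$ and $\soo$ (positivity and smoothness being preserved by the positive constant $c$).

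The substantive step is to check that $\eta = c \eta_m$ is still higher \ext{} \kr{} with a nonzero \hol{} gradient. The key observation is that the top \chrn{} form is unchanged under scaling the metric by a positive constant, since the curvature of the \chrn{} connection of a \hmnm{} $h$ is insensitive to replacing $h$ by $c h$; hence $c_2 \left(\eta\right) = c_2 \left(\eta_m\right)$ as forms, while $\eta^2 = c^2 \eta_m^2$. Reading off the defining relation (\ref{eq:topChern0}), this forces $\lambda \left(\eta\right) = c^{-2} \lambda \left(\eta_m\right)$, and since $g^{i \bar{j}}$ scales as $c^{-1}$, the $\left(1,0\right)$-gradient $\nabla^{\left(1,0\right)} \lambda \left(\eta\right)$ is a nonzero constant multiple of $\nabla^{\left(1,0\right)} \lambda \left(\eta_m\right)$. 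A constant multiple of a \hol{} vector field is again \hol{}, and it is nonzero exactly when $A \neq 0$, so $\eta$ is higher \ext{} \kr{} but not higher cscK, with the gradient given in bundle-adapted coordinates by $\frac{d}{d x} \left(\lambda \left(\eta\right)\right)$ being a nonzero constant, which is what the corollary asserts.

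The only point needing care is that the ansatz (\ref{eq:ansatz0}) as stated normalizes the base to the fixed metric $\omega_\Sigma$ of constant scalar curvature $-2$, whereas the rescaled metric has base part $c \omega_\Sigma$; I would therefore phrase the conclusion in terms of the Calabi-symmetric form of the momentum construction (equivalently, reparametrizing the momentum profile $\psi$ by the constant $c$) rather than the literally fixed normalization, noting that this reparametrization alters none of the qualitative conclusions. This bookkeeping about the base normalization is the main (and only mild) obstacle; the \cohomll{} and curvature-scaling computations themselves are routine.
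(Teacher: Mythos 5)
Your proposal is correct and is essentially the paper's own argument: the paper likewise obtains the metric in each class $2 \pi \left(\mathsf{C} + m S_\infty\right)$ from Theorem \ref{thm:heKsmooth} via the momentum construction and then rescales by a positive constant $\frac{k}{2 \pi}$, invoking exactly the scale invariance of the higher extremal (and higher cscK) property that you verify through $c_2 \left(c \eta\right) = c_2 \left(\eta\right)$, $\lambda \left(c \eta\right) = c^{-2} \lambda \left(\eta\right)$ and the gradient being a nonzero constant multiple of a holomorphic vector field. Your closing remark about the base normalization in the ansatz (\ref{eq:ansatz0}) matches the implicit convention the paper adopts when it asserts that the rescaled metric still ``satisfies the ansatz.''
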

{\noindent Corollary \ref{cor:heKsmooth} can be starkly contrasted with the existence result obtained in the usual \ext{} \kr{} analogue of this problem by T{\o}nnesen-Friedman \cite{Tonnesen:1998:eKminruledsurf}, in which non-cscK \ext{} \krm{s} exist only in some \krcl{es} of the minimal ruled surface but not in all, because the \ext{} \kr{} analogue of the ODE boundary value problem (\ref{eq:ODEBVP0}) has a solution only for values of the parameter $m$ smaller than a particular positive value \cite{Szekelyhidi:2014:eKintro}.} \par
We had further proven in \cite{Sompurkar:2023:heKsmooth}; Theorem 4.2, by using the \textit{top Bando-Futaki invariant} introduced by Bando \cite{Bando:2006:HarmonObstruct} and given by the following expression on the \krcl{} $\pcsoo$ (where $Y$ is a \textit{gradient real \hol{} vector field} on the surface $X$ with \textit{real holomorphy potential} $\ttf : X \to \R$, i.e. $Y = \nabla^{\left(1,0\right)} \ttf = \left(\bar{\partial} \ttf\right)^{\sharp}$ and $\lambda_0 \left(\eta\right) = \frac{\int\limits_{X} \lambda \left(\eta\right) \eta^2}{\int\limits_{X} \eta^2}$ is the \textit{average higher scalar curvature} of a \krm{} $\eta$ belonging to $\pcsoo$):
\begin{equation}\label{eq:BFdef0}
\mathcal{F} \left(Y, \pcsoo\right) = -\frac{1}{2 \left(2 \pi\right)^2} \int\limits_X \ttf \left(\lambda \left(\eta\right) - \lambda_0 \left(\eta\right)\right) \eta^2
\end{equation}
that in the K\"ahler class of a higher cscK metric every higher extremal K\"ahler representative has to be higher cscK (compare this with the analogous statement about cscK and \ext{} \krm{s} proven by Calabi \cite{Calabi:1985:eK2}; Theorem 4 using the Futaki invariant \cite{Futaki:1983:ObstructKE}, which is mentioned in T{\o}nnesen-Friedman \cite{Tonnesen:1998:eKminruledsurf}; Proposition 3, Corollary 2 and Sz\'ekelyhidi \cite{Szekelyhidi:2014:eKintro}; Corollary 4.22). This along with Corollary \ref{cor:heKsmooth} finally gives us the following:
\begin{corollary}[\cite{Sompurkar:2023:heKsmooth}; Corollary 4.2]\label{cor:nonexisthcscKsmooth}
There do not exist any (smooth) higher cscK metrics on the minimal ruled surface $X = \prj$.
\end{corollary}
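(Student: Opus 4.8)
The plan is to argue by contradiction, leveraging the two facts already in hand: the existence result of Corollary \ref{cor:heKsmooth} and the rigidity furnished by the top Bando-Futaki invariant (\ref{eq:BFdef0}). Suppose, to the contrary, that $X$ carries a smooth higher cscK metric. By the description (\ref{eq:KConeX0}) of the \kr{} cone, its \krcl{} is necessarily of the form $\acsoo$ for some $a, b > 0$. My goal is then to show that this single class cannot simultaneously contain a higher cscK metric and the momentum-constructed higher \ext{} (but not higher cscK) metric produced by Corollary \ref{cor:heKsmooth}, which will be the contradiction.

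The mechanism is the invariance and definiteness of $\mathcal{F}\left(Y, \cdot\right)$. First I would recall that for a fixed gradient real \hol{} vector field $Y = \nabla^{\left(1,0\right)} \ttf$ the quantity $\mathcal{F}\left(Y, \acsoo\right)$ is independent of the representative $\eta$ chosen in the \krcl{} — this is the cohomological invariance established in the course of proving \cite{Sompurkar:2023:heKsmooth}; Theorem 4.2, resting on the characterization of the higher scalar curvature via the harmonic top \chrn{} form together with an integration-by-parts argument. Granting this, two observations close the loop. If the class $\acsoo$ contains a higher cscK metric $\eta_0$, then $\lambda\left(\eta_0\right) \equiv \lambda_0\left(\eta_0\right)$ is constant, so the integrand in (\ref{eq:BFdef0}) vanishes and $\mathcal{F}\left(Y, \acsoo\right) = 0$ for every $Y$. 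On the other hand, evaluating the invariant on the extremal field $Y = \nabla^{\left(1,0\right)} \lambda\left(\eta\right)$ of a higher \ext{} representative $\eta$ (whose real holomorphy potential is $\ttf = \lambda\left(\eta\right)$) and using that $\lambda_0\left(\eta\right)$ is the $\eta^2$-average of $\lambda\left(\eta\right)$, so that the cross term integrates to zero, gives
\begin{equation*}
\mathcal{F}\left(\nabla^{\left(1,0\right)} \lambda\left(\eta\right), \acsoo\right) = -\frac{1}{2 \left(2 \pi\right)^2} \int\limits_X \left(\lambda\left(\eta\right) - \lambda_0\left(\eta\right)\right)^2 \eta^2 \leq 0,
\end{equation*}
with equality precisely when $\lambda\left(\eta\right)$ is constant, i.e. when $\eta$ is higher cscK.

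Combining the two observations reproduces the rigidity statement quoted before the corollary: in a \krcl{} containing a higher cscK metric every higher \ext{} representative is higher cscK, since otherwise the invariant would be both zero (by the first observation and invariance) and strictly negative (by the second). To finish, I would invoke Corollary \ref{cor:heKsmooth}: the class $\acsoo$, being the \krcl{} of our hypothetical higher cscK metric, contains a momentum-constructed higher \ext{} \krm{} $\eta$ with $\nabla^{\left(1,0\right)} \lambda\left(\eta\right) = A \neq 0$, so $\lambda\left(\eta\right)$ is non-constant and $\eta$ is \emph{not} higher cscK, contradicting the rigidity just derived. Given both cited inputs the corollary is thus immediate; the genuinely delicate point — and the step I expect to carry the weight — is the cohomological invariance of $\mathcal{F}\left(Y, \cdot\right)$, for it is precisely this invariance, combined with the strict negativity in the display above, that converts the mere non-vanishing of $A$ into the desired contradiction.
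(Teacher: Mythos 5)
Your proposal is correct and takes essentially the same route as the paper: the paper likewise obtains the corollary by combining Corollary \ref{cor:heKsmooth} with the rigidity statement from \cite{Sompurkar:2023:heKsmooth}; Theorem 4.2 (proven via the top Bando-Futaki invariant) that in the K\"ahler class of a higher cscK metric every higher extremal representative must itself be higher cscK. Your inline reconstruction of that rigidity --- vanishing of $\mathcal{F}$ on a class containing a higher cscK metric, together with the evaluation $\mathcal{F}\left(\nabla^{\left(1,0\right)} \lambda\left(\eta\right), \cdot\right) = -\frac{1}{2\left(2\pi\right)^2}\int\limits_X \left(\lambda\left(\eta\right)-\lambda_0\left(\eta\right)\right)^2 \eta^2$ at the extremal representative --- is precisely the Calabi-style argument underlying the cited theorem, so nothing is missing.
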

{\noindent Corollary \ref{cor:nonexisthcscKsmooth} agrees exactly by analogy with the non-existence result for (usual) cscK metrics on the minimal ruled surface proven by T{\o}nnesen-Friedman \cite{Tonnesen:1998:eKminruledsurf}.} \par
All the \krm{s} in the above discussed problem on higher \ext{} \krm{s} \cite{Pingali:2018:heK,Sompurkar:2023:heKsmooth} were required to be smooth throughout the surface $X$ as was the case with the analogous problem on \ext{} \krm{s} \cite{Szekelyhidi:2014:eKintro,Tonnesen:1998:eKminruledsurf}. In this paper we will require the \krm{s} to be smooth only on the non-compact surface $X \smallsetminus \left(\so \cup \soo\right)$ and we will allow them to develop conical singularities of some kind (made precise by the discussions in Subsections \ref{subsec:KhlrConeSing} and \ref{subsec:CanonKhlrConeSing}) along the simple closed curves (hypersurfaces) $\so$ and $\soo$ with cone angles $2 \pi \bo > 0$ and $2 \pi \boo > 0$ respectively. We will see that in this setup we can indeed construct conical higher cscK metrics on the compact surface $X$ by the momentum construction method of Hwang-Singer \cite{Hwang:2002:MomentConstruct}. \par
Similar to the ODE boundary value problem (\ref{eq:ODEBVP0}) in the smooth higher \ext{} \kr{} case, the momentum construction applied to the conical higher cscK problem on the minimal ruled surface $X$ throws up the following ODE boundary value problem for some smooth function $\phi : \cllml \to \R$ and some constants $B, C \in \R$ (as we will see in detail in Section \ref{sec:MomentConstructConehcscK}):
\begin{equation}\label{eq:ODEBVPConeSing0}
\begin{gathered}
\left(2 \gamma + \phi\right) \phi' = B \frac{\gamma^3}{2} + C \gamma \hspace{2pt}; \hspace{4pt} \gamma \in \left[1, m + 1\right] \\
\phi\left(1\right) = 0 \hspace{1pt}, \hspace{5pt} \phi\left(m + 1\right) = 0 \\
\phi'\left(1\right) = \beta_0 \hspace{1pt}, \hspace{5pt} \phi'\left(m + 1\right) = - \beta_\infty \\
\phi\left(\gamma\right) > 0 \hspace{2pt}; \hspace{4pt} \gamma \in \left(1, m + 1\right)
\end{gathered}
\end{equation}
The method of attack for solving (\ref{eq:ODEBVPConeSing0}) is very similar to the one used for (\ref{eq:ODEBVP0}) but with some subtle differences in the values and the interdependence of the parameters involved. Proceeding like in \cite{Sompurkar:2023:heKsmooth}; Subsection 2.3 we first derive the expressions (\ref{eq:BCConeSing}) for $B, C$ in terms of $\bo, \boo, m$ analogous to the expressions (\ref{eq:ABCm0}), and then analyze the behaviour of the polynomial $B \frac{\gamma^3}{2} + C \gamma$ appearing in the right hand side of the ODE in (\ref{eq:ODEBVPConeSing0}) in Lemma \ref{lem:polynomconesing} (in Subsection \ref{subsec:AnalysisODEBVPConeSing}). Analogous to Theorem \ref{thm:heKsmooth} we have the existence result for the ODE boundary value problem (\ref{eq:ODEBVPConeSing0}) given by Theorem \ref{thm:mainconesing} (proven in Section \ref{sec:ProofConeSing}) which states that for all positive values of the independent parameters $m, \bo$ there exists a unique positive value of the dependent parameter $\boo$ and there exist unique values of $B, C$ given by the expressions (\ref{eq:BCConeSing}) such that (corresponding to these values of the respective parameters) there exists a unique smooth solution $\phi\left(\gamma\right)$ on $\cllml$ to (\ref{eq:ODEBVPConeSing0}) satisfying all the (boundary) conditions. Corollary \ref{cor:mainconesing} then immediately follows from the construction described in Subsection \ref{subsec:MomentConstructConeSing}, stating that in every \krcl{} of the form $\pcsoo$ of the surface $X$ there exists a momentum-constructed conical higher cscK metric $\omega$ with cone angles $2 \pi \bo$ and $2 \pi \boo$ along the divisors $\so$ and $\soo$ respectively and with higher scalar curvature on $\xsosoo$ (see Definition \ref{def:conehcscK}) being given by $\lambda \left(\omega\right) = B$. Of course then just like in the smooth higher \ext{} \kr{} case, we can rescale the constructed metric $\omega$ by a suitable positive constant to get a conical higher cscK metric with the same cone angles as $\omega$ in the general \krcl{} of the form $\acsoo$ given by the expression (\ref{eq:KConeX0}), because even for conical metrics the property of being higher cscK is invariant under rescaling the metric by a positive constant. \par
According to Definition \ref{def:conehcscK} the higher scalar curvature of the conical higher cscK metric $\omega$ is defined a priori as the smooth (constant) function $\lambda \left(\omega\right)$ on the non-compact surface $\xsosoo$ only. Following the discussion in Subsection \ref{subsec:CanonKhlrConeSing} (specifically referring to equation (\ref{eq:highScalconehcscK})) our real aim in the study of the conical higher cscK equation is to get an expression for the higher scalar curvature of $\omega$ which is globally applicable on the whole of $X$, and this is achieved by the following global expression for the \textit{top \chrn{} current} of $\omega$ (to be derived in Section \ref{sec:highScalcurrent}):
\begin{equation}\label{eq:Cherncurrentomega0}
c_2 \left(\omega\right) = \frac{\lambda \left(\omega\right)}{2 \left(2 \pi\right)^2} \omega^2 + \frac{\beta_0 - 1}{\pi} \omega \wedge \left[\so\right] + \frac{\beta_\infty - 1}{\left(m + 1\right) \pi} \omega \wedge \left[\soo\right]
\end{equation}
where $\left[\so\right]$ and $\left[\soo\right]$ are the currents of integration on $X$ along $\so$ and $\soo$ respectively. The wedge products of the conically singular closed positive $\left(1,1\right)$-form $\omega$ with the closed positive $\left(1,1\right)$-currents $\left[\so\right]$ and $\left[\soo\right]$ are rigorously explained by using two different approaches in this paper, viz. by means of the Bedford-Taylor wedge product \cite{Bedford:1982:cpctypsh,Bedford:1976:DirichletMongeAmpere} in Subsection \ref{subsec:currenteq}, and then in Subsection \ref{subsec:SmoothapproxConehcscK} by taking some special kind of smooth approximations $\omega_\epsilon$ to the conical metric $\omega$ (akin to the method outlined in \cite{Campana:2013:ConeSingNormCrossDiv,Edwards:2019:ContractDivConicKRFlow,Shen:2016:SmoothApproxConeSingRicciLB,Wang:2016:SmoothApproxConeKRFlow}) whose smooth top \chrn{} forms $c_2 \left(\omega_\epsilon\right)$ converge weakly in the sense of currents to $c_2 \left(\omega\right)$ given by (\ref{eq:Cherncurrentomega0}). \par
Finally in Section \ref{sec:logFutlogMab} we introduce the \textit{top $\log$ Bando-Futaki invariant} on the \krcl{} $\pcsoo$ of the minimal ruled surface $X$ given by the following expression (where the vector field $Y$ must be \textit{parallel} to both the curves $\so$ and $\soo$, meaning $Y$ should restrict to vector fields on $\so$ as well as $\soo$):
\begin{multline}\label{eq:logBFdef0}
\mathcal{F}_{\log; \bo, \boo} \left(Y, \pcsoo\right) = -\frac{1}{2 \left(2 \pi\right)^2} \int\limits_X \ttf \left(\lambda \left(\eta\right) - \lambda_0 \left(\eta\right)\right) \eta^2 \\
+ \frac{\beta_0 - 1}{\pi} \left( \int\limits_{S_0} \ttf \eta - \frac{\int\limits_{S_0} \eta}{\int\limits_X \eta^2} \int\limits_X \ttf \eta^2 \right) + \frac{\beta_\infty - 1}{\left(m + 1\right) \pi} \left( \int\limits_{S_\infty} \ttf \eta - \frac{\int\limits_{S_\infty} \eta}{\int\limits_X \eta^2} \int\limits_X \ttf \eta^2 \right)
\end{multline}
We can observe that the invariant in (\ref{eq:logBFdef0}) is precisely equal to the classical top Bando-Futaki invariant \cite{Bando:2006:HarmonObstruct} given by the expression (\ref{eq:BFdef0}) plus two ``correction factors'' along the two divisors of the conical singularities, and (\ref{eq:logBFdef0}) can be reasonably expected to provide an obstruction to the existence of (momentum-constructed) conical higher cscK metrics with cone angles $2\pi\bo > 0$ and $2\pi\boo > 0$ along the divisors $\so$ and $\soo$ respectively in the \krcl{} $\pcsoo$ in exactly the same way as (\ref{eq:BFdef0}) does for smooth higher cscK metrics. So if we evaluate the invariant (\ref{eq:logBFdef0}) at the momentum-constructed smooth higher \ext{} \krm{} $\eta$ yielded by Corollary \ref{cor:heKsmooth} and forcefully set it equal to zero, then we obtain the following linear equation in $\bo, \boo$ dependent on the parameter $m > 0$ determining the \krcl{} under consideration:
\begin{equation}\label{eq:boboomstrghtline0}
\frac{2 \left(m + 3\right)}{m + 2} \beta_\infty - \frac{2 \left(2 m + 3\right)}{m + 2} \beta_0 = \frac{m^2 \left(m^2 + 6 m + 6\right)}{4 \left(m + 1\right)^2} C \left(m\right) - \frac{m \left(m + 2\right)^3}{2 \left(m + 1\right)^2}
\end{equation}
Also note that (\ref{eq:boboomstrghtline0}) gives a precise relationship in terms of $m$ between the value of the parameter $C = C \left(m\right)$ and the values of the parameters $\bo, \boo$ given by \cite{Sompurkar:2023:heKsmooth}; Theorem 2.3.2, Corollary 2.3.1 and by Theorem \ref{thm:mainconesing}, Corollary \ref{cor:mainconesing} in the smooth higher \ext{} \kr{} and the conical higher cscK cases of the momentum construction method respectively, and thus relates the work in our previous paper \cite{Sompurkar:2023:heKsmooth} with that in this paper. But the author would like to admit here that the linear relationship between the cone angles given by equation (\ref{eq:boboomstrghtline0}) is however conjectural, as one of the expected properties of the top $\log$ Bando-Futaki invariant (viz. Conjecture \ref{conj:logBFconiccohomllinvar}) itself could not be proven in its entirety (see Section \ref{sec:logFutlogMab}).
\subsection{Comparison with Other Related Research Works}\label{subsec:AnalogyConeSing}
We just briefly mention here the fact that all the results and expressions that are going to be derived in this paper, and all the calculations and analysis that are involved therein, carry over to general pseudo-Hirzebruch surfaces of genera $\ttg \geq 2$ and degrees $\ttd \neq 0$ \cite{Tonnesen:1998:eKminruledsurf} as well, rather than merely the special case of $\ttg = 2$ and $\ttd = -1$ which we are going to consider. Following the terminology of T{\o}nnesen-Friedman \cite{Tonnesen:1998:eKminruledsurf}; Definition 1 a \textit{pseudo-Hirzebruch surface} is defined to be a minimal ruled complex surface of the form $X = \mathbb{P} \left(L \oplus \mathcal{O}\right)$ where $L$ is a holomorphic line bundle of \textit{degree} $\mathtt{d} \neq 0$ (which means the intersection number $c_1 \left(L\right) \cdot \left[\Sigma\right] = \ttd$ where $c_1 \left(L\right)$ is the first \chrn{} class of $L$ and $\left[\Sigma\right]$ is the fundamental class of $\Sigma$) over a compact Riemann surface $\Sigma$ of genus $\mathtt{g} \geq 2$. We always equip $\Sigma$ with a K\"ahler metric $\omega_\Sigma$ of constant scalar curvature $-2 \left(\mathtt{g} - 1\right)$ (which is the Euler characteristic of $\Sigma$) and we then equip $L$ with a Hermitian metric $h$ of curvature form $\mathtt{d} \omega_\Sigma$. Exactly as it happened in our first paper \cite{Sompurkar:2023:heKsmooth}; Section 5 with the ODE boundary value problem (\ref{eq:ODEBVP0}) in the smooth higher \ext{} \kr{} case of the momentum construction, some factors involving $\ttg$ and $\ttd$ will pop up in the ODE boundary value problem (\ref{eq:ODEBVPConeSing0}) in the conical higher cscK case of this paper as well, and as a result also in the global expression of currents (\ref{eq:Cherncurrentomega0}) for the top Chern current, in the expression (\ref{eq:logBFdef0}) defining the top $\log$ Bando-Futaki invariant and in the equation (\ref{eq:boboomstrghtline0}) giving the linear relationship between the positive cone angles at the two special divisors. Because of lack of space and for the sake of simplicity of the calculations we are skipping all the details that are involved in this generalization to the case $\ttg \geq 2$ and $\ttd \neq 0$, just like the exposition given in Sz\'ekelyhidi \cite{Szekelyhidi:2014:eKintro}; Section 4.4 of the work of T{\o}nnesen-Friedman \cite{Tonnesen:1998:eKminruledsurf} dealing with the construction of smooth (usual) \ext{} \krm{s}, presents only the special case $\ttg = 2$ and $\ttd = -1$. \par
Again just like in the smooth higher \ext{} \kr{} equation with the Calabi ansatz studied in our first paper \cite{Sompurkar:2023:heKsmooth} the genus $\ttg$ of the base \rms{} $\Sigma$ does not seem to be playing a hurdle in getting all the results in the conical higher cscK case of this paper as well. So the author believes that results similar to the ones gotten in this paper for the case $\ttg = 2$ and $\ttd = -1$ (i.e. a pseudo-\hbch{} surface) are obtainable for the case with $\ttg = 0$, i.e. when the base \rms{} is the complex projective line $\C \PP^1$ (or the Riemann sphere $S^2$). A \textit{Hirzebruch surface} is defined to be a ruled complex surface of the form $X = \mathbb{P} \left(\mathcal{O} \left(\mathtt{d}\right) \oplus \mathcal{O}\right)$ where $\mathcal{O} \left(\mathtt{d}\right)$ is the $-\ttd$\textsuperscript{th} tensor power of the tautological line bundle $\mathcal{O} \left(-1\right)$ over $\cp^1$ and $\ttd \neq 0$ \cite{Barth:2004:CmpctCmplxSurf}. Here as well the special case with $\ttd = -1$, i.e. the \hbch{} surface $\mathbb{P} \left(\mathcal{O} \left(-1\right) \oplus \mathcal{O}\right)$ which can also be seen as the blowup of the complex projective plane $\cp^2$ at a point, can be tried for the momentum construction of conical higher cscK metrics along the lines of the work of Calabi \cite{Calabi:1982:eK} which is concerned with the construction of smooth \ext{} \krm{s} on this \hbch{} surface (which is discussed briefly in Sz\'ekelyhidi \cite{Szekelyhidi:2014:eKintro}; Section 4.4, Exercise 4.32). \par
Now to take a short peek into the enormous body of research papers on canonical \krm{s} with conical singularities along certain kinds of hypersurface divisors of compact \krmf{s}, the simplest among these are conical \kr{}-Einstein metrics which have been a topic of active research interest as seen from the works of Donaldson \cite{Donaldson:2012:ConeSingDiv}, Brendle \cite{Brendle:2013:Ricflatedgesing}, Campana-Guenancia-P\u{a}un \cite{Campana:2013:ConeSingNormCrossDiv}, Datar \cite{Datar:2014:CanonConeSing}, Li \cite{Li:2015:logKStab}, Jeffres-Mazzeo-Rubinstein \cite{Jeffres:2016:KEedgesing}, Song-Wang \cite{Song:2016:RicciGLBconicKE} just to name a few. But on the contrary, conical cscK metrics have been studied only in a few works and that too quite recently, e.g. Zheng \cite{Zheng:2015:UniqueConeSing}, Keller-Zheng \cite{Keller:2018:cscKConeSing}, Li \cite{Li:2018:conicMab}, Hashimoto \cite{Hashimoto:2019:cscKConeSing}, Aoi-Hashimoto-Zheng \cite{Aoi:2025:cscKConeSing}, because the conical cscK PDE is in general much harder than the conical \kr{}-Einstein PDE and it involves some more complications like for example giving the ``correct interpretation'' to the wedge products of closed positive currents that will appear in the generalization of the scalar curvature of the conical metric globally on the underlying compact manifold (this same issue of taking wedge products of currents will appear in our conical higher cscK PDE as well, and Section \ref{sec:highScalcurrent} is devoted precisely for this). For all three of these notions of canonical \krm{s} with conical singularities the respective kinds of curvature (as in, the higher scalar curvature for conical higher cscK metrics, and so on) are expected to have global interpretations in terms of the current of integration along the divisor of their singularities, like the model equations (\ref{eq:RicconeKE}), (\ref{eq:ScalconecscK}) and (\ref{eq:highScalconehcscK}) discussed in Subsection \ref{subsec:CanonKhlrConeSing}. For arriving at these types of equations one first needs to compute the global expression for the Ricci curvature current $\Ric \left(\omega\right)$ of the conical \kr{} metric $\omega$ by using a fairly standard method followed in for example Li \cite{Li:2015:logKStab}; Section 2, equation (2) and Hashimoto \cite{Hashimoto:2019:cscKConeSing}; Lemma 4.2, equations (3.11), (4.2) and (4.3) of expressing the (singular) volume form of $\omega$ as:
\begin{equation}\label{eq:defRiccurt1}
\omega^n = \left\lvert z \right\rvert^{2 \beta - 2} \xi
\end{equation}
where $z = 0$ is the divisor of the conical singularity of $\omega$, $2 \pi \beta > 0$ is the cone angle and $\xi$ is some positive $\left(n, n\right)$-form which is smooth away from $z = 0$ and which contains coefficient terms of the order of $\left\lvert z \right\rvert^{2 \beta}$ near $z = 0$. Since $\omega^n$ and $\xi$ will induce their corresponding (singular and non-smooth respectively) \hmnm{s} on the anticanonical line bundle of the underlying \kr{} manifold, we can then simply take the \textit{Ricci curvature current} of $\omega$ to be (using the Poincar\'e-Lelong formula \cite{Demailly:2012:CmplxDifferGeom}):
\begin{equation}\label{eq:defRiccurt2}
\Ric \left(\omega\right) = \Ric \left(\omega^n\right) = \Ric \left(\xi\right) + 2\pi \left(1-\beta\right) \left[z=0\right]
\end{equation}
where $\Ric \left(\omega^n\right)$ and $\Ric \left(\xi\right)$ are the curvature forms of the induced \hmnm{s} $\omega^n$ and $\xi$ respectively and $\left[z=0\right]$ is the current of integration along the divisor $z = 0$. We have used this same method (as described above in equations (\ref{eq:defRiccurt1}), (\ref{eq:defRiccurt2})) for obtaining the current term in the $\left(2, 2\right)$-entry of the curvature form matrix $\Theta \left(\omega\right)$ of the momentum-constructed conical \krm{} $\omega$ in equation (\ref{eq:Curvcurrent22}), but unlike the conical \kr{}-Einstein case the issue that occurs after this point in our conical higher cscK case is that we have to take the determinant of the curvature form matrix (and not its trace) which involves taking wedge products of currents (rather than a plain sum of currents) which then need to be justified rigorously (in the conical cscK case on the other hand one needs to take the wedge product of $\Ric \left(\omega\right)$ given by (\ref{eq:defRiccurt2}) with the conically singular $\left(n-1, n-1\right)$-form $\omega^{n-1}$ in order to get to the global interpretation of the scalar curvature of the conical \krm{} $\omega$ \cite{Hashimoto:2019:cscKConeSing,Li:2018:conicMab}). \par
Hashimoto \cite{Hashimoto:2019:cscKConeSing} had studied conical cscK metrics in the setting of the Calabi ansatz by applying the momentum construction method of \cite{Hwang:2002:MomentConstruct}, while the theory of conical cscK metrics has been developed in much more general settings than the Calabi ansatz (like on smooth projective varieties) by for example Zheng \cite{Zheng:2015:UniqueConeSing}, Keller-Zheng \cite{Keller:2018:cscKConeSing}, Li \cite{Li:2018:conicMab}, Aoi-Hashimoto-Zheng \cite{Aoi:2025:cscKConeSing}. \par
Conical cscK metrics were constructed by Hashimoto \cite{Hashimoto:2019:cscKConeSing} on (actual) Hirzebruch surfaces \cite{Barth:2004:CmpctCmplxSurf}, and in fact on a much more general class of ruled complex $n$-manifolds, viz. the projective completions of pluricanonical and plurianticanonical line bundles over \kr{}-Einstein Fano $\left(n-1\right)$-manifolds with second Betti number $1$. In this paper we are dealing with the conical higher cscK analogue of the problem of \cite{Hashimoto:2019:cscKConeSing}, with the only difference in the setup being that the base manifold on which the projective fibre bundle lies is a negatively curved (hyperbolic) \rms{} in our case. Hashimoto \cite{Hashimoto:2019:cscKConeSing}; Theorem 1.12 gives the global equation of currents for the scalar curvature of a momentum-constructed conical cscK metric, which is like the model equation (\ref{eq:ScalconecscK}) briefly discussed in Subsection \ref{subsec:CanonKhlrConeSing}. We can clearly observe the analogy between the conical scalar curvature equation in \cite{Hashimoto:2019:cscKConeSing}; Theorem 1.12 and the equation of currents (\ref{eq:Cherncurrentomega0}) for the higher scalar curvature of a momentum-constructed conical higher cscK metric that we have obtained in this paper. We are using Bedford-Taylor theory \cite{Bedford:1982:cpctypsh,Bedford:1976:DirichletMongeAmpere} (in Subsection \ref{subsec:currenteq}) and also the method of taking smooth approximations to conical metrics \cite{Campana:2013:ConeSingNormCrossDiv,Edwards:2019:ContractDivConicKRFlow,Shen:2016:SmoothApproxConeSingRicciLB,Wang:2016:SmoothApproxConeKRFlow} (in Subsection \ref{subsec:SmoothapproxConehcscK}) for giving meaning to the wedge products of currents that appear in our conical higher cscK equation, but we could have also used the method detailed in Hashimoto \cite{Hashimoto:2019:cscKConeSing}; Subsection 4.2 in which one first thinks of these wedge products as the corresponding integrals (as remarked in equation (\ref{eq:wedgenaivint}) in Subsection \ref{subsec:CanonKhlrConeSing}) and then proves that these integrals are indeed well-defined and finite by using various estimates and some limiting arguments. Even the fact, that these wedge products of currents can be acted on non-smooth test functions which are asymptotically of the order of $\left\lvert z \right\rvert^{\beta}$ near the divisor of conical singularity given by $z = 0$ (where again $2\pi\beta > 0$ is the cone angle), which will be needed in the proof of Theorem \ref{thm:logBFconicevalue} to justify the evaluation of the top $\log$ Bando-Futaki invariant with respect to a conically singular metric, relates well with the work in \cite{Hashimoto:2019:cscKConeSing}; Subsubsections 4.3.2 and 4.3.3. And most notably this top $\log$ Bando-Futaki invariant given by the expression (\ref{eq:logBFdef0}), which we are going to study in Section \ref{sec:logFutlogMab}, is directly seen to be the top-dimensional analogue of the $\log$ Futaki invariant of Hashimoto \cite{Hashimoto:2019:cscKConeSing}; equation (2.3), with the rigorous verifications in Section \ref{sec:logFutlogMab} being along similar lines as those in \cite{Hashimoto:2019:cscKConeSing}; Subsection 4.3, Section 5. \par
However the most important difference between our higher scalar curvature equation with the Calabi ansatz and its usual scalar curvature counterpart is that (as we will be seeing in equation (\ref{eq:lambdaconesing}) in Subsection \ref{subsec:MomentConstructConeSing}) the higher scalar curvature $\lambda \left(\omega\right)$ of the momentum-constructed metric $\omega$ has a second-order fully non-linear differential expression in terms of the momentum profile (which is the function $\phi \left(\gamma\right)$ seen in the ODE (\ref{eq:ODEBVPConeSing0})) whereas the corresponding expression for the scalar curvature is second-order linear \cite{Hashimoto:2019:cscKConeSing,Hwang:2002:MomentConstruct,Szekelyhidi:2014:eKintro,Tonnesen:1998:eKminruledsurf}. The ODE (\ref{eq:ODEBVPConeSing0}) arising in the momentum construction of conical higher cscK metrics (just like the ODE (\ref{eq:ODEBVP0}) in the smooth higher \ext{} \kr{} problem \cite{Sompurkar:2023:heKsmooth}) cannot be solved explicitly in closed form by any of the elementary ODE methods (being a version of Chini's equation) \cite{Pingali:2018:heK}, and hence it requires a very delicate analysis for getting the existence of a solution satisfying all the concerned boundary conditions and that too for each \krcl{}. But on the contrary, for the ODE yielded by the scalar curvature equation the momentum profile is obtainable as an explicit rational function which can then be solved for the boundary conditions individually, as demonstrated in \cite{Hashimoto:2019:cscKConeSing,Hwang:2002:MomentConstruct,Szekelyhidi:2014:eKintro,Tonnesen:1998:eKminruledsurf}. \par
Li \cite{Li:2018:conicMab} had also studied the conical cscK equation but in a very different setup than Hashimoto \cite{Hashimoto:2019:cscKConeSing}, and had employed the non-pluripolar wedge product of Boucksom-Eyssidieux-Guedj-Zeriahi \cite{Boucksom:2010:MAeqnonpluripolarprod} (a generalization of the Bedford-Taylor product for unbounded \psh{} functions which charges no mass on the divisor of the conical singularity as the divisor is a pluripolar set) to interpret the wedge product of the current terms that appeared in the conical cscK equation in \cite{Li:2018:conicMab}, and had further noted that if instead the Bedford-Taylor-Demailly wedge product \cite{Demailly:2012:CmplxDifferGeom} (a different generalization of the Bedford-Taylor product for unbounded \psh{} functions which charges mass on the divisor of the conical singularity) is used to determine the wedge product of the concerned current terms then the answer will be different from the case of the non-pluripolar product. In our case of the conical higher cscK equation with the Calabi symmetry condition this problem does not arise since the momentum construction ensures that the \psh{} functions which turn out to be the potentials of the closed positive currents (viz. the functions $\ln \left(1 + f' \left(s\right)\right)$, $f \left(s\right)$ and $f \left(s\right) - m s$ to be seen in Subsection \ref{subsec:currenteq}, where $f \left(s\right)$ is the convex function yielding the Calabi ansatz similar to the function $\rho \left(s\right)$ in equation (\ref{eq:ansatz0})) are bounded at least locally around the divisors. \par
Just like the top $\log$ Bando-Futaki invariant is designed to provide an algebro-geometric obstruction to the existence of conical higher cscK metrics in a given \krcl{}, the notions of a `higher $\log$ Mabuchi functional', `higher $\log$ $K$-stability' would also characterize conical higher cscK metrics and would provide deeper insights into their study, and so we would be interested in studying these things in our future works by following the already well-known analogous notions of these (which are the $\log$ Futaki invariant, the $\log$ Mabuchi functional and $\log$ $K$-stability respectively) given in the works \cite{Aoi:2025:cscKConeSing,Hashimoto:2019:cscKConeSing,Keller:2018:cscKConeSing,Li:2012:eKEngyFunctProjBund,Li:2018:conicMab,Zheng:2015:UniqueConeSing} in the case of conical cscK metrics and in the works \cite{Donaldson:2012:ConeSingDiv,Jeffres:2016:KEedgesing,Li:2015:logKStab,Song:2016:RicciGLBconicKE} in the case of conical \kr{}-Einstein metrics. \par
Conical \ext{} \krm{s} are another obvious and important notion of canonical \krm{s} with conical singularities about which the author could find only one work viz. Li \cite{Li:2012:eKEngyFunctProjBund} which studies them in the setting of the Calabi ansatz on the same minimal ruled surface (the pseudo-\hbch{} surface \cite{Tonnesen:1998:eKminruledsurf}) as ours. Li \cite{Li:2012:eKEngyFunctProjBund}; Corollary 1.2, Theorem 3.2 state that conical \ext{} \krm{s} can be constructed in all the \krcl{es} of the surface, in contrast to the fact that smooth \ext{} \krm{s} do not exist in some \krcl{es}, which is proven in \cite{Szekelyhidi:2014:eKintro,Tonnesen:1998:eKminruledsurf}. Looking at the naturally analogous concept of a conical higher \ext{} \krm{} might also lead us to some new research directions.
\numberwithin{equation}{subsection}
\numberwithin{figure}{subsection}
\numberwithin{table}{subsection}
\numberwithin{lemma}{subsection}
\numberwithin{proposition}{subsection}
\numberwithin{result}{subsection}
\numberwithin{theorem}{subsection}
\numberwithin{corollary}{subsection}
\numberwithin{conjecture}{subsection}
\numberwithin{remark}{subsection}
\numberwithin{note}{subsection}
\numberwithin{motivation}{subsection}
\numberwithin{question}{subsection}
\numberwithin{answer}{subsection}
\numberwithin{case}{subsection}
\numberwithin{claim}{subsection}
\numberwithin{definition}{subsection}
\numberwithin{example}{subsection}
\numberwithin{hypothesis}{subsection}
\numberwithin{statement}{subsection}
\numberwithin{ansatz}{subsection}
\section{Preliminaries}\label{sec:Prelim}
\subsection{The Definitions of the Requisite Notions of Canonical K\"ahler Metrics}\label{subsec:CanonKhlrSmooth}
A recurring theme in \kr{} geometry (or even in \hmn{} and \rmn{} geometry) is to find ``canonical'' metrics on (complex or real) manifolds, i.e. to find metrics which enjoy some nice properties with respect to their curvatures \cite{Szekelyhidi:2014:eKintro}. Along these lines in \kr{} geometry we have \textit{\kr{}-Einstein metrics} which are special cases of \textit{constant scalar curvature \kr{} (cscK) metrics} on compact \krmf{s} \cite{Szekelyhidi:2014:eKintro}. Calabi \cite{Calabi:1982:eK,Calabi:1985:eK2} introduced \textit{extremal K\"ahler metrics} as the critical points of a certain energy \funcl{} defined on a fixed \krcl{} (called as the Calabi \funcl{}) and these are a further generalization of cscK metrics. These three notions involve the Ricci curvature form (or the first Chern form for \krm{s}) satisfying some nice equations as we will briefly see below \cite{Szekelyhidi:2014:eKintro}: \par
Let $M$ be a compact \kr{} $n$-manifold and $\omega$ be a \krm{} on $M$. The \textit{Ricci curvature form} of $\omega$ is defined as \cite{Szekelyhidi:2014:eKintro}:
\begin{equation}\label{eq:defRic}
\Ric \left(\omega\right) = - \ideldb \ln \det \left(\omega\right)
\end{equation}
where $\det \left(\omega\right) = \det H \left(\omega\right)$, $H \left(\omega\right)$ being the Hermitian matrix of (the underlying \hmnm{} associated with) $\omega$ given in terms of local holomorphic coordinates on $M$. The metric $\omega$ is said to be \textit{\kr{}-Einstein} if there exists a constant $\lambda \in \R$ such that $\Ric \left(\omega\right) = \lambda \omega$, in which case the constant $\lambda$ is called as the \textit{Ricci curvature} of $\omega$ \cite{Szekelyhidi:2014:eKintro}. \par
The \textit{scalar curvature} of $\omega$, denoted by $S \left(\omega\right) : M \to \R$, can be read off from the following formula \cite{Szekelyhidi:2014:eKintro}:
\begin{equation}\label{eq:defScal}
n \Ric \left(\omega\right) \wedge \omega^{n-1} = S \left(\omega\right) \omega^n
\end{equation}
The metric $\omega$ is said to be \textit{constant scalar curvature \kr{} (cscK)} if (as the name suggests) $S \left(\omega\right) \in \R$ is a constant \cite{Szekelyhidi:2014:eKintro}. \par
Letting $\Omega^{\left(0,1\right)} \left(M\right)$ and $\mathfrak{X}^{\left(1,0\right)} \left(M\right)$ denote the set of all real smooth $\left(0,1\right)$-forms on $M$ and the set of all real smooth $\left(1,0\right)$-vector fields on $M$ respectively, and $\flat : \mathfrak{X}^{\left(1,0\right)} \left(M\right) \to \Omega^{\left(0,1\right)} \left(M\right)$ and $\sharp : \Omega^{\left(0,1\right)} \left(M\right) \to \mathfrak{X}^{\left(1,0\right)} \left(M\right)$ denote the `musical isomorphisms' induced by (the underlying \hmnm{} associated with) $\omega$, then the set of all \textit{real holomorphic vector fields} on $M$ is defined as \cite{Szekelyhidi:2014:eKintro}:
\begin{equation}\label{eq:defholVF}
\fkh \left(M\right) = \left\lbrace \hspace{2pt} Y \in \mathfrak{X}^{\left(1,0\right)} \left(M\right) \hspace{4pt} \Big\vert \hspace{4pt} \db Y = 0 \hspace{2pt} \right\rbrace
\end{equation}
The metric $\omega$ is said to be \textit{extremal \kr{}} if $\nabla^{\left(1,0\right)} S \left(\omega\right) = \left(\bar{\partial} S \left(\omega\right)\right)^{\sharp} \in \fkh \left(M\right)$, where $\nabla^{\left(1,0\right)} \left(\cdot\right) = \left(\bar{\partial} \left(\cdot\right) \right)^{\sharp}$ denotes the $\left(1,0\right)$-gradient with respect to $\omega$ (see Calabi \cite{Calabi:1982:eK}; Theorem 2.1 and Sz\'ekelyhidi \cite{Szekelyhidi:2014:eKintro}; Theorem 4.2). \par
These three definitions are directly related to the notions of the \textit{first Chern form} of the \krm{} $\omega$ and the \textit{first Chern class} of $M$ (which turns out to be independent of the choice of the \krm{} $\omega$) \cite{Szekelyhidi:2014:eKintro}.
\begin{equation}\label{eq:deffirstChern}
c_1 \left(\omega\right) = \frac{1}{2 \pi} \Ric \left(\omega\right) \hspace{1pt}, \hspace{5pt} c_1 \left(M\right) = \left[c_1 \left(\omega\right)\right] = \frac{1}{2 \pi} \left[\Ric \left(\omega\right)\right] \in H^{\left(1,1\right)} \left(M, \mathbb{R}\right)
\end{equation}
where $H^{\left(1,1\right)} \left(M, \mathbb{R}\right)$ is the real cohomology space of all closed real smooth $\left(1,1\right)$-forms on $M$. \par
An equivalent characterization for cscK metrics is, $\omega$ is a cscK metric on $M$ if and only if its first Chern form $c_1 \left(\omega\right)$ is \textit{harmonic}, i.e. $\Delta c_1 \left(\omega\right) = - \bar{\partial}^* \bar{\partial} c_1 \left(\omega\right) - \bar{\partial} \bar{\partial}^* c_1 \left(\omega\right) = 0$ \cite{Calabi:1985:eK2,Futaki:1983:ObstructKE}, where $\Delta = - \bar{\partial}^* \bar{\partial} - \bar{\partial} \bar{\partial}^*$ is the Hodge $\bar{\partial}$-Laplacian on $M$ and $\bar{\partial}^*$ is the Hodge formal adjoint of $\bar{\partial}$ \cite{Demailly:2012:CmplxDifferGeom}. \par
It can be verified that the \textit{average scalar curvature} of $\omega$, given by $\widehat{S} \left(\omega\right) = \frac{\int\limits_{M} S \left(\omega\right) \omega^n}{\int\limits_{M} \omega^n} \in \R$, is a cohomological invariant of the \krcl{} of $\omega$, as in it depends only on the real \cohoml{} classes $\left[ \omega \right], c_1 \left(M\right) \in H^{\left(1,1\right)} \left(M, \mathbb{R}\right)$ in the following way \cite{Szekelyhidi:2014:eKintro}:
\begin{equation}\label{eq:avgScal}
\widehat{S} \left(\omega\right) = \frac{2 n \pi c_1 \left(M\right) \smile \left[ \omega \right]^{n-1}}{\left[ \omega \right]^n}
\end{equation}
where $\smile$ denotes the cup product of real \cohoml{} classes of $M$. Clearly then $\omega$ is a cscK metric in its \krcl{} $\left[ \omega \right]$ if and only if $S \left(\omega\right) = \widehat{S} \left(\omega\right) \in \R$ \cite{Calabi:1985:eK2,Futaki:1983:ObstructKE,Szekelyhidi:2014:eKintro}. \par
Taking the analogy of these definitions to the level of the top real cohomology, Pingali \cite{Pingali:2018:heK} introduced \textit{`higher constant scalar curvature K\"ahler (higher cscK) metrics'} and \textit{`higher extremal K\"ahler metrics'}. Denoting by $\Theta \left(\omega\right)$ the \textit{curvature form matrix} of the \krm{} $\omega$ (which in terms of its Hermitian matrix $H \left(\omega\right)$ in local holomorphic coordinates is given by $\Theta \left(\omega\right) = \bar{\partial} \left(H^{-1} \partial H\right) \left(\omega\right)$), the \textit{top Chern form} of $\omega$ and the \textit{top Chern class} of $M$ (which is again independent of the choice of $\omega$) are defined as follows \cite{Demailly:2012:CmplxDifferGeom}:
\begin{equation}\label{eq:deftopChern}
c_n \left(\omega\right) = \det \left( \frac{\sqrt{-1}}{2 \pi} \Theta \left(\omega\right) \right) \hspace{1pt}, \hspace{5pt} c_n \left(M\right) = \left[c_n \left(\omega\right)\right] \in H^{\left(n,n\right)} \left(M, \mathbb{R}\right)
\end{equation}
where $H^{\left(n,n\right)} \left(M, \mathbb{R}\right)$ is the top-dimensional real cohomology space of $M$. Since $M$ is compact and orientable and $\omega^n$ is nowhere vanishing, there exists a unique smooth (bounded) function $\lambda \left(\omega\right) : M \to \R$ (which can be dubbed as the \textit{``higher scalar curvature''} of $\omega$) such that \cite{Pingali:2018:heK,Sompurkar:2023:heKsmooth}:
\begin{equation}\label{eq:defhcscKheK}
c_n \left(\omega\right) = \frac{\lambda \left(\omega\right)}{n! \left(2 \pi\right)^n} \omega^n
\end{equation}
\begin{definition}[Higher cscK Metric; Pingali \cite{Pingali:2018:heK}]\label{def:hcscK}
The \kr{} metric $\omega$ on $M$ is said to be \textit{higher constant scalar curvature K\"ahler (higher cscK)} if $\lambda \left(\omega\right) \in \R$ is a constant in equation (\ref{eq:defhcscKheK}).
\end{definition}
\begin{definition}[Higher Extremal K\"ahler Metric; Pingali \cite{Pingali:2018:heK}]\label{def:heK}
The \kr{} metric $\omega$ on $M$ is said to be \textit{higher extremal K\"ahler} if $\nabla^{\left(1,0\right)} \lambda \left(\omega\right) = \left(\bar{\partial} \lambda \left(\omega\right)\right)^{\sharp} \in \fkh \left(M\right)$ in equation (\ref{eq:defhcscKheK}).
\end{definition}
{\noindent Observing the analogy between equations (\ref{eq:defScal}) and (\ref{eq:defhcscKheK}), one sees that Definitions \ref{def:hcscK} and \ref{def:heK} are motivated by the above three ``classical'' notions of canonical metrics in \kr{} geometry with the first Chern form being replaced by the top Chern form \cite{Pingali:2018:heK}.} \par
Just like for cscK metrics, there are two other equivalent characterizations for higher cscK metrics. In terms of \textit{\har{ity}} of closed forms (given in terms of Hodge theory) \cite{Demailly:2012:CmplxDifferGeom}, $\omega$ is higher cscK if and only if its top Chern form $c_n \left(\omega\right)$ is \textit{harmonic}, i.e. $\Delta c_n \left(\omega\right) = - \bar{\partial}^* \bar{\partial} c_n \left(\omega\right) - \bar{\partial} \bar{\partial}^* c_n \left(\omega\right) = 0$ \cite{Bando:2006:HarmonObstruct,Calabi:1985:eK2,Futaki:1983:ObstructKE,Pingali:2018:heK}. The other one is analogously given in terms of the \textit{average higher scalar curvature} of $\omega$, which is defined as $\lambda_0 \left(\omega\right) = \frac{\int\limits_{M} \lambda \left(\omega\right) \omega^n}{\int\limits_{M} \omega^n} \in \R$ and which can again be checked to be a cohomological invariant of the \krcl{} of $\omega$, meaning it depends only on the real \cohoml{} classes $\left[ \omega \right] \in H^{\left(1,1\right)} \left(M, \mathbb{R}\right)$, $c_n \left(M\right) \in H^{\left(n,n\right)} \left(M, \mathbb{R}\right)$ in the following way \cite{Bando:2006:HarmonObstruct,Pingali:2018:heK,Sompurkar:2023:heKsmooth,Szekelyhidi:2014:eKintro}:
\begin{equation}\label{eq:avghighScal}
\lambda_0 \left(\omega\right) = \frac{n! \left(2 \pi\right)^n c_n \left(M\right)}{\left[ \omega \right]^n}
\end{equation}
and then the formulation is obviously, $\omega$ is higher cscK if and only if $\lambda \left(\omega\right) = \lambda_0 \left(\omega\right) \in \R$ \cite{Bando:2006:HarmonObstruct,Pingali:2018:heK,Sompurkar:2023:heKsmooth}.
\subsection{Some Definitions of K\"ahler Metrics with Conical Singularities}\label{subsec:KhlrConeSing}
We will now briefly look at some definitions of \krm{s} on compact complex manifolds having conical singularities of positive cone angles along simple closed (complex) hypersurfaces of the manifolds, attributed to Brendle \cite{Brendle:2013:Ricflatedgesing}, Donaldson \cite{Donaldson:2012:ConeSingDiv}, Jeffres-Mazzeo-Rubinstein \cite{Jeffres:2016:KEedgesing}, Song-Wang \cite{Song:2016:RicciGLBconicKE} among many others. Starting with the basic motivation behind the definitions, a \textit{K\"ahler metric with a conical singularity} is a \krc{} which represents a smooth \krm{} away from the hypersurface of its conical singularity and which in a \nbd{} of the hypersurface is ``comparable'' with the \textit{model edge metric} on $\C^n$ (given by the expression (\ref{eq:modeledge})) in some or the other way \cite{Donaldson:2012:ConeSingDiv,Jeffres:2016:KEedgesing}. The sense in which it is comparable with the model edge metric on $\C^n$ gives us various different notions of conical \krm{s} (out of which we are going to be seeing only four, viz. Definitions \ref{def:coneKr1}, \ref{def:coneKr2}, \ref{def:coneKr3} and \ref{def:coneKr4}) \cite{Donaldson:2012:ConeSingDiv,Jeffres:2016:KEedgesing}. The theory of conical \krm{s} (especially conical \kr{}-Einstein metrics) has been one of active research interest, starting with the early works of for example Troyanov \cite{Troyanov:1991:CurvCmpctSurfConeSing}, Luo-Tian \cite{Luo:1992:LiouvillePolytope} (who considered the special case of metrics on \rms{s} with conical singularities on a finite set of points) and going all the way till the later works of Donaldson \cite{Donaldson:2012:ConeSingDiv}, Brendle \cite{Brendle:2013:Ricflatedgesing}, Jeffres-Mazzeo-Rubinstein \cite{Jeffres:2016:KEedgesing}, Song-Wang \cite{Song:2016:RicciGLBconicKE} (which deal with the more general case of metrics on \krmf{s} having conical singularities along a smooth effective simple normal crossing divisor). Donaldson \cite{Donaldson:2012:ConeSingDiv} and Jeffres-Mazzeo-Rubinstein \cite{Jeffres:2016:KEedgesing} have developed elaborate linear theories of various kinds of weighted H\"older spaces that are involved in the many definitions of conical \krm{s} (satisfying various additional properties) that they have dealt with in their works, but for our purpose in this paper we will be dealing only with what are called as \textit{`smooth conical \krm{s}'} (which are basically conical \krm{s} which can be ``smoothened out'' by two certain singular coordinate transformations as we shall see in Definition \ref{def:coneKr2}) \cite{Donaldson:2012:ConeSingDiv,Jeffres:2016:KEedgesing,Song:2016:RicciGLBconicKE}, and hence the entire linear theories given in \cite{Donaldson:2012:ConeSingDiv,Jeffres:2016:KEedgesing} are not needed at least for this paper. \kr{}-Einstein metrics with conical singularities are also studied in the works of Campana-Guenancia-P\u{a}un \cite{Campana:2013:ConeSingNormCrossDiv}, Datar \cite{Datar:2014:CanonConeSing}, Li \cite{Li:2015:logKStab}, Shen \cite{Shen:2016:SmoothApproxConeSingRicciLB}, Rubinstein-Zhang \cite{Rubinstein:2022:KEedgeHirzebruch} and many others, while only a few works like Zheng \cite{Zheng:2015:UniqueConeSing}, Keller-Zheng \cite{Keller:2018:cscKConeSing}, Li \cite{Li:2018:conicMab}, Hashimoto \cite{Hashimoto:2019:cscKConeSing}, Aoi-Hashimoto-Zheng \cite{Aoi:2025:cscKConeSing}, Li \cite{Li:2012:eKEngyFunctProjBund} deal with conical cscK metrics (the last one also with conical \ext{} \krm{s}). \par
The \textit{model edge metric} on $\C^n$ in standard coordinates $\left(z_1, \ldots, z_{n-1}, z\right)$ with \textit{cone angle} $2 \pi \beta > 0$ along the \textit{hyperplane} $\left\lbrace z = 0 \right\rbrace \subseteq \C^n$ is the following \krf{} (together with its corresponding \krm{}) on $\C^n \smallsetminus \left\lbrace z = 0 \right\rbrace$ \cite{Brendle:2013:Ricflatedgesing,Datar:2014:CanonConeSing,Donaldson:2012:ConeSingDiv,Jeffres:2016:KEedgesing}:
\begin{equation}\label{eq:modeledge}
\begin{gathered}
\omega_\beta = \sqrt{-1} \sum\limits_{i=1}^{n-1} d z_i \wedge d \bar{z}_i + \sqrt{-1} \left\lvert z \right\rvert^{2 \beta - 2} d z \wedge d \bar{z} \hspace{1.5pt}; \hspace{5pt} g_\beta = 2 \sum\limits_{i=1}^{n-1} \left\lvert d z_i \right\rvert^2 + 2 \left\lvert z \right\rvert^{2 \beta - 2} \left\lvert d z \right\rvert^2
\end{gathered}
\end{equation}
Since $\omega_\beta$ is locally integrable on the whole of $\C^n$, we regard $\omega_\beta$ as a \textit{\krc{}} (i.e. a closed strictly positive $\left(1,1\right)$-current) on $\C^n$ \cite{Demailly:2012:CmplxDifferGeom}. Since $\omega_\beta = \ideldb \left( \sum\limits_{i=1}^{n-1} \left\lvert z_i \right\rvert^2 + \frac{\left\lvert z \right\rvert^{2 \beta}}{\beta^2} \right)$ on $\C^n$, we have $\sum\limits_{i=1}^{n-1} \left\lvert z_i \right\rvert^2 + \frac{\left\lvert z \right\rvert^{2 \beta}}{\beta^2}$ as a global \krp{} for $\omega_\beta$ on $\C^n$ \cite{Datar:2014:CanonConeSing,Demailly:2012:CmplxDifferGeom}.
\begin{remark}
In this paper we are going to be dealing with conical singularities of arbitrary positive cone angles, so in the expression (\ref{eq:modeledge}) and in the Definitions \ref{def:coneKr1}, \ref{def:coneKr2}, \ref{def:coneKr3} and \ref{def:coneKr4} we are going to be assuming the condition $\beta \in \left(0, \infty\right)$ only. But we note here that if $\beta \in \left(0, 1\right)$ then the model edge metric $\omega_\beta$ on $\C^n$ as well as the conical \krm{s} $\omega$ on the general \krmf{} $M$ to be seen in these four definitions will have a pole of fractional order $2 - 2 \beta$ and hence will be singular along the divisor $\left\lbrace z = 0 \right\rbrace$, while if $\beta \in \left(1, \infty\right)$ then $\omega_\beta$ and $\omega$ will have a zero of fractional multiplicity $2 \beta - 2$ and hence will be degenerate along $\left\lbrace z = 0 \right\rbrace$, and the case $\beta = 1$ obviously yields the standard smooth metric $\omega_1$ on $\C^n$ and smooth \krm{s} $\omega$ on the manifold $M$ respectively \cite{Hashimoto:2019:cscKConeSing,Jeffres:2016:KEedgesing,Li:2012:eKEngyFunctProjBund,Li:2018:conicMab,Schlitzer:2023:dHYM}.
\end{remark} \par
Let $M$ be a compact \kr{} $n$-manifold and $D \subseteq M$ be a \textit{smooth simple closed (complex) hypersurface} (i.e. a (complex) codimension-$1$ submanifold with no self-intersections). We first see the weakest notion of a \krm{} on $M$ with a \textit{conical singularity of cone angle} $2 \pi \beta > 0$ along $D$ \cite{Brendle:2013:Ricflatedgesing,Datar:2014:CanonConeSing,Donaldson:2012:ConeSingDiv,Jeffres:2016:KEedgesing}.
\begin{definition}[Conical \kr{} Metric; \cite{Brendle:2013:Ricflatedgesing,Datar:2014:CanonConeSing,Donaldson:2012:ConeSingDiv,Jeffres:2016:KEedgesing}]\label{def:coneKr1}
A \krc{} $\omega$ on $M$ is said to be a \textit{conical \krm{}} on $M$ with cone angle $2 \pi \beta > 0$ along $D$ if:
\begin{enumerate}
\item $\omega$ is a smooth \krm{} on the (non-compact) \kr{} $n$-manifold $M \smallsetminus D$.
\item {Around every point of $D$ there exist local \hol{} coordinates $\left(z_1, \ldots, z_{n-1}, z\right)$ with $D$ being given by $\left\lbrace z = 0 \right\rbrace$ and with $\left(z_1, \ldots, z_{n-1}\right)$ restricting to a complex coordinate chart on $D$, such that $\omega$ (expressed in the coordinates $\left(z_1, \ldots, z_{n-1}, z\right)$) is \textit{asymptotically quasi-isometric} to the model edge metric $\omega_\beta$ on $\C^n$ (described in the expression (\ref{eq:modeledge})), i.e. there exist constants $C_1, C_2 > 0$ such that (locally around the point of $D$) we have:
\begin{equation}\label{eq:qisometr}
C_1 \omega_\beta \leq \omega \leq C_2 \omega_\beta
\end{equation}
where for closed $\left(1,1\right)$-currents $\xi, \eta$ on $M$ we say $\xi \leq \eta$ if the current $\eta - \xi$ is semipositive.}\label{itm:qisometr}
\end{enumerate}
\end{definition} \par
Since Definition \ref{def:coneKr1} is too general, we formulate a much stronger notion of a conical \krm{}. Define a new (non-\hol{}) singular coordinate $\zeta = \left\lvert z \right\rvert^{\beta-1} z$ on $\C$ \cite{Brendle:2013:Ricflatedgesing,Datar:2014:CanonConeSing,Donaldson:2012:ConeSingDiv,Jeffres:2016:KEedgesing}. Then in the \textit{``conical coordinates''} (singular coordinates) $\left(z_1, \ldots, z_{n-1}, \zeta\right)$ on $\C^n$ the expression for the model edge metric $\omega_\beta$ turns out to be similar to that for the standard smooth metric on $\C^n$ in the coordinates $\left(z_1, \ldots, z_{n-1}, \zeta\right)$ as seen from the following \cite{Brendle:2013:Ricflatedgesing,Datar:2014:CanonConeSing,Donaldson:2012:ConeSingDiv,Jeffres:2016:KEedgesing}:
\begin{equation}\label{eq:zetaz}
\begin{gathered}
d \zeta = \frac{\beta+1}{2} \left\lvert z \right\rvert^{\beta-1} d z + \frac{\beta-1}{2} \frac{z}{\bar{z}} \left\lvert z \right\rvert^{\beta-1} d \bar{z} \\
\sqrt{-1} d \zeta \wedge d \bar{\zeta} = \sqrt{-1} \beta \left\lvert z \right\rvert^{2 \beta - 2} d z \wedge d \bar{z} \hspace{1.5pt}; \hspace{5pt} \frac{\sqrt{-1}}{\beta} \frac{d \zeta \wedge d \bar{\zeta}}{\left\lvert \zeta \right\rvert^2} = \sqrt{-1} \frac{d z \wedge d \bar{z}}{\left\lvert z \right\rvert^2} \\
\omega_\beta = \sqrt{-1} \sum\limits_{i=1}^{n-1} d z_i \wedge d \bar{z}_i + \frac{\sqrt{-1}}{\beta} d \zeta \wedge d \bar{\zeta}
\end{gathered}
\end{equation}
But as the coordinate transformation $\zeta$ is non-\hol{}, there will be a non-\hol{} part appearing in the transformed coordinate expressions where it would not have otherwise occurred, as we can see from the following \cite{Brendle:2013:Ricflatedgesing,Datar:2014:CanonConeSing,Donaldson:2012:ConeSingDiv,Jeffres:2016:KEedgesing}:
\begin{equation}\label{eq:zzeta}
\begin{gathered}
z = \left\lvert \zeta \right\rvert^{\frac{1}{\beta} - 1} \zeta \\
\left\lvert z \right\rvert^{\beta-1} d z = \frac{1}{2} \left( \frac{1}{\beta} + 1 \right) d \zeta + \frac{1}{2} \left( \frac{1}{\beta} - 1 \right) \frac{\zeta}{\bar{\zeta}} d \bar{\zeta} \hspace{1.5pt}; \hspace{5pt} \frac{d z}{z} = \frac{1}{2} \left( \frac{1}{\beta} + 1 \right) \frac{d \zeta}{\zeta} + \frac{1}{2} \left( \frac{1}{\beta} - 1 \right) \frac{d \bar{\zeta}}{\bar{\zeta}}
\end{gathered}
\end{equation}
So as a matter of convention while stating any result in terms of the coordinate $\zeta$, we ignore the non-\hol{} part and consider only the \hol{} part in the concerned coordinate expression, and then the said statement holds true up to the \hol{} part \cite{Datar:2014:CanonConeSing,Donaldson:2012:ConeSingDiv,Jeffres:2016:KEedgesing,Li:2018:conicMab}. \par
The issue of non-\hol{ity} in $\zeta$ can be mitigated by defining another new (multivalued or non-injective) singular coordinate $\tilde{\zeta} = z^\beta$ on $\C$ as it will give us $z^{\beta-1} d z = \frac{1}{\beta} d \tilde{\zeta}$ and then $\frac{\sqrt{-1}}{\beta^2} \frac{d \tilde{\zeta} \wedge d \bar{\tilde{\zeta}}}{\left\lvert \tilde{\zeta} \right\rvert^2} = \sqrt{-1} \frac{d z \wedge d \bar{z}}{\left\lvert z \right\rvert^2}$ \cite{Donaldson:2012:ConeSingDiv,Jeffres:2016:KEedgesing}. The two singular coordinates $\left(z_1, \ldots, z_{n-1}, \zeta\right)$ and $\left(z_1, \ldots, z_{n-1}, \tilde{\zeta}\right)$ on $\C^n$ can be checked to be equivalent to each other (as in every definition of conical singularities stated in terms of one of them holds true in terms of the other as well) and both of them can be dubbed as \textit{`conical coordinates'} on $\C^n$ with their sole purpose being to ``conceal'' the conical singularities present in the metric $\omega_\beta$ (or in the metric $\omega$ of Definition \ref{def:coneKr2}) \cite{Donaldson:2012:ConeSingDiv,Jeffres:2016:KEedgesing}. $\zeta$ is single-valued and injective on $\C$ but non-\hol{} for any $\beta \in \left(0, \infty\right) \smallsetminus \stone$, while $\tilde{\zeta}$ is either \hol{} on $\C \smallsetminus \stzero$ but multivalued (if $\beta \in \left(0, 1\right)$) or \hol{} on $\C$ but non-injective (if $\beta \in \left(1, \infty\right)$) \cite{Jeffres:2016:KEedgesing}. Thus we can state all of our results about conical \krm{s} in terms of $\zeta$ as well as $\tilde{\zeta}$ equivalently (see Jeffres-Mazzeo-Rubinstein \cite{Jeffres:2016:KEedgesing}; Subsection 2.1 for more on the relationship between $\zeta$ and $\tilde{\zeta}$).
\begin{remark}
Since the coordinate $\tilde{\zeta} = z^\beta$ is either multivalued or non-injective, we have to work with the logarithmic \rms{} (either as the domain or as the codomain) in order to make it single-valued, injective and \hol{} \cite{Jeffres:2016:KEedgesing}. If $\beta \in \left(0, 1\right]$ then the branched \hol{} covering of fractional degree $\frac{1}{\beta}$ viz. $\C^n \to \C^n$, $\left(z_1, \ldots, z_{n-1}, \tilde{\zeta}\right) \mapsto \left(z_1, \ldots, z_{n-1}, z = \tilde{\zeta}^{\frac{1}{\beta}}\right)$ smoothens out the model edge metric $\omega_\beta$ as well as the smooth conical \krm{} $\omega$ given by Definition \ref{def:coneKr2}, while if $\beta \in \left[1, \infty\right)$ then $\C^n \to \C^n$, $\left(z_1, \ldots, z_{n-1}, z\right) \mapsto \left(z_1, \ldots, z_{n-1}, \tilde{\zeta}\right)$ (which has fractional degree $\beta$) smoothens out $\omega_\beta$ and the concerned conical metric $\omega$ \cite{Jeffres:2016:KEedgesing}. Thus for $\beta \in \left(0, 1\right]$ the geometry of the conical \krm{} of cone angle $2 \pi \beta \leq 2 \pi$ can be clearly visualized in the usual way and in this case $\tilde{\zeta}$ covers $z$ to smoothen out the conical metric, whereas the geometric interpretation of cone angles $2 \pi \beta \geq 2 \pi$ can be thought of as having the conical singularity of cone angle $\frac{2 \pi}{\beta} \leq 2 \pi$ but instead with $z$ covering $\tilde{\zeta}$ \cite{Jeffres:2016:KEedgesing}.
\end{remark} \par
A \textit{smooth conical \krm{}} on a general \krmf{} is one for which in item (\ref{itm:qisometr}) of Definition \ref{def:coneKr1}, if the coordinate $\zeta = \left\lvert z \right\rvert^{\beta-1} z$ is substituted then the local expression for the metric in the new coordinates turns out to be that for a smooth \krm{} (up to the \hol{} part), or equivalently if $\tilde{\zeta} = z^\beta$ is substituted then the local coordinate expression becomes smooth (just as it happened above in (\ref{eq:zetaz}) in the case of the model edge metric $\omega_\beta$ on $\C^n$) \cite{Brendle:2013:Ricflatedgesing,Datar:2014:CanonConeSing,Donaldson:2012:ConeSingDiv,Jeffres:2016:KEedgesing,Song:2016:RicciGLBconicKE}.
\begin{definition}[Smooth Conical \kr{} Metric; \cite{Brendle:2013:Ricflatedgesing,Datar:2014:CanonConeSing,Donaldson:2012:ConeSingDiv,Jeffres:2016:KEedgesing,Song:2016:RicciGLBconicKE}]\label{def:coneKr2}
A \krc{} $\omega$ on $M$ is said to be a \textit{smooth conical \krm{}} on $M$ with cone angle $2 \pi \beta > 0$ along $D$ if:
\begin{enumerate}
\item $\omega$ is a smooth \krm{} on $M \smallsetminus D$.
\item {Around every point of $D$ there exist local coordinates $\left(z_1, \ldots, z_{n-1}, z\right)$ with $D$ being given by $\left\lbrace z = 0 \right\rbrace$ and with $\left(z_1, \ldots, z_{n-1}\right)$ restricting to a coordinate chart on $D$, such that after substituting the coordinate $\zeta = \left\lvert z \right\rvert^{\beta-1} z$ the expression for $\omega$ in the coordinates $\left(z_1, \ldots, z_{n-1}, \zeta\right)$ takes the form of a smooth \krm{} on $M$ (up to the \hol{} part), or equivalently after substituting $\tilde{\zeta} = z^\beta$ the expression for $\omega$ in $\left(z_1, \ldots, z_{n-1}, \tilde{\zeta}\right)$ takes the form of a smooth \krm{} (just like it happened with $\omega_\beta$ in (\ref{eq:zetaz})).}\label{itm:smoothen}
\end{enumerate}
\end{definition}
{\noindent In other words a \textit{smooth conical \krm{}} is one which can be ``smoothened out'' by doing any of the singular coordinate transformations $\zeta$ or $\tilde{\zeta}$ just like the model edge metric $\omega_\beta$, meaning the conical singularities present in the metric are mild enough to be completely transferred to the singular conical coordinates $\left(z_1, \ldots, z_{n-1}, \zeta\right)$ or $\left(z_1, \ldots, z_{n-1}, \tilde{\zeta}\right)$ \cite{Brendle:2013:Ricflatedgesing,Datar:2014:CanonConeSing,Donaldson:2012:ConeSingDiv,Jeffres:2016:KEedgesing,Song:2016:RicciGLBconicKE}.} \par
In this paper we will be dealing with smooth conical \krm{s} that satisfy the following even more restrictive condition (which we thought of dubbing as \textit{``\conrml{ity}''} considering the implications between Definitions \ref{def:coneKr3} and \ref{def:coneKr4} and following the works of Hashimoto \cite{Hashimoto:2019:cscKConeSing} and Jeffres-Mazzeo-Rubinstein \cite{Jeffres:2016:KEedgesing}):
\begin{definition}[Conormal Smooth Conical \kr{} Metric; Hashimoto \cite{Hashimoto:2019:cscKConeSing}; Definition 1.1, Jeffres-Mazzeo-Rubinstein \cite{Jeffres:2016:KEedgesing}; Equation (6), Subsubsection 2.6.4]\label{def:coneKr3}
A \krc{} $\omega$ on $M$ is said to be a \textit{\conrml{} smooth conical \krm{}} on $M$ with cone angle $2 \pi \beta > 0$ along $D$ if:
\begin{enumerate}
\item $\omega$ is a smooth \krm{} on $M \smallsetminus D$.
\item {Around every point of $D$ there exist local coordinates $\left(z_1, \ldots, z_{n-1}, z\right)$ with $D$ being given by $\left\lbrace z = 0 \right\rbrace$ and with $\left(z_1, \ldots, z_{n-1}\right)$ restricting to a coordinate chart on $D$, such that writing the following local expression for $\omega$:
\begin{equation}\label{eq:gij}
\omega = \sqrt{-1} \sum\limits_{i,j=1}^{n-1} g_{i \bar{\jmath}} d z_i \wedge d \bar{z}_j + \sqrt{-1} \sum\limits_{i=1}^{n-1} g_i d z_i \wedge d \bar{z} + \sqrt{-1} \sum\limits_{i=1}^{n-1} g_{\bar{\imath}} d z \wedge d \bar{z}_i + \sqrt{-1} g_0 dz \wedge d \bar{z}
\end{equation}
we have the following conditions on the coefficient functions in (\ref{eq:gij}):
\begin{enumerate}
\item $g_{i \bar{\jmath}} \in O \left(1\right)$ as $z \to 0$, i.e. $g_{i \bar{\jmath}}$ is bounded for all $1 \leq i, j \leq n - 1$. \label{itm:gij}
\item $g_i, g_{\bar{\imath}} \in O \left(\left\lvert z \right\rvert^{2 \beta - 1}\right)$ as $z \to 0$, i.e. $\frac{g_i}{\left\lvert z \right\rvert^{2 \beta - 1}}$, $\frac{g_{\bar{\imath}}}{\left\lvert z \right\rvert^{2 \beta - 1}}$ are bounded for all $1 \leq i \leq n - 1$. \label{itm:gi}
\item $g_0 = F_0 \left\lvert z \right\rvert^{2 \beta - 2}$, where $F_0$ is bounded and strictly positive. \label{itm:g0}
\item $g_{i \bar{\jmath}}$, $g_i$, $g_{\bar{\imath}}$, $g_0$ and $F_0$ are functions of $\left(z_1, \ldots, z_{n-1}, z\right)$ such that $g_{i \bar{\jmath}}$, $\bar{z} g_i$, $z g_{\bar{\imath}}$, $\left\lvert z \right\rvert^2 g_0$ and $F_0$ are smooth away from $\left\lbrace z = 0 \right\rbrace$ and these when considered as functions of $\left(z_1, \ldots, z_{n-1}, \zeta\right)$ with $\zeta = \left\lvert z \right\rvert^{\beta-1} z$ (or even of $\left(z_1, \ldots, z_{n-1}, \tilde{\zeta}\right)$ with $\tilde{\zeta} = z^\beta$) become smooth everywhere.
\end{enumerate}}\label{itm:gijbigOz}
\end{enumerate}
\end{definition}
\begin{remark}
We should note here that the conditions given in the items (\ref{itm:gij}), (\ref{itm:gi}) and (\ref{itm:g0}) in Definition \ref{def:coneKr3} can be related to conical \krm{s} satisfying Definitions \ref{def:coneKr1} and \ref{def:coneKr2} in the following ways:
\begin{enumerate}
\item $\omega$ is a smooth conical \krm{} on $M$ given by Definition \ref{def:coneKr2} if and only if the coefficient functions in the coordinate expression (\ref{eq:gij}) for $\omega$ satisfy all the conditions of Definition \ref{def:coneKr3} except with the condition in item (\ref{itm:gi}) weakened to $g_i, g_{\bar{\imath}} \in O \left(\left\lvert z \right\rvert^{\beta - 1}\right)$ as $z \to 0$ for all $1 \leq i \leq n - 1$.
\item $\omega$ is a conical \krm{} on $M$ given by Definition \ref{def:coneKr1} if and only if the coefficient functions in (\ref{eq:gij}) satisfy the following conditions:
\begin{enumerate}
\item $g_{i \bar{\jmath}} \in O \left(1\right)$ as $z \to 0$ for all $1 \leq i, j \leq n - 1$. \label{itm:gijqisometr}
\item $g_0 = F_0 \left\lvert z \right\rvert^{2 \beta - 2}$, where $F_0$ is bounded and strictly positive. \label{itm:g0qisometr}
\item $g_{i \bar{\jmath}}$, $g_0$ and $F_0$ are functions of $\left(z_1, \ldots, z_{n-1}, z\right)$ such that $g_{i \bar{\jmath}}$, $\left\lvert z \right\rvert^2 g_0$ and $F_0$ are continuous everywhere and smooth away from $\left\lbrace z = 0 \right\rbrace$.
\end{enumerate}
\end{enumerate}
\end{remark} \par
Along the lines of the terminology of Jeffres-Mazzeo-Rubinstein \cite{Jeffres:2016:KEedgesing} we define a \textit{\plyhomo{} smooth conical \krm{}} as one having a certain nice kind of complete asymptotic power series expansion with smooth coefficients along the hypersurface of the conical singularity.
\begin{definition}[Polyhomogeneous Smooth Conical \kr{} Metric; Hashimoto \cite{Hashimoto:2019:cscKConeSing}; Lemma 3.6, Jeffres-Mazzeo-Rubinstein \cite{Jeffres:2016:KEedgesing}; Theorems 1 and 2, Subsubsection 2.6.4]\label{def:coneKr4}
A \krc{} $\omega$ on $M$ is said to be a \textit{\plyhomo{} smooth conical \krm{}} on $M$ with cone angle $2 \pi \beta > 0$ along $D$ if:
\begin{enumerate}
\item $\omega$ is a smooth \krm{} on $M \smallsetminus D$.
\item {Around every point of $D$ there exist local coordinates $\left(z_1, \ldots, z_{n-1}, z\right)$ with $D$ being given by $\left\lbrace z = 0 \right\rbrace$ and with $\left(z_1, \ldots, z_{n-1}\right)$ restricting to a coordinate chart on $D$, such that the coefficient functions in the coordinate expression (\ref{eq:gij}) have got the following (absolutely convergent and locally uniformly convergent) complete power series expansions in a tubular \nbd{} of $\left\lbrace z = 0 \right\rbrace$:
\begin{enumerate}
\item $\left\lvert g_{i \bar{\jmath}} \right\rvert = \sum\limits_{k=0}^{\infty} c_{2 k, i j} \left\lvert z \right\rvert^{2 k \beta}$
\item $\left\lvert g_i \right\rvert = \left\lvert g_{\bar{\imath}} \right\rvert = \sum\limits_{k=1}^{\infty} c_{2 k,i} \left\lvert z \right\rvert^{2 k \beta - 1}$
\item $g_0 = \left\lvert g_0 \right\rvert = \sum\limits_{k=1}^{\infty} c_{2 k} \left\lvert z \right\rvert^{2 k \beta - 2}$
\end{enumerate}
where $c_{2 k, i j} = c_{2 k, j i}$, $c_{2 k,i}$ and $c_{2 k}$ are real-valued, bounded and smooth functions of $\left(z_1, \ldots, z_{n-1}\right)$ (i.e. are independent of $z$) such that $c_{0, i i}$ and $c_2$ are strictly positive and bounded below away from $0$.}\label{itm:polyhomo}
\end{enumerate}
\end{definition}
\begin{remark}
It should be noted with caution that `\plyhomo{} smooth' (and also `\conrml{} smooth') in this paper can be at best thought of as analogous to `\plyhomo{}' (and `\conrml{}' respectively) of Jeffres-Mazzeo-Rubinstein \cite{Jeffres:2016:KEedgesing}. But we are dealing with a much restricted class of conical \krm{s} (viz. smooth conical \krm{s}) than \cite{Jeffres:2016:KEedgesing}, so our Definitions \ref{def:coneKr3} and \ref{def:coneKr4} are certainly not identical to the respective definitions of \conrml{ity} and \plyhomoy{} made in \cite{Jeffres:2016:KEedgesing}.
\end{remark} \par
We can now clearly see the hierarchy of implications of Definitions \ref{def:coneKr1}, \ref{def:coneKr2}, \ref{def:coneKr3} and \ref{def:coneKr4} as follows with each implication being strictly one-directional \cite{Brendle:2013:Ricflatedgesing,Datar:2014:CanonConeSing,Donaldson:2012:ConeSingDiv,Hashimoto:2019:cscKConeSing,Jeffres:2016:KEedgesing,Song:2016:RicciGLBconicKE}:
\begin{equation*}
\text{Definition \ref{def:coneKr4}} \impl \text{Definition \ref{def:coneKr3}} \impl \text{Definition \ref{def:coneKr2}} \impl \text{Definition \ref{def:coneKr1}}
\end{equation*}
Our conical \kr{} metrics constructed by the momentum construction method \cite{Hwang:1994:cscK,Hwang:2002:MomentConstruct} on pseudo-Hirzebruch surfaces \cite{Barth:2004:CmpctCmplxSurf,Fujiki:1992:eKruledmani,Tonnesen:1998:eKminruledsurf} will be \plyhomo{} smooth \cite{Hashimoto:2019:cscKConeSing,Jeffres:2016:KEedgesing} (if the momentum profile is taken to be real analytic) and will be \conrml{} smooth \cite{Hashimoto:2019:cscKConeSing,Jeffres:2016:KEedgesing} (if the momentum profile is taken to be just smooth) as we shall see in Section \ref{sec:polyhomoConeSing}, and hence will be satisfying the relatively strongest of all the above-mentioned conditions for conical \krm{s}.
\subsection{Canonical K\"ahler Metrics with Conical Singularities}\label{subsec:CanonKhlrConeSing}
Now that we have made sense of the concept of a \krm{} developing a conical singularity along a smooth complex hypersurface of a compact complex manifold, we will briefly discuss the respective notions of \textit{canonical \krm{s} with conical singularities} \cite{Datar:2014:CanonConeSing} i.e. conical \krm{s} which are additionally \kr{}-Einstein, cscK or higher cscK. Since a conical \krm{} is a smooth \krm{} away from the hypersurface of its conical singularity, the notions of a \textit{conical \kr{}-Einstein metric} \cite{Brendle:2013:Ricflatedgesing,Donaldson:2012:ConeSingDiv,Jeffres:2016:KEedgesing,Song:2016:RicciGLBconicKE}, a \textit{conical cscK metric} \cite{Aoi:2025:cscKConeSing,Hashimoto:2019:cscKConeSing,Keller:2018:cscKConeSing,Li:2018:conicMab,Zheng:2015:UniqueConeSing} and a \textit{conical higher cscK metric} can be naturally defined as those conical \krm{s} which are (smooth) \kr{}-Einstein, cscK and higher cscK respectively away from the hypersurface. However since this involves dealing with the complement of the hypersurface which is a non-compact manifold, there arise some issues related to the \cohomll{} invariance of the respective curvatures i.e. the Ricci, scalar and higher scalar curvatures (see for example Donaldson \cite{Donaldson:2012:ConeSingDiv}, Jeffres-Mazzeo-Rubinstein \cite{Jeffres:2016:KEedgesing}, Song-Wang \cite{Song:2016:RicciGLBconicKE} for the Ricci curvature of conical \kr{}-Einstein metrics and Aoi-Hashimoto-Zheng \cite{Aoi:2025:cscKConeSing}, Hashimoto \cite{Hashimoto:2019:cscKConeSing}, Li \cite{Li:2018:conicMab} for the scalar curvature of conical cscK metrics). In order to sort out these issues (for which Section \ref{sec:highScalcurrent} is dedicated for the case of our momentum-constructed conical higher cscK metrics) one needs a global interpretation of the respective curvatures on the underlying compact manifold in terms of the current of integration along the hypersurface of the conical singularity \cite{Aoi:2025:cscKConeSing,Donaldson:2012:ConeSingDiv,Hashimoto:2019:cscKConeSing,Jeffres:2016:KEedgesing,Li:2018:conicMab,Song:2016:RicciGLBconicKE}. \par
Let $M$ be a compact \kr{} $n$-manifold and $D \subseteq M$ be a simple closed hypersurface. Let $\omega$ be any of the four kinds (mentioned in Subsection \ref{subsec:KhlrConeSing}) of conical \krm{s} on $M$ with cone angle $2 \pi \beta > 0$ along $D$. Since $\omega$ is a smooth \krm{} on the non-compact \krmf{} $M \smallsetminus D$, the \textit{Ricci curvature form} $\Ric \left(\omega\right) \bigr\rvert_{M \smallsetminus D}$ (and hence the \textit{first Chern form} $c_1 \left(\omega\right) \bigr\rvert_{M \smallsetminus D}$) given by (\ref{eq:defRic}) (and by (\ref{eq:deffirstChern}) respectively) are well-defined closed smooth $\left(1,1\right)$-forms on $M \smallsetminus D$. So a \textit{conical \kr{}-Einstein metric} on $M$ can be obviously defined as follows \cite{Brendle:2013:Ricflatedgesing,Donaldson:2012:ConeSingDiv,Jeffres:2016:KEedgesing,Song:2016:RicciGLBconicKE}:
\begin{definition}[Conical \kr{}-Einstein Metric; \cite{Brendle:2013:Ricflatedgesing,Donaldson:2012:ConeSingDiv,Jeffres:2016:KEedgesing,Song:2016:RicciGLBconicKE}]\label{def:coneKE}
A conical \krm{} $\omega$ on $M$ with cone angle $2 \pi \beta > 0$ along $D$ is said to be a \textit{conical \kr{}-Einstein metric} if $\operatorname{Ric} \left(\omega\right) \bigr\rvert_{M \smallsetminus D} = \lambda \omega$ on $M \smallsetminus D$ for some constant $\lambda \in \mathbb{R}$. The constant $\lambda$ is then called as the \textit{Ricci curvature} of $\omega$ on $M \smallsetminus D$.
\end{definition} \par
Note that as $\Ric \left(\omega\right) \bigr\rvert_{M \smallsetminus D}$ and $c_1 \left(\omega\right) \bigr\rvert_{M \smallsetminus D}$ (for any conical \krm{} $\omega$ on $M$) are locally integrable on the whole of $M$, they can be regarded as closed $\left(1,1\right)$-currents on $M$, and hence it makes sense to talk about their de Rham cohomology classes $\left[ \Ric \left(\omega\right) \bigr\rvert_{M \smallsetminus D} \right], \left[ c_1 \left(\omega\right) \bigr\rvert_{M \smallsetminus D} \right] \in H^{\left(1,1\right)} \left(M, \mathbb{R}\right)$ \cite{Demailly:2012:CmplxDifferGeom,Szekelyhidi:2014:eKintro}. However $c_1 \left(\omega\right) \bigr\rvert_{M \smallsetminus D}$ given in this way is in general not a \cohomll{} representative of the first Chern class $c_1 \left(M\right)$ (defined by (\ref{eq:deffirstChern})) as we are dealing with the non-compact manifold $M \smallsetminus D$ (see for example \cite{Hashimoto:2019:cscKConeSing,Li:2015:logKStab,Li:2018:conicMab,Song:2016:RicciGLBconicKE} for more on this). It is for this reason that a ``global interpretation'' of the objects $\Ric \left(\omega\right)$ and $c_1 \left(\omega\right)$ on the whole compact manifold $M$ in terms of the current of integration $\left[ D \right]$ (which is a closed positive $\left(1,1\right)$-current on $M$) is sought after \cite{Hashimoto:2019:cscKConeSing,Li:2015:logKStab,Li:2018:conicMab,Song:2016:RicciGLBconicKE}. This global expression of currents for the \textit{Ricci curvature form} (or the \textit{Ricci curvature current}) $\Ric \left(\omega\right)$ on $M$ is given as (studied in many works e.g. \cite{Brendle:2013:Ricflatedgesing,Campana:2013:ConeSingNormCrossDiv,Datar:2014:CanonConeSing,Donaldson:2012:ConeSingDiv,Jeffres:2016:KEedgesing,Li:2015:logKStab,Shen:2016:SmoothApproxConeSingRicciLB,Song:2016:RicciGLBconicKE}):
\begin{equation}\label{eq:RicconeKhlr}
\Ric \left(\omega\right) = \rho + 2 \pi \left(1 - \beta\right) \left[ D \right]
\end{equation}
where $\left[ D \right]$ is the current of integration on $M$ along the hypersurface $D$ and $\rho$ is some closed $\left(1,1\right)$-current on $M$ given by a closed $\left(1,1\right)$-form smooth on $M \smallsetminus D$ and locally integrable on $M$ (similar to $\omega$) where in fact $\rho = \Ric \left(\omega\right) \bigr\rvert_{M \smallsetminus D}$. An expression of the form (\ref{eq:RicconeKhlr}) can be obtained for $\Ric \left(\omega\right)$ by following the general method outlined in equations (\ref{eq:defRiccurt1}), (\ref{eq:defRiccurt2}) briefly discussed in Subsection \ref{subsec:AnalogyConeSing}. The \textit{first Chern form} (or the \textit{first Chern current}) $c_1 \left(\omega\right) = \frac{1}{2 \pi} \Ric \left(\omega\right)$ given by (\ref{eq:RicconeKhlr}) then indeed turns out to be a \cohomll{} representative of the first Chern class $c_1 \left(M\right)$ \cite{Hashimoto:2019:cscKConeSing,Li:2015:logKStab,Li:2018:conicMab,Song:2016:RicciGLBconicKE}. If in particular $\omega$ is a conical \kr{}-Einstein metric on $M$ with Ricci curvature $\lambda \in \R$ then this global expression for $\Ric \left(\omega\right)$ on $M$ looks like \cite{Brendle:2013:Ricflatedgesing,Campana:2013:ConeSingNormCrossDiv,Datar:2014:CanonConeSing,Donaldson:2012:ConeSingDiv,Jeffres:2016:KEedgesing,Li:2015:logKStab,Shen:2016:SmoothApproxConeSingRicciLB,Song:2016:RicciGLBconicKE}:
\begin{equation}\label{eq:RicconeKE}
\Ric \left(\omega\right) = \lambda \omega + 2 \pi \left(1 - \beta\right) \left[ D \right]
\end{equation} \par
Now we will discuss conical cscK metrics which are defined in a similar way as conical \kr{}-Einstein metrics \cite{Hashimoto:2019:cscKConeSing,Li:2018:conicMab,Zheng:2015:UniqueConeSing}. For the conical \krm{} $\omega$ on $M$, we again first consider the closed smooth $\left(1,1\right)$-forms $\Ric \left(\omega\right) \bigr\rvert_{M \smallsetminus D}$ and $c_1 \left(\omega\right) \bigr\rvert_{M \smallsetminus D}$ (given by (\ref{eq:defRic}) and (\ref{eq:deffirstChern}) respectively) only on $M \smallsetminus D$ (and not on the whole of $M$), and $\omega^n$ will be a nowhere vanishing smooth top-dimensional form on $M \smallsetminus D$. So the \textit{scalar curvature} $S \left(\omega\right) : M \smallsetminus D \to \R$ is a well-defined smooth (but possibly unbounded) function satisfying (\ref{eq:defScal}) on $M \smallsetminus D$. But to begin with we can restrict our attention to \textit{conical \krm{s} with bounded scalar curvature} only. Amongst these are the \textit{conical cscK metrics} which are obviously defined as follows \cite{Hashimoto:2019:cscKConeSing,Li:2018:conicMab,Zheng:2015:UniqueConeSing}:
\begin{definition}[Conical cscK Metric; \cite{Aoi:2025:cscKConeSing,Hashimoto:2019:cscKConeSing,Keller:2018:cscKConeSing,Li:2018:conicMab,Zheng:2015:UniqueConeSing}]\label{def:conecscK}
A conical \krm{} $\omega$ on $M$ with cone angle $2 \pi \beta > 0$ along $D$ is said to be a \textit{conical cscK metric} if the scalar curvature $S \left(\omega\right) \in \R$ is a constant on $M \smallsetminus D$.
\end{definition} \par
Now we will define conical higher cscK metrics along the same lines as conical cscK metrics (or even conical \kr{}-Einstein metrics). Again since the conical \krm{} $\omega$ on $M$ is a smooth \krm{} on $M \smallsetminus D$, the \textit{top Chern form} $c_n \left(\omega\right) \bigr\rvert_{M \smallsetminus D}$ (as defined in (\ref{eq:deftopChern})) and the volume form $\frac{\omega^n}{n!}$ are well-defined smooth $\left(n,n\right)$-forms on the non-compact \orble{} manifold $M \smallsetminus D$, so the \textit{higher scalar curvature} $\lambda \left(\omega\right) : M \smallsetminus D \to \R$ is a well-defined smooth (but possibly unbounded) function defined by (\ref{eq:defhcscKheK}) on $M \smallsetminus D$. Again considering \textit{conical \krm{s} with bounded higher scalar curvature} only, we have amongst these our \textit{conical higher cscK metrics} which can be obviously defined by combining Definition \ref{def:hcscK} with any one out of Definitions \ref{def:coneKr1}, \ref{def:coneKr2}, \ref{def:coneKr3} and \ref{def:coneKr4} as follows:
\begin{definition}[Conical Higher cscK Metric]\label{def:conehcscK}
A conical \krm{} $\omega$ on $M$ with cone angle $2 \pi \beta > 0$ along $D$ is said to be a \textit{conical higher cscK metric} if the higher scalar curvature $\lambda \left(\omega\right) \in \R$ is a constant on $M \smallsetminus D$.
\end{definition} \par
Now we will come to the global interpretations of the conical cscK and the conical higher cscK equations on the whole compact manifold which will result in the \cohomll{} invariance of the scalar and the higher scalar curvatures respectively on the manifold. For any conical \krm{} $\omega$ on $M$, just like $\Ric \left(\omega\right) \bigr\rvert_{M \smallsetminus D}$ and $c_1 \left(\omega\right) \bigr\rvert_{M \smallsetminus D}$ given by (\ref{eq:defRic}) and (\ref{eq:deffirstChern}) respectively are closed $\left(1,1\right)$-currents on $M$ given by locally integrable closed smooth $\left(1,1\right)$-forms on $M \smallsetminus D$, in exactly the same way $c_n \left(\omega\right) \bigr\rvert_{M \smallsetminus D}$ (given by (\ref{eq:deftopChern}) on $M \smallsetminus D$) is locally integrable on the whole of $M$ and so can be regarded as an $\left(n,n\right)$-current on $M$ \cite{Demailly:2012:CmplxDifferGeom}. So it makes sense to talk about the de Rham cohomology class $\left[ c_n \left(\omega\right) \bigr\rvert_{M \smallsetminus D} \right] \in H^{\left(n,n\right)} \left(M, \mathbb{R}\right)$ and similarly, also the classes $\left[ \Ric \left(\omega\right) \bigr\rvert_{M \smallsetminus D} \right], \left[ c_1 \left(\omega\right) \bigr\rvert_{M \smallsetminus D} \right] \in H^{\left(1,1\right)} \left(M, \mathbb{R}\right)$ \cite{Demailly:2012:CmplxDifferGeom}. Unfortunately as we are on the non-compact manifold $M \smallsetminus D$, just like $c_1 \left(\omega\right) \bigr\rvert_{M \smallsetminus D}$ is not a \cohomll{} representative of $c_1 \left(M\right)$, even $c_n \left(\omega\right) \bigr\rvert_{M \smallsetminus D}$ given in this way is not in general a de Rham \cohomll{} representative of the top Chern class $c_n \left(M\right)$ (which is defined in (\ref{eq:deftopChern})) \cite{Hashimoto:2019:cscKConeSing,Li:2015:logKStab,Li:2018:conicMab,Song:2016:RicciGLBconicKE}. \par
The global expression of currents for the scalar curvature $S \left(\omega\right)$ of a conical \krm{} $\omega$ on $M$ studied by Zheng \cite{Zheng:2015:UniqueConeSing}, Li \cite{Li:2018:conicMab}, Hashimoto \cite{Hashimoto:2019:cscKConeSing} looks like the following (similar to equations (\ref{eq:RicconeKhlr}) and (\ref{eq:RicconeKE})):
\begin{equation}\label{eq:ScalconecscK}
n \Ric \left(\omega\right) \wedge \omega^{n-1} = S \left(\omega\right) \omega^n + 2 n \pi \left(1 - \beta\right) \left[ D \right] \wedge \omega^{n-1}
\end{equation}
The \textit{average scalar curvature} of $\omega$ on the non-compact $M \smallsetminus D$ is given by $\bar{S} \left(\omega\right) = \frac{\int\limits_{M} S \left(\omega\right) \omega^n}{\int\limits_{M} \omega^n}$ which is not going to be an invariant of the \krcl{} $\left[\omega\right]$, while the \textit{average scalar curvature} of $\omega$ on the whole of $M$ is given by $\widehat{S} \left(\omega\right) = n \frac{\int\limits_{M} \Ric \left(\omega\right) \wedge \omega^{n-1}}{\int\limits_{M} \omega^n}$ (with the substitution of (\ref{eq:ScalconecscK})) which will turn out to be equal to the \cohomll{} value (\ref{eq:avgScal}) \cite{Hashimoto:2019:cscKConeSing,Li:2018:conicMab}. Further $\widehat{S} \left(\omega\right)$ and $\bar{S} \left(\omega\right)$ are related precisely by the following (Hashimoto \cite{Hashimoto:2019:cscKConeSing}; \textit{Remark} 4.8):
\begin{equation}\label{eq:widehatSbarS}
\widehat{S} \left(\omega\right) = \bar{S} \left(\omega\right) + 2 n \pi \left(1 - \beta\right) \frac{\int\limits_{D} \omega^{n-1}}{\int\limits_{M} \omega^n}
\end{equation} \par
Along very similar lines we try to mimic equation (\ref{eq:ScalconecscK}) to obtain a global expression of currents for the higher scalar curvature $\lambda \left(\omega\right)$ on $M$ which is supposed to look somewhat like the following (which we will study in the special case of the momentum construction method \cite{Hwang:2002:MomentConstruct} in Section \ref{sec:highScalcurrent}):
\begin{equation}\label{eq:highScalconehcscK}
c_n \left(\omega\right) = \frac{\lambda \left(\omega\right)}{n! \left(2 \pi\right)^n} \omega^n + \frac{1 - \beta}{\left(2 \pi\right)^{n-1}} \alpha \wedge \left[ D \right]
\end{equation}
where $\alpha$ is some closed $\left(n-1,n-1\right)$-form which is smooth on $M \smallsetminus D$ and locally integrable on $M$ thereby giving a closed $\left(n-1,n-1\right)$-current on $M$ (exactly like $\omega^{n-1}$). The \textit{top Chern form} (or the \textit{top Chern current}) $c_n \left(\omega\right)$ given by (\ref{eq:highScalconehcscK}) should then turn out to be a \cohomll{} representative of the top Chern class $c_n \left(M\right)$ (as we will see in our special case in Subsection \ref{subsec:CohomolInvcurrent}). Here also the \textit{average higher scalar curvature} on $M \smallsetminus D$ is given by $\lambda_1 \left(\omega\right) = \frac{\int\limits_{M} \lambda \left(\omega\right) \omega^n}{\int\limits_{M} \omega^n}$ which may not a \cohomll{} invariant, while the \textit{average higher scalar curvature} on $M$ is given by $\lambda_0 \left(\omega\right) = n! \left(2 \pi\right)^n \frac{\int\limits_{M} c_n \left(\omega\right)}{\int\limits_{M} \omega^n}$ (again with the substitution of (\ref{eq:highScalconehcscK})) which is expected to be equal to the \cohomll{} value (\ref{eq:avghighScal}). Further $\lambda_0 \left(\omega\right)$ and $\lambda_1 \left(\omega\right)$ should be related by the following equation analogous to the equation (\ref{eq:widehatSbarS}) (also to be seen in Subsection \ref{subsec:CohomolInvcurrent}):
\begin{equation}\label{eq:lambda0lambda1}
\lambda_0 \left(\omega\right) = \lambda_1 \left(\omega\right) + n! \left(2 \pi\right) \left(1 - \beta\right) \frac{\int\limits_{D} \alpha}{\int\limits_{M} \omega^n}
\end{equation} \par
The most important issue however which arises in the study of equations (\ref{eq:ScalconecscK}) and (\ref{eq:highScalconehcscK}) (and which does not arise in equations (\ref{eq:RicconeKhlr}) or (\ref{eq:RicconeKE})) is that the wedge products of the current terms $\left[ D \right]$ and $\omega^{n-1}$ (respectively $\alpha$) can be ``na\"ively'' thought of as the following integrals \cite{Hashimoto:2019:cscKConeSing}:
\begin{equation}\label{eq:wedgenaivint}
\left[ D \right] \wedge \omega^{n-1} \left(\varphi\right) = \int\limits_D \varphi \omega^{n-1} \hspace{2pt}, \hspace{7pt} \alpha \wedge \left[ D \right] \left(\varphi\right) = \int\limits_D \varphi \alpha
\end{equation}
where $\varphi : M \to \R$ is a smooth test function. But the problem is that the closed $\left(n-1,n-1\right)$-forms $\omega^{n-1}$ and $\alpha$ are singular (and not smooth) on $M$ and have got singularities precisely along $D$, so it is not at all clear why the integrals in (\ref{eq:wedgenaivint}) even make sense (meaning are well-defined and finite) \cite{Hashimoto:2019:cscKConeSing}. Resolving this problem for general conical \krm{s} on arbitrary compact complex manifolds seems to be out of hand, but it is possible to give ``correct interpretations'' to these wedge products in some special cases like the momentum construction method \cite{Hwang:2002:MomentConstruct} and some others \cite{Donaldson:2012:ConeSingDiv,Jeffres:2016:KEedgesing} (see Hashimoto \cite{Hashimoto:2019:cscKConeSing} and Li \cite{Li:2018:conicMab} for a detailed account of this issue). In the case of our momentum-constructed conical higher cscK metrics, we will interpret the wedge products of closed currents (which will be having a sign at least locally around the two special divisors) arising in the specific form of equation (\ref{eq:highScalconehcscK}) by using Bedford-Taylor theory \cite{Bedford:1982:cpctypsh,Bedford:1976:DirichletMongeAmpere} (to be seen in Subsection \ref{subsec:currenteq}). In Subsection \ref{subsec:SmoothapproxConehcscK} we will provide another way of thinking about the wedge products of currents in equation (\ref{eq:highScalconehcscK}) by following a method attributed to \cite{Campana:2013:ConeSingNormCrossDiv,Edwards:2019:ContractDivConicKRFlow,Shen:2016:SmoothApproxConeSingRicciLB,Wang:2016:SmoothApproxConeKRFlow} of taking certain explicit smooth approximations $\omega_\epsilon$ to the conical \kr{} metric $\omega$ (with all the metrics having Calabi symmetry in this case) such that the smooth top Chern forms $c_n \left(\omega_\epsilon\right)$ converge weakly in the sense of currents to the top Chern current $c_n \left(\omega\right)$ given by (\ref{eq:highScalconehcscK}). \par
With these wedge products of currents in equations (\ref{eq:ScalconecscK}) and (\ref{eq:highScalconehcscK}) rigorously justified, we can give a precise meaning to the integrals $\int\limits_{D} \omega^{n-1} = \left[ D \right] \wedge \omega^{n-1} \left(1\right)$ and $\int\limits_{D} \alpha = \alpha \wedge \left[ D \right] \left(1\right)$ appearing in equations (\ref{eq:widehatSbarS}) and (\ref{eq:lambda0lambda1}). And the integral $\int\limits_{M} \omega^n = \int\limits_{M \smallsetminus D} \omega^n$ appearing in equations (\ref{eq:widehatSbarS}) and (\ref{eq:lambda0lambda1}) and in the definitions of $\widehat{S} \left(\omega\right)$, $\bar{S} \left(\omega\right)$ and $\lambda_0 \left(\omega\right)$, $\lambda_1 \left(\omega\right)$ is clearly well-defined, as $\omega^n$ is a locally \intble{} top-dimensional form on $M$ which is smooth on $M \smallsetminus D$.
\numberwithin{equation}{subsection}
\numberwithin{figure}{subsection}
\numberwithin{table}{subsection}
\numberwithin{lemma}{subsection}
\numberwithin{proposition}{subsection}
\numberwithin{result}{subsection}
\numberwithin{theorem}{subsection}
\numberwithin{corollary}{subsection}
\numberwithin{conjecture}{subsection}
\numberwithin{remark}{subsection}
\numberwithin{note}{subsection}
\numberwithin{motivation}{subsection}
\numberwithin{question}{subsection}
\numberwithin{answer}{subsection}
\numberwithin{case}{subsection}
\numberwithin{claim}{subsection}
\numberwithin{definition}{subsection}
\numberwithin{example}{subsection}
\numberwithin{hypothesis}{subsection}
\numberwithin{statement}{subsection}
\numberwithin{ansatz}{subsection}
\section{The Momentum Construction Method for Conical Higher cscK Metrics on a Minimal Ruled Surface}\label{sec:MomentConstructConehcscK}
\subsection{A Brief Description of the Momentum Construction Method}\label{subsec:MomentConstructConeSing}
We will now apply the momentum construction method of Hwang-Singer \cite{Hwang:2002:MomentConstruct} for explicitly constructing conical higher cscK metrics on the \textit{minimal ruled surface} $X = \mathbb{P} \left(L \oplus \mathcal{O}\right)$ where $L$ is a degree $-1$ holomorphic line bundle over a genus $2$ compact Riemann surface $\Sigma$ and $\Sigma$ is equipped with a K\"ahler metric $\omega_\Sigma$ of constant scalar curvature $-2$ (or equivalently of area $2 \pi$) and $L$ is equipped with a Hermitian metric $h$ of curvature form $-\omega_\Sigma$ (as briefly described in Subsection \ref{subsec:Background}). This surface $X$ is the first example coming from a family of similar kind of compact complex surfaces called as \textit{`pseudo-Hirzebruch surfaces'} (as termed by T{\o}nnesen-Friedman \cite{Tonnesen:1998:eKminruledsurf}; Definition 1) and it has got some nice symmetries in terms of its typical fibre $\mathsf{C}$, its \textit{zero divisor} $S_0 = \mathbb{P} \left(\left\lbrace 0 \right\rbrace \oplus \mathcal{O}\right)$ and its \textit{infinity divisor} $S_\infty = \mathbb{P} \left(L \oplus \left\lbrace 0 \right\rbrace\right)$ which enable the use of the momentum construction method. In our previous paper \cite{Sompurkar:2023:heKsmooth} we had constructed smooth higher \ext{} \krm{s} in all the \krcl{es} of $X$ by the momentum construction method, however these metrics were not higher cscK (see \cite{Sompurkar:2023:heKsmooth} for the details and Subsection \ref{subsec:Background} for a quick summary). \par
Considering the Poincar\'e duals of $\mathsf{C}$, $S_\infty$ and $S_0$ which will be elements of the de Rham \cohoml{} space $H^{\left(1,1\right)} \left(X, \mathbb{R}\right) \subseteq H^2 \left(X, \mathbb{R}\right)$ we have the following intersection formulae (refer to Barth-Hulek-Peters-Van de Ven \cite{Barth:2004:CmpctCmplxSurf}; Sections I.1, II.9 and II.10, Sz\'ekelyhidi \cite{Szekelyhidi:2014:eKintro}; Section 4.4 and T{\o}nnesen-Friedman \cite{Tonnesen:1998:eKminruledsurf}; Proposition 4):
\begin{equation}\label{eq:IntersectForm}
\mathsf{C}^2 = 0 \hspace{1pt}, \hspace{5pt} S_\infty^2 = 1 \hspace{1pt}, \hspace{5pt} S_0^2 = -1 \hspace{1pt}, \hspace{5pt} \mathsf{C} \cdot S_\infty = 1 \hspace{1pt}, \hspace{5pt} \mathsf{C} \cdot S_0 = 1 \hspace{1pt}, \hspace{5pt} S_\infty \cdot S_0 = 0
\end{equation}
where $\cdot$ denotes the intersection product of real \cohoml{} classes of $X$. And considering $c_1 \left(L\right) \in H^{\left(1,1\right)} \left(\Sigma, \mathbb{R}\right) = H^2 \left(\Sigma, \mathbb{R}\right)$ as the first Chern class of $L$, $\left[\omega_\Sigma\right] \in H^{\left(1,1\right)} \left(\Sigma, \mathbb{R}\right)$ as the K\"ahler class of $\omega_\Sigma$ and $\left[\Sigma\right] \in H^2 \left(\Sigma, \mathbb{R}\right)$ as the fundamental class of $\Sigma$ (which is identified with the Poincar\'e dual of $S_0$ as an element in $H^2 \left(X, \mathbb{R}\right)$) we also have the following intersection formulae \cite{Barth:2004:CmpctCmplxSurf,Szekelyhidi:2014:eKintro,Tonnesen:1998:eKminruledsurf}:
\begin{equation}\label{eq:Sigma}
c_1 \left(L\right) \cdot \left[\Sigma\right] = -1 \hspace{1pt}, \hspace{5pt} \left[\omega_\Sigma\right] \cdot \left[\Sigma\right] = 2 \pi \hspace{1pt}, \hspace{5pt} \mathsf{C} \cdot \left[\Sigma\right] = 1 \hspace{1pt}, \hspace{5pt} S_\infty \cdot \left[\Sigma\right] = 0 \hspace{1pt}, \hspace{5pt} S_0 \cdot \left[\Sigma\right] = -1
\end{equation}
The intersection formulae (\ref{eq:IntersectForm}) and (\ref{eq:Sigma}) are needed for obtaining the numerical characterization of the \kr{} cone of $X$ as well as for doing some \cohomll{} computations in the momentum construction method. \par
We will first recall a description of the \kr{} cone of $X$ in terms of two real parameters due to Fujiki \cite{Fujiki:1992:eKruledmani} and T{\o}nnesen-Friedman \cite{Tonnesen:1998:eKminruledsurf}. By the Leray-Hirsch theorem \cite{Demailly:2012:CmplxDifferGeom} the second real cohomology space of $X$ up to Poincar\'e duality is given by \cite{Barth:2004:CmpctCmplxSurf,Szekelyhidi:2014:eKintro,Tonnesen:1998:eKminruledsurf}:
\begin{equation}\label{eq:H2XR}
H^2 \left(X, \mathbb{R}\right) = \mathbb{R} \mathsf{C} \oplus \mathbb{R} S_\infty = \left\lbrace \hspace{2pt} a \mathsf{C} + b S_\infty \hspace{4pt} \big\vert \hspace{4pt} a, b \in \R \hspace{2pt} \right\rbrace
\end{equation}
Then by the Nakai-Moishezon criterion \cite{Barth:2004:CmpctCmplxSurf} (extended from its original integral cohomology case to the real cohomology case on general compact \kr{} surfaces by Buchdahl \cite{Buchdahl:1999:CmpctKahler} and Lamari \cite{Lamari:1999:Kcone} independently) the \textit{K\"ahler cone} of $X$ (which is the set of all K\"ahler classes of $X$) is given by \cite{Szekelyhidi:2014:eKintro,Tonnesen:1998:eKminruledsurf}:
\begin{equation}\label{eq:KConeX}
H^{\left(1,1\right)} \left(X, \mathbb{R}\right)^+ = \left\lbrace \hspace{2pt} a \mathsf{C} + b S_\infty \hspace{4pt} \big\vert \hspace{4pt} a, b > 0 \hspace{2pt} \right\rbrace = \mathbb{R}_{> 0} \mathsf{C} \oplus \mathbb{R}_{> 0} S_\infty \subseteq H^2 \left(X, \mathbb{R}\right)
\end{equation}
One can refer to Fujiki \cite{Fujiki:1992:eKruledmani}; Proposition 1, Lemma 5 and T{\o}nnesen-Friedman \cite{Tonnesen:1998:eKminruledsurf}; Lemma 1 for the proof of the numerical description (\ref{eq:KConeX}) applicable in the special case of our minimal ruled surface $X$ which uses the intersection formulae (\ref{eq:IntersectForm}). An exposition of the numerical description (\ref{eq:KConeX}) of the K\"ahler cone of $X$ containing all the required details with the concerned references is given in the author's earlier work \cite{Sompurkar:2023:heKsmooth}; Subsection 2.1. \par
In the rest of Subsection \ref{subsec:MomentConstructConeSing} we will follow the exposition of the special case of the momentum construction method given in Sz\'ekelyhidi \cite{Szekelyhidi:2014:eKintro}; Section 4.4 (for smooth \ext{} \krm{s}) and in Pingali \cite{Pingali:2018:heK}; Section 2 (for smooth higher \ext{} \krm{s}) which we had followed in \cite{Sompurkar:2023:heKsmooth} for smooth higher \ext{} \krm{s} but which we will follow now for conical higher cscK metrics. The surface $X$ is obtained as the \textit{projective completion} of the line bundle $L$ by attaching to $L$ a bi\hol{} copy of the base \rms{} $\Sigma$ ``at infinity'' (which then becomes the infinity divisor of $X$), so that $\Sigma$ sits in $L$ as its zero section and hence in $X$ as its zero divisor and $X$ is a projective fibre bundle on $\Sigma$ i.e. the fibres of $X$ are bi\hol{} copies of $\cp^1$ or $\C \cup \left\lbrace \infty \right\rbrace$ \cite{Szekelyhidi:2014:eKintro,Tonnesen:1998:eKminruledsurf}. The basic idea for producing a \krm{} $\omega$ on the surface $X$ by using the given metrics $\omega_\Sigma$ and $h$ on $\Sigma$ and $L$ respectively (evolved by Hwang-Singer \cite{Hwang:2002:MomentConstruct}) is to first consider an ansatz for $\omega$ on the total space of $L$ minus its zero section (which is the same as $X$ minus its zero and infinity divisors) and to then extend $\omega$ across the zero and infinity divisors of $X$ by means of some appropriate boundary conditions applied on the fibres of $L$, and this is done by taking the pullback of $L$ to its total space and then adding the curvature of the resultant bundle to the pullback of $\omega_\Sigma$ to $X$ \cite{Szekelyhidi:2014:eKintro}. \par
Let $z$ be a local \hol{} coordinate on $\Sigma$, $w$ be a local \hol{} coordinate on the fibres of $L$ corresponding to a local \hol{} trivialization around $z$ (where $w = 0$ gives the zero section of $L$), $\left\lvert \left(z,w\right) \right\rvert_{h}^2 = \left\lvert w \right\rvert^2 h \left(z\right)$ be the \textit{fibrewise squared norm function} on $L$ induced by the \hmnm{} $h$ (where the function $h \left(z\right)$ is strictly positive and smooth) and $s = \ln \left\lvert \left(z,w\right) \right\rvert_{h}^2 = \ln \left\lvert w \right\rvert^2 + \ln h \left(z\right)$ be the coordinate on the total space of $L$ minus the zero section i.e. on the non-compact surface $X \smallsetminus \left(S_0 \cup S_\infty\right)$. The local coordinates $\left(z,w\right)$ on $X \smallsetminus \left(S_0 \cup S_\infty\right)$ are called as \textit{`bundle-adapted' local holomorphic coordinates} on the surface \cite{Hwang:2002:MomentConstruct,Szekelyhidi:2014:eKintro}. Let $\mathtt{p} : X \to \Sigma$ be the fibre bundle projection, $f$ be a strictly convex smooth function of $s$ such that $s + f \left(s\right)$ is strictly increasing, and $\omega$ be a smooth K\"ahler metric on $X \smallsetminus \left(S_0 \cup S_\infty\right)$ given by the \textit{Calabi ansatz} as follows \cite{Pingali:2018:heK,Szekelyhidi:2014:eKintro}:
\begin{equation}\label{eq:ansatzconesing}
\omega = \mathtt{p}^* \omega_\Sigma + \sqrt{-1} \partial \bar{\partial} f \left(s\right)
\end{equation}
where $\mathtt{p}^* \omega_\Sigma$ denotes the pullback to $X$ of the \krm{} $\omega_\Sigma$ with respect to $\mathtt{p}$. \par
The ansatz (\ref{eq:ansatzconesing}) produces \krm{s} $\omega$, which are said to have \textit{Calabi symmetry}, depending only on a suitable choice of the convex function $f : \R \to \R$ \cite{Hwang:2002:MomentConstruct}. Our task in Section \ref{sec:MomentConstructConehcscK} is to construct $\omega$ given by (\ref{eq:ansatzconesing}) which extends to the whole of $X$ by developing conical singularities of cone angles $2 \pi \beta_0 > 0$ and $2 \pi \beta_\infty > 0$ along the hypersurface divisors $S_0$ and $S_\infty$ respectively and which is (conical) higher cscK on $X$. The top Chern form $c_2 \left(\omega\right) \bigr\rvert_{X \smallsetminus \left(S_0 \cup S_\infty\right)}$ and the higher scalar curvature $\lambda \left(\omega\right) : X \smallsetminus \left(S_0 \cup S_\infty\right) \to \R$ should a priori satisfy the following PDE on the non-compact surface $X \smallsetminus \left(S_0 \cup S_\infty\right)$:
\begin{equation}\label{eq:topChernXminus}
c_2 \left(\omega\right) \bigr\rvert_{X \smallsetminus \left(S_0 \cup S_\infty\right)} = \frac{\lambda \left(\omega\right)}{2 \left(2 \pi\right)^2} \omega^2
\end{equation}
where $\lambda \left(\omega\right) \in \R$ should be a constant, for $\omega$ to be conical higher cscK on $X$ (refer to Definition \ref{def:conehcscK}). \par
By doing local calculations in the coordinates $\left(z, w\right)$, where $w = 0$ gives the zero divisor $S_0$, we get the following expression for $\omega$ on $X \smallsetminus \left(S_0 \cup S_\infty\right)$ \cite{Hwang:2002:MomentConstruct}:
\begin{multline}\label{eq:omega1conesing}
\omega \hspace{2pt} = \hspace{2pt} \left(1 + f' \left(s\right)\right) \mathtt{p}^* \omega_\Sigma \hspace{2pt} + \hspace{2pt} \sqrt{-1} \frac{f'' \left(s\right)}{h \left(z\right)^2} \left\lvert \frac{\partial h}{\partial z} \right\rvert^2 d z \wedge d \bar{z} \\
+ \hspace{2pt} \sqrt{-1} \frac{f'' \left(s\right)}{\bar{w} h \left(z\right)} \frac{\partial h}{\partial z} d z \wedge d \bar{w} \hspace{2pt} + \hspace{2pt} \sqrt{-1} \frac{f'' \left(s\right)}{w h \left(z\right)} \frac{\partial h}{\partial \bar{z}} d w \wedge d \bar{z} \hspace{2pt} + \hspace{2pt} f'' \left(s\right) \sqrt{-1} \frac{d w \wedge d \bar{w}}{\left\lvert w \right\rvert^2}
\end{multline}
where we used the fact that $\ideldb \ln h \left(z\right) = \mathtt{p}^* \omega_\Sigma$, as the curvature form of $h$ is given to be $-\omega_\Sigma$. It can be checked that (\ref{eq:omega1conesing}) remains unchanged under a bi\hol{} change of coordinates $z' = \psi \left(z\right)$ and hence does not depend on the choice of the local coordinate $z$ on the base $\Sigma$ \cite{Hwang:2002:MomentConstruct}. \par
Now in order to simplify the coordinate expression (\ref{eq:omega1conesing}), we use a clever trick given in Sz\'ekelyhidi \cite{Szekelyhidi:2014:eKintro}; Section 4.4 and followed in Pingali \cite{Pingali:2018:heK}; Section 2 and in \cite{Sompurkar:2023:heKsmooth}; Subsection 2.2: For any point $Q \in X \smallsetminus \left(S_0 \cup S_\infty\right)$ we can choose a local trivialization $\left(z, w\right)$ around $Q$ such that $Q = \left(z_0, w_0\right)$ and $d \ln h \left(z_0\right) = 0$, then the local expression for $\omega$ at the point $Q$ in terms of these chosen coordinates $\left(z,w\right)$ simplifies to the following \cite{Pingali:2018:heK,Szekelyhidi:2014:eKintro}:
\begin{equation}\label{eq:omegaconesing}
\omega = \left(1 + f' \left(s\right)\right) \mathtt{p}^* \omega_\Sigma + f'' \left(s\right) \sqrt{-1} \frac{d w \wedge d \bar{w}}{\left\lvert w \right\rvert^2}
\end{equation}
It can again be checked that if any other local trivialization around the coordinate $z$ is chosen which will be of the form $w' = \varphi \left(z\right) w$ for some non-zero \hol{} function $\varphi \left(z\right)$ then the same expression (\ref{eq:omegaconesing}) for $\omega$ is obtained \cite{Pingali:2018:heK,Szekelyhidi:2014:eKintro}. Thus the local expression (\ref{eq:omegaconesing}) for $\omega$ does not depend on the choice of the local trivialization $w$ for the line bundle $L$ and hence holds true at all points of the surface $X \smallsetminus \left(S_0 \cup S_\infty\right)$ \cite{Pingali:2018:heK,Szekelyhidi:2014:eKintro}. \par
We then want to determine the \krcl{} of the ansatz (\ref{eq:ansatzconesing}) while considering the numerical characterization of the \kr{} cone of $X$ given by (\ref{eq:KConeX}). Though a general \krcl{} on the surface $X$ is given by $a \mathsf{C} + b S_\infty$ where $a, b > 0$, we will consider only the \krcl{es} of the form $\pcsoo$ where $m > 0$ following the convention of \cite{Pingali:2018:heK,Szekelyhidi:2014:eKintro}. The property of a \krm{} (smooth or conical) on any compact complex manifold being any one kind of the five notions of canonical \krm{s} discussed in Subsection \ref{subsec:CanonKhlrSmooth} (\kr{}-Einstein, cscK, \ext{} \kr{}, higher cscK or higher \ext{} \kr{}) does not change under scaling the metric by a positive constant (see the concerned definitions and expressions given in Subsection \ref{subsec:CanonKhlrSmooth} for this). So we can very well rescale our ansatz (\ref{eq:ansatzconesing}) obtained in the \krcl{es} $\pcsoo$ by a factor of $\frac{k}{2 \pi}$ where $k > 0$ to get the desired \krm{s} in all the \krcl{es} of $X$ (which are going to be of the form $\kcsoo$). So it suffices to obtain all our results for the specially chosen subset $\left\lbrace \hspace{2pt} \pcsoo \hspace{1.5pt} \big\vert \hspace{2.5pt} m > 0 \hspace{2pt} \right\rbrace$ of the \kr{} cone of $X$. \par
So we now want $\omega$ given by (\ref{eq:ansatzconesing}) to be in the K\"ahler class $2 \pi \left(\mathsf{C} + m S_\infty\right)$ where $m > 0$, and for that to happen we must have $0 \leq f' \left(s\right) \leq m$ along with $\lim\limits_{s \to -\infty} f' \left(s\right) = 0$ and $\lim\limits_{s \to \infty} f' \left(s\right) = m$ (which we get by integrating the equation (\ref{eq:omegaconesing}) over $\mathsf{C}$ and $S_\infty$ individually and computing the lengths of the curves $\mathsf{C}$ and $S_\infty$ with respect to the metric $\omega$ by using the intersection formulae (\ref{eq:IntersectForm}) and (\ref{eq:Sigma}), as done in \cite{Pingali:2018:heK}; Section 2 and \cite{Szekelyhidi:2014:eKintro}; Section 4.4). \par
We compute the volume form associated with $\omega$ which is $\frac{\omega^2}{2}$ and the \textit{curvature form matrix} of $\omega$ given by $\Theta \left(\omega\right) = \bar{\partial} \left( H^{-1} \partial H \right) \left(\omega\right)$, where $H \left(\omega\right)$ is the Hermitian matrix of $\omega$ in the coordinates $\left(z,w\right)$, as follows \cite{Pingali:2018:heK,Szekelyhidi:2014:eKintro}:
\begin{equation}\label{eq:omega2conesing}
\omega^2 = 2 \left(1 + f' \left(s\right)\right) f'' \left(s\right) \mathtt{p}^* \omega_\Sigma \wedge \sqrt{-1} \frac{d w \wedge d \bar{w}}{\left\lvert w \right\rvert^2}
\end{equation}
\begin{equation}\label{eq:Curv1conesing}
\sqrt{-1} \Theta \left(\omega\right) =
\begin{bmatrix}
- \sqrt{-1} \partial \bar{\partial} \ln \left(1 + f' \left(s\right)\right) - 2 \mathtt{p}^* \omega_\Sigma & 0 \\
0 & - \sqrt{-1} \partial \bar{\partial} \ln \left(f'' \left(s\right)\right)
\end{bmatrix}
\end{equation}
This is where the conditions $1 + f' \left(s\right) > 0$ and $f'' \left(s\right) > 0$ assumed a priori are needed (and also in (\ref{eq:omegaconesing})). \par
We then use the Legendre transform $F \left(\tau\right)$ in the variable $\tau = f' \left(s\right) \in \clom$ (with $\tau$ being called as the \textit{momentum variable} of $\omega$ and the interval $\clom$ being called as the \textit{momentum interval} of $\omega$) given as $f \left(s\right) + F \left(\tau\right) = s \tau$ \cite{Hwang:2002:MomentConstruct,Pingali:2018:heK,Szekelyhidi:2014:eKintro}. We then define the \textit{momentum profile} of $\omega$ as $\phi \left(\tau\right) = \frac{1}{F'' \left(\tau\right)} = f'' \left(s\right)$ (again as in \cite{Hwang:2002:MomentConstruct,Pingali:2018:heK,Szekelyhidi:2014:eKintro}). This change of variables from $f \left(s\right)$ to $\phi \left(\tau\right)$ developed in the works \cite{Hwang:1994:cscK,Hwang:2002:MomentConstruct} is then called as the \textit{momentum construction}. \par
We can also write down the expression for the curvature form matrix $\Theta \left(\omega\right)$ in terms of $\phi \left(\gamma\right) = f'' \left(s\right)$, where $\gamma = \tau + 1 \in \left[1, m + 1\right]$, as follows \cite{Pingali:2018:heK}:
\begin{equation}\label{eq:Curv2conesing}
\sqrt{-1} \Theta \left(\omega\right) =
\begin{bmatrix}
\frac{\phi}{\gamma} \left( \frac{\phi}{\gamma} - \phi' \right) \sqrt{-1} \frac{d w \wedge d \bar{w}}{\left\lvert w \right\rvert^2} - \left(\frac{\phi}{\gamma} + 2\right) \mathtt{p}^* \omega_\Sigma & 0 \\
0 & - \phi'' \phi \sqrt{-1} \frac{d w \wedge d \bar{w}}{\left\lvert w \right\rvert^2} - \phi' \mathtt{p}^* \omega_\Sigma
\end{bmatrix}
\end{equation} \par
Before proceeding further we first note down the following relations between the variables $w$, $s$, $\tau$ and $\gamma$ and the functions $f$, $F$ and $\phi$ that have been introduced so far, which need to be used for getting the expression (\ref{eq:Curv2conesing}) \cite{Hwang:2002:MomentConstruct,Pingali:2018:heK,Szekelyhidi:2014:eKintro}:
\begin{equation}\label{eq:variablechange}
s = F' \left(\tau\right) \hspace{1pt}, \hspace{5pt} f''' \left(s\right) = - \frac{F''' \left(\tau\right)}{\left(F'' \left(\tau\right)\right)^3} = \phi' \left(\tau\right) \phi \left(\tau\right) \hspace{1pt}, \hspace{5pt} d s = F'' \left(\tau\right) d \tau = \frac{d \tau}{\phi \left(\tau\right)}
\end{equation}
\begin{equation}\label{eq:wgammasosoo}
\begin{gathered}
w \to 0 \iff s \to -\infty \iff \tau \to 0 \iff \gamma \to 1 \hspace{7pt} \text{(Corresponding to $S_0$)} \\
w \to \infty \iff s \to \infty \iff \tau \to m \iff \gamma \to m + 1 \hspace{7pt} \text{(Corresponding to $S_\infty$)}
\end{gathered}
\end{equation}
Thus the momentum construction taking $f \left(s\right)$, $s \in \R$ to $\phi \left(\gamma\right)$, $\gamma \in \left[1, m + 1\right]$ outlined in Hwang-Singer \cite{Hwang:2002:MomentConstruct}; Section 1 makes $\omega$ completely and uniquely determined by $\phi$ (as $\omega$ was determined by $f$ alone in the ansatz (\ref{eq:ansatzconesing})), makes the behaviour of $\omega$ at $\so$ and $\soo$ (in terms of smoothness or conical singularities) easily readable from the boundary conditions on $\phi$, $\phi'$ at the endpoints $1$ and $m + 1$ respectively as will be seen in (\ref{eq:BVPConeSing}) below, and most importantly gives the higher scalar curvature $\lambda \left(\omega\right)$ as a second-order fully non-linear differential expression in $\phi \left(\gamma\right)$ (rather than a fourth-order fully non-linear differential expression in $f \left(s\right)$ which it would have been) as will be seen in (\ref{eq:lambdaconesing}) below. \par
The \textit{top Chern form} of $\omega$ is given by $c_2 \left(\omega\right) \bigr\rvert_{X \smallsetminus \left(S_0 \cup S_\infty\right)} = \frac{1}{\left(2 \pi\right)^2} \det \left( \sqrt{-1} \Theta \left(\omega\right) \right)$ and in terms of $\phi \left(\gamma\right)$ is given by \cite{Pingali:2018:heK}:
\begin{equation}\label{eq:Chernconesing}
c_2 \left(\omega\right) \bigr\rvert_{X \smallsetminus \left(S_0 \cup S_\infty\right)} = \frac{1}{\left(2 \pi\right)^2} \mathtt{p}^* \omega_\Sigma \wedge \sqrt{-1} \frac{d w \wedge d \bar{w}}{\left\lvert w \right\rvert^2} \frac{\phi}{\gamma^2} \left( \gamma \left(\phi + 2 \gamma\right) \phi'' + \phi' \left(\phi' \gamma - \phi\right) \right)
\end{equation} \par
Comparing the expressions (\ref{eq:Chernconesing}) and (\ref{eq:omega2conesing}) for $c_2 \left(\omega\right) \bigr\rvert_{X \smallsetminus \left(S_0 \cup S_\infty\right)}$ and $\omega^2$ respectively with the equation (\ref{eq:topChernXminus}) that is to be studied for the ansatz (\ref{eq:ansatzconesing}), we obtain the expression for the \textit{higher scalar curvature} $\lambda \left(\omega\right)$ in terms of $\phi \left(\gamma\right)$ as follows \cite{Pingali:2018:heK}:
\begin{equation}\label{eq:lambdaconesing}
\lambda \left(\omega\right) = \frac{1}{\gamma^3} \left( \gamma \left(\phi + 2 \gamma\right) \phi'' + \phi' \left(\phi' \gamma - \phi\right) \right)
\end{equation} \par
Now substituting $\lambda \left(\omega\right) = B \in \R$ to be some constant (since $\omega$ is required to be higher cscK on $X \smallsetminus \left(S_0 \cup S_\infty\right)$), we obtain the following ODE for the momentum profile $\phi : \left[1, m + 1\right] \to \mathbb{R}$ for some constant $C \in \mathbb{R}$ (which is obtained by integrating the equation (\ref{eq:lambdaconesing}) once with respect to $\gamma$) \cite{Pingali:2018:heK}:
\begin{equation}\label{eq:ODEConeSing}
\left(2 \gamma + \phi\right) \phi' = B \frac{\gamma^3}{2} + C \gamma \hspace{1pt}, \hspace{5pt} \gamma \in \left[1, m + 1\right]
\end{equation} \par
Now for $\omega$ to develop conical singularities with cone angles $2 \pi \beta_0 > 0$ and $2 \pi \beta_\infty > 0$ along $S_0$ and $S_\infty$ respectively, it can be seen from the works of Edwards \cite{Edwards:2019:ContractDivConicKRFlow}, Hashimoto \cite{Hashimoto:2019:cscKConeSing}, Li \cite{Li:2012:eKEngyFunctProjBund}, Rubinstein-Zhang \cite{Rubinstein:2022:KEedgeHirzebruch}, Schlitzer-Stoppa \cite{Schlitzer:2023:dHYM} that the correct boundary conditions on $\phi : \left[1, m + 1\right] \to \mathbb{R}$ are the following:
\begin{equation}\label{eq:BVPConeSing}
\begin{gathered}
\phi\left(1\right) = 0 \hspace{1pt}, \hspace{5pt} \phi\left(m + 1\right) = 0 \\
\phi'\left(1\right) = \beta_0 \hspace{1pt}, \hspace{5pt} \phi'\left(m + 1\right) = - \beta_\infty
\end{gathered}
\end{equation}
The boundary conditions on $\phi'$ in (\ref{eq:BVPConeSing}) involve the cone angles $2 \pi \beta_0$ and $2 \pi \beta_\infty$, and we note that the momentum-constructed \krm{} $\omega$ will be smooth along $\so$ if $\bo = 1$ and similarly will be smooth along $\soo$ if $\boo = 1$ \cite{Edwards:2019:ContractDivConicKRFlow,Hashimoto:2019:cscKConeSing,Hwang:2002:MomentConstruct,Li:2012:eKEngyFunctProjBund,Rubinstein:2022:KEedgeHirzebruch,Schlitzer:2023:dHYM,Szekelyhidi:2006:eKKStab}. \par
And we also have the following condition:
\begin{equation}\label{eq:positiveConeSing}
\phi > 0 \hspace{5.5pt} \text{on} \hspace{3pt} \left(1, m + 1\right) \hspace{0.02pt}, \hspace{3.7pt} \text{as} \hspace{5pt} \phi \left(\gamma\right) = f'' \left(s\right) > 0 \hspace{1.2pt}, \hspace{3.3pt} \text{for all} \hspace{4.2pt} s \in \R
\end{equation}
which is required for $\omega$ to be a \krm{} i.e. a closed strictly positive $\left(1,1\right)$-form \cite{Hwang:2002:MomentConstruct,Pingali:2018:heK,Szekelyhidi:2014:eKintro}. \par
Thus the problem of constructing the \krm{} $\omega$ with the required properties on the surface $X$ by the momentum construction method finally boils down to solving the ODE (\ref{eq:ODEConeSing}) for the momentum profile $\phi \left(\gamma\right)$ on $\left[1, m + 1\right]$ with the boundary conditions (\ref{eq:BVPConeSing}) and the additional condition (\ref{eq:positiveConeSing}), i.e. the momentum construction method has converted the PDE (\ref{eq:topChernXminus}) defining the higher scalar curvature of a conical higher cscK metric into an explicit ODE boundary value problem on an interval of the real line whose solution uniquely determines the metric satisfying the PDE. \par
Since the ODE boundary value problem (\ref{eq:ODEConeSing}), (\ref{eq:BVPConeSing}), (\ref{eq:positiveConeSing}) depends on the parameters $m$ (determining the \krcl{} of the ansatz), $\bo$, $\boo$ (the values of the cone angles at $\so$, $\soo$ respectively up to a factor of $2 \pi$), $B$ (the constant value of the higher scalar curvature) and $C$ (a constant of integration), its analysis involves studying the relationships between these parameters and the boundary conditions, and also analyzing the behaviour of the polynomial $B \frac{\gamma^3}{2} + C \gamma$, as we shall see in Subsection \ref{subsec:AnalysisODEBVPConeSing} and Section \ref{sec:ProofConeSing}.
\subsection{Analysis of the ODE Boundary Value Problem for the Momentum Profile}\label{subsec:AnalysisODEBVPConeSing}
Following Pingali \cite{Pingali:2018:heK}, we start by defining the polynomial $p \left(\gamma\right) = B \frac{\gamma^2}{2} + C$ and applying the transformation $v = \frac{\left(2 \gamma + \phi\right)^2}{2}$, $\gamma \in \left[1, m + 1\right]$, after which the ODE (\ref{eq:ODEConeSing}) along with the boundary conditions (\ref{eq:BVPConeSing}) and the condition (\ref{eq:positiveConeSing}), which we had obtained at the end of Subsection \ref{subsec:MomentConstructConeSing}, reduces to the following ODE boundary value problem:
\begin{equation}\label{eq:ODEBVPConeSing}
\begin{gathered}
v' = 2 \sqrt{2} \sqrt{v} + p \left(\gamma\right) \gamma \hspace{1pt}, \hspace{5pt} \gamma \in \left[1, m + 1\right] \\
v\left(1\right) = 2 \hspace{1pt}, \hspace{5pt} v\left(m + 1\right) = 2 \left(m + 1\right)^2 \\
v'\left(1\right) = 2 \left(\beta_0 + 2\right) \hspace{1pt}, \hspace{5pt} v'\left(m + 1\right) = - 2 \left(m + 1\right) \left(\beta_\infty - 2\right) \\
v\left(\gamma\right) > 2 \gamma^2 \hspace{1pt}, \hspace{5pt} \gamma \in \left(1, m + 1\right)
\end{gathered}
\end{equation} \par
Again as done in \cite{Pingali:2018:heK}, imposing all the boundary conditions on the ODE in (\ref{eq:ODEBVPConeSing}) and evaluating individually at $\gamma = 1$ and $\gamma = m + 1$ gives us $B$, $C$ as linear functions of $\beta_0$, $\beta_\infty$ as follows:
\begin{equation}\label{eq:BCConeSing}
B \left(\bo, \boo\right) = - \frac{4 \left(\beta_0 + \beta_\infty\right)}{m \left(m + 2\right)} \hspace{1pt}, \hspace{5pt} C \left(\bo, \boo\right) = \frac{2 \left(\beta_0 \left(m + 1\right)^2 + \beta_\infty\right)}{m \left(m + 2\right)}
\end{equation}
Note that $B < 0$ and $C > 0$ as $\bo, \boo > 0$ and $m > 0$. \par
Considering the values of $B$, $C$ given in (\ref{eq:BCConeSing}), we can check that the boundary condition $v\left(1\right) = 2$ with the ODE in (\ref{eq:ODEBVPConeSing}) will automatically imply the boundary condition $v'\left(1\right) = 2 \left(\beta_0 + 2\right)$, and similarly the boundary condition $v\left(m + 1\right) = 2 \left(m + 1\right)^2$ will imply the boundary condition $v'\left(m + 1\right) = - 2 \left(m + 1\right) \left(\beta_\infty - 2\right)$. So just like in \cite{Pingali:2018:heK}, the boundary conditions $v'\left(1\right) = 2 \left(\beta_0 + 2\right)$ and $v'\left(m + 1\right) = - 2 \left(m + 1\right) \left(\beta_\infty - 2\right)$ in the ODE boundary value problem (\ref{eq:ODEBVPConeSing}) are redundant, as in they simply follow from the boundary conditions $v\left(1\right) = 2$ and $v\left(m + 1\right) = 2 \left(m + 1\right)^2$ respectively after substituting the expressions for $B$, $C$ given by (\ref{eq:BCConeSing}). \par
The following result analyzing the polynomials $p \left(\gamma\right)$ and $p \left(\gamma\right) \gamma$ with their coefficients containing $B = B \left(\bo, \boo\right)$ and $C = C \left(\bo, \boo\right)$ will give us the redundancy of the additional condition $v\left(\gamma\right) > 2 \gamma^2$ for all $\gamma \in \left(1, m + 1\right)$ in (\ref{eq:ODEBVPConeSing}) as well, and will also play a very crucial role in the entire analysis of the ODE boundary value problem (\ref{eq:ODEBVPConeSing}) to be developed in Subsection \ref{subsec:AnalysisODEBVPConeSing} and Section \ref{sec:ProofConeSing} (very similar to the analysis of the ODE obtained in the smooth higher \ext{} \kr{} case studied in \cite{Pingali:2018:heK,Sompurkar:2023:heKsmooth}).
\begin{lemma}\label{lem:polynomconesing}
The polynomials $p \left(\gamma\right)$ and $p \left(\gamma\right) \gamma$ both have exactly one root $\gamma_0 = \sqrt{-\frac{2 C}{B}}$ in the interval $\cllml{}$ with $1 < \gamma_0 < m + 1$. The quadratic $p \left(\gamma\right)$ is strictly decreasing on $\cllml{}$ with $p \left(1\right) = 2 \bo > 0$ and $p \left(m + 1\right) = -2 \boo < 0$, while the cubic $p \left(\gamma\right) \gamma$ can have at most one critical point (which would be a point of local maximum) $\gamma_{0 0} = \sqrt{-\frac{2 C}{3 B}}$ in the interval $\cllml{}$ with $\gamma_{0 0} < \gamma_0$. Both $p \left(\gamma\right)$ and $p \left(\gamma\right) \gamma$ are strictly positive on $\left[1, \gamma_0\right)$ and are strictly negative on $\left(\gamma_0, m + 1\right]$.
\end{lemma}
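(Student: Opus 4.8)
The plan is to reduce everything to the behaviour of the explicit quadratic $p \left(\gamma\right) = B \frac{\gamma^2}{2} + C$, whose coefficients are given by the formulae (\ref{eq:BCConeSing}), and then to transfer all conclusions to the cubic $p \left(\gamma\right) \gamma$ using the single observation that $\gamma > 0$ throughout $\cllml$, so that $p \left(\gamma\right)$ and $p \left(\gamma\right) \gamma$ always carry the same sign. First I would carry out the two boundary evaluations, which are the computational heart of the lemma. Substituting (\ref{eq:BCConeSing}) and simplifying, one obtains $p \left(1\right) = \frac{B}{2} + C = 2 \bo$ and $p \left(m + 1\right) = \frac{B \left(m + 1\right)^2}{2} + C = -2 \boo$; in each case the common denominator $m \left(m + 2\right)$ of $B$ and $C$ cancels exactly against the numerator after invoking the identity $\left(m + 1\right)^2 - 1 = m \left(m + 2\right)$. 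Since $\bo, \boo > 0$ this yields $p \left(1\right) > 0$ and $p \left(m + 1\right) < 0$.

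Next I would record the monotonicity of $p$. Differentiating gives $p' \left(\gamma\right) = B \gamma$, and since $B < 0$ (as noted just after (\ref{eq:BCConeSing})) and $\gamma > 0$ on $\cllml$, we have $p' \left(\gamma\right) < 0$, so $p$ is strictly decreasing on $\cllml$. Combined with $p \left(1\right) > 0 > p \left(m + 1\right)$ and the intermediate value theorem, $p$ has exactly one root $\gamma_0$ in $\cllml$, and the strict inequalities at the endpoints force $1 < \gamma_0 < m + 1$. Solving $p \left(\gamma_0\right) = 0$ directly gives $\gamma_0 = \sqrt{-\frac{2 C}{B}}$, which is well-defined and real because $-\frac{C}{B} > 0$. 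Because $\gamma > 0$ throughout $\cllml$, the sign of $p \left(\gamma\right) \gamma$ coincides with the sign of $p \left(\gamma\right)$ at every point, so $\gamma_0$ is likewise the unique root of $p \left(\gamma\right) \gamma$ in $\cllml$, and both functions are strictly positive on $\left[1, \gamma_0\right)$ and strictly negative on $\left(\gamma_0, m + 1\right]$.

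For the critical-point structure of the cubic $q \left(\gamma\right) = p \left(\gamma\right) \gamma = B \frac{\gamma^3}{2} + C \gamma$, I would compute $q' \left(\gamma\right) = \frac{3 B}{2} \gamma^2 + C$, a downward-opening quadratic (since $B < 0$) which therefore has at most one root in $\cllml$, giving the ``at most one critical point'' assertion. Its unique positive root is $\gamma_{0 0} = \sqrt{-\frac{2 C}{3 B}}$, and comparing squares yields $\gamma_{0 0}^2 = \frac{1}{3} \gamma_0^2$, so that $\gamma_{0 0} = \frac{\gamma_0}{\sqrt{3}} < \gamma_0$. That $\gamma_{0 0}$ is a point of local maximum follows from $q'' \left(\gamma_{0 0}\right) = 3 B \gamma_{0 0} < 0$.

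The entire argument is elementary calculus once the two boundary values are in hand, so I expect the only place demanding genuine care to be that algebraic simplification: the clean cancellations producing $p \left(1\right) = 2 \bo$ and $p \left(m + 1\right) = -2 \boo$ are exactly what pin $\gamma_0$ strictly inside $\cllml$ and, as already noted before the lemma, simultaneously account for the redundancy of the endpoint-derivative boundary conditions in (\ref{eq:ODEBVPConeSing}). Everything else is a matter of sign bookkeeping and the strict monotonicity of $p$.
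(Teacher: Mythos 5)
Your proof is correct and takes essentially the same route as the paper's own (outlined) proof: explicit endpoint evaluations $p\left(1\right) = 2 \beta_0$ and $p\left(m + 1\right) = -2 \beta_\infty$ via the formulae (\ref{eq:BCConeSing}), strict monotonicity of $p$ from $B < 0$, and direct solution of the quadratic equations defining $\gamma_0$ and $\gamma_{0 0}$ together with sign bookkeeping. The only cosmetic differences --- your appeal to the intermediate value theorem for the uniqueness of $\gamma_0$ and the sign-transfer via $\gamma > 0$ to the cubic $p\left(\gamma\right)\gamma$ --- are implicit in the paper's sketch, which instead solves $p\left(\gamma\right)\gamma = 0$ directly.
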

\begin{proof}
The proof of Lemma \ref{lem:polynomconesing} is entirely straightforward and we just outline it here. Solve the quadratic and cubic equations $p \left(\gamma\right) = 0$ and $p \left(\gamma\right) \gamma = 0$ respectively to get the value of $\gamma_0$, and also the quadratic equation $\frac{d}{d \gamma} \left( p \left(\gamma\right) \gamma \right) = 0$ to get the value of $\gamma_{0 0}$. Verify that $\frac{d}{d \gamma} \left( p \left(\gamma\right) \right) < 0$ and $\frac{d^2}{d \gamma^2} \left( p \left(\gamma\right) \gamma \right) < 0$ on $\cllml{}$ by using the fact that $B < 0$. Write down the expressions for $\gamma_0$ and $\gamma_{0 0}$ in terms of $\bo$, $\boo$ and $m$ by using the values of $B$ and $C$ given by (\ref{eq:BCConeSing}). Conclude the inequalities $1 < \gamma_0 < m + 1$ and also $\gamma_{0 0} < \gamma_0$ by observing the signs of $B = B \left(\bo, \boo\right)$ and $C = C \left(\bo, \boo\right)$ as well as $\bo$, $\boo$ and $m$. Evaluate $p \left(\gamma\right)$ and $p \left(\gamma\right) \gamma$ at $\gamma = 1$ and at $\gamma = m + 1$ individually, and again use the relations (\ref{eq:BCConeSing}) to conclude the signs of $p \left(\gamma\right)$ and $p \left(\gamma\right) \gamma$ on the subintervals $\left[1, \gamma_0\right)$ and $\left(\gamma_0, m + 1\right]$.
\end{proof} \par
With the help of the unique root $\gamma_0$ of the polynomial $p \left(\gamma\right) \gamma$ in $\cllml$, it can be proven (again as in \cite{Pingali:2018:heK}) that the boundary conditions $v\left(1\right) = 2$ and $v\left(m + 1\right) = 2 \left(m + 1\right)^2$ in (\ref{eq:ODEBVPConeSing}) taken together will automatically imply the condition $v\left(\gamma\right) > 2 \gamma^2$ for all $\gamma \in \left(1, m + 1\right)$. By using Lemma \ref{lem:polynomconesing}, we observe that if $v$ is a solution of the ODE in (\ref{eq:ODEBVPConeSing}) on the interval $\left[1, m + 1\right]$ satisfying both the boundary conditions $v \left(1\right) = 2$ and $v \left(m + 1\right) = 2 \left(m + 1\right)^2$, then integrating the expression for $\left(\sqrt{v}\right)'$ derived from (\ref{eq:ODEBVPConeSing}) on $\left[1, \gamma_0\right]$ and $\left[\gamma_0, m + 1\right]$ separately and noting the sign of $p \left(\gamma\right) \gamma$ on both the subintervals will help us conclude that $v \left(\gamma\right) \geq 2 \gamma^2$ on $\left[1, m + 1\right]$, and then rewriting the ODE in (\ref{eq:ODEBVPConeSing}) as $2 \sqrt{v} \left(\sqrt{v} - \sqrt{2} \gamma\right)' = p \left(\gamma\right) \gamma$ and using the mean value theorem for derivatives and the uniqueness of the root $\gamma_0$ in the interval $\cllml$ will help us conclude that $v \left(\gamma\right) > 2 \gamma^2$ on $\left(1, m + 1\right)$. Thus the ODE boundary value problem (\ref{eq:ODEBVPConeSing}) can be further simplified to the following:
\begin{equation}\label{eq:ODEBVP2ConeSing}
\begin{gathered}
v' = 2 \sqrt{2} \sqrt{v} + p \left(\gamma\right) \gamma \hspace{1pt}, \hspace{5pt} \gamma \in \left[1, m + 1\right] \\
v\left(1\right) = 2 \hspace{1pt}, \hspace{5pt} v\left(m + 1\right) = 2 \left(m + 1\right)^2
\end{gathered}
\end{equation} \par
Over here also (exactly like the smooth case of \cite{Pingali:2018:heK,Sompurkar:2023:heKsmooth}) the strategy of solving the ODE boundary value problem (\ref{eq:ODEBVP2ConeSing}) with $m > 0$ being fixed will be to first neglect the final boundary condition $v\left(m + 1\right) = 2 \left(m + 1\right)^2$ and consider the resulting ODE initial value problem:
\begin{equation}\label{eq:ODEIVPConeSing}
\begin{gathered}
v' = 2 \sqrt{2} \sqrt{v} + p \left(\gamma\right) \gamma \hspace{1pt}, \hspace{5pt} \gamma \in \left[1, m + 1\right] \\
v\left(1\right) = 2
\end{gathered}
\end{equation}
and then try to get the existence of solutions $v \left(\cdot; \bo, \boo\right)$ to (\ref{eq:ODEIVPConeSing}) on the whole of $\cllml$ for different values of the parameters $\bo, \boo$ and finally try to get a pair of values of $\bo, \boo$ for which the corresponding solution $v = v \left(\cdot; \bo, \boo\right)$ satisfies $v\left(m + 1\right) > 2 \left(m + 1\right)^2$ and another pair of values of $\bo, \boo$ for which the solution $v$ satisfies $v\left(m + 1\right) < 2 \left(m + 1\right)^2$. Since the ODE initial value problem (\ref{eq:ODEIVPConeSing}) varies continuously with respect to the parameters $\bo, \boo$, this should imply the existence of a pair of values of these parameters for which the required final boundary condition viz. $v\left(m + 1\right) = 2 \left(m + 1\right)^2$ is satisfied. Again as in the case of \cite{Pingali:2018:heK,Sompurkar:2023:heKsmooth}, proving the $<$ part is much more difficult than proving the $>$ part. We have the following existence result for the ODE boundary value problem (\ref{eq:ODEBVP2ConeSing}) (whose proof uses nothing more than elementary real analysis and standard ODE theory, but requires a careful handling of various estimates given by variations in the parameters $\bo, \boo$, as we shall see in detail in Section \ref{sec:ProofConeSing}):
\begin{theorem}[Existence Result for the ODE Boundary Value Problem (\ref{eq:ODEBVP2ConeSing})]\label{thm:mainconesing}
For every $m > 0$ and for every $\beta_0 > 0$ there exists a unique $\beta_\infty > 0$ (depending on both $m$ and $\bo$) with $\boo > \bo$ and there exist unique $B, C \in \R$ (given by the expressions (\ref{eq:BCConeSing}) with the substitution of the respective values of $m$, $\bo$ and $\boo$) with $B < 0$ and $C > 0$, such that there exists a unique smooth solution $v : \cllml{} \to \R$ (depending on all these parameters) to the ODE boundary value problem (\ref{eq:ODEBVP2ConeSing}) satisfying both the boundary conditions, and as a consequence also to the ODE boundary value problem (\ref{eq:ODEBVPConeSing}) satisfying all the conditions therein.
\end{theorem}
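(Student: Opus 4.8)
The plan is to solve (\ref{eq:ODEBVP2ConeSing}) by a shooting argument in the parameter $\boo$, following the treatment of (\ref{eq:ODEBVP0}) in Pingali \cite{Pingali:2018:heK} and \cite{Sompurkar:2023:heKsmooth}. I fix $m > 0$ and $\bo > 0$ throughout. For each trial value $\boo > 0$ the relations (\ref{eq:BCConeSing}) prescribe $B = B(\bo, \boo) < 0$ and $C = C(\bo, \boo) > 0$, and hence the polynomial $p(\gamma) = B\frac{\gamma^2}{2} + C$; I then consider the initial value problem (\ref{eq:ODEIVPConeSing}). The datum $v(1) = 2$ forces $v'(1) = 2\sqrt2\sqrt2 + p(1) = 4 + 2\bo = 2(\bo + 2)$ through the ODE itself, so the left endpoint derivative condition is automatic. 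As $F(\gamma, v) = 2\sqrt2\sqrt v + p(\gamma)\gamma$ is smooth and locally Lipschitz in $v$ on $\{v > 0\}$ and $v(1) = 2 > 0$, Picard--Lindel\"of gives a unique maximal smooth solution $v(\cdot\,; \boo)$, and the entire task becomes the study of the shooting function $\Xi(\boo) := v(m+1; \boo) - 2(m+1)^2$: I must show it has a unique positive zero, and that this zero exceeds $\bo$.

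It is cleanest to follow $w := \sqrt v - \sqrt2\,\gamma$, for which the ODE reads $2\sqrt v\, w' = p(\gamma)\gamma$; thus $w(1) = 0$, the sign of $w'$ equals that of $p(\gamma)\gamma$, and the right endpoint condition $v(m+1) = 2(m+1)^2$ is exactly $w(m+1) = 0$, with $\Xi(\boo)$ carrying the sign of $w(m+1)$. By Lemma \ref{lem:polynomconesing}, $p(\gamma)\gamma > 0$ on $[1, \gamma_0)$ and $< 0$ on $(\gamma_0, m+1]$ with $1 < \gamma_0 < m+1$, so every solution rises from $0$ to a single interior maximum at $\gamma_0$ and then strictly decreases. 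This monotone descent is the structural fact I would lean on: since $v < 2\gamma^2$ is equivalent to $w < 0$, a loss of positivity ($v \to 0$) can occur only in the region where $w < 0$, and there $w$ is already strictly decreasing, so that $w < 0$ anywhere on the descent forces $w(m+1) < 0$. For global existence from above, the sublinear growth of $\sqrt v$ against the bounded forcing yields, by comparison with $V' = 2\sqrt2\sqrt V + \max_{\cllml}|p\gamma|$, an a priori upper bound ruling out blow-up; continuous dependence on parameters then makes $\Xi$ continuous on the (open) set of $\boo$ for which the solution survives to $m+1$.

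I would next locate the two signs of $\Xi$. The easy ($>$) direction is $\boo \to 0^+$: here $\gamma_0 \to m+1$ while $B, C$ stay bounded, so $w$ increases on all but a vanishing terminal interval and its bounded descent cannot consume the accumulated height; thus $w > 0$ throughout, the solution stays above $2\gamma^2 > 0$ (so it exists globally), and $\Xi(\boo) > 0$ for all small $\boo$. The ($<$) direction is the main obstacle. As $\boo \to \infty$ one has $\gamma_0 \to 1$, while $p \le p(1) = 2\bo$ on the ascent gives the peak bound $w(\gamma_0) \le \frac{\bo}{\sqrt2}(\gamma_0 - 1) \to 0$ (using $\sqrt v \ge \sqrt2\,\gamma$ where $w \ge 0$); at the same time $|B| \to \infty$ makes the descent slope $w'$ steeply negative immediately past $\gamma_0$, so $w$ must cross $0$ just after its tiny maximum. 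By the structural fact above this already yields $w(m+1) < 0$ whenever the solution survives to $m+1$; and if instead positivity is lost at some $\gamma^{*} < m+1$ (necessarily in $\{w < 0\}$), then a continuity argument at the threshold value of $\boo$ beyond which existence fails produces a nearby admissible $\boo$ whose solution reaches $m+1$ with $w(m+1) < 0$. The delicate part throughout is this simultaneous bookkeeping of the descent estimate and the positivity of $v$, which is where the careful handling of the parameter-dependent constants is needed.

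Granting both signs, the intermediate value theorem and continuity of $\Xi$ give a zero $\boo^{*} > 0$, i.e. a genuine solution of (\ref{eq:ODEBVP2ConeSing}); the signs $B < 0$, $C > 0$ are immediate from (\ref{eq:BCConeSing}). Uniqueness of $\boo^{*}$ I would obtain from strict monotonicity of $\Xi$: the sensitivity $\sigma := \partial_{\boo} v$ solves the linear variational equation $\sigma' = \frac{\sqrt2}{\sqrt v}\,\sigma + \partial_{\boo}(p(\gamma)\gamma)$ with $\sigma(1) = 0$, and (\ref{eq:BCConeSing}) gives $\partial_{\boo}(p(\gamma)\gamma) = \frac{2}{m(m+2)}\,\gamma(1 - \gamma^2) \le 0$ on $\cllml$, strictly for $\gamma > 1$; the integrating-factor representation then forces $\sigma(m+1) < 0$, so $\Xi$ is strictly decreasing and its zero is unique, while uniqueness of $v$ for fixed $\boo^{*}$ is just Picard--Lindel\"of. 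Finally, to see $\boo^{*} > \bo$, I would evaluate $\Xi$ at the symmetric value $\boo = \bo$, where $\gamma_0$ sits at the root-mean-square midpoint of $\cllml$: because $v$ has climbed by the descent, the factor $\sqrt v$ in $w' = \frac{p(\gamma)\gamma}{2\sqrt v}$ damps the fall of $w$ relative to its rise, so $w$ cannot return to $0$ by $\gamma = m+1$ and $\Xi(\bo) > 0$; combined with the strict monotonicity just established, the unique zero lies at $\boo^{*} > \bo$. Once $v$ is in hand, the reduction preceding (\ref{eq:ODEBVP2ConeSing}) (via Lemma \ref{lem:polynomconesing}) promotes it to a solution of the full problem (\ref{eq:ODEBVPConeSing}), as required.
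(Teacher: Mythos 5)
Your scheme is, up to the trivial reparameterization $\alpha = \bo - \boo$, the one the paper actually runs: shooting in the cone-angle parameter, strict monotonicity of $v\left(m+1; \cdot\right)$ from the variational equation with integrating factor (your $\partial_{\boo}\left(p\left(\gamma\right)\gamma\right) = \frac{2}{m\left(m+2\right)}\gamma\left(1-\gamma^2\right)$ is exactly $-q\left(\gamma\right)$ of Lemma \ref{lem:derpolynomconesing}), the positive sign of the shooting map as $\boo \to 0^+$ (Lemma \ref{lem:VPPConeSing}), and uniqueness from strict monotonicity. But two steps do not hold up as written. The concrete failure is your derivation of $\Xi\left(\bo\right) > 0$, which is what you use to get $\boo^{*} > \bo$: the ``damping'' of the fall of $w$ by the factor $\sqrt{v}$ operates only while $w \geq 0$, where $\sqrt{v} \geq \sqrt{2}\,\gamma$ caps the descent slope; the instant $w$ crosses zero you have $\sqrt{v} < \sqrt{2}\,\gamma$, the weight $\frac{1}{2\sqrt{v}}$ exceeds its value on the comparison curve, and the fall is \emph{amplified} rather than damped --- so your heuristic cannot rule out the very crossing it needs to exclude. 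The claim is true, but the working mechanism is an integral identity, not damping: if $w\left(\gamma_1\right) = 0$ for some $\gamma_1 > 1$, integrating $v' = 2\sqrt{2}\sqrt{v} + p\left(\gamma\right)\gamma$ over $\left[1, \gamma_1\right]$ and using $\sqrt{v} > \sqrt{2}\,\gamma$ on $\left(1, \gamma_1\right)$ forces $P_\alpha\left(\gamma_1\right) < 0$, whereas at $\boo = \bo$ (i.e. $\alpha = 0$) Lemma \ref{lem:polynom1conesing} gives $P_0 \geq 0$, a contradiction. The paper runs this same computation at $\gamma_1 = m+1$ on the solved boundary value problem, using $P_\alpha\left(m+1\right) = \frac{m\left(m+2\right)}{2}\alpha$ to conclude $\alpha < 0$ directly (end of Corollary \ref{cor:final}), which avoids evaluating $\Xi$ at the symmetric value altogether.

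The other genuine gap is that the whole negative side of the shooting is delegated to ``a continuity argument at the threshold value of $\boo$''. Note first that for large $\boo$ your own steep-descent estimate shows the solution is driven to $v = 0$ before $m+1$, so $\Xi$ is simply undefined there; the statement you actually need is that $v\left(m+1; \boo\right) \to 0$ as $\boo$ increases to the breakdown threshold. This does not follow from naive continuous dependence, because every uniform positive lower bound on $v$ degenerates exactly at the threshold, and that is where the paper spends most of Section \ref{sec:ProofConeSing}: the zero-extended solutions $u_\alpha$, the Arzel\`a--Ascoli compactness argument with its three-case analysis (Theorem \ref{thm:UnifConv}), the proof that the good parameter set is an open interval $\mathscr{A} = \left(M, \bo\right)$ (Theorem \ref{thm:scrA}) --- which is also what makes the domain of your $\Xi$ an interval and so licenses the intermediate value and uniqueness arguments --- the monotonicity of maximal intervals of existence (Corollary \ref{cor:StrctIncrConeSing}), Dini's theorem (Corollary \ref{cor:Phicont}), and finally $\lim_{\alpha \to M^+} v\left(m+1; \alpha\right) = 0$ (Corollary \ref{cor:final}). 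Your proposal names the right endpoint behaviour but supplies none of this machinery; as a proof it is incomplete precisely at the step in which the theorem's difficulty resides.
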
 \par
Remember that $m$ characterizes the \krcl{} under consideration, $\bo$, $\boo$ give the values of the cone angles at the divisors $\so$, $\soo$ respectively and $B$ is the value of the constant higher scalar curvature. As can be seen from the arguments in Subsections \ref{subsec:MomentConstructConeSing} and \ref{subsec:AnalysisODEBVPConeSing}, the following existence result for the required kind of \krm{} on the surface $X$ follows directly from Theorem \ref{thm:mainconesing}:
\begin{corollary}[Existence Result for Conical Higher cscK Metrics on the Minimal Ruled Surface]\label{cor:mainconesing}
For every $m > 0$ and for every $\beta_0 > 0$ there exists a unique $\beta_\infty > 0$ with $\boo > \bo$, such that there exists a unique higher cscK metric $\omega$ having Calabi symmetry on the minimal ruled surface $X = \prj$, which belongs to the K\"ahler class $2 \pi \left(\sfc + m S_\infty\right)$, which is smooth on $X \smallsetminus \left(S_0 \cup S_\infty\right)$ and which has got conical singularities with cone angles $2 \pi \beta_0$ and $2 \pi \beta_\infty$ along the divisors $S_0$ and $S_\infty$ respectively.
\end{corollary}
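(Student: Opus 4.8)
The plan is to solve the boundary value problem (\ref{eq:ODEBVP2ConeSing}) by a \emph{shooting} argument in which $m$ and $\bo$ are held fixed while $\boo > 0$ is used as the shooting parameter; for each candidate $\boo$ the constants $B = B\left(\bo, \boo\right)$ and $C = C\left(\bo, \boo\right)$ are read off from (\ref{eq:BCConeSing}), so that $B < 0$, $C > 0$ are immediate and the whole ODE depends on $\boo$ alone. First I would record the algebraic identities $p\left(1\right) = \frac{B}{2} + C = 2\bo$ and $p\left(m+1\right) = \frac{B}{2}\left(m+1\right)^2 + C = -2\boo$, which together with Lemma \ref{lem:polynomconesing} pin down the unique root $\gamma_0 = \sqrt{-2C/B} \in \oplml$. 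It is convenient to pass to the new unknown $w = \sqrt{v} - \sqrt{2}\,\gamma$ (so that $\phi = \sqrt{2}\,w$), under which the ODE in (\ref{eq:ODEIVPConeSing}) becomes the first-order equation
\[ w'\left(\gamma\right) = \frac{p\left(\gamma\right)\gamma}{2\left(w + \sqrt{2}\,\gamma\right)} , \]
the initial condition becomes $w\left(1\right) = 0$, the target $v\left(m+1\right) = 2\left(m+1\right)^2$ becomes $w\left(m+1\right) = 0$, and the positivity requirement $v > 2\gamma^2$ becomes simply $w > 0$ on $\oplml$. Since $w'\left(1\right) = p\left(1\right)/\left(2\sqrt{2}\right) = \bo/\sqrt{2} > 0$, the trajectory enters $\left\lbrace w > 0 \right\rbrace$ immediately, and as long as $v = \left(w + \sqrt{2}\,\gamma\right)^2 > 0$ the right-hand side is smooth, so standard ODE theory yields a unique local solution depending smoothly on $\boo$.

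Next I would establish the two soft ingredients. Differentiating (\ref{eq:BCConeSing}) gives $\partial_{\boo} p\left(\gamma\right) = \frac{2\left(1 - \gamma^2\right)}{m\left(m+2\right)} \leq 0$ on $\cllml$, strictly negative for $\gamma > 1$; hence the forcing $p\left(\gamma\right)\gamma$ is strictly decreasing in $\boo$, and the comparison principle for the scalar ODE shows that $v\left(m+1; \boo\right)$ is strictly decreasing in $\boo$ wherever defined, which will give uniqueness of the shooting value. For the overshoot (``$>$'') direction, as $\boo \to 0^+$ one has $\gamma_0 \to m+1$, so $p\left(\gamma\right)\gamma \geq 0$ on all of $\cllml$ in the limit; the limiting trajectory keeps $w$ nondecreasing with $w\left(m+1\right) > 0$ while $v$ stays bounded and $\geq 2\gamma^2 > 0$, so it exists on all of $\cllml$, and by continuous dependence $v\left(m+1; \boo\right) > 2\left(m+1\right)^2$ for all sufficiently small $\boo > 0$. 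A direct computation using (\ref{eq:BCConeSing}) moreover gives $\int_1^{m+1} p\left(\gamma\right)\gamma\, d\gamma = \frac{m\left(m+2\right)}{2}\left(\bo - \boo\right)$, which vanishes exactly at $\boo = \bo$; combining the resulting overshoot at $\boo = \bo$ with the strict monotonicity will force the eventual shooting value to satisfy $\boo > \bo$.

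The main obstacle is the undershoot (``$<$'') direction: I must exhibit a $\boo$ for which the solution both survives on all of $\cllml$ and satisfies $v\left(m+1; \boo\right) < 2\left(m+1\right)^2$. The tension is that the term $2\sqrt{2}\sqrt{v}$ always pushes $v$ upward, whereas only the forcing $p\left(\gamma\right)\gamma$ — which becomes strongly negative on $\left(\gamma_0, m+1\right]$ once $\boo$ is large (then $\gamma_0 \to 1^+$) — can pull it down; driving $\boo$ up far enough to undershoot the endpoint risks sending $v$ to $0$ at an interior point, where $w' \to -\infty$ and the solution ceases to exist. The crux is therefore a simultaneous pair of a priori bounds on $\left[\gamma_0, m+1\right]$: a lower barrier keeping $v$ away from $0$ so the trajectory reaches $\gamma = m+1$, obtained from $v' \geq p\left(\gamma\right)\gamma$ (dropping the nonnegative $2\sqrt{2}\sqrt{v}$) and the explicitly integrable $\int_{\gamma_0}^{m+1} p\left(\gamma\right)\gamma\, d\gamma$, together with an upper control (via a supersolution) forcing $v\left(m+1\right) < 2\left(m+1\right)^2$. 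Managing this competition, with the one-root and monotonicity structure of Lemma \ref{lem:polynomconesing} and careful tracking of the dependence on $\bo, \boo, m$, is precisely the delicate bookkeeping I would carry out in detail in Section \ref{sec:ProofConeSing}.

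Finally, the preceding steps make $\boo \mapsto v\left(m+1; \boo\right)$ a continuous, strictly decreasing function that exceeds $2\left(m+1\right)^2$ for small $\boo$ and falls below it for the $\boo$ produced in the undershoot step, so the intermediate value theorem yields a unique $\boo > 0$ (necessarily with $\boo > \bo$) satisfying $v\left(m+1; \boo\right) = 2\left(m+1\right)^2$, and the corresponding $B < 0$, $C > 0$ are then fixed by (\ref{eq:BCConeSing}). For this solution the interior positivity $v > 2\gamma^2$ — equivalently $\phi = \sqrt{2}\,w > 0$ on $\oplml$ — follows from the argument already given before (\ref{eq:ODEBVP2ConeSing}) (integrating $\left(\sqrt{v}\right)'$ separately on $\left[1, \gamma_0\right]$ and $\left[\gamma_0, m+1\right]$ and invoking the uniqueness of $\gamma_0$), so $v$ in fact solves the full problem (\ref{eq:ODEBVPConeSing}) as well, which completes the proof.
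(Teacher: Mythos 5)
Your framework is the same as the paper's: a shooting argument with $m, \bo$ fixed and the second cone angle as the parameter (the paper shoots in $\alpha = \bo - \boo$, which is your $\boo$ up to an affine change), and your soft ingredients are all correct and match the paper's. The monotonicity of $v\left(m+1; \cdot\right)$ in the parameter is Theorem \ref{thm:StrctIncrConeSing}; the overshoot as $\boo \to 0^+$ is Lemma \ref{lem:VPPConeSing}; your identity $\int_1^{m+1} p\left(\gamma\right)\gamma\, d\gamma = \frac{m\left(m+2\right)}{2}\left(\bo - \boo\right)$ is Lemma \ref{lem:polynom1conesing}; and $\boo > \bo$ is obtained at the end of Corollary \ref{cor:final} by exactly the integration you have available (integrate the ODE over $\cllml$ using the interior bound $v > 2\gamma^2$). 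Note, though, that your ``overshoot at $\boo = \bo$'' is never actually proved in your sketch; it is a consequence of the shooting value being $> \bo$ plus monotonicity, not an independent input, so you should derive $\boo > \bo$ directly from the integration argument rather than from it.

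The genuine gap is the undershoot direction, which you explicitly defer as ``delicate bookkeeping'': this is not bookkeeping but the entire mathematical content of Section \ref{sec:ProofConeSing}, and the barrier plan you sketch does not close. Dropping the nonnegative term gives the survival barrier $v\left(\gamma\right) \geq 2 + P\left(\gamma\right) \geq 2 + \frac{m\left(m+2\right)}{2}\left(\bo - \boo\right)$, which certifies existence up to $m+1$ only for $\boo - \bo < \frac{4}{m\left(m+2\right)}$; on the other hand, the natural supersolution estimate $\left(\sqrt{v}\right)' \leq \sqrt{2} + \frac{p\gamma}{2\sqrt{2}}$ on $\left[1, \gamma_0\right]$ and $\left(\sqrt{v}\right)' \leq \sqrt{2} + \frac{p\gamma}{2\sqrt{K}}$ on $\left[\gamma_0, m+1\right]$ forces $v\left(m+1\right) < 2\left(m+1\right)^2$ only when $\left\lvert \int_{\gamma_0}^{m+1} p\gamma\, d\gamma \right\rvert > \sqrt{K/2}\, \int_1^{\gamma_0} p\gamma\, d\gamma$ with $K > 2$, i.e.\ only when $\boo - \bo$ is large --- and in that regime the solution provably fails to reach $m+1$ at all (this is Theorem \ref{thm:scrA}, proved via the quantitative monotonicity $v\left(m+1; \alpha_n\right) \leq v\left(m+1; \alpha_0\right) - n\,\frac{m\left(m+2\right)}{2}$), so the upper barrier is vacuous exactly where it would bite. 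The undershoot values are attained only in a thin window hugging the breakdown threshold, which integrable barriers cannot see. The paper resolves this by a structurally different argument: it never exhibits a surviving undershooting trajectory directly, but instead characterizes the existence window as $\mathscr{A} = \left(M, \bo\right)$ with $M$ finite, and proves $v\left(m+1; \alpha\right) \to 0$ as $\alpha \to M^+$ (Corollary \ref{cor:final}), using the Arzel\`a--Ascoli compactness of the zero-extended solutions $u_\alpha$ (Theorem \ref{thm:UnifConv}, including the delicate Case ($3$) analysis of the breakdown points $\gamma_{\star, \alpha}$), openness of $\mathscr{A}$, and Dini's theorem (Corollary \ref{cor:Phicont}); the point is that at the threshold the solution breaks down exactly at the right endpoint, so the endpoint values sweep all of $\left(0, b\right)$ and the intermediate value theorem applies. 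Your observation that monotonicity propagates survival downward from any surviving parameter is correct and makes your final IVT step sound conditional on the undershoot --- but to complete the proof you would need either this boundary-of-existence analysis or a genuinely new barrier, neither of which your proposal supplies.
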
 \par
\begin{remark}
As we shall see in Section \ref{sec:ProofConeSing}, the ODE initial value problem (\ref{eq:ODEIVPConeSing}) depends on the parameters $m > 0$ and $\alpha = \bo - \boo$ (and not independently on each one of the two parameters $\bo, \boo > 0$). Out of all these parameters, $m$ and $\bo$ are allowed to take any arbitrary positive values but are kept fixed throughout the proof of Theorem \ref{thm:mainconesing}, while $\alpha$ is taken as the independent variable parameter with its values ranging over the interval $\left(-\infty, \bo\right)$ and with the parameter $\boo = - \alpha + \bo$ (and of course the parameters $B, C$ as well, being given by (\ref{eq:BCConeSing})) depending on and varying with $\alpha$. Then the analysis required for the proof of Theorem \ref{thm:mainconesing} will yield for every $m > 0$ and for every $\bo > 0$ a unique $\alpha = \alpha \left(m, \bo\right) \in \left(-\infty, \bo\right)$ with $\alpha < 0$ such that there will exist a unique smooth solution $v = v \left(\cdot; \alpha\right)$ to the ODE initial value problem (\ref{eq:ODEIVPConeSing}) satisfying the correct final boundary condition viz. $v\left(m + 1\right) = 2 \left(m + 1\right)^2$ and hence to the ODE boundary value problems (\ref{eq:ODEBVP2ConeSing}) as well as (\ref{eq:ODEBVPConeSing}) satisfying all the required (boundary) conditions. That is the reason why in Theorem \ref{thm:mainconesing} and Corollary \ref{cor:mainconesing} we are having $m$ and $\bo$ arbitrary and $\boo$ depending on both of them with the condition $\boo > \bo$ coming from the fact $\alpha \left(m, \bo\right) < 0$.
\end{remark}
\numberwithin{equation}{subsection}
\numberwithin{figure}{subsection}
\numberwithin{table}{subsection}
\numberwithin{lemma}{subsection}
\numberwithin{proposition}{subsection}
\numberwithin{result}{subsection}
\numberwithin{theorem}{subsection}
\numberwithin{corollary}{subsection}
\numberwithin{conjecture}{subsection}
\numberwithin{remark}{subsection}
\numberwithin{note}{subsection}
\numberwithin{motivation}{subsection}
\numberwithin{question}{subsection}
\numberwithin{answer}{subsection}
\numberwithin{case}{subsection}
\numberwithin{claim}{subsection}
\numberwithin{definition}{subsection}
\numberwithin{example}{subsection}
\numberwithin{hypothesis}{subsection}
\numberwithin{statement}{subsection}
\numberwithin{ansatz}{subsection}
\section{Proof of Theorem \ref{thm:mainconesing}}\label{sec:ProofConeSing}
\subsection{First Part of the Proof}\label{subsec:Proof1}
The final goal over here in Section \ref{sec:ProofConeSing} is to prove Theorem \ref{thm:mainconesing}. The method of attack of the ODE boundary value problem (\ref{eq:ODEBVP2ConeSing}) and the central ideas in the proof of Theorem \ref{thm:mainconesing} are all nearly the same as those for the very similar ODE boundary value problem obtained in the momentum construction of smooth higher \ext{} \krm{s}, whose analysis was initiated in Pingali \cite{Pingali:2018:heK}; Section 2 and was completed in the author's first work \cite{Sompurkar:2023:heKsmooth}; Section 3. As mentioned in Subsection \ref{subsec:AnalysisODEBVPConeSing} our main aim in Section \ref{sec:ProofConeSing} is to find a smooth solution $v : \left[1, m + 1\right] \to \R$ to the ODE initial value problem (\ref{eq:ODEIVPConeSing}) satisfying the correct final boundary condition viz. $v\left(m + 1\right) = 2 \left(m + 1\right)^2$, and for that we need to study the variation of the solution of the ODE in (\ref{eq:ODEIVPConeSing}) with respect to the parameters $\bo$, $\boo$ (or actually $\alpha = \bo - \boo$) and determine for what values of these parameters does a solution exist satisfying the required boundary conditions. As was the case in the smooth higher \ext{} \kr{} setting of this problem studied in \cite{Pingali:2018:heK}; Section 2 and \cite{Sompurkar:2023:heKsmooth}; Section 3, the (common) unique root $\gamma_0$ of the polynomials $p \left(\gamma\right)$ and $p \left(\gamma\right) \gamma$ in $\cllml$, which in this case is explicitly determined by Lemma \ref{lem:polynomconesing}, plays a major role in the analysis of the ODE in (\ref{eq:ODEIVPConeSing}) as we shall see here in Subsection \ref{subsec:Proof1}. \par
Throughout Section \ref{sec:ProofConeSing}, $m > 0$ will be kept fixed. An important thing that happens over here is that $\bo > 0$ can also be kept fixed throughout (along with $m$), and then the independent variable parameter can be taken to be $\alpha \in \left(-\infty, \bo\right)$ (which represents the difference between the cone angles at the two divisors) with the other three parameters $\boo = \boo \left(\alpha\right) = - \alpha + \bo \in \left(0, \infty\right)$, $B = B \left(\alpha\right)$, $C = C \left(\alpha\right)$ (both being given by the following expressions derived from the expressions (\ref{eq:BCConeSing})) all depending on $\alpha$, and hence even the ODE initial value problem (\ref{eq:ODEIVPConeSing}) will depend on $\alpha$.
\begin{equation}\label{eq:BCalphabeta0}
\begin{gathered}
B \left(\alpha\right) = \frac{4 \alpha}{m \left(m + 2\right)} - \frac{8 \beta_0}{m \left(m + 2\right)} \in \left(-\infty, -\frac{4 \bo}{m \left(m + 2\right)}\right) \\
C \left(\alpha\right) = - \frac{2 \alpha}{m \left(m + 2\right)} + \frac{2 \beta_0 \left(\left(m + 1\right)^2 + 1\right)}{m \left(m + 2\right)} \in \left(\frac{2 \beta_0 \left(m + 1\right)^2}{m \left(m + 2\right)}, \infty\right)
\end{gathered}
\end{equation} \par
\begin{motivation}
We will first prove that for every $\alpha \in \left(-\infty, \bo\right)$ there exists a unique $\mathcal{C}^1$ solution $v$ to the ODE initial value problem (\ref{eq:ODEIVPConeSing}) on a non-degenerate interval containing $1$, and in fact this $v$ exists and is strictly increasing on $\left[1, \gamma_0\right]$ where $\gamma_0 = \sqrt{-\frac{2 C}{B}}$ is the unique root of the polynomial $p \left(\gamma\right) \gamma = B \frac{\gamma^3}{2} + C \gamma$ in $\left[1, m + 1\right]$. We will then prove that the $\mathcal{C}^1$ solution $v$ defined on any interval is always strictly positive on the interval, and as a consequence is smooth (i.e. $\mathcal{C}^\infty$) on the interval. We will finally prove a necessary and sufficient condition for the continuation of the solution $v$ defined a priori on $\left[1, \tilde{r}\right)$ for a given $\tilde{r} \in \left(1, m + 1\right]$.
\end{motivation} \par
As was noted by Pingali \cite{Pingali:2018:heK}; Section 2 (for the ODE in the smooth analogue of this problem), if $v$ is a $\mathcal{C}^1$ solution to (\ref{eq:ODEIVPConeSing}) on any interval then substituting $\sqrt{v} < v + 1$ and $\left\lvert p \left(\gamma\right) \gamma \right\rvert \leq l$ (for some $l > 0$) in the expression for $v' = \left(v + 1\right)'$ in (\ref{eq:ODEIVPConeSing}) and applying Gr\"onwall's inequality will give us a $K > 0$ such that $v \left(\gamma\right) \leq K$ on the interval. \par
Thus solutions to (\ref{eq:ODEIVPConeSing}) are always bounded above (and always bounded below by $0$) on any interval on which they exist. So by standard ODE theory the existence of a strictly positive lower bound on a solution of (\ref{eq:ODEIVPConeSing}) is a sufficient condition for the continuation of the solution beyond its prior interval of definition (\cite{Pingali:2018:heK}; Section 2).
\begin{lemma}[Continuation of Solutions]\label{lem:continue}
For a given $\alpha \in \left(-\infty, \bo\right)$ let $v$ be a $\mathcal{C}^1$ solution to the ODE initial value problem (\ref{eq:ODEIVPConeSing}) existing on $\left[1, \tilde{r}\right) \subseteq \left[1, m + 1\right]$. If there exists an $\epsilon > 0$ such that $v \left(\gamma\right) \geq \epsilon$ on $\left[1, \tilde{r}\right)$ then $v$ can be continued beyond $\tilde{r}$.
\end{lemma}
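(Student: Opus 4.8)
The plan is to show that the only ways a solution of (\ref{eq:ODEIVPConeSing}) can fail to continue past $\tilde{r}$ are either by blowing up or by the right-hand side losing its Lipschitz dependence on $v$, and that the hypotheses rule out both. First I would invoke the a priori upper bound already established in the discussion preceding the lemma: Gr\"onwall's inequality yields a constant $K > 0$ with $v(\gamma) \leq K$ on $[1, \tilde{r})$. Together with the hypothesis $v(\gamma) \geq \epsilon$, this confines $v$ to the compact range $[\epsilon, K]$. Since $p(\gamma)\gamma$ is a polynomial (hence bounded on $[1, m+1]$) and $\sqrt{v} \leq \sqrt{K}$, the right-hand side $2\sqrt{2}\sqrt{v} + p(\gamma)\gamma$ of the ODE, which equals $v'$, is bounded on $[1, \tilde{r})$. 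Consequently $v$ is Lipschitz on $[1, \tilde{r})$ and extends continuously to $\gamma = \tilde{r}$; writing $v(\tilde{r}) := \lim_{\gamma \to \tilde{r}^-} v(\gamma)$, we have $v(\tilde{r}) \geq \epsilon > 0$, and by continuity the one-sided derivative $v'(\tilde{r}^-) = 2\sqrt{2}\sqrt{v(\tilde{r})} + p(\tilde{r})\tilde{r}$ exists as well, so the extended $v$ is $\mathcal{C}^1$ on $[1, \tilde{r}]$ and solves the ODE there.

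Next I would restart the ODE from the point $(\tilde{r}, v(\tilde{r}))$ using the standard Picard-Lindel\"of existence theorem. The decisive point is that, because $v(\tilde{r}) \geq \epsilon > 0$, the map $u \mapsto 2\sqrt{2}\sqrt{u} + p(\gamma)\gamma$ is smooth, and in particular locally Lipschitz in $u$, on a neighbourhood of $(\tilde{r}, v(\tilde{r}))$ inside $(1, m+1] \times (0, \infty)$ --- the sole failure of Lipschitz continuity of $\sqrt{u}$ is at $u = 0$, which the lower bound avoids. Hence there exist $\delta > 0$ and a unique $\mathcal{C}^1$ solution $\tilde{v}$ of the ODE on $[\tilde{r}, \tilde{r} + \delta)$ with $\tilde{v}(\tilde{r}) = v(\tilde{r})$. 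Gluing $v$ on $[1, \tilde{r}]$ to $\tilde{v}$ on $[\tilde{r}, \tilde{r} + \delta)$ produces a continuous function whose one-sided derivatives at $\tilde{r}$ coincide (both equal $2\sqrt{2}\sqrt{v(\tilde{r})} + p(\tilde{r})\tilde{r}$ by the ODE), so the glued function is a $\mathcal{C}^1$ solution of (\ref{eq:ODEIVPConeSing}) on $[1, \tilde{r} + \delta)$, which continues $v$ strictly beyond $\tilde{r}$ as required.

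The main obstacle --- indeed the whole reason the lemma needs a hypothesis at all --- is the non-Lipschitz behaviour of the nonlinearity $\sqrt{v}$ at $v = 0$, where the elementary Cauchy-Lipschitz machinery breaks down and both uniqueness and the restart argument can fail. The positive lower bound $\epsilon$ is precisely the assumption that quarantines the solution away from this bad locus $\{v = 0\}$, reducing a potentially delicate continuation question to a routine gluing of local solutions. The only genuinely necessary preliminary work is verifying that the boundedness of $v'$ forces the left-hand limit at $\tilde{r}$ to exist, so that there is a well-defined value $v(\tilde{r})$ from which to relaunch the initial value problem.
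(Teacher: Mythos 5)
Your proof is correct and takes essentially the same route as the paper's: both combine the a priori Gr\"onwall upper bound $K$ with the hypothesis $v \geq \epsilon$ to confine the solution to a compact region on which the right-hand side $2\sqrt{2}\sqrt{v} + p\left(\gamma\right)\gamma$ is Lipschitz (indeed $\mathcal{C}^1$) in $v$, and then continue the solution past $\tilde{r}$. You merely spell out the details (existence of the left limit $v\left(\tilde{r}\right)$, the Picard--Lindel\"of restart, and the $\mathcal{C}^1$ gluing) that the paper compresses into the phrase ``by standard ODE theory the solution can be continued.''
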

\begin{proof}
We are given a lower bound $\epsilon > 0$ for the solution $v$ and we always have an upper bound $K > 0$ for $v$ as discussed above. So $0 < \sqrt{\epsilon} \leq \sqrt{v} \leq \sqrt{K}$ on $\left[1, \tilde{r}\right)$ and so, the right hand side of the ODE in (\ref{eq:ODEIVPConeSing}) is $\mathcal{C}^1$ in $v$. So by standard ODE theory the solution $v$ defined a priori on $\left[1, \tilde{r}\right)$ can be continued beyond $\tilde{r}$.
\end{proof} \par
We will prove the converse of Lemma \ref{lem:continue} (viz. Theorem \ref{thm:continue}), but before that we prove some more basic results:
\begin{lemma}[Existence of Solutions]\label{lem:firstexist}
For every $\alpha \in \left(-\infty, \bo\right)$ there exists a unique $\mathcal{C}^1$ solution $v$ to the ODE initial value problem (\ref{eq:ODEIVPConeSing}) on $\left[1, r\right)$ for some $r \in \left(1, m + 1\right]$ such that $v' > 0$ on $\left[1, r\right)$. If $\left[1, r'\right) \subseteq \left[1, m + 1\right]$ is the maximal interval of existence of $v$ then $\left[1, \gamma_0\right] \subseteq \left[1, r'\right)$ and $v' > 0$ on $\left[1, \gamma_0\right]$. Similarly if $\left[1, m + 1\right]$ is the maximal interval of existence of $v$ then (obviously) $\left[1, \gamma_0\right] \subseteq \left[1, m + 1\right]$ and $v' > 0$ on $\left[1, \gamma_0\right]$.
\end{lemma}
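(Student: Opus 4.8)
The plan is to combine the standard Picard--Lindel\"of machinery with the sign information for $p\left(\gamma\right)\gamma$ supplied by Lemma \ref{lem:polynomconesing} and the continuation criterion of Lemma \ref{lem:continue}. First I would note that at the initial point the right hand side of the ODE in (\ref{eq:ODEIVPConeSing}) is smooth in $v$, since $v\left(1\right) = 2 > 0$ keeps us bounded away from the singularity of $\sqrt{v}$ at $v = 0$; consequently the right hand side is locally Lipschitz in $v$, and Picard--Lindel\"of yields a unique $\mathcal{C}^1$ solution $v$ on some $\left[1, r\right)$ with $r > 1$. Evaluating the ODE at $\gamma = 1$ gives $v'\left(1\right) = 2\sqrt{2}\sqrt{2} + p\left(1\right) = 4 + 2\bo > 0$ (using $p\left(1\right) = 2\bo$ from Lemma \ref{lem:polynomconesing}), so by continuity $v' > 0$ on $\left[1, r\right)$ after possibly shrinking $r$; this settles the first assertion.

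The heart of the argument is a monotonicity bootstrap that propagates the solution out to $\gamma_0$. The key fact from Lemma \ref{lem:polynomconesing} is that $p\left(\gamma\right)\gamma > 0$ on $\left[1, \gamma_0\right)$, so on that subinterval, as long as $v > 0$ one has
\[
v' = 2\sqrt{2}\sqrt{v} + p\left(\gamma\right)\gamma > 2\sqrt{2}\sqrt{v} > 0,
\]
forcing $v$ to be strictly increasing and hence $v \geq v\left(1\right) = 2$. I would make this precise by letting $\left[1, r'\right)$ be the maximal interval of existence and showing $v \geq 2$ on $\left[1, \min\left(r', \gamma_0\right)\right)$: the set where $v > 0$ is open and contains $1$, and on its intersection with $\left[1, \gamma_0\right)$ the displayed inequality gives $v' > 0$, so $v$ cannot descend toward $0$ there, whence the uniform lower bound $v \geq 2$ holds on that intersection.

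With this lower bound in hand the continuation criterion closes the argument. Suppose toward a contradiction that $r' \leq \gamma_0$; then $v \geq 2 > 0$ on all of $\left[1, r'\right)$, and since $r' \leq \gamma_0 < m + 1$ lies in the interior of $\cllml$, Lemma \ref{lem:continue} (with $\epsilon = 2$) continues $v$ strictly beyond $r'$, contradicting maximality. Hence $\gamma_0 < r'$, i.e. $\left[1, \gamma_0\right] \subseteq \left[1, r'\right)$, and the bound $v \geq 2$ extends up to and including $\gamma_0$. Finally $v' > 0$ on all of $\left[1, \gamma_0\right]$: on $\left[1, \gamma_0\right)$ this is the strict inequality above, while at the endpoint $p\left(\gamma_0\right)\gamma_0 = 0$ gives $v'\left(\gamma_0\right) = 2\sqrt{2}\sqrt{v\left(\gamma_0\right)} > 0$ since $v\left(\gamma_0\right) \geq 2$. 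The case in which the maximal interval is all of $\cllml$ is handled identically: $\gamma_0 < m + 1$ gives $\left[1, \gamma_0\right] \subseteq \cllml$ automatically, and the same bootstrap yields $v' > 0$ on $\left[1, \gamma_0\right]$.

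The main obstacle I anticipate is the apparent circularity in establishing positivity of $v$ and positivity of $v'$ at the same time. The resolution is exactly the bootstrap above: because $p\left(\gamma\right)\gamma > 0$ throughout $\left[1, \gamma_0\right)$, the two properties decouple, with $v' > 0$ becoming automatic whenever $v > 0$, so the strictly positive initial value $v\left(1\right) = 2$ self-sustains a uniform lower bound. That uniform lower bound is precisely the hypothesis required to invoke Lemma \ref{lem:continue}, which is what lets the solution reach $\gamma_0$ rather than breaking down earlier.
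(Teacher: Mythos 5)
Your proposal is correct and follows essentially the same route as the paper: local existence and uniqueness from standard ODE theory (the right hand side is Lipschitz in $v$ near $\gamma = 1$ since $v\left(1\right) = 2$ keeps $\sqrt{v}$ away from its singularity), then the sign of $p\left(\gamma\right)\gamma$ on $\left[1, \gamma_0\right)$ from Lemma \ref{lem:polynomconesing} combined with the continuation criterion of Lemma \ref{lem:continue} to rule out $r' \leq \gamma_0$. The only micro-difference is that your open-set bootstrap for keeping $v > 0$ can be shortcut as in the paper, since $v \geq 0$ holds automatically wherever the solution exists (the term $\sqrt{v}$ must be defined), so $p\left(\gamma\right)\gamma \geq 0$ immediately gives $v' \geq 0$ and hence $v \geq v\left(1\right) = 2$.
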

\begin{proof}
As noted in Subsection \ref{subsec:AnalysisODEBVPConeSing}, $v \left(1\right) = 2$ will automatically imply $v' \left(1\right) = 2 \left(\beta_0 + 2\right) > 4$ in (\ref{eq:ODEIVPConeSing}). And then since $\sqrt{v} \geq \sqrt{2} > 0$ near $\gamma = 1$, so the right hand side of (\ref{eq:ODEIVPConeSing}) is continuous in $\left(\gamma, v\right)$ and Lipschitz in $v$ locally in a neighbourhood of $\gamma = 1$ and by standard ODE theory there exists a unique $\mathcal{C}^1$ solution $v$ to (\ref{eq:ODEIVPConeSing}) on $\left[1, r\right)$ for some $r \in \left(1, m + 1\right]$. Since $v' \left(1\right) = 2 \left(\beta_0 + 2\right) > 0$ so this $r \in \left(1, m + 1\right]$ can be chosen so that $v' > 0$ on $\left[1, r\right)$. \\
Let $\left[1, r'\right) \subseteq \left[1, m + 1\right]$ be the maximal interval of existence of $v$. If $\gamma_0 \geq r'$ then by Lemma \ref{lem:polynomconesing}, $p \left(\gamma\right) \gamma \geq 0$ on $\left[1, r'\right)$ and hence $v' \geq 0$ on $\left[1, r'\right)$. So $v \left(\gamma\right) \geq v \left(1\right) = 2 > 0$ on $\left[1, r'\right)$ and by Lemma \ref{lem:continue}, $v$ can be continued beyond $r'$ contradicting the maximality of $r'$. So $\left[1, \gamma_0\right] \subseteq \left[1, r'\right)$ and $v' \geq 0$ on $\left[1, \gamma_0\right]$, but as $\sqrt{v \left(\gamma\right)} > 0$ on $\left[1, \gamma_0\right]$ so $v' > 0$ on $\left[1, \gamma_0\right]$. \\
If $\left[1, m + 1\right]$ is the maximal interval of existence of $v$ then by Lemma \ref{lem:polynomconesing}, $\left[1, \gamma_0\right] \subseteq \left[1, m + 1\right]$ and by the same arguments as in the above case, $v' > 0$ on $\left[1, \gamma_0\right]$.
\end{proof} \par
\begin{remark}
Note that $v' > 0$ on $\left[1, \gamma_0\right]$ actually implies $v' > 0$ on $\left[1, \gamma_0'\right) \subseteq \left[1, r'\right)$ (or $\left[1, \gamma_0'\right) \subseteq \left[1, m + 1\right]$) for some $\gamma_0' > \gamma_0$.
\end{remark} \par
Observe that if there exists a $\mathcal{C}^1$ solution $v$ to (\ref{eq:ODEIVPConeSing}) on any interval then $v \geq 0$ on the interval, but Lemma \ref{lem:positivity} below is saying that $v > 0$ on the interval.
\begin{lemma}[Positivity of Solutions]\label{lem:positivity}
For a given $\alpha \in \left(-\infty, \bo\right)$ let $v$ be the unique $\mathcal{C}^1$ solution to the ODE initial value problem (\ref{eq:ODEIVPConeSing}) existing on some interval containing $1$.
\begin{enumerate}
\item If $\left[1, r'\right) \subseteq \left[1, m + 1\right]$ is the maximal interval of existence of $v$ then $v \left(\gamma\right) > 0$ for all $\gamma \in \left[1, r'\right)$ and $\lim\limits_{\gamma \to r'} v \left(\gamma\right) = 0$ and $\lim\limits_{\gamma \to r'} v' \left(\gamma\right) < 0$. \label{itm:posit1}
\item If $\left[1, m + 1\right]$ is the maximal interval of existence of $v$ then $v \left(\gamma\right) > 0$ for all $\gamma \in \left[1, m + 1\right]$. \label{itm:posit2}
\end{enumerate}
\end{lemma}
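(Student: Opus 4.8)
The whole argument hinges on a single observation about the ODE in (\ref{eq:ODEIVPConeSing}) at a hypothetical zero of $v$: if $v\left(\gamma^*\right) = 0$ at some $\gamma^*$ lying in the interval of existence, then the $\sqrt{v}$ term drops out and $v'\left(\gamma^*\right) = p\left(\gamma^*\right)\gamma^*$. Since Lemma \ref{lem:firstexist} already guarantees $v > 0$ on $\left[1, \gamma_0\right]$, any such $\gamma^*$ must satisfy $\gamma^* > \gamma_0$, and there Lemma \ref{lem:polynomconesing} gives $p\left(\gamma^*\right)\gamma^* < 0$, hence $v'\left(\gamma^*\right) < 0$. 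Because any $\mathcal{C}^1$ solution of (\ref{eq:ODEIVPConeSing}) is forced to satisfy $v \geq 0$ (otherwise $\sqrt{v}$ is undefined), a zero at which $v' < 0$ cannot have the solution defined immediately to its right. So the plan is to exploit this to show that the \emph{first} zero of $v$, if one exists at all, can only occur at the right endpoint of the maximal interval of existence.

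For item (\ref{itm:posit1}), I would argue by contradiction. Set $\gamma^* = \inf \left\lbrace \gamma \in \left[1, r'\right) : v\left(\gamma\right) = 0 \right\rbrace$ whenever this set is nonempty; then $\gamma^* < r'$, $v\left(\gamma^*\right) = 0$, and $v > 0$ on $\left[1, \gamma^*\right)$, so by the paragraph above $v'\left(\gamma^*\right) < 0$, forcing $v < 0$ just past $\gamma^*$ and contradicting $v \geq 0$. Hence $v > 0$ on all of $\left[1, r'\right)$. For the endpoint behaviour I would first invoke the contrapositive of Lemma \ref{lem:continue}: since $\left[1, r'\right)$ is maximal, $v$ cannot be bounded below by a positive constant, so $\liminf_{\gamma \to r'} v = 0$. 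To promote this to an honest limit, I would use $r' > \gamma_0$ (from Lemma \ref{lem:firstexist}) to fix $\gamma_1 \in \left(\gamma_0, r'\right)$ and $c > 0$ with $p\left(\gamma\right)\gamma \leq -c$ on $\left[\gamma_1, r'\right)$; then on this subinterval $v < c^2/8$ forces $v' = 2 \sqrt{2} \sqrt{v} + p\left(\gamma\right)\gamma < 0$, so once $v$ dips below the threshold $c^2/8$ (which it does along the $\liminf$ sequence) it decreases monotonically up to $r'$. This yields $\lim_{\gamma \to r'} v = 0$, and consequently $\lim_{\gamma \to r'} v' = 2\sqrt{2}\cdot 0 + p\left(r'\right) r' = p\left(r'\right) r' < 0$, the last inequality because $r' \in \left(\gamma_0, m + 1\right]$.

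Item (\ref{itm:posit2}) then follows from the same first-zero mechanism. If $\cllml$ is the full closed maximal interval of existence, then the interior contradiction shows $v > 0$ on $\left[1, m + 1\right)$, and the endpoint value $v\left(m + 1\right)$ cannot be $0$ either: a zero at $m + 1$ (with $v'\left(m+1\right) = p\left(m+1\right)\left(m+1\right) < 0$) is precisely the terminating configuration already accounted for by item (\ref{itm:posit1}) with $r' = m + 1$, and so is excluded by the hypothesis that the solution persists up to and including $m + 1$. Thus $v > 0$ throughout $\cllml$.

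The hard part will be the promotion of $\liminf_{\gamma \to r'} v = 0$ to the genuine limit in item (\ref{itm:posit1}), i.e. ruling out oscillation of $v$ as $\gamma \to r'$; the monotonicity-below-threshold argument is what makes this go through, and it rests entirely on the sign of $p\left(\gamma\right)\gamma$ on $\left(\gamma_0, m + 1\right]$ provided by Lemma \ref{lem:polynomconesing}. The strict positivity itself, by contrast, is immediate from the sign of $v'$ at a first zero, and everything else is standard ODE continuation theory combined with Lemmas \ref{lem:continue}, \ref{lem:firstexist} and \ref{lem:polynomconesing}.
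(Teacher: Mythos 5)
Your proof is correct and follows essentially the same route as the paper's: interior zeros are excluded via the value of the ODE at a zero combined with Lemma \ref{lem:firstexist} and the sign of $p\left(\gamma\right)\gamma$ from Lemma \ref{lem:polynomconesing} (the paper phrases this through the local-minimum condition $v'\left(t_0\right) = 0$ and the uniqueness of the root $\gamma_0$, you through $v'\left(\gamma^*\right) = p\left(\gamma^*\right)\gamma^* < 0$ forcing negativity to the right), the infimum $0$ comes from the contrapositive of Lemma \ref{lem:continue}, and the endpoint case of item (\ref{itm:posit2}) is settled exactly as in the paper, by classifying a solution with $v\left(m+1\right) = 0$ and $v'\left(m+1\right) < 0$ as existing maximally on $\left[1, m+1\right)$. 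Your two local devices are if anything slightly cleaner than the paper's: eventual monotonicity below the threshold $\frac{c^2}{8}$ replaces the paper's Lipschitz (bounded-$v'$) argument for the existence of $\lim\limits_{\gamma \to r'} v\left(\gamma\right)$, and reading $\lim\limits_{\gamma \to r'} v'\left(\gamma\right) = p\left(r'\right) r' < 0$ directly off the ODE replaces the paper's uniform-continuity argument for $v'$ followed by exclusion of the limit value $0$.
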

\begin{proof}
\begin{case*}(\ref{itm:posit1}) \hspace{2pt}
If $\left[1, r'\right)$ is the maximal interval of existence of $v$ then by Lemma \ref{lem:firstexist}, $\gamma_0 \in \left[1, r'\right)$ and $v' \left(\gamma_0\right) > 0$ and also $v \left(\gamma_0\right) \geq v \left(1\right) = 2 > 0$. Since $v$ cannot be continued beyond $r'$ so by Lemma \ref{lem:continue}, $\inf\limits_{\gamma \in \left[1, r'\right)} v \left(\gamma\right) = 0$. If $t_0 \in \left(1, r'\right)$ is such that $v \left(t_0\right) = 0$ then $t_0$ must be a point of local minimum of $v$ and so $v' \left(t_0\right) = 0$ which will imply $p \left(t_0\right) t_0 = 0$ (from the ODE in (\ref{eq:ODEIVPConeSing})) and then $t_0 = \gamma_0$ (by the uniqueness of $\gamma_0$) which contradicts the first assertion above. So $v \left(\gamma\right) > 0$ for all $\gamma \in \left[1, r'\right)$. \\
Now since $v$ and $p \left(\gamma\right) \gamma$ are bounded on $\left[1, r'\right)$, from the expression of $v'$ in (\ref{eq:ODEIVPConeSing}) we get $v'$ is bounded, thereby implying $v$ is Lipschitz on $\left[1, r'\right)$. So $\lim\limits_{\gamma \to r'} v \left(\gamma\right)$ exists and as $v > 0$ on $\left[1, r'\right)$ so $\lim\limits_{\gamma \to r'} v \left(\gamma\right) = \inf\limits_{\gamma \in \left[1, r'\right)} v \left(\gamma\right) = 0$. \\
Now as $\sqrt{\cdot}$ on $\left[0, \infty\right)$, $v$ on $\left[1, r'\right)$ and $p \left(\gamma\right) \gamma$ on $\left[1, m + 1\right]$ are uniformly continuous so from (\ref{eq:ODEIVPConeSing}), $v'$ is uniformly continuous on $\left[1, r'\right)$ and so $\lim\limits_{\gamma \to r'} v' \left(\gamma\right)$ exists. Since $\lim\limits_{\gamma \to r'} v \left(\gamma\right) = 0$ and $v > 0$ on $\left[1, r'\right)$ so $\lim\limits_{\gamma \to r'} v' \left(\gamma\right) \leq 0$. If $\lim\limits_{\gamma \to r'} v' \left(\gamma\right) = 0$ then from (\ref{eq:ODEIVPConeSing}) we will get $r'$ is a root of $p \left(\gamma\right) \gamma$ which is not possible by Lemma \ref{lem:firstexist}. So $\lim\limits_{\gamma \to r'} v' \left(\gamma\right) < 0$.
\end{case*}
\begin{case*}(\ref{itm:posit2}) \hspace{2pt}
If $\left[1, m + 1\right]$ is the maximal interval of existence of $v$ then $v \geq 0$ on $\left[1, m + 1\right]$ and $v \left(1\right) = 2 > 0$ and from Lemma \ref{lem:firstexist}, $v' \left(\gamma_0\right) > 0$ and $v \left(\gamma_0\right) > 0$. So by the same argument as in Case (\ref{itm:posit1}), there cannot exist a $t_0 \in \left(1, m + 1\right)$ such that $v \left(t_0\right) = 0$. So $v \left(\gamma\right) > 0$ for all $\gamma \in \left[1, m + 1\right)$. \\
Let if possible $v \left(m + 1\right) = 0$. Since $v \geq 0$ and $v$ is $\mathcal{C}^1$ on $\left[1, m + 1\right]$ so $v' \left(m + 1\right) \leq 0$. If $v' \left(m + 1\right) = 0$ then from (\ref{eq:ODEIVPConeSing}) we will get $m + 1$ is a root of $p \left(\gamma\right) \gamma$ which is not possible by Lemma \ref{lem:polynomconesing}. So $v' \left(m + 1\right) < 0$ i.e. $v$ is strictly decreasing in a neighbourhood of $m + 1$. Now $v$ is the $\mathcal{C}^1$ solution of (\ref{eq:ODEIVPConeSing}) and $v > 0$ on $\left[1, m + 1\right)$, and for $v$ to be extendable as the $\mathcal{C}^1$ solution to (\ref{eq:ODEIVPConeSing}) on an interval strictly containing $\left[1, m + 1\right)$ we must have $v \geq 0$ on the larger interval which will not be possible with $v' \left(m + 1\right) < 0$. So in that case, $v$ will exist as the $\mathcal{C}^1$ solution of (\ref{eq:ODEIVPConeSing}) maximally on $\left[1, m + 1\right)$, a contradiction to the hypothesis. So $v \left(m + 1\right) > 0$ and hence $v \left(\gamma\right) > 0$ for all $\gamma \in \left[1, m + 1\right]$.
\end{case*}
\end{proof} \par
Before proceeding further let us observe the following about the regularity of the $\mathcal{C}^1$ solutions of the ODE initial value problem (\ref{eq:ODEIVPConeSing}):
\begin{corollary}[Smoothness and Real Analyticity of Solutions]\label{cor:InfinitDiff}
Let $v$ be the $\mathcal{C}^1$ solution of the ODE initial value problem (\ref{eq:ODEIVPConeSing}) on a non-degenerate interval $\mathsf{J} \subseteq \cllml$. Then $v'$ is bounded (thus $v$ is Lipschitz) on $\mathsf{J}$ and $v'$ is uniformly continuous on $\mathsf{J}$, and $v^{\left(i\right)}$ exists on $\mathsf{J}$ for all $i \in \mathbb{N}_{\geq 2}$, thereby $v$ is smooth (i.e. $\mathcal{C}^\infty$) on $\mathsf{J}$. For all $i \in \mathbb{N}_{\geq 2}$ if $\inf\limits_{\mathsf{J}} v > 0$ then $v^{\left(i\right)}$ is bounded on $\mathsf{J}$ and if $\inf\limits_{\mathsf{J}} v = 0$ then $v^{\left(i\right)}$ is unbounded on $\mathsf{J}$. Further $v$ is in fact $\mathcal{C}^\omega$ (i.e. real analytic) on $\mathsf{J}$.
\end{corollary}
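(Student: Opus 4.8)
The plan is to treat the four assertions in turn, using throughout that the right-hand side $F \left(\gamma, v\right) = 2 \sqrt{2} \sqrt{v} + p \left(\gamma\right) \gamma$ of the ODE in (\ref{eq:ODEIVPConeSing}) is smooth (indeed real analytic) in $\left(\gamma, v\right)$ on the open half-space $\left\lbrace v > 0 \right\rbrace$, with the only possible failure of regularity occurring where $v = 0$. For the boundedness of $v'$ I would combine the a priori bounds $0 \leq v \leq K$ on $\mathsf{J}$ (the upper bound coming from the Gr\"onwall argument recalled just before Lemma \ref{lem:continue}, the lower bound from $v \geq 0$) with the fact that $p \left(\gamma\right) \gamma$ is a polynomial, hence bounded by some $l > 0$ on the compact interval $\cllml$: the ODE then gives $\left\lvert v' \right\rvert \leq 2 \sqrt{2} \sqrt{K} + l$ on $\mathsf{J}$, so $v$ is Lipschitz. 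For the uniform continuity of $v'$ I would note that $v$ is Lipschitz (hence uniformly continuous) on $\mathsf{J}$, that $\sqrt{\cdot}$ is uniformly continuous on $\left[0, K\right]$, and that $p \left(\gamma\right) \gamma$ is uniformly continuous on $\cllml$; then $v' = 2 \sqrt{2} \sqrt{v} + p \left(\gamma\right) \gamma$ is a sum of uniformly continuous functions.

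For smoothness I would run a bootstrapping induction confined to the region where $v > 0$. By Lemma \ref{lem:positivity}, $v \left(\gamma\right) > 0$ at every point of $\mathsf{J}$ (with $v \to 0$ only in the limit at the right endpoint $r'$ in the case $\mathsf{J} = \left[1, r'\right)$ maximal). Wherever $v > 0$, $\sqrt{v}$ has exactly the regularity of $v$, so the ODE propagates regularity: if $v \in \mathcal{C}^k$ and $v > 0$ then $F \left(\gamma, v \left(\gamma\right)\right) \in \mathcal{C}^k$ and hence $v \in \mathcal{C}^{k+1}$; starting from $v \in \mathcal{C}^1$ this yields $v \in \mathcal{C}^\infty$ on $\mathsf{J}$, so that $v^{\left(i\right)}$ exists for every $i \in \N_{\geq 2}$. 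To prepare for the boundedness dichotomy I would record the singular normal form obtained by differentiating the ODE repeatedly: an induction shows that for each $i \in \N_{\geq 2}$ one has $v^{\left(i\right)} = \sum_{k = 0}^{2 i - 3} a_{i, k} \left(\gamma\right) v^{-k/2}$, where each $a_{i, k}$ is a polynomial in $\gamma$ built from derivatives of $p \left(\gamma\right) \gamma$, and the top (most singular) coefficient is $a_{i, 2 i - 3} = c_i \left(p \left(\gamma\right) \gamma\right)^{i - 1}$ with $c_i \neq 0$ a numerical constant. The base case $v'' = 4 + \left(p \left(\gamma\right) \gamma\right)' + \sqrt{2} \, p \left(\gamma\right) \gamma \, v^{-1/2}$ (so $c_2 = \sqrt{2}$) is immediate, and in the inductive step the most singular contribution to the derivative of the leading term — obtained by substituting $v' = 2 \sqrt{2} \sqrt{v} + p \left(\gamma\right) \gamma$ and retaining the part proportional to $p \left(\gamma\right) \gamma$ — lowers the exponent $-k/2$ by one and multiplies the leading coefficient by the nonzero factor $-\tfrac{2 i - 3}{2} \, p \left(\gamma\right) \gamma$, giving the recursion $c_{i+1} = -\tfrac{2 i - 3}{2} c_i$.

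The fourth assertion then follows from this normal form. If $\inf_{\mathsf{J}} v = \epsilon > 0$, then each $v^{-k/2} \leq \epsilon^{-k/2}$ is bounded and the polynomial coefficients $a_{i, k}$ are bounded on $\cllml$, so $v^{\left(i\right)}$ is bounded. If instead $\inf_{\mathsf{J}} v = 0$ — which by Lemma \ref{lem:positivity} happens exactly when $\mathsf{J} = \left[1, r'\right)$ is the maximal interval and $\lim_{\gamma \to r'} v = 0$ — then I would argue that as $\gamma \to r'$ the leading term $a_{i, 2 i - 3} \left(\gamma\right) v^{-(2 i - 3)/2}$ dominates all the lower-order terms and blows up; this uses that $p \left(r'\right) r' \neq 0$, which holds because passing to the limit in the ODE gives $\lim_{\gamma \to r'} v' = p \left(r'\right) r'$, and this limit is strictly negative by Lemma \ref{lem:positivity}. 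Hence $v^{\left(i\right)}$ is unbounded on $\mathsf{J}$.

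Finally, for real analyticity I would invoke the classical theorem that a first-order ODE whose right-hand side is real analytic in $\left(\gamma, v\right)$ near a point admits a real analytic solution through that point. Since $F \left(\gamma, v\right) = 2 \sqrt{2} \sqrt{v} + p \left(\gamma\right) \gamma$ is real analytic on $\left\lbrace v > 0 \right\rbrace$ and $v \left(\gamma_*\right) > 0$ for every $\gamma_* \in \mathsf{J}$ by Lemma \ref{lem:positivity}, the unique solution is real analytic in a neighbourhood of each $\gamma_*$, and letting $\gamma_*$ range over $\mathsf{J}$ gives $v \in \mathcal{C}^\omega$ on $\mathsf{J}$. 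The one point that genuinely needs care is the nonvanishing of the leading coefficient $c_i \left(p \left(\gamma\right) \gamma\right)^{i - 1}$ at the blow-up endpoint $r'$: the numerical factor $c_i$ stays nonzero through the induction precisely because $-\tfrac{2 i - 3}{2} \neq 0$ for $i \geq 2$, while $p \left(r'\right) r' \neq 0$ as just noted, so the leading singularity is never accidentally cancelled and the unboundedness holds at every order $i$. This is where I would concentrate the verification; the remaining steps are routine.
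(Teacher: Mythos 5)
Your proposal is correct and takes essentially the same route as the paper's own proof: positivity of $v$ (Lemma \ref{lem:positivity}) legitimizes $\frac{1}{\sqrt{v}}$ and repeated differentiation of the ODE, an induction on the differentiated ODE gives smoothness together with the bounded/unbounded dichotomy for $v^{\left(i\right)}$ according to whether $\inf\limits_{\mathsf{J}} v$ is positive or zero, and real analyticity follows from standard ODE theory since the right-hand side is real analytic in $\left(\gamma, v\right)$ on $\left\lbrace v > 0 \right\rbrace$. Your explicit singular normal form $v^{\left(i\right)} = \sum_{k=0}^{2i-3} a_{i,k} \left(\gamma\right) v^{-k/2}$ with nonvanishing leading coefficient $c_i \left(p \left(\gamma\right) \gamma\right)^{i-1}$ and the observation $p \left(r'\right) r' = \lim_{\gamma \to r'} v' \left(\gamma\right) < 0$ (so the leading singularity is never cancelled) simply make precise the induction and substitution that the paper's proof states in sketch form.
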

\begin{proof}
By Lemma \ref{lem:positivity}, $v > 0$ on $\mathsf{J}$ and so $\frac{1}{\sqrt{v}}$ makes sense. Considering the ODE in (\ref{eq:ODEIVPConeSing}) and its $\left(i - 1\right)$\textsuperscript{th} derivative, substituting the implied property of $\sqrt{\cdot}$, $v$ and $p \left(\gamma\right) \gamma$ in both the cases into these ODEs and using induction will give us the required results for all $i \in \mathbb{N}_{\geq 2}$. So $v$ will be smooth on $\mathsf{J}$. \\
Regarding the real analyticity of $v$, again as $v > 0$ on $\mathsf{J}$ so the right hand side of the ODE in (\ref{eq:ODEIVPConeSing}) is real analytic in $\left(\gamma, v\right)$ with the initial condition being $v \left(1\right) = 2$. So again by standard ODE theory $v$ is real analytic on $\mathsf{J}$.
\end{proof} \par
\begin{remark}
Corollary \ref{cor:InfinitDiff} will imply that the $\mathcal{C}^1$ solution $v$ of the ODE initial value problem (\ref{eq:ODEIVPConeSing}) is automatically $\mathcal{C}^\infty$ and furthermore even $\mathcal{C}^\omega$ on its maximal interval of existence. This is because by Lemma \ref{lem:positivity}, $v$ remains strictly positive as long as it exists as a solution to (\ref{eq:ODEIVPConeSing}), i.e. $v > 0$ on the maximal interval of existence. In particular if $\cllml$ is the maximal interval of existence of $v$ then $v \in \mathcal{C}^\omega \left[1, m + 1\right] \subseteq \mathcal{C}^\infty \left[1, m + 1\right]$.
\end{remark} \par
Observe one more thing that the solution $v$ cannot be constant on any non-degenerate interval $\mathsf{J}$, as that would imply (again from (\ref{eq:ODEIVPConeSing})) that the polynomial $p \left(\gamma\right) \gamma$ is a constant polynomial which is not possible by Lemma \ref{lem:polynomconesing}. We have the following result giving the precise behaviour of the solution $v$ with regards to its monotonicity and its points of local extrema:
\begin{corollary}\label{cor:incredecreextrm}
Let $v$ be the solution of the ODE initial value problem (\ref{eq:ODEIVPConeSing}) existing on some non-degenerate interval $\left[1, \tilde{r}\right) \subseteq \left[1, m + 1\right]$. Then $v$ can have at most one point of local extremum (which if it exists will always be a point of local maximum) in $\left[1, \tilde{r}\right)$. Furthermore,
\begin{enumerate}
\item If $\left[1, r'\right) \subseteq \left[1, m + 1\right]$ is the maximal interval of existence of $v$ then $v$ will have a unique point of local maximum $t \in \left[1, r'\right)$ with $t > \gamma_0$ (and no local minima in $\left[1, r'\right)$). \label{itm:extrm1}
\item If $\left[1, m + 1\right]$ is the maximal interval of existence of $v$ then either $v$ will have a unique point of local maximum $t \in \left[1, m + 1\right]$ with $t > \gamma_0$ (just like Case (\ref{itm:extrm1})) or $v$ will be strictly increasing on $\left[1, m + 1\right]$. \label{itm:extrm2}
\end{enumerate}
\end{corollary}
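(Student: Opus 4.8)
The plan is to read off the location and nature of the critical points of $v$ directly from the sign of the polynomial $p \left(\gamma\right) \gamma$ supplied by Lemma \ref{lem:polynomconesing}, combined with the positivity $v > 0$ from Lemma \ref{lem:positivity}. First I would note that at any point $t$ where $v' \left(t\right) = 0$, the ODE in (\ref{eq:ODEIVPConeSing}) forces $2 \sqrt{2} \sqrt{v \left(t\right)} = - p \left(t\right) t$; since $v \left(t\right) > 0$ the left-hand side is strictly positive, whence $p \left(t\right) t < 0$, and by Lemma \ref{lem:polynomconesing} this is equivalent to $t > \gamma_0$. Thus every critical point of $v$ must lie strictly to the right of $\gamma_0$.

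Next I would show that each such critical point is a strict local maximum. Differentiating the ODE gives $v'' = \sqrt{2}\, v' / \sqrt{v} + \left( p \left(\gamma\right) \gamma \right)'$, so at a critical point $t$ the first term drops out and $v'' \left(t\right) = \left( p \left(\gamma\right) \gamma \right)' \big\rvert_{\gamma = t}$. Because $t > \gamma_0 > \gamma_{0 0}$ and, by Lemma \ref{lem:polynomconesing} (using $B < 0$), the cubic $p \left(\gamma\right) \gamma$ is strictly decreasing past its unique critical point $\gamma_{0 0}$, we get $\left( p \left(\gamma\right) \gamma \right)' \left(t\right) < 0$ and hence $v'' \left(t\right) < 0$. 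So $t$ is a strict local maximum. For the uniqueness assertion I would argue by contradiction: if there were two critical points $t_1 < t_2$, both strict local maxima, then $v$ (being non-constant and continuous) would attain an interior minimum at some $c \in \left(t_1, t_2\right)$, giving $v' \left(c\right) = 0$, so $c$ would itself be a critical point and therefore a strict local maximum by the preceding paragraph, which is impossible. This proves the opening claim that $v$ has at most one local extremum and that it is necessarily a local maximum.

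The two itemized cases then follow by tracking the sign of $v'$, which is continuous (indeed $v$ is real analytic by Corollary \ref{cor:InfinitDiff}). By Lemma \ref{lem:firstexist} we have $v' > 0$ on $\left[1, \gamma_0\right]$, in particular $v' \left(\gamma_0\right) > 0$. In Case (\ref{itm:extrm1}), Lemma \ref{lem:positivity} gives $\lim_{\gamma \to r'} v' \left(\gamma\right) < 0$, so by the intermediate value theorem $v'$ has a zero $t \in \left(\gamma_0, r'\right)$; by the first part this $t$ is the unique point of local extremum, a local maximum with $t > \gamma_0$, and there are no local minima. In Case (\ref{itm:extrm2}), either $v'$ vanishes somewhere on $\left[1, m + 1\right]$ --- necessarily at some $t \in \left(\gamma_0, m + 1\right]$ since $v' > 0$ on $\left[1, \gamma_0\right]$ --- which reproduces the unique local maximum exactly as before, or $v'$ has no zero at all, in which case, being continuous and strictly positive at $\gamma_0$, it remains strictly positive throughout $\left[1, m + 1\right]$ and $v$ is strictly increasing.

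The main obstacle is the middle step: establishing that every critical point is a strict local maximum via the sign of $v'' \left(t\right) = \left( p \left(\gamma\right) \gamma \right)' \left(t\right)$, and converting this local information into the global uniqueness statement through the ``a minimum must lie between two maxima'' argument. Once the negativity of $\left( p \left(\gamma\right) \gamma \right)'$ on $\left(\gamma_0, m + 1\right]$ is harvested from Lemma \ref{lem:polynomconesing}, everything else reduces to bookkeeping with the sign of $v'$ at $\gamma_0$ and its limiting value at the right endpoint.
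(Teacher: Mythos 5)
Your proof is correct, and its central step takes a genuinely different route from the paper's. For the key ``at most one extremum'' claim you differentiate the ODE: at a critical point $t$ the relation $2\sqrt{2}\sqrt{v\left(t\right)} = -p\left(t\right)t > 0$ forces $t > \gamma_0$ (by the sign information in Lemma \ref{lem:polynomconesing}), and then $v''\left(t\right) = \left(p\left(\gamma\right)\gamma\right)'\left(t\right) < 0$ because the cubic's derivative $\frac{3B}{2}\gamma^2 + C$ is strictly decreasing ($B < 0$) and vanishes only at $\gamma_{00} < \gamma_0 < t$; so every critical point is a strict local maximum, and two strict maxima would trap an interior minimum which would itself be a critical point and hence a maximum --- a contradiction. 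The paper never differentiates the ODE. Instead it evaluates the ODE at critical points to get $v\left(t\right) = \frac{p\left(t\right)^2 t^2}{8}$ and $v\left(t_0\right) = \frac{p\left(t_0\right)^2 t_0^2}{8}$, shows that $\frac{p\left(\gamma\right)^2\gamma^2}{8}$ is strictly increasing on $\left[\gamma_0, m+1\right]$, and derives a contradiction from a maximum-then-minimum configuration: $v\left(t\right) < v\left(t_0\right)$ from the polynomial's monotonicity versus $v\left(t\right) > v\left(t_0\right)$ from $v$ decreasing on $\left[t, t_0\right]$. The paper's route needs the real analyticity of $v$ (Corollary \ref{cor:InfinitDiff}) to guarantee finitely many extrema, so that a first maximum and a subsequent minimum can be singled out with nothing in between; your second-derivative test automatically isolates the critical points and sidesteps that, requiring only smoothness of $v$ and the positivity $v > 0$ from Lemma \ref{lem:positivity}. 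The handling of the two itemized cases is essentially the same in both arguments: $v' > 0$ on $\left[1, \gamma_0\right]$ from Lemma \ref{lem:firstexist}, the limit $\lim_{\gamma \to r'} v'\left(\gamma\right) < 0$ from Lemma \ref{lem:positivity} producing the maximum in Case (\ref{itm:extrm1}), and the dichotomy on whether $v'$ vanishes in Case (\ref{itm:extrm2}).
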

\begin{proof}
By Corollary \ref{cor:InfinitDiff} $v$ is real analytic on $\left[1, \tilde{r}\right)$, and since $v$ is non-constant, the set of critical points of $v$ in $\left[1, \tilde{r}\right)$ is finite, and so $v$ can have only finitely many local extrema in $\left[1, \tilde{r}\right)$. Let if possible $v$ have two or more local extrema in $\left[1, \tilde{r}\right)$. Since $v \left(1\right) = 2$ and $v' \left(1\right) = 2 \left(\beta_0 + 2\right) > 4$, $v$ is strictly increasing in a \nbd{} of $1$. So given the existence of at least two local extrema, $v$ first has to attain a local maximum say $t$ and then must have a local minimum say $t_0$ in $\left[1, \tilde{r}\right)$ (with there being no other local extrema in between $1$ and $t$ as well as in between $t$ and $t_0$). In such a situation considering Lemma \ref{lem:firstexist}, we must have $1 < \gamma_0 < t < t_0 < \tilde{r}$ (as $v' > 0$ on $\left[1, \gamma_0\right]$). As $t$ and $t_0$ are critical points of $v$ we get $v\left(t\right) = \frac{p\left(t\right)^2 t^2}{8}$ and $v\left(t_0\right) = \frac{p\left(t_0\right)^2 t_0^2}{8}$ from the equation of $v'$ in (\ref{eq:ODEIVPConeSing}). \\
From Lemma \ref{lem:polynomconesing}, $\frac{p\left(\gamma\right)^2 \gamma^2}{8} \geq 0$ and $\gamma_0$ (which is the unique root of $p\left(\gamma\right) \gamma$) is also the unique root of $\frac{p\left(\gamma\right)^2 \gamma^2}{8}$ in $\cllml$, with $\frac{p\left(\gamma\right)^2 \gamma^2}{8}$ having a double root at $\gamma_0$. Apart from $\gamma_0$ (which will be a point of local minimum), $\frac{p\left(\gamma\right)^2 \gamma^2}{8}$ can have at most one other critical point (which would be a point of local maximum) in $\cllml{}$, and this point if it exists in $\cllml{}$ will be the same $\gamma_{0 0}$ (which was the only possible critical point of $p\left(\gamma\right) \gamma$ in $\cllml{}$) with $\gamma_{0 0} < \gamma_0$. Thus $\frac{p\left(\gamma\right)^2 \gamma^2}{8}$ is strictly increasing on $\left[\gamma_0, m + 1\right]$ and so we must have $\frac{p\left(t\right)^2 t^2}{8} < \frac{p\left(t_0\right)^2 t_0^2}{8}$. But on the contrary $v$ is strictly decreasing on $\left[t, t_0\right]$ and that gives us $v\left(t\right) > v\left(t_0\right)$. This contradicts the statement derived above that the values of the solution $v$ and the polynomial $\frac{p\left(\gamma\right)^2 \gamma^2}{8}$ are equal to each other at both the points $t$ and $t_0$. So $v$ can have at most one point of local extremum in $\left[1, \tilde{r}\right)$. Since $v$ is strictly increasing near $1$ and remains strictly increasing at least till $\gamma_0$, if $v$ has a point of local extremum in $\left[1, \tilde{r}\right)$ then clearly it will always be a point of local maximum and it will be strictly greater than $\gamma_0$.
\begin{case*}(\ref{itm:extrm1}) \hspace{2pt}
If $\left[1, r'\right) \subseteq \left[1, m + 1\right]$ is the maximal interval of existence of $v$ then from Lemmas \ref{lem:firstexist} and \ref{lem:positivity}, $v' > 0$ on $\left[1, \gamma_0\right] \subseteq \left[1, r'\right)$ and $v > 0$ on $\left[1, r'\right)$ and $\lim\limits_{\gamma \to r'} v \left(\gamma\right) = 0$ and $\lim\limits_{\gamma \to r'} v' \left(\gamma\right) < 0$. Here since $v$ is strictly increasing near $1$ and is strictly decreasing near $r'$, it must have at least one point of local maximum say $t$ in between $1$ and $r'$, and then by the very first assertion of Corollary \ref{cor:incredecreextrm} which we have already proven above, this point $t \in \left(1, r'\right)$ will be the only point of local extremum of $v$ and $t > \gamma_0$.
\end{case*}
\begin{case*}(\ref{itm:extrm2}) \hspace{2pt}
Let $\left[1, m + 1\right]$ be the maximal interval of existence of $v$ then again by Lemmas \ref{lem:firstexist} and \ref{lem:positivity}, $v' > 0$ on $\left[1, \gamma_0\right]$ and $v > 0$ on $\left[1, m + 1\right]$. Let us assume that $v$ is not strictly increasing on $\left[\gamma_0, m + 1\right]$ then $v$ will have at least one point of local maximum $t$ between $\gamma_0$ and $m + 1$ (as $v'\left(\gamma_0\right) > 0$). Then again as in Case (\ref{itm:extrm1}), $t \in \left(1, m + 1\right)$ will be the only point of local extremum of $v$ and $t > \gamma_0$.
\end{case*}
\end{proof} \par
We now have the following necessary and sufficient condition for the continuation of the solution to (\ref{eq:ODEIVPConeSing}) defined a priori on some interval:
\begin{theorem}[Criterion for Continuation of Solutions]\label{thm:continue}
For any $\alpha \in \left(-\infty, \bo\right)$ if $v$ is the smooth solution to (\ref{eq:ODEIVPConeSing}) defined on an interval $\left[1, \tilde{r}\right) \subseteq \left[1, m + 1\right]$ then:
\begin{enumerate}
\item $v$ can be continued beyond $\tilde{r}$ if and only if $\inf\limits_{\gamma \in \left[1, \tilde{r}\right)} v \left(\gamma\right) = \epsilon_1 > 0$ if and only if $\lim\limits_{\gamma \to \tilde{r}} v \left(\gamma\right) = \epsilon_2 > 0$. \label{itm:cont1}
\item $\left[1, \tilde{r}\right)$ is the maximal interval of existence of $v$ if and only if $\inf\limits_{\gamma \in \left[1, \tilde{r}\right)} v \left(\gamma\right) = 0$ if and only if $\lim\limits_{\gamma \to \tilde{r}} v \left(\gamma\right) = 0$. \label{itm:cont2}
\end{enumerate}
\end{theorem}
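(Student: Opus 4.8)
The plan is to prove part (\ref{itm:cont1}) as a cycle of three equivalent conditions — continuability beyond $\tilde{r}$, positivity of the infimum $\inf_{[1,\tilde{r})} v$, and positivity of the one-sided limit at $\tilde{r}$ — and then to obtain part (\ref{itm:cont2}) for free, by negating each of these three conditions and using the dichotomy that the solution is either continuable past $\tilde{r}$ or else $[1,\tilde{r})$ is its maximal interval of existence, together with the fact that $v \geq 0$ wherever it is defined (so that each of the infimum and the limit is either $0$ or strictly positive).

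The first preparatory step is to record that the one-sided limit $L := \lim_{\gamma \to \tilde{r}} v \left(\gamma\right)$ exists with $L \geq 0$. By Corollary \ref{cor:InfinitDiff} the derivative $v'$ is bounded on $\mathsf{J} = \left[1, \tilde{r}\right)$, so $v$ is Lipschitz and hence uniformly continuous there, which forces the limit to exist; alternatively this follows since Corollary \ref{cor:incredecreextrm} makes $v$ eventually monotone near $\tilde{r}$ and $v$ is bounded. Positivity (Lemma \ref{lem:positivity}) gives $L \geq 0$. The heart of the argument is then the identity $\inf_{[1,\tilde{r})} v = \min \left\lbrace 2, L \right\rbrace$. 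To prove it I would invoke Corollary \ref{cor:incredecreextrm}: $v$ has at most one local extremum on $\left[1,\tilde{r}\right)$, necessarily a maximum, and since $v \left(1\right) = 2$ with $v' \left(1\right) = 2 \left(\bo + 2\right) > 0$ the solution is strictly increasing near $1$. Thus either $v$ is strictly increasing throughout $\left[1,\tilde{r}\right)$, whence $\inf = v \left(1\right) = 2$ and $L \geq 2$; or $v$ strictly increases to a maximum at some $t$ and strictly decreases on $\left[t,\tilde{r}\right)$ down to $L$, whence the values of $v$ fill $\left[2, v \left(t\right)\right] \cup \left(L, v \left(t\right)\right)$ and $\inf = \min \left\lbrace 2, L \right\rbrace$. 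In both cases the identity holds, and since $2 > 0$ it yields immediately $\inf_{[1,\tilde{r})} v > 0 \iff L > 0$, i.e. the equivalence of the second and third conditions.

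To close the cycle I would use the two continuation-type lemmas. If $\inf_{[1,\tilde{r})} v = \epsilon_1 > 0$, then $v \geq \epsilon_1$ on $\left[1,\tilde{r}\right)$, so Lemma \ref{lem:continue} continues $v$ beyond $\tilde{r}$; this gives (infimum positive) $\Rightarrow$ (continuable). Conversely, if $v$ continues beyond $\tilde{r}$, then $v$ extends continuously and positively to $\tilde{r}$ by Lemma \ref{lem:positivity}, so $L = v \left(\tilde{r}\right) > 0$; this gives (continuable) $\Rightarrow$ (limit positive). Combined with the equivalence of the infimum and limit conditions established above, all three conditions of part (\ref{itm:cont1}) are mutually equivalent. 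Part (\ref{itm:cont2}) is then just the negation: ``$[1,\tilde{r})$ is maximal'' is the negation of ``continuable'', ``$\inf = 0$'' the negation of ``$\inf > 0$'' (as $v \geq 0$), and ``$L = 0$'' the negation of ``$L > 0$'' (as $L \geq 0$), so the three equivalences of part (\ref{itm:cont2}) follow from those of part (\ref{itm:cont1}).

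I expect the only genuinely substantive point to be the infimum identity, and within it the implication $L > 0 \Rightarrow \inf_{[1,\tilde{r})} v > 0$: a priori a solution could dip arbitrarily close to $0$ in the interior while recovering to a strictly positive limit at the endpoint, and it is precisely the single-maximum monotonicity profile furnished by Corollary \ref{cor:incredecreextrm} that excludes this. Everything else is direct bookkeeping of Lemma \ref{lem:continue} and Lemma \ref{lem:positivity}, so once the monotone structure is in hand the proof is essentially a matter of assembling the implications into a cycle.
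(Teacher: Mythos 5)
Your proof is correct, and its overall skeleton matches the paper's: Lemma \ref{lem:continue} gives (infimum positive) $\Rightarrow$ (continuable); Lemma \ref{lem:positivity} plus continuity of the extension gives (continuable) $\Rightarrow$ (limit positive); an infimum--limit equivalence closes the cycle; and your derivation of part (2) by negating part (1) is, in substance, the same extension-plus-continuity contradiction the paper uses to prove maximality when the limit vanishes. The one step where you genuinely diverge is the one you flag as substantive, namely $\inf_{\gamma \in \left[1, \tilde{r}\right)} v \left(\gamma\right) > 0 \iff \lim_{\gamma \to \tilde{r}} v \left(\gamma\right) > 0$. You obtain it from the single-maximum profile of Corollary \ref{cor:incredecreextrm}, which yields the sharp identity $\inf v = \min \left\lbrace 2, L \right\rbrace$; the paper instead dispatches this step citing only Lemma \ref{lem:positivity} and Corollary \ref{cor:InfinitDiff} (``it can be easily checked''), and the implicit argument is purely topological: once $v'$ is bounded the limit $L$ exists, and the scenario you worry about --- $v$ dipping arbitrarily close to $0$ in the interior while recovering to $L > 0$ at the endpoint --- is impossible without any monotonicity, since $v > 0$ and continuous forces $\min v > 0$ on each compact $\left[1, \tilde{r} - \delta\right]$, so points where $v$ is near $0$ would have to accumulate at $\tilde{r}$, forcing $\liminf_{\gamma \to \tilde{r}} v \left(\gamma\right) = 0$ and contradicting $L > 0$. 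So your route is valid but invokes heavier machinery (real analyticity and the extremum count) for a step that positivity and continuity already settle; what it buys is the explicit formula for the infimum in terms of $L$ and $v \left(1\right) = 2$, which is more information than the theorem requires.
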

\begin{proof}
From Lemma \ref{lem:positivity} and Corollary \ref{cor:InfinitDiff}, $v > 0$ and $v$ is Lipschitz on $\left[1, \tilde{r}\right)$ so $\inf\limits_{\gamma \in \left[1, \tilde{r}\right)} v \left(\gamma\right)$ and $\lim\limits_{\gamma \to \tilde{r}} v \left(\gamma\right)$ both exist and are non-negative. So again by using Lemma \ref{lem:positivity} and Corollary \ref{cor:InfinitDiff} it can be easily checked that either both $\inf\limits_{\gamma \in \left[1, \tilde{r}\right)} v \left(\gamma\right)$ and $\lim\limits_{\gamma \to \tilde{r}} v \left(\gamma\right)$ are simultaneously strictly positive or both are simultaneously zero. From Lemma \ref{lem:continue} we already have that if both are simultaneously positive then $v$ can be continued beyond $\tilde{r}$. \\
For proving the converse let $\inf\limits_{\gamma \in \left[1, \tilde{r}\right)} v \left(\gamma\right) = \lim\limits_{\gamma \to \tilde{r}} v \left(\gamma\right) = 0$. Let if possible $v$ be extendable as the smooth solution of (\ref{eq:ODEIVPConeSing}) to an interval $\left[1, r'\right)$ with $\left[1, \tilde{r}\right] \subseteq \left[1, r'\right) \subseteq \left[1, m + 1\right]$. Then by Lemma \ref{lem:positivity}, $v > 0$ on $\left[1, r'\right)$ and as $\tilde{r} \in \left[1, r'\right)$ and $v$ is continuous on $\left[1, r'\right)$ so $0 = \lim\limits_{\gamma \to \tilde{r}} v \left(\gamma\right) = v \left(\tilde{r}\right) > 0$, a contradiction. So $\left[1, \tilde{r}\right)$ is the maximal interval of existence of $v$.
\end{proof} \par
So if the smooth solution $v$ to (\ref{eq:ODEIVPConeSing}) cannot be defined on $\left[1, m + 1\right]$ then there exists a unique $\gamma_\star \in \left(1, m + 1\right]$ such that $\left[1, \gamma_\star\right)$ is the maximal interval of existence of $v$. \par
\begin{remark}
Lemma \ref{lem:positivity} and Theorem \ref{thm:continue} are together saying that the solution of (\ref{eq:ODEIVPConeSing}) continues to exist as long as it is strictly positive, but the moment it attains zero, it `breaks down' i.e. it cannot be continued further.
\end{remark} \par
So finally for every $\alpha \in \left(-\infty, \bo\right)$ considering the ODE initial value problem (\ref{eq:ODEIVPConeSing}) depending on $\alpha$ we have exactly one of the following two scenarios (as a consequence of Lemma \ref{lem:firstexist} and Theorem \ref{thm:continue}):
\begin{enumerate}
\item There exists a unique smooth solution $v_\alpha = v \left(\cdot; \alpha\right)$ on $\left[1, m + 1\right]$. \label{itm:case1}
\item There exists a unique smooth solution $v_\alpha = v \left(\cdot; \alpha\right)$ with maximal interval of existence $\left[1, \gamma_{\star, \alpha}\right)$ for a unique $\gamma_{\star, \alpha} = \gamma_\star \left(\alpha\right) \in \left(1, m + 1\right]$. \label{itm:case2}
\end{enumerate} \par
\begin{motivation}
In order to prove Theorem \ref{thm:mainconesing}, we will first show that the set of all $\alpha \in \left(-\infty, \bo\right)$, for which the condition (\ref{itm:case1}) above holds true, is precisely the interval $\left(M, \bo\right)$ for a unique $M < 0$, and then we will check the limits of $v_\alpha \left(m + 1\right)$ as $\alpha \to M^+$ (which will turn out to be $0$) and as $\alpha \to \bo^-$ (which will turn out to be some $b > 2 \left(m + 1\right)^2$) respectively to conclude that the range of the function $\left(M, \bo\right) \to \mathbb{R}$, $\alpha \mapsto v_\alpha \left(m + 1\right)$ is precisely the interval $\left(0, b\right)$, whence the fact that $2 \left(m + 1\right)^2 \in \left(0, b\right)$ should yield the existence of an $\alpha \in \left(M, \bo\right)$ for which $v_\alpha \left(m + 1\right) = 2 \left(m + 1\right)^2$. For doing this we will prove some preparatory results in the remainder of Subsection \ref{subsec:Proof1} and in Subsection \ref{subsec:Proof2}.
\end{motivation} \par
Let $\left( \mathcal{C} \left[1, m + 1\right], \left\lVert \cdot \right\rVert_\infty \right)$ be the Banach space of all continuous functions on $\left[1, m + 1\right]$ equipped with the supremum norm. For each $\alpha \in \left(-\infty, \bo\right)$ define $u \left(\cdot; \alpha\right) : \left[1, m + 1\right] \to \mathbb{R}$ as follows:
\begin{enumerate}
\item If the smooth solution $v_\alpha$ to (\ref{eq:ODEIVPConeSing}) exists on $\left[1, m + 1\right]$ then $u \left(\gamma; \alpha\right) = v_\alpha \left(\gamma\right)$ for all $\gamma \in \left[1, m + 1\right]$. \label{itm:defu1}
\item If the smooth solution $v_\alpha$ to (\ref{eq:ODEIVPConeSing}) has maximal interval of existence $\left[1, \gamma_{\star, \alpha}\right)$ then $u \left(\gamma; \alpha\right) = v_\alpha \left(\gamma\right)$ for all $\gamma \in \left[1, \gamma_{\star, \alpha}\right)$ and $u \left(\gamma; \alpha\right) = 0$ for all $\gamma \in \left[\gamma_{\star, \alpha}, m + 1\right]$. \label{itm:defu2}
\end{enumerate}
By Lemma \ref{lem:positivity}, $u \left(\cdot; \alpha\right)$ is continuous on $\left[1, m + 1\right]$ in the Case (\ref{itm:defu2}) above as well, and hence $u \left(\cdot; \alpha\right) \in \mathcal{C} \left[1, m + 1\right]$ in both the Cases (\ref{itm:defu1}) and (\ref{itm:defu2}) above. Thus we get a function $\Phi : \left(-\infty, \bo\right) \to \mathcal{C} \left[1, m + 1\right]$ defined as $\Phi \left(\alpha\right) = u \left(\cdot; \alpha\right)$ for all $\alpha \in \left(-\infty, \bo\right)$. It can be readily checked from (\ref{eq:ODEIVPConeSing}) that $\Phi$ is well-defined and injective. \par
\begin{motivation}
We want to prove that $\Phi$ is continuous and considering the pointwise partial order $\leq$ on $\mathcal{C} \left[1, m + 1\right]$, $\Phi$ is monotone increasing.
\end{motivation} \par
For a given $\alpha \in \left(-\infty, \bo\right)$ let $\gamma_{0, \alpha} = \gamma_0 \left(\alpha\right)$ be the unique root of the polynomial $p_\alpha \left(\gamma\right) \gamma = p \left(\gamma; \alpha\right) \gamma = B \left(\alpha\right) \frac{\gamma^3}{2} + C \left(\alpha\right) \gamma$ in $\left[1, m + 1\right]$ and similarly let $u_\alpha = u \left(\cdot; \alpha\right)$ be the function defined on $\left[1, m + 1\right]$ in the Cases (\ref{itm:defu1}) and (\ref{itm:defu2}) above and if it exists then let $t_\alpha = t \left(\alpha\right)$ be the unique point of local maximum of the solution $v_\alpha$ in its maximal interval of existence as given by Corollary \ref{cor:incredecreextrm}. \par
\begin{theorem}\label{thm:UnifConv}
Let $\left(\alpha_n\right) \to \alpha_0$ in $\left(-\infty, \bo\right)$ and $u_n = u_{\alpha_n}$ and $u_0 = u_{\alpha_0}$ on $\left[1, m + 1\right]$. Then there exists a subsequence $\left(u_{n_k}\right)$ of $\left(u_n\right)$ such that $\left(u_{n_k}\right) \to u_0$ uniformly on $\left[1, m + 1\right]$.
\end{theorem}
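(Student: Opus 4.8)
The plan is to prove this by a compactness (Arzel\`a--Ascoli) argument followed by an identification of the limit, the delicate point being the behaviour of the limit past the breakdown point of $v_{\alpha_0}$.

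First I would establish that the family $\left\lbrace u_n \right\rbrace$ is uniformly bounded and uniformly Lipschitz on $\cllml$. Since $\left(\alpha_n\right) \to \alpha_0$, the parameters $\alpha_n$ eventually lie in a bounded subset of $\left(-\infty, \bo\right)$, so by the expressions (\ref{eq:BCalphabeta0}) the coefficients $B\left(\alpha_n\right)$, $C\left(\alpha_n\right)$ are bounded, whence $\left\lvert p_{\alpha_n}\left(\gamma\right) \gamma \right\rvert \leq l$ for some $l > 0$ uniform in $n$ and $\gamma \in \cllml$. Feeding this into the Gr\"onwall estimate recorded after (\ref{eq:ODEIVPConeSing}) yields a uniform upper bound $K > 0$ with $0 \leq u_n \leq K$ on $\cllml$ for all $n$. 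On the interval where $u_n = v_{\alpha_n}$ the ODE in (\ref{eq:ODEIVPConeSing}) gives $\left\lvert u_n' \right\rvert \leq 2 \sqrt{2} \sqrt{K} + l =: L$, while $u_n \equiv 0$ on $\left[\gamma_{\star, \alpha_n}, m + 1\right]$; since $u_n$ is continuous across $\gamma_{\star, \alpha_n}$ (by Lemma \ref{lem:positivity}) it is globally $L$-Lipschitz. Arzel\`a--Ascoli then produces a subsequence $\left(u_{n_k}\right)$ converging uniformly to some $u_\infty \in \mathcal{C}\cllml$ with $u_\infty \geq 0$, $u_\infty\left(1\right) = 2$, and $u_\infty$ itself $L$-Lipschitz.

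Next I would identify $u_\infty$ with $u_0$ on the set where it is positive. Let $E = \left\lbrace \gamma \in \cllml : u_\infty\left(\gamma\right) > 0 \right\rbrace$. On any compact subinterval $\left[1, c\right]$ of the component of $E$ containing $1$ one has $u_\infty \geq \delta > 0$, so uniform convergence forces $u_{n_k} \geq \frac{\delta}{2} > 0$ there for large $k$; hence $v_{\alpha_{n_k}}$ has not yet broken down and $u_{n_k} = v_{\alpha_{n_k}}$ satisfies the integral form $u_{n_k}\left(\gamma\right) = 2 + \int_1^\gamma \left( 2 \sqrt{2} \sqrt{u_{n_k}\left(t\right)} + p_{\alpha_{n_k}}\left(t\right) t \right) d t$. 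Passing to the limit $k \to \infty$ --- using uniform convergence of $u_{n_k}$, uniform continuity of $\sqrt{\cdot}$ on $\left[0, K\right]$, and the continuous dependence of $B, C$ on $\alpha$ in (\ref{eq:BCalphabeta0}) --- shows that $u_\infty$ satisfies the same integral equation with parameter $\alpha_0$. Thus $u_\infty$ is a $\mathcal{C}^1$ (indeed, by Corollary \ref{cor:InfinitDiff}, smooth) solution of the ODE initial value problem (\ref{eq:ODEIVPConeSing}) for $\alpha_0$ on $\left[1, c\right]$, and uniqueness (Lemma \ref{lem:firstexist}) gives $u_\infty = v_{\alpha_0}$ there. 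Letting $c$ exhaust the component and using Lemma \ref{lem:positivity} and Theorem \ref{thm:continue} to pin its right endpoint, the component of $E$ containing $1$ is exactly the maximal positivity interval of $v_{\alpha_0}$, on which $u_\infty = v_{\alpha_0} = u_0$.

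Finally --- and this is the step I expect to be the main obstacle, since the square-root nonlinearity is not Lipschitz at the breakdown value $0$ so ordinary continuous dependence cannot be pushed across the breakdown point --- I would rule out any further component of $E$. Suppose $\left(a, b\right)$ were a component with $a \geq \gamma_{\star, \alpha_0} > \gamma_{0, \alpha_0}$ (the strict inequality coming from Lemma \ref{lem:firstexist}); by continuity $u_\infty\left(a\right) = 0$, and the integral-equation argument above shows $u_\infty$ solves the ODE on $\left(a, b\right)$, so $u_\infty'\left(\gamma\right) \to 2 \sqrt{2} \sqrt{u_\infty\left(a\right)} + p_{\alpha_0}\left(a\right) a = p_{\alpha_0}\left(a\right) a$ as $\gamma \to a^+$. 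But $a > \gamma_{0, \alpha_0}$ forces $p_{\alpha_0}\left(a\right) a < 0$ by Lemma \ref{lem:polynomconesing}, so $u_\infty$ is strictly decreasing just to the right of $a$ and hence becomes negative there, contradicting $u_\infty \geq 0$. Therefore $E$ has no component beyond $\gamma_{\star, \alpha_0}$, so $u_\infty = 0 = u_0$ on $\left[\gamma_{\star, \alpha_0}, m + 1\right]$ as well (in the case that $v_{\alpha_0}$ exists on all of $\cllml$ this last step is vacuous, since then $u_\infty = v_{\alpha_0} > 0$ throughout). Combining the two steps gives $u_\infty = u_0$ on all of $\cllml$, which completes the identification and hence the proof.
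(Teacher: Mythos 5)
Your proof is correct, and its identification step takes a genuinely different route from the paper's. The compactness stage uses the same toolkit as the paper --- boundedness of $B\left(\alpha_n\right), C\left(\alpha_n\right)$ from (\ref{eq:BCalphabeta0}), the Gr\"onwall bound, a uniform Lipschitz constant, Arzel\`a--Ascoli --- but you apply it in one stroke to the truncated functions $u_n$, bypassing the paper's division into Cases ($1$), ($2$), ($3$) (solutions global with positive infimum, global with zero infimum, breaking down), its reduction of $\left(\gamma_{\star,n}\right)$ to a monotone sequence with the further split ($3.1$)/($3.2$), and its Claim bounding $v_n\left(t_n\right) = \frac{p_n\left(t_n\right)^2 t_n^2}{8}$ via the unique-local-maximum structure of Corollary \ref{cor:incredecreextrm} (your Gr\"onwall bound on the interval of existence renders that claim unnecessary). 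Where the paper identifies the limit by proving $w\left(\sigma\right) = 0$ through the estimate (\ref{eq:Case3}) and then forces $w \equiv 0$ past $\sigma$ using $u_{n_k} \equiv 0$ on $\left[\gamma_{\star,n_k}, m + 1\right]$ together with monotone convergence of the breakdown points, you instead analyze the positivity set $E$ of the limit: the integral equation plus Theorem \ref{thm:continue} pins the component containing $1$ as exactly $\left[1, \gamma_{\star,\alpha_0}\right)$ (or all of $\left[1, m + 1\right]$), and the sign of $p_{\alpha_0}\left(\gamma\right)\gamma$ past $\gamma_{0,\alpha_0}$ (Lemma \ref{lem:polynomconesing}, with the strict inequality $\gamma_{\star,\alpha_0} > \gamma_{0,\alpha_0}$ from Lemma \ref{lem:firstexist}) kills any later component --- a clean local argument that correctly isolates the only place the non-Lipschitz square root could bite, and that handles mixed sequences (some $v_n$ global, some breaking down) without first passing to a behaviourally homogeneous subsequence. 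What the paper's longer route buys is granularity that is reused downstream: Theorem \ref{thm:scrA} and Corollary \ref{cor:final} cite Theorem \ref{thm:UnifConv} Cases ($2$), ($3$), ($3.1$) and ($3.2$) directly, and in particular the convergence $\left(\gamma_{\star,n_k}\right) \to \gamma_{\star,0}$ established inside the paper's Case ($3$) is invoked later even though it is not part of the theorem's statement; your argument does not produce that fact as written, though it is recoverable with a small supplement (run your endpoint sign argument on the $u_{n_j}$ themselves near $\gamma_{\star,\alpha_0}$ to get $\limsup\limits_{k \to \infty} \gamma_{\star,n_k} \leq \gamma_{\star,\alpha_0}$, the reverse inequality following from uniform convergence and positivity of $u_0$ on $\left[1, \gamma_{\star,\alpha_0}\right)$).
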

\begin{proof}
Let $v_n = v_{\alpha_n}$ and $v_0 = v_{\alpha_0}$, and $p_n \left(\gamma\right) \gamma = p_{\alpha_n} \left(\gamma\right) \gamma$ and $p_0 \left(\gamma\right) \gamma = p_{\alpha_0} \left(\gamma\right) \gamma$. Then from the expressions (\ref{eq:BCalphabeta0}), $\left(B \left(\alpha_n\right)\right) \to B \left(\alpha_0\right)$ and $\left(C \left(\alpha_n\right)\right) \to C \left(\alpha_0\right)$, and hence $\left( p_n \left(\gamma\right) \gamma \right) \to p_0 \left(\gamma\right) \gamma$ uniformly on $\left[1, m + 1\right]$. Let $\gamma_{0, n} = \gamma_{0, \alpha_n} = \sqrt{-\frac{2 C \left(\alpha_n\right)}{B \left(\alpha_n\right)}}$ and $\gamma_{0, 0} = \gamma_{0, \alpha_0} = \sqrt{-\frac{2 C \left(\alpha_0\right)}{B \left(\alpha_0\right)}}$ be the roots of $p_n \left(\gamma\right) \gamma$ and $p_0 \left(\gamma\right) \gamma$ respectively in $\left[1, m + 1\right]$ given by Lemma \ref{lem:polynomconesing}, then clearly $\left( \gamma_{0, n} \right) \to \gamma_{0, 0}$. Also note that $u_n \left(1\right) = v_n \left(1\right) = 2 = v_0 \left(1\right) = u_0 \left(1\right)$ and $u_n' \left(1\right) = v_n' \left(1\right) = 2 \left(\beta_0 + 2\right) = v_0' \left(1\right) = u_0' \left(1\right)$. After this the proof of Theorem \ref{thm:UnifConv} will be divided into the following three Cases:
\begin{case*}($1$) \hspace{2pt}
The solution $v_n$ exists on $\left[1, m + 1\right]$ for all $n \in \mathbb{N}$ and $\inf\limits_{\gamma \in \left[1, m + 1\right], n \in \mathbb{N}} v_n \left(\gamma\right) = \epsilon > 0$.
\end{case*}
{\noindent Here $u_n = v_n$ for all $n \in \mathbb{N}$. Since $\left( p_n \left(\gamma\right) \gamma \right)$ is uniformly norm bounded, substituting $\left\lvert p_n \left(\gamma\right) \gamma \right\rvert \leq l$ (for some $l > 0$) and $\sqrt{v_n} < v_n + 1$ for all $n \in \mathbb{N}$ in the expression for $v_n' = \left(v_n + 1\right)'$ in (\ref{eq:ODEIVPConeSing}) and using Gr\"onwall's inequality will yield a $K > 0$ such that $\epsilon \leq v_n \left(\gamma\right) \leq K$ for all $\gamma \in \left[1, m + 1\right]$ and for all $n \in \mathbb{N}$, thereby proving that $\left(v_n\right)$ is uniformly norm bounded on $\left[1, m + 1\right]$. Now substituting $\sqrt{v_n} \leq \sqrt{K}$ and $\left\lvert p_n \left(\gamma\right) \gamma \right\rvert \leq l$ in the expression for $v_n'$ in (\ref{eq:ODEIVPConeSing}) will give us an $R > 0$ such that $\left\lvert v_n' \left(\gamma\right) \right\rvert \leq R$ for all $\gamma \in \left[1, m + 1\right]$ and for all $n \in \mathbb{N}$, proving that $\left(v_n'\right)$ is also uniformly norm bounded and thus implying that $\left(v_n\right)$ is uniformly equicontinuous on $\left[1, m + 1\right]$. So by Arzel\`a-Ascoli theorem there exists a subsequence $\left(v_{n_k}\right)$ of $\left(v_n\right)$ converging uniformly on $\left[1, m + 1\right]$ to some $w \in \mathcal{C} \left[1, m + 1\right]$. As $\left( p_n \left(\gamma\right) \gamma \right)$ and $\left(\sqrt{v_{n_k}}\right)$ are uniformly convergent so $\left(v_{n_k}'\right)$ is uniformly convergent on $\left[1, m + 1\right]$ (again from (\ref{eq:ODEIVPConeSing})). Then by standard uniform convergence theory $w$ is differentiable and satisfies the ODE initial value problem (\ref{eq:ODEIVPConeSing}) for $\alpha = \alpha_0$ on $\left[1, m + 1\right)$. As $\inf\limits_{\gamma \in \left[1, m + 1\right], n \in \mathbb{N}} v_n \left(\gamma\right) = \epsilon > 0$ so $\inf\limits_{\gamma \in \left[1, m + 1\right)} w \left(\gamma\right) = \tilde{\epsilon} > 0$ and so by Theorem \ref{thm:continue}, $w$ is differentiable and satisfies (\ref{eq:ODEIVPConeSing}) on $\left[1, m + 1\right]$, and hence the solution $v_0$ exists on $\left[1, m + 1\right]$ and by uniqueness, $w = v_0$ and by definition, $u_0 = v_0$ as well. Thus $\left(u_{n_k}\right) \to u_0$ uniformly on $\left[1, m + 1\right]$.}
\begin{case*}($2$) \hspace{2pt}
The solution $v_n$ exists on $\left[1, m + 1\right]$ for all $n \in \mathbb{N}$ and $\inf\limits_{\gamma \in \left[1, m + 1\right], n \in \mathbb{N}} v_n \left(\gamma\right) = 0$.
\end{case*}
{\noindent Here also $u_n = v_n$ for all $n \in \mathbb{N}$. The Gr\"onwall's inequality argument as in Case ($1$) above using the uniform norm boundedness of $\left( p_n \left(\gamma\right) \gamma \right)$ will prove here that $0 \leq v_n \left(\gamma\right) \leq K$ i.e. $\left(v_n\right)$ is uniformly norm bounded on $\left[1, m + 1\right]$. Once again by substituting this in the ODE in (\ref{eq:ODEIVPConeSing}) with $\alpha = \alpha_n$, we can obtain $\left\lvert v_n' \left(\gamma\right) \right\rvert \leq R$ i.e. $\left(v_n'\right)$ will also be uniformly norm bounded on $\left[1, m + 1\right]$, and so there will exist a subsequence $\left(v_{n_k}\right) \to w \in \mathcal{C} \left[1, m + 1\right]$ uniformly on $\left[1, m + 1\right]$. By the same arguments as in Case ($1$), $\left(v_{n_k}'\right)$ is uniformly convergent on $\left[1, m + 1\right]$ and $w$ is differentiable and satisfies the ODE initial value problem (\ref{eq:ODEIVPConeSing}) for $\alpha = \alpha_0$ on $\left[1, m + 1\right)$. \\
If $v_n$ is strictly increasing on $\left[1, m + 1\right]$ for each $n \in \mathbb{N}$ then $v_n \left(\gamma\right) \geq v_n \left(1\right) = 2$ for all $\gamma \in \left[1, m + 1\right]$ which contradicts the hypothesis $\inf\limits_{\gamma \in \left[1, m + 1\right], n \in \mathbb{N}} v_n \left(\gamma\right) = 0$ given here in Case ($2$). Then by using Corollary \ref{cor:incredecreextrm} Case (\ref{itm:extrm2}), $v_n$ must have a unique point of local maximum $t_n = t_{\alpha_n} \in \left(1, m + 1\right)$ (and no local minima), and we can assume this to be true for each $n \in \mathbb{N}$ since our aim here was to find some uniformly convergent subsequence of $\left(v_n\right)$. So $\inf\limits_{\gamma \in \left[1, m + 1\right]} v_n \left(\gamma\right) = \min\limits_{\gamma \in \left[1, m + 1\right]} v_n \left(\gamma\right) = \min \left\lbrace v_n \left(1\right), v_n \left(m + 1\right) \right\rbrace = \min \left\lbrace 2, v_n \left(m + 1\right) \right\rbrace$. As $\inf\limits_{\gamma \in \left[1, m + 1\right], n \in \mathbb{N}} v_n \left(\gamma\right) = 0$, we must have $\inf\limits_{n \in \mathbb{N}} v_n \left(m + 1\right) = 0$. As $\left(v_{n_k}\right) \to w$ uniformly on $\left[1, m + 1\right]$ (which we have gotten above for Case ($2$)) so $w \left(m + 1\right) = 0$ and hence by Theorem \ref{thm:continue}, $\left[1, m + 1\right)$ is the maximal interval of existence of $w$ as the smooth solution to (\ref{eq:ODEIVPConeSing}) with $\alpha = \alpha_0$, and so $w = v_0 = u_0$ on $\left[1, m + 1\right)$ and $w = u_0$ on $\left[1, m + 1\right]$ by continuity. With this, $\left(u_{n_k}\right) \to u_0$ uniformly on $\left[1, m + 1\right]$.}
\begin{case*}($3$) \hspace{2pt}
The solution $v_n$ has maximal interval of existence $\left[1, \gamma_{\star, n}\right)$ with $\gamma_{\star, n} = \gamma_{\star, \alpha_n} \in \left(1, m + 1\right]$ for all $n \in \mathbb{N}$. Without loss of generality $\left(\gamma_{\star, n}\right)$ is a monotone sequence converging to some $\sigma \in \left[1, m + 1\right]$.
\end{case*}
{\noindent Here $u_n = v_n$ on $\left[1, \gamma_{\star, n}\right)$ and $u_n = 0$ on $\left[\gamma_{\star, n}, m + 1\right]$ for all $n \in \mathbb{N}$. As was noted in the beginning, the polynomials $\left( p_n \left(\gamma\right) \gamma \right) \to p_0 \left(\gamma\right) \gamma$ uniformly on $\left[1, m + 1\right]$ and their respective roots $\left( \gamma_{0, n} \right) \to \gamma_{0, 0}$. By Lemmas \ref{lem:polynomconesing} and \ref{lem:firstexist}, $1 < \gamma_{0,n} < \gamma_{\star,n} \leq m + 1$ for all $n \in \mathbb{N}$ and taking limits as $n \to \infty$ we get $1 < \gamma_{0,0} \leq \sigma \leq m + 1$ ($\gamma_{0,0} > 1$ by Lemma \ref{lem:polynomconesing}) and specifically $\sigma \in \left(1, m + 1\right]$. By Corollary \ref{cor:incredecreextrm} Case (\ref{itm:extrm1}), $v_n$ has a unique point of local maximum $t_n = t_{\alpha_n} \in \left(1, \gamma_{\star, n}\right)$.
\begin{claim*}
There exists a $\tilde{K} > 0$ such that for each $n \in \mathbb{N}$ we have $v_n \left(t_n\right) \leq \tilde{K}$.
\end{claim*}
{\noindent For any $n \in \mathbb{N}$, as $t_n$ is a critical point of $v_n$ so $v_n'\left(t_n\right) = 0$ which will imply $v_n\left(t_n\right) = \frac{p_n\left(t_n\right)^2 t_n^2}{8}$ (from the equation in (\ref{eq:ODEIVPConeSing}) with $\alpha = \alpha_n$), and as $\left(p_n\left(\gamma\right)\gamma\right)$ is uniformly norm bounded on $\left[1, m + 1\right]$ in all the three Cases, so there exists a $\tilde{K} > 0$ such that $v_n\left(t_n\right) = \frac{p_n\left(t_n\right)^2 t_n^2}{8} \leq \tilde{K}$ for all $n \in \mathbb{N}$.} \\
Looking at the definition of $u_n$ in Case ($3$) and using Lemma \ref{lem:positivity} and Corollary \ref{cor:incredecreextrm}, we will observe for each $n \in \mathbb{N}$ that $\max\limits_{\gamma \in \left[1, m + 1\right]} u_n \left(\gamma\right) = \sup\limits_{\gamma \in \left[1, \gamma_{\star, n}\right)} v_n \left(\gamma\right) = v_n \left(t_n\right) \leq \tilde{K}$, as $t_n$ is the unique global maximum of $v_n$ and hence also of $u_n$. Thus we see $0 \leq u_n \left(\gamma\right) \leq \tilde{K}$ for all $\gamma \in \left[1, m + 1\right]$ and for all $n \in \mathbb{N}$ i.e. $\left(u_n\right)$ is uniformly norm bounded on $\left[1, m + 1\right]$. By substituting the uniform norm bounds on $\left(\sqrt{v_n}\right)$ and $\left( p_n \left(\gamma\right) \gamma \right)$ in the expression for $v_n'$ in (\ref{eq:ODEIVPConeSing}) on $\left[1, \gamma_{\star, n}\right)$, we get an $\tilde{R} > 0$ such that for each $n \in \mathbb{N}$, $\left\lvert v_n' \left(\gamma\right) \right\rvert \leq \tilde{R}$ for all $\gamma \in \left[1, \gamma_{\star, n}\right)$, and so considering Lemma \ref{lem:positivity}, we also get $\left\lvert \lim\limits_{\gamma \to \gamma_{\star, n}} v_n' \left(\gamma\right) \right\rvert \leq \tilde{R}$. So by its definition here, $\left(u_n\right)$ is uniformly Lipschitz and hence uniformly equicontinuous on $\left[1, m + 1\right]$. So we can extract a subsequence $\left(u_{n_k}\right) \to w \in \mathcal{C} \left[1, m + 1\right]$ uniformly on $\left[1, m + 1\right]$. As $u_{n_k} \geq 0$ so $w \geq 0$.
\begin{claim*}
$w \left(\sigma\right) = 0$.
\end{claim*}
{\noindent Note that by Lemma \ref{lem:positivity}, $u_n \left(\gamma_{\star, n}\right) = 0$ for all $n \in \mathbb{N}$ and by the above arguments in Case ($3$), $\tilde{R}$ is the uniform Lipschitz constant for $\left(u_n\right)$ on $\left[1, m + 1\right]$. Then by considering the following estimates for any appropriately chosen $k \in \mathbb{N}$:
\begin{align}\label{eq:Case3}
\left\lvert w\left(\sigma\right) \right\rvert &\leq \left\lvert u_{n_k}\left(\sigma\right) - w\left(\sigma\right) \right\rvert + \left\lvert u_{n_k}\left(\gamma_{\star, n_k}\right) - u_{n_k}\left(\sigma\right) \right\rvert + \left\lvert u_{n_k}\left(\gamma_{\star, n_k}\right) \right\rvert \\
&\leq \left\lvert u_{n_k}\left(\sigma\right) - w\left(\sigma\right) \right\rvert + \tilde{R} \left\lvert \gamma_{\star, n_k} - \sigma \right\rvert \nonumber
\end{align}
it can be easily seen that $w \left(\sigma\right) = 0$.} \\
After this the proof of $w = u_0$ in Case ($3$) will depend upon whether $\left(\gamma_{\star, n}\right)$ is monotonically increasing or decreasing.
\begin{case*}($3.1$) \hspace{2pt}
$\left(\gamma_{\star, n}\right)$ decreases to $\sigma$.
\end{case*}
{\noindent So $\left[1, \sigma\right] = \bigcap\limits_{k \in \mathbb{N}} \left[1, \gamma_{\star,n_k}\right)$ and so each $v_{n_k}$ will satisfy the ODE initial value problem (\ref{eq:ODEIVPConeSing}) with $\alpha = \alpha_{n_k}$ on $\left[1, \sigma\right]$. But $v_{n_k} = u_{n_k}$ on $\left[1, \gamma_{\star, n_k}\right)$ and $\left(u_{n_k}\right) \to w$ uniformly on $\left[1, m + 1\right]$. So $\left( \sqrt{v_{n_k}} \right) \to \sqrt{w}$ uniformly on $\left[1, \sigma\right]$ and this will imply (again from (\ref{eq:ODEIVPConeSing}) with $\alpha = \alpha_{n_k}$) that $\left(v_{n_k}'\right)$ is uniformly convergent on $\left[1, \sigma\right]$. Hence $w$ is differentiable and satisfies (\ref{eq:ODEIVPConeSing}) for $\alpha = \alpha_0$ on $\left[1, \sigma\right)$. As $w \left(\sigma\right) = 0$, from Lemma \ref{lem:positivity} and Theorem \ref{thm:continue}, $\left[1, \sigma\right)$ is the maximal interval of existence of the solution $v_0 \left(= v_{\alpha_0}\right)$ of (\ref{eq:ODEIVPConeSing}), and $w = v_0$ on $\left[1, \sigma\right)$ and $\sigma = \gamma_{\star,0} = \gamma_{\star,\alpha_0}$. Now note that $u_{n_k} \equiv 0$ on $\left[\gamma_{\star, n_k}, m + 1\right]$ for each $k \in \mathbb{N}$ and so (the uniform limit) $w \equiv 0$ on $\left(\sigma, m + 1\right] = \bigcup\limits_{k \in \mathbb{N}} \left[\gamma_{\star,n_k}, m + 1\right]$, as $\left(\gamma_{\star, n}\right)$ is decreasing to $\sigma$. So by its definition, $w = u_0$ on $\left[1, m + 1\right]$ and we already have $\sigma = \gamma_{\star,0}$.}
\begin{case*}($3.2$) \hspace{2pt}
$\left(\gamma_{\star, n}\right)$ increases to $\sigma$.
\end{case*}
{\noindent So $\left[1, \sigma\right) = \bigcup\limits_{k \in \mathbb{N}} \left[1, \gamma_{\star,n_k}\right)$ and $\left[1, \gamma_{\star,n_k}\right) = \bigcap\limits_{j \geq k} \left[1, \gamma_{\star,n_j}\right)$ for each $k \in \mathbb{N}$. So for a fixed $k \in \mathbb{N}$, $v_{n_j}$ satisfies the ODE initial value problem (\ref{eq:ODEIVPConeSing}) with $\alpha = \alpha_{n_j}$ on $\left[1, \gamma_{\star,n_k}\right)$ for all $j \geq k$. By using the same set of arguments as in Case ($3.1$) for the subsequence $\left(v_{n_j}\right)_{j \geq k}$ converging uniformly to $w$ on $\left[1, \gamma_{\star,n_k}\right)$, we will see that $\left(v_{n_j}'\right)_{j \geq k}$ is uniformly convergent on $\left[1, \gamma_{\star,n_k}\right)$. So $w$ is differentiable and satisfies (\ref{eq:ODEIVPConeSing}) for $\alpha = \alpha_0$ on $\left[1, \gamma_{\star,n_k}\right)$, and as this holds true for each $k \in \mathbb{N}$ so $w$ satisfies (\ref{eq:ODEIVPConeSing}) for $\alpha = \alpha_0$ on $\left[1, \sigma\right) = \bigcup\limits_{k \in \mathbb{N}} \left[1, \gamma_{\star,n_k}\right)$. After this, the same arguments as in Case ($3.1$) will give us that $\sigma = \gamma_{\star,0} \left(= \gamma_{\star,\alpha_0}\right)$ and $w = v_0$ on $\left[1, \sigma\right)$ and $w \equiv 0$ on $\left[\sigma, m + 1\right] = \bigcap\limits_{k \in \mathbb{N}} \left[\gamma_{\star,n_k}, m + 1\right]$ (as $\left(\gamma_{\star, n}\right)$ is increasing to $\sigma$), thereby giving $w = u_0$ on $\left[1, m + 1\right]$.} \\
In both the Cases ($3.1$) and ($3.2$), $\left(u_{n_k}\right) \to u_0$ uniformly on $\left[1, m + 1\right]$ and $\left(\gamma_{\star,n_k}\right) \to \gamma_{\star,0}$.} \\
Since our aim was to find only a subsequence of $\left(u_n\right)$ which is uniformly convergent, the above Cases ($1$), ($2$) and ($3$) suffice in proving Theorem \ref{thm:UnifConv}.
\end{proof} \par
\begin{remark}
The Cases ($1$), ($2$) and ($3$) in Theorem \ref{thm:UnifConv} have given us a hint that the interval $\left(-\infty, \bo\right)$ (which is the range of the values of the parameter $\alpha$) may be expressed as the disjoint set union of the set of all $\alpha \in \left(-\infty, \bo\right)$ for which the solution $v_\alpha$ to (\ref{eq:ODEIVPConeSing}) exists on the whole of $\left[1, m + 1\right]$ with the set of all $\alpha \in \left(-\infty, \bo\right)$ for which $v_\alpha$ breaks down at $\gamma_{\star, \alpha} \in \left(1, m + 1\right]$, and that the first set is an open interval and the second one is a closed interval.
\end{remark}
\subsection{Second Part of the Proof}\label{subsec:Proof2}
\begin{motivation}
In Subsection \ref{subsec:Proof2} we will try to analyze the variation of the ODE initial value problem (\ref{eq:ODEIVPConeSing}) with respect to the parameter $\alpha \in \left(-\infty, \bo\right)$ by differentiating the ODE and its solution with respect to $\alpha$. This is going to be along the exact same lines as the ODE appearing in the analogous smooth higher \ext{} \kr{} problem seen in Pingali \cite{Pingali:2018:heK}; Section 2 and \cite{Sompurkar:2023:heKsmooth}; Subsection 3.2 with only some subtle details in the calculations and estimates with respect to the parameters changing over here.
\end{motivation} \par
We first do some calculations with the polynomial $p_\alpha \left(\gamma\right) \gamma = B \left(\alpha\right) \frac{\gamma^3}{2} + C \left(\alpha\right) \gamma$ on $\left[1, m + 1\right]$ (in the following three results which are in continuation of Lemma \ref{lem:polynomconesing}) which will be needed further in Subsection \ref{subsec:Proof2}. \par
\begin{lemma}\label{lem:polynom1conesing}
For each $\alpha \in \left(-\infty, \bo\right)$ define $P_\alpha \left(\gamma\right) = P \left(\gamma; \alpha\right) = \int\limits_1^\gamma p_\alpha \left(y\right) y d y$ for all $\gamma \in \left[1, m + 1\right]$. Then $P_\alpha \left(m + 1\right) = \frac{m \left(m + 2\right)}{2} \alpha$ and further $P_\alpha \left(\gamma\right) = B \left(\alpha\right) \frac{\gamma^4 - 1}{8} + C \left(\alpha\right) \frac{\gamma^2 - 1}{2} \geq \min \left\lbrace 0, \frac{m \left(m + 2\right)}{2} \alpha \right\rbrace$ for all $\gamma \in \left[1, m + 1\right]$.
\end{lemma}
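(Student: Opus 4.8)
The plan is to first obtain the closed form for $P_\alpha$ by integrating the cubic termwise. Since $p_\alpha \left(\gamma\right) = B \left(\alpha\right) \frac{\gamma^2}{2} + C \left(\alpha\right)$, we have $p_\alpha \left(y\right) y = B \left(\alpha\right) \frac{y^3}{2} + C \left(\alpha\right) y$, and integrating from $1$ to $\gamma$ gives at once $P_\alpha \left(\gamma\right) = B \left(\alpha\right) \frac{\gamma^4 - 1}{8} + C \left(\alpha\right) \frac{\gamma^2 - 1}{2}$, which is the middle expression in the statement and for which I need no more than the fundamental theorem of calculus. In particular $P_\alpha \left(1\right) = 0$.

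Next I would evaluate this at $\gamma = m + 1$. The key algebraic simplification is the pair of factorizations $\left(m + 1\right)^2 - 1 = m \left(m + 2\right)$ and $\left(m + 1\right)^4 - 1 = m \left(m + 2\right) \left(\left(m + 1\right)^2 + 1\right)$, which let me pull out a common factor and write $P_\alpha \left(m + 1\right) = \frac{m \left(m + 2\right)}{8} \left( B \left(\alpha\right) \left(\left(m + 1\right)^2 + 1\right) + 4 C \left(\alpha\right) \right)$. Substituting the expressions (\ref{eq:BCalphabeta0}) for $B \left(\alpha\right)$ and $C \left(\alpha\right)$, the terms carrying $\bo$ cancel identically and the bracket collapses to $4 \alpha$, yielding $P_\alpha \left(m + 1\right) = \frac{m \left(m + 2\right)}{2} \alpha$. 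I expect this cancellation of the $\bo$-dependent terms to be the only place in the whole argument that demands care; it is a short but slightly delicate computation rather than a genuine obstacle.

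For the lower bound I would argue purely from monotonicity, using nothing beyond the sign information already recorded in Lemma \ref{lem:polynomconesing}. Since $P_\alpha' \left(\gamma\right) = p_\alpha \left(\gamma\right) \gamma$, and since $\alpha < \bo$ forces $\boo = \bo - \alpha > 0$ (so that $B \left(\alpha\right) < 0$ and $C \left(\alpha\right) > 0$ and Lemma \ref{lem:polynomconesing} applies), the derivative $P_\alpha'$ is strictly positive on $\left[1, \gamma_0\right)$ and strictly negative on $\left(\gamma_0, m + 1\right]$, where $\gamma_0 \in \left(1, m + 1\right)$ is the unique root. Hence $P_\alpha$ strictly increases up to its global maximum at $\gamma_0$ and then strictly decreases, so on each of the two monotone pieces its minimum sits at an endpoint of $\cllml$, giving $\min\limits_{\cllml} P_\alpha = \min \left\lbrace P_\alpha \left(1\right), P_\alpha \left(m + 1\right) \right\rbrace$. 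Inserting $P_\alpha \left(1\right) = 0$ and the value $P_\alpha \left(m + 1\right) = \frac{m \left(m + 2\right)}{2} \alpha$ computed above, this minimum is exactly $\min \left\lbrace 0, \frac{m \left(m + 2\right)}{2} \alpha \right\rbrace$, establishing the claimed inequality $P_\alpha \left(\gamma\right) \geq \min \left\lbrace 0, \frac{m \left(m + 2\right)}{2} \alpha \right\rbrace$ for all $\gamma \in \cllml$. Overall the lemma carries no real difficulty: it is an elementary integration together with the unimodal shape of $P_\alpha$ inherited from Lemma \ref{lem:polynomconesing}.
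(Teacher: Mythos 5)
Your proposal is correct and follows essentially the same route as the paper: the identical explicit evaluation of $P_\alpha \left(m + 1\right)$ via the factorizations of $\left(m + 1\right)^2 - 1$ and $\left(m + 1\right)^4 - 1$ together with the cancellation of the $\beta_0$-terms from (\ref{eq:BCalphabeta0}), followed by the unimodality of $P_\alpha$ inherited from Lemma \ref{lem:polynomconesing} to place the minimum at an endpoint. The only cosmetic difference is that you read off the increasing/decreasing behaviour from the sign of $P_\alpha' = p_\alpha \left(\gamma\right) \gamma$ on the two subintervals, whereas the paper identifies $\gamma_{0, \alpha}$ as the unique critical point and checks $\frac{d^2}{d \gamma^2} P_\alpha \bigr\rvert_{\gamma_{0, \alpha}} = -2 C \left(\alpha\right) < 0$; both yield the same conclusion.
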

\begin{proof}
\begin{align}\label{eq:intpolynomvalue}
P_\alpha \left(m + 1\right) \hspace{1pt} &= \int\limits_1^{m + 1} p_\alpha \left(\gamma\right) \gamma d \gamma \hspace{1pt} = \int\limits_1^{m + 1} \left( B \left(\alpha\right) \frac{\gamma^3}{2} + C \left(\alpha\right) \gamma \right) d \gamma \nonumber \\
&= B \left(\alpha\right) \frac{\left(m + 1\right)^4 - 1}{8} + C \left(\alpha\right) \frac{\left(m + 1\right)^2 - 1}{2} \nonumber \\
&= \left( \frac{B \left(\alpha\right)}{4} \left(\left(m + 1\right)^2 + 1\right) + C \left(\alpha\right) \right) \frac{m \left(m + 2\right)}{2} \nonumber \\
&= \frac{m \left(m + 2\right)}{2} \alpha && \left(\text{by using the expressions (\ref{eq:BCalphabeta0})}\right)
\end{align}
As $\frac{d}{d \gamma} \left( P_\alpha \left(\gamma\right) \right) = p_\alpha \left(\gamma\right) \gamma$, by Lemma \ref{lem:polynomconesing}, $\gamma_{0, \alpha} = \gamma_0 \left(\alpha\right) = \sqrt{-\frac{2 C \left(\alpha\right)}{B \left(\alpha\right)}}$ is the only critical point of the polynomial $P_\alpha \left(\gamma\right) = B \left(\alpha\right) \frac{\gamma^4 - 1}{8} + C \left(\alpha\right) \frac{\gamma^2 - 1}{2}$ in the interval $\cllml$, and further as $\frac{d^2}{d \gamma^2} \left( P_\alpha \left(\gamma\right) \right) \Bigr\rvert_{\gamma = \gamma_{0, \alpha}} = \frac{d}{d \gamma} \left( p_\alpha \left(\gamma\right) \gamma \right) \Bigr\rvert_{\gamma = \gamma_{0, \alpha}} = \frac{3 B \left(\alpha\right)}{2} \gamma_{0, \alpha}^2 + C \left(\alpha\right) = - 2 C \left(\alpha\right) < 0$ (from (\ref{eq:BCalphabeta0})), $\gamma_{0, \alpha}$ is a point of local maximum of $P_\alpha \left(\gamma\right)$. So we have $\inf\limits_{\gamma \in \left[1, m + 1\right]} P_\alpha \left(\gamma\right) = \min\limits_{\gamma \in \left[1, m + 1\right]} P_\alpha \left(\gamma\right) = \min \left\lbrace P_\alpha \left(1\right), P_\alpha \left(m + 1\right) \right\rbrace = \min \left\lbrace 0, \frac{m \left(m + 2\right)}{2} \alpha \right\rbrace$.
\end{proof} \par
\begin{lemma}\label{lem:derpolynomconesing}
The polynomial $q \left(\gamma\right) = \frac{d}{d\alpha}\left(p_\alpha\left(\gamma\right)\gamma\right)$, $\gamma \in \left[1, m + 1\right]$ is independent of $\alpha$, and further $q \left(\gamma\right) > 0$ for all $\gamma \in \left(1, m + 1\right]$ and $q \left(1\right) = 0$.
\end{lemma}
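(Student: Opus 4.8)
The plan is to differentiate the coefficients directly, since everything reduces to a one-line computation. First I would use the expressions (\ref{eq:BCalphabeta0}) for $B \left(\alpha\right)$ and $C \left(\alpha\right)$, observing that both are \emph{affine} functions of $\alpha$. Hence their $\alpha$-derivatives are the constants $\frac{d}{d\alpha} B \left(\alpha\right) = \frac{4}{m \left(m + 2\right)}$ and $\frac{d}{d\alpha} C \left(\alpha\right) = - \frac{2}{m \left(m + 2\right)}$, which are manifestly independent of $\alpha$. Since $p_\alpha \left(\gamma\right) \gamma = B \left(\alpha\right) \frac{\gamma^3}{2} + C \left(\alpha\right) \gamma$ depends on $\alpha$ only through the coefficients $B \left(\alpha\right)$ and $C \left(\alpha\right)$ (the powers of $\gamma$ being fixed), differentiating under the (polynomial) expression gives
\begin{equation*}
q \left(\gamma\right) = \frac{d}{d\alpha}\left(p_\alpha\left(\gamma\right)\gamma\right) = \left(\frac{d}{d\alpha} B \left(\alpha\right)\right) \frac{\gamma^3}{2} + \left(\frac{d}{d\alpha} C \left(\alpha\right)\right) \gamma,
\end{equation*}
and this already shows $q \left(\gamma\right)$ is independent of $\alpha$, proving the first assertion.

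Next I would substitute the two constant derivatives computed above to obtain the explicit closed form
\begin{equation*}
q \left(\gamma\right) = \frac{4}{m \left(m + 2\right)} \cdot \frac{\gamma^3}{2} - \frac{2}{m \left(m + 2\right)} \gamma = \frac{2 \gamma \left(\gamma^2 - 1\right)}{m \left(m + 2\right)} = \frac{2 \gamma \left(\gamma - 1\right) \left(\gamma + 1\right)}{m \left(m + 2\right)}.
\end{equation*}
From this factored form the sign statements are immediate: since $m > 0$ the denominator $m \left(m + 2\right)$ is strictly positive, and for $\gamma \in \left(1, m + 1\right]$ each of the three factors $\gamma$, $\gamma - 1$, $\gamma + 1$ is strictly positive, whence $q \left(\gamma\right) > 0$ there; while at $\gamma = 1$ the factor $\gamma - 1$ vanishes, giving $q \left(1\right) = 0$.

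I do not anticipate any genuine obstacle here: the entire content is the observation that $B$ and $C$ are linear in $\alpha$, so that $\alpha$-differentiation annihilates the $\alpha$-dependence and leaves a fixed cubic whose sign is read off by factoring. The only point warranting a word of care is to confirm that $q \left(\gamma\right)$ really is a \emph{polynomial} in $\gamma$ (so that Lemma \ref{lem:polynomconesing} and the subsequent monotonicity arguments of Subsection \ref{subsec:Proof2} may legitimately treat it as such), which is clear from the explicit cubic expression above.
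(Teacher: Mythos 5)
Your proof is correct and takes essentially the same route as the paper: both differentiate the affine expressions (\ref{eq:BCalphabeta0}) for $B\left(\alpha\right)$ and $C\left(\alpha\right)$ to get the constants $\frac{4}{m\left(m+2\right)}$ and $-\frac{2}{m\left(m+2\right)}$, and both arrive at the same factorization $q\left(\gamma\right) = \frac{2}{m\left(m+2\right)} \gamma \left(\gamma+1\right) \left(\gamma-1\right)$. The only cosmetic difference is that the paper deduces the sign by noting $q$ does not change sign on $\left(1, m+1\right]$ and evaluating $q\left(m+1\right) = 2\left(m+1\right) > 0$, whereas you read positivity directly off the three factors; both are immediate.
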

\begin{proof}
\begin{equation}\label{eq:derBC}
\frac{d}{d\alpha}\left(B\left(\alpha\right)\right) = \frac{4}{m \left(m + 2\right)} \hspace{2pt}, \hspace{5pt} \frac{d}{d\alpha}\left(C\left(\alpha\right)\right) = - \frac{2}{m \left(m + 2\right)}
\end{equation}
where we again use the expressions (\ref{eq:BCalphabeta0}).
\begin{equation}\label{eq:qfactor}
\frac{d}{d\alpha}\left(p_\alpha\left(\gamma\right)\gamma\right) \hspace{2pt} = \hspace{2pt} \frac{2}{m \left(m + 2\right)} \gamma^3 - \frac{2}{m \left(m + 2\right)} \gamma \hspace{2pt} = \hspace{2pt} \frac{2}{m \left(m + 2\right)} \gamma \left(\gamma + 1\right) \left(\gamma - 1\right)
\end{equation}
So $q \left(\gamma\right) = \frac{d}{d\alpha}\left(p_\alpha\left(\gamma\right)\gamma\right)$ is independent of $\alpha$, has its root at $\gamma = 1$ in $\cllml$, and does not change its sign in $\left(1, m + 1\right]$. Evaluating $q \left(\gamma\right)$ at $\gamma = m + 1 \in \left(1, m + 1\right]$ to check its sign, we see $q \left(m + 1\right) = 2 \left(m + 1\right) > 0$. So for any $\gamma \in \left(1, m + 1\right]$, we must have $q \left(\gamma\right) > 0$, and $q \left(1\right) = 0$.
\end{proof} \par
\begin{corollary}\label{cor:derpolynomconesing}
Define $Q \left(\gamma\right) = \int\limits_1^\gamma q \left(y\right) d y$ for all $\gamma \in \left[1, m + 1\right]$. Then $Q \left(\gamma\right) > 0$ for all $\gamma \in \left(1, m + 1\right]$ and $Q \left(\gamma\right)$ is strictly increasing on $\left[1, m + 1\right]$. Further $\frac{d}{d \alpha}\left(P_\alpha \left(\gamma\right)\right) = \frac{\left(\gamma^2 - 1\right)^2}{2 m \left(m + 2\right)}$ is also independent of $\alpha$ and $Q \left(\gamma\right) = \frac{d}{d \alpha}\left(P_\alpha \left(\gamma\right)\right)$ on $\cllml$.
\end{corollary}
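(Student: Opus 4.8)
The plan is to dispatch the four assertions in turn, each reducing to an elementary computation once Lemmas \ref{lem:polynom1conesing} and \ref{lem:derpolynomconesing} are in hand; there is no genuine analytic difficulty here, only bookkeeping.

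First I would establish the positivity and monotonicity of $Q$. Since $Q \left(\gamma\right) = \int_1^\gamma q \left(y\right) d y$, the fundamental theorem of calculus gives $Q' \left(\gamma\right) = q \left(\gamma\right)$ together with $Q \left(1\right) = 0$. By Lemma \ref{lem:derpolynomconesing} we know $q \left(\gamma\right) > 0$ on $\oplml$ and $q \left(1\right) = 0$, so integrating a function that is strictly positive on the interval $\left(1, \gamma\right]$ yields $Q \left(\gamma\right) > 0$ for all $\gamma \in \oplml$. The same positivity of $q$ away from the single point $\gamma = 1$ shows $Q' > 0$ on $\oplml$, whence $Q$ is strictly increasing on $\cllml$.

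Next I would compute $\frac{d}{d \alpha} \left( P_\alpha \left(\gamma\right) \right)$ directly from the closed form $P_\alpha \left(\gamma\right) = B \left(\alpha\right) \frac{\gamma^4 - 1}{8} + C \left(\alpha\right) \frac{\gamma^2 - 1}{2}$ supplied by Lemma \ref{lem:polynom1conesing}. Differentiating in $\alpha$ and substituting $\frac{d}{d \alpha} \left( B \left(\alpha\right) \right) = \frac{4}{m \left(m + 2\right)}$ and $\frac{d}{d \alpha} \left( C \left(\alpha\right) \right) = - \frac{2}{m \left(m + 2\right)}$ from equation (\ref{eq:derBC}), the two terms combine into $\frac{\gamma^4 - 2 \gamma^2 + 1}{2 m \left(m + 2\right)} = \frac{\left(\gamma^2 - 1\right)^2}{2 m \left(m + 2\right)}$, which is manifestly independent of $\alpha$; this is the third assertion.

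Finally, for the identity $Q \left(\gamma\right) = \frac{d}{d \alpha} \left( P_\alpha \left(\gamma\right) \right)$, I would exploit that the integrand $p_\alpha \left(y\right) y = B \left(\alpha\right) \frac{y^3}{2} + C \left(\alpha\right) y$ is polynomial in $y$ and affine-linear in $\alpha$, so differentiation in $\alpha$ passes under the (finite) integral sign with no regularity concern: $\frac{d}{d \alpha} \left( P_\alpha \left(\gamma\right) \right) = \int_1^\gamma \frac{d}{d \alpha} \left( p_\alpha \left(y\right) y \right) d y = \int_1^\gamma q \left(y\right) d y = Q \left(\gamma\right)$. Equivalently, one may verify this by integrating the explicit cubic $q \left(y\right) = \frac{2}{m \left(m + 2\right)} \left(y^3 - y\right)$ from Lemma \ref{lem:derpolynomconesing} and comparing the result with the formula obtained in the third assertion. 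If any step deserves a word of caution, it is merely that the interchange of $\frac{d}{d \alpha}$ with $\int_1^\gamma$ is legitimate precisely because the $\alpha$-dependence of the integrand enters only through the affine-linear coefficients $B \left(\alpha\right), C \left(\alpha\right)$; beyond that observation the corollary is a routine consequence of the two preceding lemmas.
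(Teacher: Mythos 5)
Your proposal is correct and follows essentially the same route as the paper's proof: differentiate $P_\alpha$ under the integral sign (justified by the affine-linear $\alpha$-dependence of $B \left(\alpha\right)$, $C \left(\alpha\right)$), substitute the derivatives from (\ref{eq:derBC}) into the closed form of Lemma \ref{lem:polynom1conesing}, and simplify to $\frac{\left(\gamma^2 - 1\right)^2}{2 m \left(m + 2\right)}$. The only difference is that you spell out the positivity and strict monotonicity of $Q$ via the fundamental theorem of calculus and Lemma \ref{lem:derpolynomconesing}, which the paper leaves implicit.
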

\begin{proof}
\begin{align}\label{eq:intderpolynom}
\int\limits_1^\gamma \frac{d}{d\alpha}\left(p_\alpha\left(y\right)y\right) dy &= \frac{d}{d \alpha} \left( \int\limits_1^\gamma p_\alpha \left(y\right) y d y \right) \nonumber \\
&= \frac{\gamma^4 - 1}{2 m \left(m + 2\right)} - \frac{\gamma^2 - 1}{m \left(m + 2\right)} && \left(\text{from Lemma \ref{lem:polynom1conesing} and the expressions (\ref{eq:derBC})}\right) \nonumber \\
&= \frac{\left(\gamma^2 - 1\right)^2}{2 m \left(m + 2\right)}
\end{align}
So $Q \left(\gamma\right) = \frac{d}{d \alpha}\left(P_\alpha \left(\gamma\right)\right)$ is also independent of $\alpha$. Note that in particular $Q\left(m + 1\right) = \frac{m \left(m + 2\right)}{2}$.
\end{proof} \par
\begin{motivation}
The following calculations and estimates are going to give us that for each $\gamma > 1$, $\frac{d}{d \alpha} \left( v_\alpha \left(\gamma\right) \right) > 0$ on appropriate intervals of $\gamma$ and $\alpha$ i.e. $v \left(\gamma; \alpha\right)$ is strictly increasing in $\alpha$. From the analysis developed in Subsection \ref{subsec:Proof1} the smooth solution $v_\alpha \left(\cdot\right) = v \left(\cdot; \alpha\right)$ of the ODE initial value problem (\ref{eq:ODEIVPConeSing}) depending on $\alpha$ will always exist on some non-degenerate subinterval of $\cllml$ containing $1$. Also note that we are allowed to differentiate $v_\alpha$ with respect to $\alpha$ because the ODE and the initial condition in (\ref{eq:ODEIVPConeSing}) have smooth dependence on the parameter $\alpha$.
\end{motivation} \par
\begin{theorem}\label{thm:StrctIncrConeSing}
Let $\mathsf{V}$ be a non-degenerate subinterval of $\left(-\infty, \bo\right)$ and $\mathsf{J}$ be a non-degenerate subinterval of $\left[1, m + 1\right]$ containing $1$, such that the smooth solution $v_\alpha$ to the ODE initial value problem (\ref{eq:ODEIVPConeSing}) exists on $\mathsf{J}$ for every $\alpha \in \mathsf{V}$. Then for any $\alpha_1, \alpha_2 \in \mathsf{V}$ with $\alpha_1 < \alpha_2$, we have $v\left(\gamma; \alpha_1\right) \leq v\left(\gamma; \alpha_2\right) - Q\left(\gamma\right) \left(\alpha_2 - \alpha_1\right) \leq v\left(\gamma; \alpha_2\right)$ for all $\gamma \in \mathsf{J}$, with the second inequality being strict if $\gamma > 1$.
\end{theorem}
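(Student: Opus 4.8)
The plan is to differentiate the solution with respect to the parameter $\alpha$ and to show that the resulting sensitivity $w_\alpha = \frac{\partial}{\partial \alpha} v_\alpha$ is bounded below by $Q(\gamma)$ pointwise; integrating this lower bound over $\alpha$ then yields the first (substantive) inequality, while the second inequality follows at once from the nonnegativity of $Q$.

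First I would record that on $\mathsf{J}$ the solution $v_\alpha$ exists and, by Lemma \ref{lem:positivity}, stays strictly positive, so $\frac{1}{\sqrt{v_\alpha}}$ is a smooth positive function on $\mathsf{J}$. Since the right-hand side $2\sqrt{2}\sqrt{v} + p_\alpha(\gamma)\gamma$ of the ODE in (\ref{eq:ODEIVPConeSing}) depends smoothly on $\alpha$ (through $B(\alpha), C(\alpha)$) and the initial value $v(1) = 2$ is independent of $\alpha$, standard smooth dependence on parameters guarantees that $w_\alpha = \partial_\alpha v_\alpha$ exists and is $\mathcal{C}^1$ on $\mathsf{J}$. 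Differentiating the ODE in $\alpha$ and interchanging $\partial_\gamma$ with $\partial_\alpha$ gives the variational (linear) equation
\[
w_\alpha' = \frac{\sqrt{2}}{\sqrt{v_\alpha}}\, w_\alpha + q(\gamma), \qquad w_\alpha(1) = 0,
\]
where $q(\gamma) = \frac{d}{d\alpha}\left(p_\alpha(\gamma)\gamma\right)$ is exactly the $\alpha$-independent polynomial of Lemma \ref{lem:derpolynomconesing}, and $w_\alpha(1) = 0$ because $v_\alpha(1) = 2$ for every $\alpha$.

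Next I would extract the two pointwise facts about $w_\alpha$ that I need. Using the integrating factor $\mu(\gamma) = \exp\!\left(-\int_1^\gamma \frac{\sqrt{2}}{\sqrt{v_\alpha(y)}}\,dy\right) > 0$, the variational equation becomes $(\mu w_\alpha)' = \mu\, q(\gamma)$; since $q \geq 0$ on $\cllml$ (Lemma \ref{lem:derpolynomconesing}) and $\mu(1) w_\alpha(1) = 0$, integration gives $\mu w_\alpha \geq 0$, hence $w_\alpha \geq 0$ throughout $\mathsf{J}$. Feeding this back into the variational equation, the nonnegative term $\frac{\sqrt{2}}{\sqrt{v_\alpha}} w_\alpha$ yields $w_\alpha' \geq q(\gamma) = Q'(\gamma)$; as $w_\alpha(1) = 0 = Q(1)$ (recall $Q(1) = 0$ from Corollary \ref{cor:derpolynomconesing}), integrating the inequality $w_\alpha' - Q' \geq 0$ from $1$ to $\gamma$ gives the crucial lower bound $w_\alpha(\gamma) \geq Q(\gamma)$ for all $\gamma \in \mathsf{J}$.

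Finally I would integrate in the parameter. For $\alpha_1 < \alpha_2$ in $\mathsf{V}$, the fundamental theorem of calculus gives $v(\gamma; \alpha_2) - v(\gamma; \alpha_1) = \int_{\alpha_1}^{\alpha_2} w_\alpha(\gamma)\,d\alpha \geq \int_{\alpha_1}^{\alpha_2} Q(\gamma)\,d\alpha = Q(\gamma)(\alpha_2 - \alpha_1)$, which rearranges to the first asserted inequality $v(\gamma; \alpha_1) \leq v(\gamma; \alpha_2) - Q(\gamma)(\alpha_2 - \alpha_1)$. The second inequality then follows because $Q(\gamma) \geq 0$ with $Q(\gamma) > 0$ precisely for $\gamma > 1$ (Corollary \ref{cor:derpolynomconesing}) and $\alpha_2 - \alpha_1 > 0$, so $-Q(\gamma)(\alpha_2 - \alpha_1) \leq 0$ with strictness exactly when $\gamma > 1$. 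The only real subtlety, and thus the step I would treat most carefully, is the legitimacy of differentiating $v_\alpha$ in $\alpha$ together with the sign argument for $w_\alpha$; both rest on the strict positivity $v_\alpha > 0$ on $\mathsf{J}$ from Lemma \ref{lem:positivity}, which keeps the coefficient $\frac{\sqrt{2}}{\sqrt{v_\alpha}}$ smooth and locally bounded so that the linear variational ODE is genuinely $\mathcal{C}^1$ and the integrating-factor comparison applies.
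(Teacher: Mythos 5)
Your proposal is correct and follows essentially the same route as the paper's proof: differentiate the ODE in $\alpha$ to obtain the linear variational equation with the $\alpha$-independent polynomial $q\left(\gamma\right)$ from Lemma \ref{lem:derpolynomconesing}, use the integrating factor $e^{-\int_1^\gamma \frac{\sqrt{2}}{\sqrt{v\left(y; \alpha\right)}} dy}$ to deduce the key bound $\frac{d}{d\alpha}\left(v_\alpha\left(\gamma\right)\right) \geq Q\left(\gamma\right)$, and then integrate in the parameter. The only cosmetic differences are that the paper bounds the explicit solution formula directly (replacing $e^{-\int_1^y}$ by $e^{-\int_1^\gamma}$ under the integral sign) where you use a two-step feedback argument (first $w_\alpha \geq 0$, then $w_\alpha' \geq q$), and the paper closes with the mean value theorem where you use the fundamental theorem of calculus; both yield identical estimates.
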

\begin{proof}
Consider the following operations performed on the equation (\ref{eq:ODEIVPConeSing}) with $'$ and $\frac{d}{d\alpha}$ denoting derivatives with respect to $\gamma$ and $\alpha$ respectively for $\alpha \in \mathsf{V}$ and $\gamma \in \mathsf{J}$, and use the polynomials $q \left(\gamma\right)$ and $Q \left(\gamma\right)$ from Lemma \ref{lem:derpolynomconesing} and Corollary \ref{cor:derpolynomconesing} respectively:
\begin{equation}\label{eq:ODEIVPalpha}
v_\alpha'\left(\gamma\right) = 2 \sqrt{2} \sqrt{v_\alpha\left(\gamma\right)} + p_\alpha\left(\gamma\right)\gamma \hspace{3pt}, \hspace{5pt} v_\alpha\left(1\right) = 2
\end{equation}
\begin{equation}\label{eq:derODEIVPalpha}
\frac{d}{d\alpha} \left( v_\alpha'\left(\gamma\right) \right) = \frac{\sqrt{2}}{\sqrt{v_\alpha\left(\gamma\right)}} \frac{d}{d\alpha} \left( v_\alpha\left(\gamma\right) \right) + q\left(\gamma\right)
\end{equation}
By Lemma \ref{lem:positivity}, $\sqrt{v_\alpha\left(\gamma\right)} > 0$ for all $\gamma \in \mathsf{J}$. Multiplying by $e^{- \int\limits_1^\gamma \frac{\sqrt{2}}{\sqrt{v\left(y; \alpha\right)}} dy}$:
\begin{equation}\label{eq:intfact}
\left( \frac{d}{d\alpha} \left( v_\alpha\left(\gamma\right) \right) \right)' e^{- \int\limits_1^\gamma \frac{\sqrt{2}}{\sqrt{v\left(y; \alpha\right)}} dy} + \frac{d}{d\alpha} \left( v_\alpha\left(\gamma\right) \right) \left( e^{- \int\limits_1^\gamma \frac{\sqrt{2}}{\sqrt{v\left(y; \alpha\right)}} dy} \right)' = q\left(\gamma\right) e^{- \int\limits_1^\gamma \frac{\sqrt{2}}{\sqrt{v\left(y; \alpha\right)}} dy}
\end{equation}
For $\gamma \in \mathsf{J}$, integrating on $\left[1, \gamma\right]$:
\begin{equation}\label{eq:intODEIVPalpha}
\frac{d}{d\alpha} \left( v_\alpha\left(\gamma\right) \right) e^{- \int\limits_1^\gamma \frac{\sqrt{2}}{\sqrt{v\left(y; \alpha\right)}} dy} = \int\limits_1^\gamma q\left(y\right) e^{- \int\limits_1^y \frac{\sqrt{2}}{\sqrt{v\left(x; \alpha\right)}} dx} dy
\end{equation}
\begin{equation}\label{eq:dersol}
\frac{d}{d\alpha} \left( v_\alpha\left(\gamma\right) \right) \hspace{3pt} = \hspace{3pt} e^{\int\limits_1^\gamma \frac{\sqrt{2}}{\sqrt{v\left(y; \alpha\right)}} dy} \int\limits_1^\gamma q\left(y\right) e^{- \int\limits_1^y \frac{\sqrt{2}}{\sqrt{v\left(x; \alpha\right)}} dx} dy \hspace{3pt} \geq \hspace{3pt} e^{\int\limits_1^\gamma \frac{\sqrt{2}}{\sqrt{v\left(y; \alpha\right)}} dy} \int\limits_1^\gamma q\left(y\right) e^{- \int\limits_1^\gamma \frac{\sqrt{2}}{\sqrt{v\left(x; \alpha\right)}} dx} dy \hspace{3pt} = \hspace{3pt} Q\left(\gamma\right)
\end{equation}
Now for any $\alpha_1, \alpha_2 \in \mathsf{V}$ with $\alpha_1 < \alpha_2$ and for any $\gamma \in \mathsf{J}$ we have (for some $\alpha_3 \in \left(\alpha_1, \alpha_2\right)$):
\begin{equation}\label{eq:LMVT}
v\left(\gamma; \alpha_2\right) - v\left(\gamma; \alpha_1\right) \hspace{2pt} = \hspace{2pt} \frac{d}{d\alpha} \left( v_\alpha\left(\gamma\right) \right) \biggr\rvert_{\alpha = \alpha_3} \left(\alpha_2 - \alpha_1\right) \hspace{2pt} \geq \hspace{2pt} Q\left(\gamma\right) \left(\alpha_2 - \alpha_1\right) \hspace{2pt} \geq \hspace{2pt} 0
\end{equation}
So $v\left(\gamma; \alpha_1\right) \leq v\left(\gamma; \alpha_2\right) - Q\left(\gamma\right) \left(\alpha_2 - \alpha_1\right) \leq v\left(\gamma; \alpha_2\right)$ for all $\gamma \in \mathsf{J}$ and from Corollary \ref{cor:derpolynomconesing}, the second inequality here is clearly strict if $\gamma > 1$.
\end{proof} \par
Define $\mathscr{A} = \left\lbrace \hspace{2pt} \alpha \in \left(-\infty, \bo\right) \hspace{1.5pt} \big\vert \hspace{2.5pt} \text{$v_\alpha$ exists on the whole of $\left[1, m + 1\right]$} \hspace{2pt} \right\rbrace \subseteq \left(-\infty, \bo\right)$. \par
We then first get the following result proving the existence of a value of the parameter $\alpha$ satisfying $v_\alpha \left(m + 1\right) > 2 \left(m + 1\right)^2$ thus completing the proof of the $>$ inequality for the desired final boundary condition on the ODE in (\ref{eq:ODEIVPConeSing}) (as we had discussed about the strategy of establishing the final boundary condition in Subsection \ref{subsec:AnalysisODEBVPConeSing}):
\begin{lemma}\label{lem:VPPConeSing}
$\left[0, \bo\right) \subseteq \mathscr{A}$ and $\lim\limits_{\alpha \to \bo^-} v\left(m + 1; \alpha\right) = b$ where $2 \left(m + 1\right)^2 < b = b \left(m, \bo\right) < \infty$. There exists an $\alpha \in \left[0, \bo\right)$ such that $v_\alpha \left(m + 1\right) > 2 \left(m + 1\right)^2$.
\end{lemma}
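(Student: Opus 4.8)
The plan is to prove the three assertions in turn. The inclusion $[0,\bo) \subseteq \mathscr{A}$ follows from a crude lower bound, while the two quantitative statements about $b$ rest on comparing $v_\alpha$ with the explicit function $V(\gamma) = 2\gamma^2$, which solves the ``homogeneous'' part of the ODE exactly. First I would prove $[0,\bo) \subseteq \mathscr{A}$. Fix $\alpha \in [0,\bo)$ and let $v_\alpha$ be the solution of (\ref{eq:ODEIVPConeSing}) on its maximal interval of existence. Since $v_\alpha > 0$ there (Lemma \ref{lem:positivity}), the ODE gives $v_\alpha' = 2\sqrt{2}\sqrt{v_\alpha} + p_\alpha(\gamma)\gamma \geq p_\alpha(\gamma)\gamma$; integrating from $1$ with $v_\alpha(1) = 2$ yields $v_\alpha(\gamma) \geq 2 + P_\alpha(\gamma)$. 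By Lemma \ref{lem:polynom1conesing}, $P_\alpha(\gamma) \geq \min\left\lbrace 0, \tfrac{m(m+2)}{2}\alpha \right\rbrace = 0$ because $\alpha \geq 0$, so $v_\alpha \geq 2 > 0$ wherever defined. Theorem \ref{thm:continue} then rules out break-down, so $v_\alpha$ exists on all of $\cllml$, i.e. $\alpha \in \mathscr{A}$.

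The heart of the matter is the sharp lower bound. Note $V(\gamma) = 2\gamma^2$ satisfies $V(1) = 2 = v_\alpha(1)$ and $V' = 2\sqrt{2}\sqrt{V}$ exactly. For $\alpha \in (0,\bo)$ (where $v_\alpha, V > 0$ on $\cllml$ by the first part), setting $w := v_\alpha - V$ and linearizing the square root via $\sqrt{v_\alpha} - \sqrt{V} = w/(\sqrt{v_\alpha}+\sqrt{V})$ gives the linear equation
\begin{equation*}
w' = \frac{2\sqrt{2}}{\sqrt{v_\alpha} + \sqrt{V}}\, w + p_\alpha(\gamma)\gamma, \qquad w(1) = 0 .
\end{equation*}
Solving with the decreasing integrating factor $\mu_\alpha(\gamma) = \exp\!\left(-\int_1^\gamma \tfrac{2\sqrt{2}}{\sqrt{v_\alpha}+\sqrt{V}}\,dy\right) \in (0,1]$ produces
\begin{equation*}
v_\alpha(\gamma) - 2\gamma^2 = \frac{1}{\mu_\alpha(\gamma)}\int_1^\gamma p_\alpha(y)\,y\,\mu_\alpha(y)\,dy .
\end{equation*}
Now I would invoke the sign structure of Lemma \ref{lem:polynomconesing}: $p_\alpha(y)y > 0$ on $[1,\gamma_{0,\alpha})$ and $p_\alpha(y)y < 0$ on $(\gamma_{0,\alpha}, m+1]$. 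Since $\mu_\alpha$ is decreasing, the pointwise inequality $p_\alpha(y)y\,\mu_\alpha(y) \geq p_\alpha(y)y\,\mu_\alpha(\gamma_{0,\alpha})$ holds on both subintervals (the weight inequality reverses exactly where $p_\alpha y$ changes sign), so integrating and using Lemma \ref{lem:polynom1conesing} gives
\begin{equation*}
\int_1^{m+1} p_\alpha(y)\,y\,\mu_\alpha(y)\,dy \geq \mu_\alpha(\gamma_{0,\alpha})\,P_\alpha(m+1) = \mu_\alpha(\gamma_{0,\alpha})\,\frac{m(m+2)}{2}\,\alpha > 0
\end{equation*}
for every $\alpha \in (0,\bo)$. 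Hence $v_\alpha(m+1) > 2(m+1)^2$ for all such $\alpha$, which already proves the final assertion of the lemma (take any $\alpha \in (0,\bo)$).

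It remains to control $b$. By Theorem \ref{thm:StrctIncrConeSing} the map $\alpha \mapsto v(m+1;\alpha)$ is strictly increasing on $[0,\bo)$, so $b = \lim_{\alpha\to\bo^-} v(m+1;\alpha)$ exists in $(0,\infty]$, and the previous paragraph forces $b \geq v_\alpha(m+1) > 2(m+1)^2$ for any fixed $\alpha \in (0,\bo)$. For finiteness I would use the same identity with uniform estimates: $v_\alpha \geq 2$ gives $\sqrt{v_\alpha}+\sqrt{V} \geq 2\sqrt{2}$, hence $\mu_\alpha(\gamma) \in [e^{-m}, 1]$ uniformly in $\alpha$, while $B(\alpha), C(\alpha)$ are bounded on the compact range $[0,\bo]$, so $\int_1^{m+1}\lvert p_\alpha(y)y\rvert\,dy \leq L$ uniformly; the displayed identity then yields $v_\alpha(m+1) \leq 2(m+1)^2 + e^m L$ for all $\alpha \in [0,\bo)$, whence $b < \infty$. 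The main obstacle is precisely the strict lower bound $b > 2(m+1)^2$: the naive estimate $v_\alpha \geq 2 + P_\alpha$ only gives $v_\alpha(m+1) \geq 2 + \tfrac{m(m+2)}{2}\alpha$, which beats $2(m+1)^2$ solely for large $\bo$, so the exact comparison with $V = 2\gamma^2$ combined with the monotonicity of $\mu_\alpha$ matched to the sign change of $p_\alpha(\gamma)\gamma$ at $\gamma_{0,\alpha}$ is essential.
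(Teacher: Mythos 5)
Your proof is correct, and its central mechanism is genuinely different from the paper's. The paper proves the Claim $b > 2\left(m+1\right)^2$ by rewriting the ODE as $\left(\sqrt{v_\alpha}\right)' = \sqrt{2} + \frac{p_\alpha\left(\gamma\right)\gamma}{2\sqrt{v_\alpha}}$, integrating over $\cllml$, and bounding $\sqrt{v_\alpha}$ above by a Gr\"onwall constant $\sqrt{K}$ on $\left[1,\gamma_{0,\alpha}\right]$ and below by $\sqrt{2}$ on $\left[\gamma_{0,\alpha},m+1\right]$; for a fixed $\alpha$ this leaves a negative error term $\frac{\sqrt{K}-\sqrt{2}}{2\sqrt{2}\sqrt{K}}\int_{\gamma_{0,\alpha}}^{m+1} p_\alpha\left(\gamma\right)\gamma\,d\gamma$, which the paper kills only in the limit $\alpha \to \bo^-$ using $\gamma_{0,\alpha} \to m+1$, and the existence assertion is then deduced from $b > 2\left(m+1\right)^2$. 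You instead derive the exact variation-of-constants identity for $w = v_\alpha - 2\gamma^2$ (legitimate, since $v_\alpha$ is already known to exist and stay $\geq 2$ on $\cllml$, so the coefficient $\frac{2\sqrt{2}}{\sqrt{v_\alpha}+\sqrt{V}}$ is a known positive function) and exploit that the strictly decreasing weight $\mu_\alpha$ is matched against the single sign change of $p_\alpha\left(\gamma\right)\gamma$ at $\gamma_{0,\alpha}$ --- a Chebyshev-type rearrangement --- to get the strict pointwise inequality $v_\alpha\left(m+1\right) > 2\left(m+1\right)^2$ for \emph{every} $\alpha \in \left(0, \bo\right)$, which is strictly stronger than the paper's limiting statement and is consistent with Corollary \ref{cor:final}, which locates the unique crossing at a negative $\alpha$. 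Your route buys three things: the existence assertion comes for free before any limit is taken; the computation $\lim_{\alpha \to \bo^-}\gamma_{0,\alpha} = m+1$ is never needed; and your finiteness argument ($\mu_\alpha \in \left[e^{-m},1\right]$ together with the uniform bound on $p_\alpha\left(\gamma\right)\gamma$ over $\alpha \in \left[0,\bo\right]$) bypasses Gr\"onwall's inequality entirely. What the paper's approach buys in exchange is the explicit quantitative estimate $\sqrt{b} \geq \sqrt{2}\left(m+1\right) + \frac{m\left(m+2\right)}{4\sqrt{K}}\bo$, exhibiting how $b$ grows with $\bo$. All ingredients you invoke (positivity of solutions, the root structure from Lemma \ref{lem:polynomconesing}, $P_\alpha\left(m+1\right) = \frac{m\left(m+2\right)}{2}\alpha$ from Lemma \ref{lem:polynom1conesing}, and the monotonicity from Theorem \ref{thm:StrctIncrConeSing}) are available at this point in the paper, so the argument is self-contained.
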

\begin{proof}
For any $\alpha \in \left[0, \bo\right) \subseteq \left(-\infty, \bo\right)$, by Lemma \ref{lem:firstexist} the solution $v_\alpha$ to (\ref{eq:ODEIVPConeSing}) a priori exists on some interval $\left[1, \tilde{\gamma}\right)$ with $\tilde{\gamma} > 1$. Integrating the ODE in (\ref{eq:ODEIVPConeSing}) on $\left[1, \gamma\right]$ for $\gamma \in \left[1, \tilde{\gamma}\right)$ and using Lemma \ref{lem:polynom1conesing} will give $v_\alpha \left(\gamma\right) \geq 2 + P_\alpha \left(\gamma\right) \geq 2$ (as $\alpha \geq 0$), and then by Lemma \ref{lem:continue} $v_\alpha$ can be continued beyond $\tilde{\gamma}$, and this will be true for each such $\tilde{\gamma} > 1$. So $v_\alpha$ exists on $\left[1, m + 1\right]$ if $\alpha \in \left[0, \bo\right)$, i.e. $\left[0, \bo\right) \subseteq \mathscr{A}$. In particular we have also proven over here that for $\alpha \in \left[0, \bo\right)$, $v_\alpha \left(\gamma\right) \geq 2$ for all $\gamma \in \cllml$. \\
If $\alpha \in \left[0, \bo\right)$ then from (\ref{eq:BCalphabeta0}) it can be observed that $B\left(\alpha\right), C\left(\alpha\right)$ are bounded with bounds depending only on $m, \bo$ (which have been kept fixed throughout this analysis in Section \ref{sec:ProofConeSing}). So there will exist an $l > 0$ such that $\left\lvert p_\alpha \left(\gamma\right) \gamma \right\rvert \leq l$ for all $\alpha \in \left[0, \bo\right)$. As done earlier in Subsection \ref{subsec:Proof1}, substituting this uniform norm bound $l$ and $\sqrt{v_\alpha} < v_\alpha + 1$ in the expression for $v_\alpha' = \left(v_\alpha + 1\right)'$ in (\ref{eq:ODEIVPConeSing}) and using Gr\"onwall's inequality will give us a $K > 0$ such that $2 \leq v_\alpha \left(\gamma\right) \leq K$ for all $\gamma \in \left[1, m + 1\right]$ and for all $\alpha \in \left[0, \bo\right)$. So in particular we have $2 \leq v\left(m + 1; \alpha\right) \leq K$ for all $\alpha \in \left[0, \bo\right)$. From Theorem \ref{thm:StrctIncrConeSing}, $\alpha \mapsto v \left(m + 1; \alpha\right)$ is strictly increasing in $\alpha$ on $\left[0, \bo\right)$. So $\lim\limits_{\alpha \to \bo^-} v\left(m + 1; \alpha\right)$ exists and equals some $b = b \left(m, \bo\right)$ and further $2 \leq b \leq K < \infty$.
\begin{claim*}
$b > 2 \left(m + 1\right)^2$.
\end{claim*}
{\noindent Take $\alpha \in \left[0, \bo\right)$ and rewrite the ODE in (\ref{eq:ODEIVPConeSing}) as follows:
\begin{equation}\label{eq:ODEIVPalpha'}
v_\alpha'\left(\gamma\right) = 2 \sqrt{2} \sqrt{v_\alpha\left(\gamma\right)} + p_\alpha\left(\gamma\right)\gamma \hspace{3pt}, \hspace{5pt} v_\alpha\left(1\right) = 2
\end{equation}
\begin{equation}\label{eq:ODEsqrtv'}
\left(\sqrt{v_\alpha\left(\gamma\right)}\right)' = \sqrt{2} + \frac{p_\alpha\left(\gamma\right)\gamma}{2 \sqrt{v_\alpha\left(\gamma\right)}}
\end{equation}
Integrating over $\cllml$, using the unique root $\gamma_{0, \alpha}$ of the polynomial $p_\alpha\left(\gamma\right)\gamma$ given by Lemma \ref{lem:polynomconesing}, noting the sign of $p_\alpha\left(\gamma\right)\gamma$ on $\left[1, \gamma_{0, \alpha}\right]$ and $\left[\gamma_{0, \alpha}, m + 1\right]$ separately and substituting the value of the integral of $p_\alpha\left(\gamma\right)\gamma$ over $\cllml$ given by Lemma \ref{lem:polynom1conesing}, we obtain the following estimate:
\begin{align}\label{eq:bg2m12}
\sqrt{v\left(m + 1; \alpha\right)} &\geq \sqrt{2} \left(m + 1\right) + \frac{1}{2 \sqrt{K}} \int\limits_1^{\gamma_{0, \alpha}} p_\alpha\left(\gamma\right)\gamma d\gamma + \frac{1}{2 \sqrt{2}} \int\limits_{\gamma_{0, \alpha}}^{m + 1} p_\alpha\left(\gamma\right)\gamma d\gamma \nonumber \\
&= \sqrt{2} \left(m + 1\right) + \frac{m \left(m + 2\right)}{4 \sqrt{K}} \alpha + \frac{\sqrt{K} - \sqrt{2}}{2 \sqrt{2} \sqrt{K}} \int\limits_{\gamma_{0, \alpha}}^{m + 1} p_\alpha\left(\gamma\right)\gamma d\gamma
\end{align}
Since $\gamma_{0, \alpha} = \sqrt{-\frac{2 C \left(\alpha\right)}{B \left(\alpha\right)}}$, it can be checked by using the expressions (\ref{eq:BCalphabeta0}) for $B \left(\alpha\right), C \left(\alpha\right)$ that $\lim\limits_{\alpha \to \bo^-} \gamma_{0, \alpha} = m + 1$. We also have $\left\lvert p_\alpha \left(\gamma\right) \gamma \right\rvert \leq l$ where the bound $l$ does not vary with $\alpha \in \left[0, \bo\right)$. So we must have $\lim\limits_{\alpha \to \bo^-} \int\limits_{\gamma_{0, \alpha}}^{m + 1} p_\alpha\left(\gamma\right)\gamma d\gamma = 0$. Now passing the limits as $\alpha \to \bo^-$ in the estimate (\ref{eq:bg2m12}) we get $\sqrt{b} \geq \sqrt{2} \left(m + 1\right) + \frac{m \left(m + 2\right)}{4 \sqrt{K}} \bo > \sqrt{2} \left(m + 1\right)$, thus proving the Claim.}
\end{proof} \par
\begin{motivation}
In the remainder of Subsection \ref{subsec:Proof2} we will prove that $\mathscr{A} = \left(M, \bo\right)$ for some $M = M \left(m, \bo\right) \in \left(-\infty, 0\right)$ and $\lim\limits_{\alpha \to M^+} v\left(m + 1; \alpha\right) = 0$, which will give us an $\alpha \in \mathscr{A}$ satisfying $v \left(m + 1; \alpha\right) < 2 \left(m + 1\right)^2$ which will prove the $<$ inequality needed for concluding the required final boundary condition viz. $v \left(m + 1; \alpha\right) = 2 \left(m + 1\right)^2$.
\end{motivation} \par
\begin{theorem}\label{thm:scrA}
We have the following properties of the set $\mathscr{A}$:
\begin{enumerate}
\item $\mathscr{A}$ is an interval. \label{itm:scrAinterval}
\item $\mathscr{A}$ is open. \label{itm:scrAopen}
\item $\mathscr{A} \subsetneq \left(-\infty, \bo\right)$. \label{itm:scrAprprsbst}
\item There exists an $M = M \left(m, \bo\right) < 0$ such that $\mathscr{A} = \left(M, \bo\right)$. \label{itm:scrAfinal}
\end{enumerate}
\end{theorem}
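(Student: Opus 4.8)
The plan is to assemble the four assertions from the monotonicity estimate of Theorem~\ref{thm:StrctIncrConeSing}, the continuation criterion of Theorem~\ref{thm:continue}, the subsequential uniform convergence of Theorem~\ref{thm:UnifConv}, and the inclusion $\left[0, \bo\right) \subseteq \mathscr{A}$ already furnished by Lemma~\ref{lem:VPPConeSing}. The guiding observation is that, by Theorem~\ref{thm:StrctIncrConeSing}, $v\left(\gamma; \alpha\right)$ is increasing in $\alpha$, so a larger $\alpha$ keeps the solution more positive and hence makes its survival all the way to $m + 1$ easier; this monotonicity is what will force $\mathscr{A}$ to be an interval with right endpoint $\bo$.

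First I would establish that $\mathscr{A}$ is upward closed in $\left(-\infty, \bo\right)$, which immediately yields part~(\ref{itm:scrAinterval}). Suppose $\alpha_1 \in \mathscr{A}$ and $\alpha_1 < \alpha_3 < \bo$. Since $v_{\alpha_1}$ exists and, by Lemma~\ref{lem:positivity}, is strictly positive on the compact interval $\cllml$, we have $\epsilon := \min_{\cllml} v_{\alpha_1} > 0$. On the maximal interval of existence of $v_{\alpha_3}$ both solutions are defined, so Theorem~\ref{thm:StrctIncrConeSing} gives $v_{\alpha_3} \geq v_{\alpha_1} \geq \epsilon$ there; Theorem~\ref{thm:continue} then forces the maximal interval to be all of $\cllml$, i.e. $\alpha_3 \in \mathscr{A}$. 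Since $\mathscr{A}$ is nonempty (it contains $\left[0, \bo\right)$) and upward closed, it is an interval with $\sup \mathscr{A} = \bo$.

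Next I would prove the properness in part~(\ref{itm:scrAprprsbst}) using the quantitative form of the monotonicity. Applying Theorem~\ref{thm:StrctIncrConeSing} with the larger parameter $0 \in \mathscr{A}$ and any $\alpha < 0$ lying in $\mathscr{A}$, and evaluating at $\gamma = m + 1$ where $Q\left(m + 1\right) = \frac{m\left(m + 2\right)}{2}$ by Corollary~\ref{cor:derpolynomconesing}, gives $v\left(m + 1; \alpha\right) \leq v\left(m + 1; 0\right) + \frac{m\left(m + 2\right)}{2}\alpha$. The right-hand side tends to $-\infty$ as $\alpha \to -\infty$, whereas Lemma~\ref{lem:positivity} forces $v\left(m + 1; \alpha\right) > 0$; hence every $\alpha < -\frac{2 v\left(m + 1; 0\right)}{m\left(m + 2\right)}$ lies outside $\mathscr{A}$, proving part~(\ref{itm:scrAprprsbst}).

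For the openness in part~(\ref{itm:scrAopen}) I would instead show that the complement $\mathscr{B} = \left(-\infty, \bo\right) \smallsetminus \mathscr{A}$ is closed. Let $\alpha_n \in \mathscr{B}$ with $\alpha_n \to \alpha_0 \in \left(-\infty, \bo\right)$. By Theorem~\ref{thm:UnifConv} there is a subsequence with $u_{\alpha_{n_k}} \to u_{\alpha_0}$ uniformly on $\cllml$. By the extension-by-zero definition of $u$ on the breakdown region, $\alpha_{n_k} \in \mathscr{B}$ gives $u_{\alpha_{n_k}}\left(m + 1\right) = 0$, whence $u_{\alpha_0}\left(m + 1\right) = 0$; but if $\alpha_0$ were in $\mathscr{A}$ then $u_{\alpha_0} = v_{\alpha_0}$ would be strictly positive everywhere by Lemma~\ref{lem:positivity}, a contradiction. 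Thus $\alpha_0 \in \mathscr{B}$, so $\mathscr{B}$ is closed and $\mathscr{A}$ is open. Part~(\ref{itm:scrAfinal}) then follows by assembly: $\mathscr{A}$ is an open interval with supremum $\bo$, so $\mathscr{A} = \left(M, \bo\right)$ with $M := \inf \mathscr{A}$; part~(\ref{itm:scrAprprsbst}) gives $M > -\infty$, the inclusion $\left[0, \bo\right) \subseteq \mathscr{A}$ gives $M \leq 0$, and openness together with $0 \in \mathscr{A}$ forces $M < 0$. I expect the openness step to be the main obstacle, since it is the only place invoking the compactness machinery of Theorem~\ref{thm:UnifConv}; the delicacy is that that theorem supplies only a convergent subsequence, so the vanishing boundary value must be transferred to the limit along it, and it is precisely the extension-by-zero convention for $u_\alpha$ that renders $\alpha \mapsto u_\alpha\left(m + 1\right)$ well behaved enough for the contradiction to close.
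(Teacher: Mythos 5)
Your overall architecture matches the paper's (Lemma~\ref{lem:VPPConeSing} for $\left[0, \bo\right) \subseteq \mathscr{A}$, Theorem~\ref{thm:StrctIncrConeSing} for monotonicity, Theorem~\ref{thm:UnifConv} for compactness, then assembly), and parts~(\ref{itm:scrAopen}) and~(\ref{itm:scrAfinal}) are sound — your openness argument is in fact slightly leaner than the paper's, since you extract only the boundary value $u_{\alpha_{n_k}} \left(m + 1\right) = 0$ from the uniform convergence of Theorem~\ref{thm:UnifConv} and do not need to identify the maximal interval of existence of the limit. But your first step, upward closedness, has a genuine gap: you invoke Theorem~\ref{thm:StrctIncrConeSing} with $\mathsf{J}$ equal to the maximal interval of existence of $v_{\alpha_3}$ on the grounds that ``both solutions are defined'' there. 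The hypothesis of that theorem is stronger: it requires the solution $v_\alpha$ to exist on $\mathsf{J}$ for \emph{every} $\alpha$ in the non-degenerate parameter interval $\mathsf{V}$, not merely at the two endpoints $\alpha_1$ and $\alpha_3$. This is not a technicality — the proof of Theorem~\ref{thm:StrctIncrConeSing} differentiates $v_\alpha \left(\gamma\right)$ in $\alpha$ and applies the mean value theorem at an intermediate parameter (equation~(\ref{eq:LMVT})), so existence for all intermediate parameters is essential. Worse, at this point of your argument the survival of $v_\alpha$ for intermediate $\alpha$ up to $\gamma_{\star, \alpha_3}$ is exactly the kind of statement upward closedness is meant to deliver, so assuming it is circular.

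This is precisely the difficulty the paper's proof of item~(\ref{itm:scrAinterval}) is built to overcome: it first proves a Claim (via Theorem~\ref{thm:UnifConv}, Case~($3$), argued by contradiction) that there is a common $\tilde{\gamma} > 1$ such that $v_\alpha$ exists on $\left[1, \tilde{\gamma}\right)$ for \emph{all} $\alpha \in \left[A_1, A_2\right]$, applies Theorem~\ref{thm:StrctIncrConeSing} only on that common interval to obtain the lower bound $\epsilon = \min\limits_{\cllml} v_{A_1} > 0$ (independent of $\alpha$ and of $\tilde{\gamma}$), and then bootstraps with Theorem~\ref{thm:continue} to push common existence to all of $\cllml$. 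Your argument can be repaired along these lines (an alternative seed for the common interval: by Lemma~\ref{lem:firstexist} each $v_\alpha$ exists beyond $\gamma_{0, \alpha} = \sqrt{-\frac{2 C \left(\alpha\right)}{B \left(\alpha\right)}}$, which is continuous in $\alpha$ and strictly greater than $1$, hence bounded below away from $1$ on the compact set $\left[\alpha_1, \alpha_3\right]$), but as written the comparison $v_{\alpha_3} \geq v_{\alpha_1} \geq \epsilon$ on the maximal interval of $v_{\alpha_3}$ is unjustified, and with it both your part~(\ref{itm:scrAinterval}) and your part~(\ref{itm:scrAprprsbst}) (which uses the interval property to place all intermediate parameters of $\left[\alpha, 0\right]$ inside $\mathscr{A}$ before invoking Theorem~\ref{thm:StrctIncrConeSing}) are left hanging. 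Note that the paper's part~(\ref{itm:scrAprprsbst}) sidesteps this dependence entirely by arguing by contradiction under the hypothesis $\mathscr{A} = \left(-\infty, \bo\right)$, which makes the common-existence hypothesis of Theorem~\ref{thm:StrctIncrConeSing} automatic.
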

\begin{proof}
$ $ \vspace{5pt} \newline
(\ref{itm:scrAinterval}) Let $A_1, A_2 \in \mathscr{A}$ with $A_1 < A_2$.
\begin{claim*}
There exists a common $\tilde{\gamma} \in \left(1, m + 1\right]$ such that $v_\alpha$ exists at least on $\left[1, \tilde{\gamma}\right)$ for all $\alpha \in \left[A_1, A_2\right]$.
\end{claim*}
{\noindent If not true then there exists a sequence $\left(\alpha_n\right)$ in $\left[A_1, A_2\right]$ such that $v_n = v_{\alpha_n}$ exists maximally on $\left[1, \gamma_{\star,n}\right)$ where $\gamma_{\star,n} = \gamma_{\star,\alpha_n} \in \left(1, m + 1\right]$ and the sequence $\left(\gamma_{\star,n}\right) \to 1$. Passing to a subsequence if necessary, assume $\left(\alpha_n\right) \to \alpha_0 \in \left[A_1, A_2\right]$. By Theorem \ref{thm:UnifConv} Case ($3$), the sequence $\left(u_n = u_{\alpha_n}\right)$ in $\mathcal{C} \left[1, m + 1\right]$ has a subsequence $\left(u_{n_k}\right) \to u_0 = u_{\alpha_0}$ uniformly on $\left[1, m + 1\right]$ with the subsequence $\left(\gamma_{\star,n_k}\right) \to \gamma_{\star,0} = \gamma_{\star,\alpha_0}$ where $\left[1, \gamma_{\star,0}\right)$ is the maximal interval of existence of $v_0 = v_{\alpha_0}$. By Lemma \ref{lem:firstexist}, $\gamma_{\star,0} > 1$. So $\left(\gamma_{\star,n_k}\right) \to \gamma_{\star,0} > 1$ and $\left(\gamma_{\star,n}\right) \to 1$, a contradiction. Hence the Claim.} \\
Take any $\tilde{\gamma} > 1$ with the property mentioned in the Claim above. Applying Theorem \ref{thm:StrctIncrConeSing} with $\mathsf{V} = \left[A_1, A_2\right]$ and $\mathsf{J} = \left[1, \tilde{\gamma}\right)$ we will get for any $\alpha \in \left[A_1, A_2\right]$ and for all $\gamma \in \left[1, \tilde{\gamma}\right)$, $v\left(\gamma; \alpha\right) \geq v\left(\gamma; A_1\right)$. Since $A_1 \in \mathscr{A}$ so $v_{A_1}$ exists on the whole of $\left[1, m + 1\right]$ and by Theorem \ref{thm:continue}, there exists an $\epsilon > 0$ (depending only on $A_1$) such that $v\left(\gamma; A_1\right) \geq \epsilon$ for all $\gamma \in \left[1, m + 1\right]$ and hence in particular for all $\gamma \in \left[1, \tilde{\gamma}\right)$. So $v\left(\gamma; \alpha\right) \geq \epsilon$ for all $\gamma \in \left[1, \tilde{\gamma}\right)$ and so by Theorem \ref{thm:continue}, $v_\alpha$ can be continued beyond $\tilde{\gamma}$ for all $\alpha \in \left[A_1, A_2\right]$. Since this holds true for any $\tilde{\gamma} > 1$ with the property mentioned in the Claim above and the lower bound $\epsilon > 0$ on $v_\alpha$ does not depend on $\alpha \in \left[A_1, A_2\right]$ as well as on $\tilde{\gamma} > 1$ so $v_\alpha$ has to exist on $\left[1, m + 1\right]$ for all $\alpha \in \left[A_1, A_2\right]$ i.e. $\left[A_1, A_2\right] \subseteq \mathscr{A}$. So $\mathscr{A}$ is an interval. \vspace{5pt} \\
(\ref{itm:scrAopen}) Take a sequence $\left(\alpha_n\right) \to \alpha_0 \in \left(-\infty, \bo\right)$ of points in $\mathscr{A}^c = \left(-\infty, \bo\right) \smallsetminus \mathscr{A}$. Then by definition of $\mathscr{A}$, the solution $v_n$ to (\ref{eq:ODEIVPConeSing}) with $\alpha = \alpha_n$ has maximal interval of existence $\left[1, \gamma_{\star,n}\right)$ and we are in Theorem \ref{thm:UnifConv} Case ($3$). So there exists a subsequence $\left(u_{n_k}\right) \to u_0$ uniformly on $\left[1, m + 1\right]$ with $\left(\gamma_{\star,n_k}\right) \to \gamma_{\star,0}$. So $\left[1, \gamma_{\star,0}\right)$ is the maximal interval of existence of $v_0$ as the solution to (\ref{eq:ODEIVPConeSing}) with $\alpha = \alpha_0$ and so by definition, $\alpha_0 \in \mathscr{A}^c$. So $\mathscr{A}$ is open. \vspace{5pt} \\
(\ref{itm:scrAprprsbst}) Let if possible $v_\alpha$ exist on $\left[1, m + 1\right]$ for all $\alpha \in \left(-\infty, \bo\right)$. Taking $\mathsf{V} = \left(-\infty, \bo\right)$ and $\mathsf{J} = \left[1, m + 1\right]$ in Theorem \ref{thm:StrctIncrConeSing} we get for a fixed $\alpha_0 \in \left(-\infty, \bo\right)$ and for any $\alpha < \alpha_0$ and with $\gamma = m + 1$, $v\left(m + 1; \alpha\right) \leq v\left(m + 1; \alpha_0\right) - Q\left(m + 1\right) \left(\alpha_0 - \alpha\right)$. Taking $\alpha_n = \alpha_0 - n$ for $n \in \mathbb{N}$ and as $Q\left(m + 1\right) = \frac{m \left(m + 2\right)}{2}$ (from Corollary \ref{cor:derpolynomconesing}), we get $v\left(m + 1; \alpha_n\right) \leq v\left(m + 1; \alpha_0\right) - n \frac{m \left(m + 2\right)}{2}$. Since $v\left(m + 1; \alpha_n\right), v\left(m + 1; \alpha_0\right) > 0$ (from Lemma \ref{lem:positivity}), we have $n < \frac{2 v\left(m + 1; \alpha_0\right)}{m \left(m + 2\right)}$ for all $n \in \mathbb{N}$, a contradiction. So there exists an $\alpha \in \left(-\infty, \bo\right)$ such that $v_\alpha$ has maximal interval of existence $\left[1, \gamma_{\star,\alpha}\right)$ for some $\gamma_{\star,\alpha} \in \left(1, m + 1\right]$ i.e. $v_\alpha$ does not exist till $m + 1$ and so $\alpha \in \mathscr{A}^c = \left(-\infty, \bo\right) \smallsetminus \mathscr{A}$. So $\mathscr{A} \subsetneq \left(-\infty, \bo\right)$. \vspace{5pt} \\
(\ref{itm:scrAfinal}) From Lemma \ref{lem:VPPConeSing} and (\ref{itm:scrAinterval}), (\ref{itm:scrAopen}) and (\ref{itm:scrAprprsbst}), there exists an $M = M \left(m, \bo\right) < 0$ such that $\mathscr{A} = \left(M, \bo\right)$.
\end{proof} \par
For each $\alpha \in \left(-\infty, \bo\right)$ let $I_\alpha \subseteq \left[1, m + 1\right]$ denote the maximal interval of existence of the solution $v_\alpha$ to the ODE initial value problem (\ref{eq:ODEIVPConeSing}). Let $\mathscr{P} \left[1, m + 1\right]$ denote the power set of $\left[1, m + 1\right]$. By Lemma \ref{lem:firstexist} and Theorem \ref{thm:continue} we get the set map $\left(-\infty, \bo\right) \to \mathscr{P} \left[1, m + 1\right]$, $\alpha \mapsto I_\alpha$. Then using Theorems \ref{thm:UnifConv}, \ref{thm:StrctIncrConeSing} and \ref{thm:scrA} and the definitions of $u_\alpha$, $\Phi$ (seen earlier in Subsection \ref{subsec:Proof1}) and $\mathscr{A}$, we get the following two results:
\begin{corollary}\label{cor:StrctIncrConeSing}
The set map $\left(-\infty, \bo\right) \to \mathscr{P} \left[1, m + 1\right]$, $\alpha \mapsto I_\alpha$ and the function $\Phi : \left(-\infty, \bo\right) \to \mathcal{C} \left[1, m + 1\right]$, $\Phi \left(\alpha\right) = u_\alpha \left(\cdot\right) = u \left(\cdot; \alpha\right)$ are monotone increasing in $\alpha$:
\begin{enumerate}
\item If $\alpha_1, \alpha_2 \in \mathscr{A}^c = \left(-\infty, \bo\right) \smallsetminus \mathscr{A}$, $\alpha_1 < \alpha_2$ then $\left[\alpha_1, \alpha_2\right] \subseteq \mathscr{A}^c$ and $\gamma_{\star,\alpha_1} < \gamma_{\star,\alpha_2}$ i.e. $I_{\alpha_1} \subsetneq I_{\alpha_2}$. In general if $\alpha_1, \alpha_2 \in \left(-\infty, \bo\right)$, $\alpha_1 < \alpha_2$ then $I_{\alpha_1} \subseteq I_{\alpha_2}$, with the set containment being strict if $\alpha_1 \in \mathscr{A}^c$ and it being set equality otherwise. \label{itm:Ialpha}
\item If $\alpha_1, \alpha_2 \in \mathscr{A}$, $\alpha_1 < \alpha_2$ then $\left[\alpha_1, \alpha_2\right] \subseteq \mathscr{A}$ and $v\left(\gamma; \alpha_1\right) < v\left(\gamma; \alpha_2\right)$ i.e. $u\left(\gamma; \alpha_1\right) < u\left(\gamma; \alpha_2\right)$ for all $\gamma \in \left(1, m + 1\right]$. In general if $\alpha_1, \alpha_2 \in \left(-\infty, \bo\right)$, $\alpha_1 < \alpha_2$ then $u\left(\gamma; \alpha_1\right) \leq u\left(\gamma; \alpha_2\right)$ for all $\gamma \in \left(1, m + 1\right]$, with the inequality being strict if $\gamma \in I_{\alpha_2} \smallsetminus \stone$ and it being equality otherwise. \label{itm:ualpha}
\end{enumerate}
\end{corollary}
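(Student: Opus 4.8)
The plan is to read off both monotonicity statements from the three theorems that precede the corollary, handling the maximal-interval map $\alpha\mapsto I_\alpha$ first (item \ref{itm:Ialpha}) and then bootstrapping to $\Phi$ (item \ref{itm:ualpha}). The structural backbone is Theorem \ref{thm:scrA}(\ref{itm:scrAfinal}), which gives $\mathscr{A}=\left(M,\bo\right)$, so that $\mathscr{A}^c$ is a downward-closed interval. Thus the moment $\alpha_1<\alpha_2$ both lie in $\mathscr{A}^c$ I get $\left[\alpha_1,\alpha_2\right]\subseteq\mathscr{A}^c$ for free, and symmetrically $\left[\alpha_1,\alpha_2\right]\subseteq\mathscr{A}$ when both lie in $\mathscr{A}$; these are the easy halves of each item.

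The substantive step is the strict monotonicity of the breakdown points. I would fix $\alpha_1<\alpha_2$ in $\mathscr{A}^c$ and set $\tilde{\gamma}=\inf_{\alpha\in\left[\alpha_1,\alpha_2\right]}\gamma_{\star,\alpha}$. Using the compactness Claim inside the proof of Theorem \ref{thm:scrA}(\ref{itm:scrAinterval}) (powered by Theorem \ref{thm:UnifConv}, Case ($3$)), a minimizing sequence $\alpha_n$ has a subsequence $\alpha_{n_k}\to\alpha^\ast\in\left[\alpha_1,\alpha_2\right]$ with $\gamma_{\star,\alpha_{n_k}}\to\gamma_{\star,\alpha^\ast}=\tilde{\gamma}$, so the infimum is attained, and $\tilde{\gamma}=\gamma_{\star,\alpha^\ast}>1$ by Lemma \ref{lem:firstexist}. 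Every $v_\alpha$ with $\alpha\in\left[\alpha_1,\alpha_2\right]$ exists on $\left[1,\tilde{\gamma}\right)$ because $\tilde{\gamma}\leq\gamma_{\star,\alpha}$, so Theorem \ref{thm:StrctIncrConeSing} applies on $\mathsf{J}=\left[1,\tilde{\gamma}\right)$. If $\alpha^\ast>\alpha_1$, comparing $\alpha_1$ against $\alpha^\ast$ and letting $\gamma\to\tilde{\gamma}^-$ gives $0\leq\lim v\left(\gamma;\alpha_1\right)\leq 0-Q\left(\tilde{\gamma}\right)\left(\alpha^\ast-\alpha_1\right)<0$ (using $v\left(\gamma;\alpha^\ast\right)\to 0$ at its breakdown by Theorem \ref{thm:continue}, $v\left(\gamma;\alpha_1\right)\geq 0$ by Lemma \ref{lem:positivity}, and $Q\left(\tilde{\gamma}\right)>0$ by Corollary \ref{cor:derpolynomconesing} since $\tilde{\gamma}>1$), a contradiction. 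Hence the minimum is attained at $\alpha_1$, giving $\gamma_{\star,\alpha_1}\leq\gamma_{\star,\alpha}$ for all intermediate $\alpha$ (weak monotonicity, so $I_{\alpha_1}\subseteq I_\alpha$). Strictness follows by excluding $\gamma_{\star,\alpha_1}=\gamma_{\star,\alpha_2}$: comparing $\alpha_1$ with $\alpha_2$ directly on $\left[1,\gamma_{\star,\alpha_1}\right)$ and letting $\gamma\to\gamma_{\star,\alpha_1}^-$, both solutions tend to $0$, forcing $0\leq -Q\left(\gamma_{\star,\alpha_1}\right)\left(\alpha_2-\alpha_1\right)<0$. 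The general containment claim of item \ref{itm:Ialpha} then drops out by combining this with $\mathscr{A}=\left(M,\bo\right)$: if $\alpha_1\in\mathscr{A}$ then $\alpha_2\in\mathscr{A}$ and $I_{\alpha_1}=I_{\alpha_2}=\cllml$ (equality); if $\alpha_1\in\mathscr{A}^c$, $\alpha_2\in\mathscr{A}$ then $I_{\alpha_1}\subsetneq\cllml=I_{\alpha_2}$; and both in $\mathscr{A}^c$ is the strict case just proved.

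For item \ref{itm:ualpha} I would use item \ref{itm:Ialpha} to supply the coexistence interval. Given arbitrary $\alpha_1<\alpha_2$, item \ref{itm:Ialpha} yields $I_{\alpha_1}\subseteq I_\alpha$ for every $\alpha\in\left[\alpha_1,\alpha_2\right]$, so all intermediate solutions live on $I_{\alpha_1}$ and Theorem \ref{thm:StrctIncrConeSing} applies on $\mathsf{J}=I_{\alpha_1}$, giving $v\left(\gamma;\alpha_1\right)<v\left(\gamma;\alpha_2\right)$ for $\gamma\in I_{\alpha_1}\smallsetminus\stone$; since $u_{\alpha_i}=v_{\alpha_i}$ there, this is the strict inequality for $u$. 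On $I_{\alpha_2}\smallsetminus I_{\alpha_1}$ one has $u\left(\gamma;\alpha_1\right)=0$ (past the breakdown of $v_{\alpha_1}$) while $u\left(\gamma;\alpha_2\right)=v\left(\gamma;\alpha_2\right)>0$ by Lemma \ref{lem:positivity}, again strict; and on $\cllml\smallsetminus I_{\alpha_2}$ both vanish, giving equality. This is precisely the dichotomy ``strict iff $\gamma\in I_{\alpha_2}\smallsetminus\stone$,'' and the special case $\alpha_1,\alpha_2\in\mathscr{A}$ (where $I_{\alpha_2}=\cllml$) collapses to strictness on all of $\oplml$. The main obstacle is the strict monotonicity of the breakdown points, and within it the two delicate points are ensuring the comparison inequality of Theorem \ref{thm:StrctIncrConeSing} is available on a single interval common to all intermediate parameters — which forces me to establish attainment of $\tilde{\gamma}=\inf_{\left[\alpha_1,\alpha_2\right]}\gamma_\star$ at a value $>1$ via the uniform-convergence machinery of Theorem \ref{thm:UnifConv} before anything else — and the limit $\gamma\to\tilde{\gamma}^-$, where the sign $Q\left(\tilde{\gamma}\right)>0$ together with $v\to 0$ at breakdown produces the contradiction. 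Once these limiting arguments are secured, the rest is bookkeeping against the interval structure $\mathscr{A}=\left(M,\bo\right)$.
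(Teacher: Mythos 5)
Your proposal is correct, and its skeleton matches the paper's proof: the interval structure $\mathscr{A} = \left(M, \bo\right)$ from Theorem \ref{thm:scrA} supplies the easy halves, Theorem \ref{thm:StrctIncrConeSing} with $Q > 0$ (Corollary \ref{cor:derpolynomconesing}) drives the comparisons, the compactness of Theorem \ref{thm:UnifConv} Case ($3$) underpins the common-existence step, and your three-region bookkeeping for item (\ref{itm:ualpha}) — strict on $I_{\alpha_1} \smallsetminus \stone$ via the comparison theorem, strict on $I_{\alpha_2} \smallsetminus I_{\alpha_1}$ via Lemma \ref{lem:positivity}, equality beyond $\gamma_{\star, \alpha_2}$ — is exactly the paper's. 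Your strictness argument (assume $\gamma_{\star,\alpha_1} = \gamma_{\star,\alpha_2}$, let $\gamma \to \gamma_{\star,\alpha_1}$, and force $Q\left(\gamma_{\star,\alpha_1}\right) \leq 0$) is verbatim the paper's. The one genuine divergence is the weak monotonicity step $\gamma_{\star,\alpha_1} \leq \gamma_{\star,\alpha}$ for $\alpha \in \left[\alpha_1, \alpha_2\right]$. The paper never forms $\tilde{\gamma} = \inf_{\alpha \in \left[\alpha_1, \alpha_2\right]} \gamma_{\star,\alpha}$: it takes an arbitrary common existence threshold $\tilde{\gamma} < \gamma_{\star,\alpha_1}$ (via the Claim), obtains $v\left(\gamma; \alpha\right) \geq v\left(\gamma; \alpha_1\right) \geq \epsilon_{\tilde{\gamma}} > 0$ on $\left[1, \tilde{\gamma}\right)$ because $\left[1, \tilde{\gamma}\right) \subsetneq I_{\alpha_1}$, invokes the continuation criterion of Theorem \ref{thm:continue} to push every $v_\alpha$ past $\tilde{\gamma}$, and then sweeps $\tilde{\gamma}$ up to $\gamma_{\star,\alpha_1}$. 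You instead prove the infimum of breakdown points is attained at some $\alpha^\ast$ (by the same Theorem \ref{thm:UnifConv} compactness, which subsumes the paper's Claim) and rule out $\alpha^\ast > \alpha_1$ by the single estimate $v\left(\gamma; \alpha_1\right) \leq v\left(\gamma; \alpha^\ast\right) - Q\left(\gamma\right) \left(\alpha^\ast - \alpha_1\right) \to - Q\left(\tilde{\gamma}\right) \left(\alpha^\ast - \alpha_1\right) < 0$ as $\gamma \to \tilde{\gamma}^-$, contradicting nonnegativity. This is a legitimate and arguably crisper variant: it replaces the paper's slightly delicate upward-sweeping continuation argument with one extremal contradiction, reusing the same sign mechanism ($v \to 0$ at breakdown against $Q\left(\tilde{\gamma}\right) > 0$, with $\tilde{\gamma} > 1$ by Lemma \ref{lem:firstexist}) that both proofs use for strictness; the only extra input it needs — attainment of the infimum along a minimizing sequence — is delivered by machinery already required for the Claim. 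No gaps.
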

\begin{proof}
$ $ \vspace{5pt} \newline
(\ref{itm:Ialpha}) Given $\alpha_1, \alpha_2 \in \mathscr{A}^c$, $\alpha_1 < \alpha_2$ then as $\mathscr{A}^c = \left(-\infty, M\right]$ (by Theorem \ref{thm:scrA} (\ref{itm:scrAfinal})) so clearly $\left[\alpha_1, \alpha_2\right] \subseteq \mathscr{A}^c$. So for every $\alpha \in \left[\alpha_1, \alpha_2\right]$, $I_\alpha = \left[1, \gamma_{\star, \alpha}\right)$.
\begin{claim*}
There exists a $\tilde{\gamma} \in \left(1, m + 1\right]$ such that $v_\alpha$ exists at least on $\left[1, \tilde{\gamma}\right)$ i.e. $\left[1, \tilde{\gamma}\right) \subseteq I_\alpha$ for all $\alpha \in \left[\alpha_1, \alpha_2\right]$.
\end{claim*}
{\noindent The proof of the above Claim is exactly the same as that of the Claim in Theorem \ref{thm:scrA} (\ref{itm:scrAinterval}) with $A_1, A_2$ being replaced by $\alpha_1, \alpha_2$.} \\
Take any $\tilde{\gamma} > 1$ with the property mentioned in the above Claim. Then clearly $\tilde{\gamma} \leq \gamma_{\star, \alpha_1}$. If $\tilde{\gamma} = \gamma_{\star, \alpha_1}$ then $\left[1, \gamma_{\star, \alpha_1}\right) = I_{\alpha_1} \subseteq I_\alpha$ for all $\alpha \in \left[\alpha_1, \alpha_2\right]$ which is precisely what we are trying to prove now over here. Otherwise taking any such $\tilde{\gamma} < \gamma_{\star, \alpha_1}$ and applying Theorem \ref{thm:StrctIncrConeSing} with $\mathsf{V} = \left[\alpha_1, \alpha_2\right]$ and $\mathsf{J} = \left[1, \tilde{\gamma}\right)$ we get $v\left(\gamma; \alpha\right) \geq v\left(\gamma; \alpha_1\right)$ for all $\gamma \in \left[1, \tilde{\gamma}\right)$ and for any $\alpha \in \left[\alpha_1, \alpha_2\right]$. Since $\left[1, \tilde{\gamma}\right) \subsetneq I_{\alpha_1}$ so by Lemma \ref{lem:positivity} and Theorem \ref{thm:continue}, there exists an $\epsilon_{\tilde{\gamma}} > 0$ (depending only on $\tilde{\gamma}$ and $\alpha_1$) such that $v\left(\gamma; \alpha_1\right) \geq \epsilon_{\tilde{\gamma}}$ for all $\gamma \in \left[1, \tilde{\gamma}\right)$. So for each $\alpha \in \left[\alpha_1, \alpha_2\right]$, $v\left(\gamma; \alpha\right) \geq \epsilon_{\tilde{\gamma}}$ for all $\gamma \in \left[1, \tilde{\gamma}\right)$ and so by Theorem \ref{thm:continue}, $v_\alpha$ can be continued beyond $\tilde{\gamma}$ for all $\alpha \in \left[\alpha_1, \alpha_2\right]$. Since this holds true for any $1 < \tilde{\gamma} < \gamma_{\star, \alpha_1}$ with the property mentioned in the above Claim and the lower bound $\epsilon_{\tilde{\gamma}} > 0$ on $v_\alpha$ does not depend on $\alpha \in \left[\alpha_1, \alpha_2\right]$ so $v_\alpha$ has to exist on $\left[1, \gamma_{\star, \alpha_1}\right)$ i.e. $I_{\alpha_1} \subseteq I_\alpha$ for all $\alpha \in \left[\alpha_1, \alpha_2\right]$. \\
So in particular $I_{\alpha_1} \subseteq I_{\alpha_2}$. Let if possible $I_{\alpha_1} = I_{\alpha_2}$ i.e. $\gamma_{\star, \alpha_1} = \gamma_{\star, \alpha_2}$. So by Lemma \ref{lem:positivity} we have $\lim\limits_{\gamma \to \gamma_{\star, \alpha_1}} v \left(\gamma; \alpha_1\right) = 0 = \lim\limits_{\gamma \to \gamma_{\star, \alpha_2}} v \left(\gamma; \alpha_2\right)$. Taking $\mathsf{V} = \left[\alpha_1, \alpha_2\right]$ and $\mathsf{J} = \left[1,
\gamma_{\star, \alpha_1}\right)$ in Theorem \ref{thm:StrctIncrConeSing} we get $v\left(\gamma; \alpha_2\right) \geq v\left(\gamma; \alpha_1\right) + Q\left(\gamma\right) \left(\alpha_2 - \alpha_1\right)$ for all $\gamma \in \left[1, \gamma_{\star, \alpha_1}\right)$. Applying limits as $\gamma \to \gamma_{\star, \alpha_1}$ we get $Q\left(\gamma_{\star, \alpha_1}\right) \leq 0$ where $\gamma_{\star, \alpha_1} > 1$ which is a contradiction to Corollary \ref{cor:derpolynomconesing}. So $I_{\alpha_1} \subsetneq I_{\alpha_2}$ i.e. $\gamma_{\star,\alpha_1} < \gamma_{\star,\alpha_2}$. \\
If $\alpha_1 \in \mathscr{A}^c$ and $\alpha_2 \in \mathscr{A}$ then $\alpha_1 < \alpha_2$ (by Theorem \ref{thm:scrA} (\ref{itm:scrAfinal})) and $I_{\alpha_1} = \left[1, \gamma_{\star, \alpha_1}\right) \subsetneq \left[1, m + 1\right] = I_{\alpha_2}$. If $\alpha_1, \alpha_2 \in \mathscr{A}$, $\alpha_1 < \alpha_2$ then $I_{\alpha_1} = \left[1, m + 1\right] = I_{\alpha_2}$. Thus the general statement for $\alpha_1, \alpha_2 \in \left(-\infty, \bo\right)$, $\alpha_1 < \alpha_2$ holds true. \vspace{5pt} \\
(\ref{itm:ualpha}) Given $\alpha_1, \alpha_2 \in \mathscr{A}$, $\alpha_1 < \alpha_2$ then from Theorem \ref{thm:scrA} (\ref{itm:scrAinterval}), $\left[\alpha_1, \alpha_2\right] \subseteq \mathscr{A}$. So for every $\alpha \in \left[\alpha_1, \alpha_2\right]$, $I_\alpha = \left[1, m + 1\right]$ and $u_\alpha = v_\alpha$ on $\left[1, m + 1\right]$. Applying Theorem \ref{thm:StrctIncrConeSing} with $\mathsf{V} = \left[\alpha_1, \alpha_2\right]$ and $\mathsf{J} = \left[1, m + 1\right]$ we get $v\left(\gamma; \alpha_1\right) < v\left(\gamma; \alpha_2\right)$ i.e. $u\left(\gamma; \alpha_1\right) < u\left(\gamma; \alpha_2\right)$ for all $\gamma \in \left(1, m + 1\right]$. \\
If $\alpha_1 \in \mathscr{A}^c$ and $\alpha_2 \in \mathscr{A}$ then $\alpha_1 < \alpha_2$ and $u_{\alpha_1} = v_{\alpha_1}$ on $I_{\alpha_1} = \left[1, \gamma_{\star, \alpha_1}\right)$ and $u_{\alpha_1} \equiv 0$ on $\left[\gamma_{\star, \alpha_1}, m + 1\right]$ and $u_{\alpha_2} = v_{\alpha_2}$ on $I_{\alpha_2} = \left[1, m + 1\right]$. From (\ref{itm:Ialpha}) above, $I_{\alpha_1} \subseteq I_\alpha$ for all $\alpha \in \left[\alpha_1, \alpha_2\right]$ and so by Theorem \ref{thm:StrctIncrConeSing} with $\mathsf{V} = \left[\alpha_1, \alpha_2\right]$ and $\mathsf{J} = \left[1, \gamma_{\star, \alpha_1}\right)$ we get $v\left(\gamma; \alpha_1\right) < v\left(\gamma; \alpha_2\right)$ for all $\gamma \in \left(1, \gamma_{\star, \alpha_1}\right)$. On $\left[\gamma_{\star, \alpha_1}, m + 1\right]$ by Lemma \ref{lem:positivity}, $u_{\alpha_1} = 0 < v_{\alpha_2} = u_{\alpha_2}$. \\
If $\alpha_1, \alpha_2 \in \mathscr{A}^c$, $\alpha_1 < \alpha_2$ then by (\ref{itm:Ialpha}) above, $u_\alpha = v_\alpha$ on $I_\alpha = \left[1, \gamma_{\star, \alpha}\right)$ and $u_\alpha \equiv 0$ on $\left[\gamma_{\star, \alpha}, m + 1\right]$ for all $\alpha \in \left[\alpha_1, \alpha_2\right]$. Also by (\ref{itm:Ialpha}), $I_{\alpha_1} \subseteq I_\alpha$ for all $\alpha \in \left[\alpha_1, \alpha_2\right]$ and so again using Theorem \ref{thm:StrctIncrConeSing} with $\mathsf{V} = \left[\alpha_1, \alpha_2\right]$ and $\mathsf{J} = \left[1, \gamma_{\star, \alpha_1}\right)$ we get $v\left(\gamma; \alpha_1\right) < v\left(\gamma; \alpha_2\right)$ for all $\gamma \in \left(1, \gamma_{\star, \alpha_1}\right)$. As $\gamma_{\star, \alpha_1} < \gamma_{\star, \alpha_2}$ (again by (\ref{itm:Ialpha})) so on $\left[\gamma_{\star, \alpha_1}, \gamma_{\star, \alpha_2}\right)$ by Lemma \ref{lem:positivity} and Theorem \ref{thm:continue}, $u_{\alpha_1} = 0 < v_{\alpha_2} = u_{\alpha_2}$. On $\left[\gamma_{\star, \alpha_2}, m + 1\right]$, $u_{\alpha_2} = 0 = u_{\alpha_1}$ as $I_{\alpha_1} \subsetneq I_{\alpha_2}$. \\
Thus the general statement for $\alpha_1, \alpha_2 \in \left(-\infty, \bo\right)$, $\alpha_1 < \alpha_2$ holds true.
\end{proof} \par
Because of monotonicity we can get the uniform convergence of the whole sequence $\left(u_n\right)$ instead of just a subsequence $\left(u_{n_k}\right)$ in Theorem \ref{thm:UnifConv}:
\begin{corollary}\label{cor:Phicont}
If $\left(\alpha_n\right) \uparrow \alpha_0$ in $\left(-\infty, \bo\right)$ then $\left(u_n = u_{\alpha_n}\right) \uparrow u_0 = u_{\alpha_0}$ in $\mathcal{C} \left[1, m + 1\right]$, and more generally if $\left(\alpha_n\right) \to \alpha_0$ then $\left(u_n\right) \to u_0$ uniformly on $\left[1, m + 1\right]$. Thus $\Phi$ is continuous.
\end{corollary}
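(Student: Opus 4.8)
The plan is to upgrade the subsequential uniform convergence already secured in Theorem \ref{thm:UnifConv} to convergence of the \emph{full} sequence, by invoking the elementary principle that in a metric space a sequence converges to a prescribed limit precisely when every subsequence admits a further subsequence converging to that same limit. The crucial feature making this work is that Theorem \ref{thm:UnifConv} identifies the limit of every uniformly convergent subsequence as the single function $u_0 = u_{\alpha_0}$, which is determined by $\alpha_0$ alone; so all candidate limits coincide.

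First I would settle the general case $\left(\alpha_n\right) \to \alpha_0$. Suppose, for contradiction, that $\left(u_n\right)$ does not converge uniformly to $u_0$. Then there exist $\epsilon > 0$ and a subsequence $\left(u_{n_j}\right)$ with $\left\lVert u_{n_j} - u_0 \right\rVert_\infty \geq \epsilon$ for all $j$. Since $\left(\alpha_{n_j}\right) \to \alpha_0$ as well, applying Theorem \ref{thm:UnifConv} to this subsequence yields a further subsequence $\left(u_{n_{j_l}}\right)$ with $\left\lVert u_{n_{j_l}} - u_0 \right\rVert_\infty \to 0$, contradicting the uniform lower bound $\epsilon$. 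Hence $\left(u_n\right) \to u_0$ uniformly on $\cllml$. The continuity of $\Phi$ then follows by sequential continuity, since $\Phi\left(\alpha_n\right) = u_n \to u_0 = \Phi\left(\alpha_0\right)$ in $\left(\mathcal{C}\cllml, \left\lVert \cdot \right\rVert_\infty\right)$ whenever $\left(\alpha_n\right) \to \alpha_0$, and both domain and codomain are metric spaces.

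For the monotone assertion $\left(\alpha_n\right) \uparrow \alpha_0 \Rightarrow \left(u_n\right) \uparrow u_0$, I would simply append the order information supplied by Corollary \ref{cor:StrctIncrConeSing}: because $\alpha_n \leq \alpha_{n+1}$, the pointwise monotonicity furnished by that corollary gives $u_n\left(\gamma\right) \leq u_{n+1}\left(\gamma\right)$ for every $\gamma \in \cllml$, so $\left(u_n\right)$ is a pointwise nondecreasing sequence, and it converges uniformly to $u_0$ by the previous paragraph. Equivalently, one may note that the pointwise nondecreasing bounded sequence $\left(u_n\right)$ has a pointwise limit which, upon comparison with the uniformly convergent subsequence of Theorem \ref{thm:UnifConv}, must equal the continuous function $u_0$, after which Dini's theorem delivers uniform convergence directly.

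The substantive real-analytic and ODE-theoretic work has all been absorbed into Theorem \ref{thm:UnifConv}, so no genuine obstacle remains at this stage. The only thing demanding care is the logical bookkeeping of the subsequence-of-a-subsequence argument combined with the uniqueness of $u_0 = u_{\alpha_0}$, together with the separate (but immediate) appeal to Corollary \ref{cor:StrctIncrConeSing} needed to record the monotonicity in the ordered-convergence statement.
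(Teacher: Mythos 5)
Your proof is correct, and its logical organization differs from the paper's in an instructive way. The paper's own proof is a one-line recipe --- combine Theorem \ref{thm:UnifConv} with Corollary \ref{cor:StrctIncrConeSing} and apply Dini's theorem --- which is exactly your \emph{alternative} route for the monotone case: monotonicity in $\alpha$ makes $\left(u_n\right)$ pointwise nondecreasing, the uniformly convergent subsequence supplied by Theorem \ref{thm:UnifConv} identifies the pointwise limit as the continuous function $u_0$, and Dini then upgrades monotone pointwise convergence to uniform convergence on the compact interval $\cllml$. Your primary argument for the general statement $\left(\alpha_n\right) \to \alpha_0$ is genuinely different and in one respect cleaner: the subsequence-of-a-subsequence principle needs only Theorem \ref{thm:UnifConv} together with the observation that the candidate limit $u_0 = u_{\alpha_0}$ is the \emph{same} for every convergent subsequence; it invokes neither Dini nor the order structure, so Corollary \ref{cor:StrctIncrConeSing} enters your proof only to record that the convergence is monotone when $\left(\alpha_n\right) \uparrow \alpha_0$, whereas in the paper's route monotonicity is what drives the full-sequence convergence in the first place. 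One step you use implicitly and should make explicit: applying Theorem \ref{thm:UnifConv} to the subsequence $\left(\alpha_{n_j}\right)$ is legitimate because the theorem's hypothesis is merely convergence to $\alpha_0$, and the three-case analysis in its proof was justified there precisely on the grounds that only a subsequential limit was sought --- so extracting a further subsequence from an arbitrary subsequence is licensed. With that noted, both routes are complete; yours buys a continuity proof for $\Phi$ that is independent of the monotone structure (and would survive in settings where an analogue of Corollary \ref{cor:StrctIncrConeSing} is unavailable), while the paper's Dini-based route delivers the ordered-convergence assertion $\left(u_n\right) \uparrow u_0$ most directly.
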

\begin{proof}
Use Theorem \ref{thm:UnifConv} and Corollary \ref{cor:StrctIncrConeSing} and apply Dini's theorem.
\end{proof} \par
We now have the final result of Section \ref{sec:ProofConeSing} which will prove Theorem \ref{thm:mainconesing} and as a consequence Corollary \ref{cor:mainconesing}:
\begin{corollary}\label{cor:final}
$\lim\limits_{\alpha \to M^+} v\left(m + 1; \alpha\right) = 0$. There exists a unique $\alpha = \alpha \left(m, \bo\right) \in \left(M, \bo\right)$ such that $v \left(m + 1; \alpha\right) = 2 \left(m + 1\right)^2$ and the $\alpha$ with this property has to be strictly negative.
\end{corollary}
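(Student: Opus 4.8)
The plan is to study the function $g \colon \mathscr{A} \to \R$, $g\left(\alpha\right) = v\left(m+1; \alpha\right)$, on the interval $\mathscr{A} = \left(M, \bo\right)$ supplied by Theorem \ref{thm:scrA}, and to locate the value $\alpha = \alpha^*$ at which $g\left(\alpha^*\right) = 2\left(m+1\right)^2$. For $\alpha \in \mathscr{A}$ the solution $v_\alpha$ exists on all of $\cllml$, so $g\left(\alpha\right) = u\left(m+1; \alpha\right) = \Phi\left(\alpha\right)\left(m+1\right)$, where $\Phi$ is the map from Subsection \ref{subsec:Proof1}. Since $\Phi$ is continuous (Corollary \ref{cor:Phicont}) and evaluation at $m+1$ is continuous on $\mathcal{C} \cllml$, the function $g$ is continuous on $\mathscr{A}$; and since $m + 1 > 1$, Corollary \ref{cor:StrctIncrConeSing}(\ref{itm:ualpha}) shows $g$ is strictly increasing on $\mathscr{A}$.

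First I would establish the boundary limit. Because $\mathscr{A} = \left(M, \bo\right)$ is open, $M \in \mathscr{A}^c$, so the maximal interval of existence of $v_M$ is $\left[1, \gamma_{\star, M}\right)$ with $\gamma_{\star, M} \le m+1$; hence by the definition of $u$ (Case (\ref{itm:defu2})) we have $u\left(m+1; M\right) = 0$. Continuity of $\Phi$ then yields $\lim\limits_{\alpha \to M^+} g\left(\alpha\right) = \lim\limits_{\alpha \to M^+} u\left(m+1; \alpha\right) = u\left(m+1; M\right) = 0$, which is the first assertion of the corollary. Together with $\lim\limits_{\alpha \to \bo^-} g\left(\alpha\right) = b$ and $2\left(m+1\right)^2 < b < \infty$ from Lemma \ref{lem:VPPConeSing}, the continuity and strict monotonicity of $g$ show that its range is exactly $\left(0, b\right)$. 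Since $0 < 2\left(m+1\right)^2 < b$, the intermediate value theorem produces an $\alpha^* = \alpha\left(m, \bo\right) \in \left(M, \bo\right)$ with $v\left(m+1; \alpha^*\right) = 2\left(m+1\right)^2$, and its uniqueness is immediate from strict monotonicity.

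It remains to show $\alpha^* < 0$, and the key new ingredient here is an integration-by-parts identity rather than any delicate estimate. For the solution $v = v_{\alpha^*}$ satisfying both boundary conditions $v\left(1\right) = 2$ and $v\left(m+1\right) = 2\left(m+1\right)^2$, set $\phi = \sqrt{2 v} - 2\gamma$; then $\phi$ is precisely the momentum profile of (\ref{eq:ODEConeSing}), with $\phi\left(1\right) = \phi\left(m+1\right) = 0$, and the condition $v\left(\gamma\right) > 2\gamma^2$ on $\left(1, m+1\right)$ — which is part of (\ref{eq:ODEBVPConeSing}) and was shown in Subsection \ref{subsec:AnalysisODEBVPConeSing} to be implied by the two boundary conditions — gives $\phi > 0$ on $\left(1, m+1\right)$. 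Integrating the ODE $\left(2\gamma + \phi\right)\phi' = B \frac{\gamma^3}{2} + C\gamma$ over $\cllml$, the left-hand side becomes $\int_1^{m+1}\left(2\gamma\phi' + \phi\phi'\right)d\gamma = -2\int_1^{m+1}\phi \, d\gamma$, after integrating $2\gamma\phi'$ by parts and using $\phi\left(1\right) = \phi\left(m+1\right) = 0$ (the $\phi\phi'$ term contributes $\tfrac12\left[\phi^2\right]_1^{m+1} = 0$), while the right-hand side equals $P_{\alpha^*}\left(m+1\right) = \frac{m\left(m+2\right)}{2}\alpha^*$ by Lemma \ref{lem:polynom1conesing}. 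Hence $\alpha^* = -\frac{4}{m\left(m+2\right)}\int_1^{m+1}\phi \, d\gamma < 0$, since $\phi > 0$ on $\left(1, m+1\right)$, which also recovers the relation $\boo > \bo$ in Theorem \ref{thm:mainconesing} and Corollary \ref{cor:mainconesing}.

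The main obstacle is conceptual rather than computational, and it sits entirely in the second paragraph: correctly passing to the limit as $\alpha \to M^+$, where the qualitative nature of the solution changes from global existence on $\cllml$ to break-down at an interior point, so that the naive quantity $v\left(m+1; \alpha\right)$ has no meaning at $\alpha = M$. This is exactly where the continuity of $\Phi$ at the boundary of $\mathscr{A}$ (Corollary \ref{cor:Phicont}, itself resting on Theorem \ref{thm:UnifConv} and the monotonicity of Corollary \ref{cor:StrctIncrConeSing}) does the heavy lifting by replacing $v$ with the globally defined continuous extension $u$. Once that continuity is granted, assembling the intermediate value argument and the negativity identity is routine.
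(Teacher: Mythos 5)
Your proposal is correct and follows essentially the same route as the paper: continuity (Corollary \ref{cor:Phicont}) and strict monotonicity (Theorem \ref{thm:StrctIncrConeSing}, Corollary \ref{cor:StrctIncrConeSing}) of $\alpha \mapsto v\left(m + 1; \alpha\right)$ on $\mathscr{A} = \left(M, \bo\right)$, the boundary limits $0$ at $M^+$ and $b > 2\left(m+1\right)^2$ at $\bo^-$ (Lemma \ref{lem:VPPConeSing}), an intermediate value argument for existence and uniqueness, and integration of the ODE over $\cllml$ with $P_\alpha\left(m+1\right) = \frac{m\left(m+2\right)}{2}\alpha$ (Lemma \ref{lem:polynom1conesing}) for the sign of $\alpha$. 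Your two local refinements are both valid and slightly cleaner: you deduce $\lim\limits_{\alpha \to M^+} v\left(m+1; \alpha\right) = u\left(m+1; M\right) = 0$ directly from $M \in \mathscr{A}^c$ and the continuity of $\Phi$ at $M$, where the paper instead runs monotone sequences from both sides of $M$ through the cases of Theorem \ref{thm:UnifConv}, and you replace the paper's inequality $2\left(m+1\right)^2 - 2 > 2\left(\left(m+1\right)^2 - 1\right) + \frac{m\left(m+2\right)}{2}\alpha$ by the exact identity $\alpha = -\frac{4}{m\left(m+2\right)} \int\limits_1^{m+1} \phi \, d\gamma$, which sharpens the conclusion and makes the connection $\alpha = \bo - \boo < 0$ transparent.
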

\begin{proof}
By Theorem \ref{thm:scrA}, $\mathscr{A} = \left(M, \bo\right) \subseteq \left(-\infty, \bo\right)$ and so $\mathscr{A}^c = \left(-\infty, M\right]$. Let $\left(A_n\right) \uparrow M$ be a sequence of points in $\mathscr{A}^c$ then from Corollary \ref{cor:Phicont}, $\left(u_{A_n}\right) \uparrow u_M$ uniformly on $\left[1, m + 1\right]$ and by definition, $I_{A_n} = \left[1, \gamma_{\star, A_n}\right)$ and so by Theorem \ref{thm:UnifConv} Case ($3.2$) and Corollaries \ref{cor:StrctIncrConeSing} and \ref{cor:Phicont}, $I_M = \left[1, \gamma_{\star, M}\right)$ where $\left(\gamma_{\star, A_n}\right) \uparrow \gamma_{\star, M}$ i.e. $\left(I_{A_n}\right)$ is monotone increasing in $\mathscr{P} \left[1, m + 1\right]$ and $I_M = \bigcup\limits_{n \in \mathbb{N}} I_{A_n}$. \\
Let $\left(\alpha_n\right) \downarrow M \in \mathscr{A}^c$ be a sequence of points in $\mathscr{A}$ then $\left(u_{\alpha_n}\right) \downarrow u_M$ uniformly on $\left[1, m + 1\right]$ and by definition, $I_{\alpha_n} = \left[1, m + 1\right]$ and $u_{\alpha_n} = v_{\alpha_n}$ on $\left[1, m + 1\right]$ for all $n \in \mathbb{N}$. So we will land up in either of the Cases ($1$) or ($2$) in Theorem \ref{thm:UnifConv}. In Theorem \ref{thm:UnifConv} Case ($1$), $\inf\limits_{\gamma \in \left[1, m + 1\right], n \in \mathbb{N}} u_{\alpha_n} \left(\gamma\right) = \epsilon > 0$ and hence the uniform limit $u_M \geq \epsilon$ on $\left[1, m + 1\right]$ and so by Lemma \ref{lem:continue}, $I_M = \left[1, m + 1\right]$ with $u_M = v_M$ on $\left[1, m + 1\right]$, thereby implying that $M \in \mathscr{A}$, a contradiction. So we are in Theorem \ref{thm:UnifConv} Case ($2$) which has $\inf\limits_{\gamma \in \left[1, m + 1\right], n \in \mathbb{N}} u_{\alpha_n} \left(\gamma\right) = 0$ and hence $I_M = \left[1, m + 1\right)$ (i.e. $\gamma_{\star, M} = m + 1$) and $u_M = v_M > 0$ (by Lemma \ref{lem:positivity}) on $\left[1, m + 1\right)$ and $u_M \left(m + 1\right) = 0$. So by pointwise convergence, $\left( u_{\alpha_n} \left(m + 1\right) \right) \downarrow u_M \left(m + 1\right)$ i.e. $\left(v_{\alpha_n} \left(m + 1\right)\right) \downarrow 0$ and so we get $\lim\limits_{\alpha \to M^+} v\left(m + 1; \alpha\right) = 0$. \\
From this and from Lemma \ref{lem:VPPConeSing} and as the ODE and the initial condition in (\ref{eq:ODEIVPConeSing}) depend continuously on the parameter $\alpha$, there exists an $\alpha = \alpha \left(m, \bo\right) \in \mathscr{A} = \left(M, \bo\right)$ such that $v \left(m + 1; \alpha\right) = 2 \left(m + 1\right)^2 > 0$ and by the strictness of the inequalities in Theorem \ref{thm:StrctIncrConeSing} and Corollary \ref{cor:StrctIncrConeSing}, this $\alpha$ has to be unique. This unique value of the parameter $\alpha$ depends on both $m > 0$ as well as $\bo > 0$, the two parameters which we have kept fixed since the beginning of Section \ref{sec:ProofConeSing}. \\
Now to show this $\alpha$ yielding the correct final boundary condition on the solution $v_\alpha$ is strictly negative we recollect from Subsection \ref{subsec:AnalysisODEBVPConeSing} that if there exists a smooth solution $v_\alpha$ to the ODE initial value problem (\ref{eq:ODEIVPConeSing}) satisfying both the boundary conditions viz. $v_\alpha \left(1\right) = 2$ and $v_\alpha \left(m + 1\right) = 2 \left(m + 1\right)^2$ then $v_\alpha \left(\gamma\right) > 2 \gamma^2$ for all $\gamma \in \left(1, m + 1\right)$. Substituting all this and $P_\alpha \left(m + 1\right) = \frac{m \left(m + 2\right)}{2} \alpha$ (from Lemma \ref{lem:polynom1conesing}) in the ODE in (\ref{eq:ODEIVPConeSing}) and integrating it over $\left[1, m + 1\right]$ we get $2 \left(m + 1\right)^2 - 2 > 2 \left( \left(m + 1\right)^2 - 1 \right) + \frac{m \left(m + 2\right)}{2} \alpha$ which implies $\alpha < 0$. So if $\alpha \in \left(M, \bo\right)$ satisfies $v \left(m + 1; \alpha\right) = 2 \left(m + 1\right)^2$ then $\alpha < 0$.
\end{proof}
\numberwithin{equation}{section}
\numberwithin{figure}{section}
\numberwithin{table}{section}
\numberwithin{lemma}{section}
\numberwithin{proposition}{section}
\numberwithin{result}{section}
\numberwithin{theorem}{section}
\numberwithin{corollary}{section}
\numberwithin{conjecture}{section}
\numberwithin{remark}{section}
\numberwithin{note}{section}
\numberwithin{motivation}{section}
\numberwithin{question}{section}
\numberwithin{answer}{section}
\numberwithin{case}{section}
\numberwithin{claim}{section}
\numberwithin{definition}{section}
\numberwithin{example}{section}
\numberwithin{hypothesis}{section}
\numberwithin{statement}{section}
\numberwithin{ansatz}{section}
\section{Polyhomogeneity of Momentum-Constructed Conical K\"ahler Metrics}\label{sec:polyhomoConeSing}
In Section \ref{sec:polyhomoConeSing} we will see that our momentum-constructed conical \krm{s} on our minimal ruled surface are \plyhomo{} smooth conical \krm{s} (given by Definition \ref{def:coneKr4}) if the momentum profile is assumed to be real analytic on the whole momentum interval (including both the endpoints) and are only \conrml{} smooth (Definition \ref{def:coneKr3}) if the momentum profile is assumed to be just smooth on the whole momentum interval. Results similar to these in some or the other form have been given in the works of Hashimoto \cite{Hashimoto:2019:cscKConeSing}, Li \cite{Li:2012:eKEngyFunctProjBund}, Rubinstein-Zhang \cite{Rubinstein:2022:KEedgeHirzebruch}, but the explicit formulations of these results (considering the various definitions of conical \krm{s} seen in Subsection \ref{subsec:KhlrConeSing}) and their proofs (using the ingredients of the momentum construction described in Subsection \ref{subsec:MomentConstructConeSing}) are somewhat simpler and clearer in our special case of the pseudo-Hirzebruch surface $X = \prj$. \par
It was shown by Hwang \cite{Hwang:1994:cscK}, Hwang-Singer \cite{Hwang:2002:MomentConstruct} in the momentum construction of smooth \krm{s} that the boundary conditions given in (\ref{eq:ODEBVP0}) on the real analytic momentum profile $\psi$ are equivalent to the momentum variable $x$ having certain nice asymptotic power series expansions in $\left\lvert w \right\rvert^2$ in a tubular \nbd{} of $w = 0$ (corresponding to the zero divisor) and also in $\left\lvert w^{-1} \right\rvert^2$ in a \nbd{} of $w^{-1} = 0$ (corresponding to the infinity divisor), $w$ being the fibre coordinate on the surface $X$. The conical singularities version of this result is exactly the same as the smooth version but instead uses the boundary conditions (\ref{eq:BVPConeSing}) on the momentum profile $\phi\left(\gamma\right)$ and it can be found in the works of Hashimoto \cite{Hashimoto:2019:cscKConeSing}, Li \cite{Li:2012:eKEngyFunctProjBund}, Rubinstein-Zhang \cite{Rubinstein:2022:KEedgeHirzebruch}. It is this result which gives the required kind of asymptotic power series expansions at the zero and infinity divisors for the coefficient functions in the local coordinate expression of the form (\ref{eq:gij}) for a momentum-constructed conical \krm{}, thereby making it satisfy the conditions of Definition \ref{def:coneKr4}. \par
Consider the bundle-adapted local \hol{} coordinates $\left(z,w\right)$ (introduced in Subsection \ref{subsec:MomentConstructConeSing}) near the divisor $\so$ of the surface $X$, and let $\tilde{w} = w^{-1}$ then $\left(z,\tilde{w}\right)$ are also bundle-adapted local coordinates but near the divisor $\soo$. Let $r = \left\lvert w \right\rvert$ and $\tilde{r} = \left\lvert \tilde{w} \right\rvert$ then $\tilde{r} = r^{-1}$. We can get the relations between the boundary values of $r, w, \tau$ from (\ref{eq:wgammasosoo}) as: $r = 0 \iff \tau = 0$ geometrically giving the boundary behaviour of the momentum-constructed \krm{} $\omega$ at the zero divisor $\so$ and $\tilde{r} = 0 \iff \tau = m > 0$ giving the boundary behaviour of $\omega$ at the infinity divisor $\soo$. We will now be needing all the relations between the functions $f : \R \to \R$, $F : \opom \to \R$, $\phi : \clom \to \R$ as well as the variables $w, s, \tau$ which we had seen in Subsection \ref{subsec:MomentConstructConeSing} like those given by (\ref{eq:variablechange}), (\ref{eq:wgammasosoo}). Also we have by its definition $s = \ln \left\lvert w \right\rvert^2 = 2 \ln r$ and $s = \ln \left\lvert \tilde{w} \right\rvert^{-2} = -2 \ln \tilde{r}$ (following the convention $d \ln h \left(z\right) = 0$ from Sz\'ekelyhidi \cite{Szekelyhidi:2014:eKintro}; Section 4.4) and so we get $d s = \frac{2}{r} d r = -\frac{2}{\tilde{r}} d \tilde{r}$ and similarly $\frac{d}{d s} = \frac{r}{2} \frac{d}{d r} = -\frac{\tilde{r}}{2} \frac{d}{d \tilde{r}}$. \par
\begin{remark}
We would like to quickly note that because of the equation $\phi \left(\tau\right) = \frac{1}{F'' \left(\tau\right)} = f'' \left(s\right)$ with $\tau \in \opom, s \in \R$ being related by the Legendre transform $s = F' \left(\tau\right)$, we can easily see that $\phi$ is real analytic in $\tau$ on $\opom$ if and only if $f'' \left(s\right)$ is real analytic in $r$ for $r \in \left(0, \infty\right)$ if and only if $f'' \left(s\right)$ is real analytic in $\tilde{r}$ for $\tilde{r} \in \left(0, \infty\right)$. Whereas the real analyticity of $\phi\left(\tau\right)$ at the endpoints $\tau = 0$ and $\tau = m$ along with the boundary conditions (\ref{eq:BVPConeSing}) turns out to be equivalent to $f'' \left(s\right)$ having a certain specific kind of power series expansion in $r^{2\beta_0}$ in a \nbd{} of $r = 0$ and also in $\tilde{r}^{2\beta_\infty}$ in a \nbd{} of $\tilde{r} = 0$ \cite{Hashimoto:2019:cscKConeSing,Hwang:1994:cscK,Hwang:2002:MomentConstruct,Li:2012:eKEngyFunctProjBund,Rubinstein:2022:KEedgeHirzebruch}. But proving this requires some work as $\tau$ and $r$ (as well as $\tilde{r}$) are related only by means of $s$, and even though the boundary values of $\tau$ and $r$ (and also $\tilde{r}$) are finite, the boundary values of $s$ are however infinite as can be seen from (\ref{eq:wgammasosoo}).
\end{remark} \par
\begin{theorem}[Asymptotic Power Series Expansions for the Momentum Profile at the Two Divisors; Hashimoto \cite{Hashimoto:2019:cscKConeSing}; Lemma 3.6, Hwang \cite{Hwang:1994:cscK}; Lemmas 2.2 and 2.5, Proposition 2.1, Hwang-Singer \cite{Hwang:2002:MomentConstruct}; Section 2.2, Proposition 2.3, Li \cite{Li:2012:eKEngyFunctProjBund}; Lemma 2.3, Rubinstein-Zhang \cite{Rubinstein:2022:KEedgeHirzebruch}; Proposition 3.3]\label{thm:polyhomoconesing}
Let $\omega$ be the \krm{} given by the Calabi ansatz (\ref{eq:ansatzconesing}) on the minimal ruled surface $X = \prj$, which is smooth on the non-compact surface $X \smallsetminus \left(S_0 \cup S_\infty\right)$ and whose momentum profile is given by $\phi \left(\tau\right) = f'' \left(s\right)$ and momentum variable by $\tau = f' \left(s\right) \in \clom$ with $s = 2 \ln r = -2 \ln \tilde{r} \in \left(-\infty, \infty\right)$, all as in Subsection \ref{subsec:MomentConstructConeSing}.
\begin{enumerate}
\item If $\phi$ is real analytic in $\tau$ in a \nbd{} of $\tau = 0$ with the boundary conditions $\phi \left(0\right) = 0$ and $\phi' \left(0\right) = \bo$, then $f'' \left(s\right)$ is real analytic in $r^{2 \beta_0}$ and has the following asymptotic power series expansion which is absolutely convergent and locally uniformly convergent in a \nbd{} of the divisor given by $r = 0$ (i.e. the divisor $\so$):
\begin{equation}\label{eq:f2spolyhomo}
f'' \left(s\right) = \sum\limits_{k=1}^{\infty} c_{2k} \left(z\right) r^{2k\beta_0}
\end{equation}
where $c_{2k} \left(z\right)$ is a smooth and bounded real-valued function for all $k \in \N$ and $c_2 \left(z\right)$ is in addition strictly positive and bounded below away from $0$. The same statement and expression hold true for $\tau = m$ and the boundary conditions $\phi \left(m\right) = 0$ and $\phi' \left(m\right) = -\boo$ with $r$ being replaced by $\tilde{r}$, $\beta_0$ by $\beta_\infty$, $\so$ by the divisor $\soo$ (which is given by $\tilde{r} = 0$) and $c_{2k} \left(z\right)$ by some function $\tilde{c}_{2k} \left(z\right)$ having the same properties. \label{itm:polyhomodirect}
\item Conversely if $f'' \left(s\right)$ is real analytic in $r^{2 \beta_0}$ and has a power series expansion of the form (\ref{eq:f2spolyhomo}) in a \nbd{} of the divisor $r = 0$, then $\phi$ is real analytic in $\tau$ in a \nbd{} of $\tau = 0$ with the boundary conditions $\phi \left(0\right) = 0$ and $\phi' \left(0\right) = \bo$. Again the same statement holds true for $\tilde{r}^{2 \beta_\infty}$ and the divisor $\tilde{r} = 0$ with $\tau = 0$ being replaced by $\tau = m$ and $\phi \left(0\right) = 0$ and $\phi' \left(0\right) = \bo$ by the boundary conditions $\phi \left(m\right) = 0$ and $\phi' \left(m\right) = -\boo$ respectively. \label{itm:polyhomoconverse}
\end{enumerate}
\end{theorem}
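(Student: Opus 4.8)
The plan is to extract everything from the single differential relation $ds = d\tau/\phi(\tau)$ recorded in (\ref{eq:variablechange}), together with the boundary data $\phi(0)=0$, $\phi'(0)=\bo$, and to recognize that the geometrically natural variable near the zero divisor $\so$ is neither $r$ nor $s$ but rather $u = r^{2\bo} = e^{\bo s}$. Once this is accepted, both implications reduce to a single application of the (real) analytic inverse function theorem to a map that vanishes to \emph{exactly} first order at the divisor, the boundary conditions being precisely what guarantee the first-order vanishing with nonvanishing derivative.

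For the forward implication (\ref{itm:polyhomodirect}), I would first use the real analyticity of $\phi$ at $\tau=0$ with $\phi(0)=0$ and $\phi'(0)=\bo$ to split off the singular part,
\[
\frac{1}{\phi(\tau)} = \frac{1}{\bo\,\tau} + A'(\tau),
\]
where $A(\tau)$ is real analytic near $\tau=0$ with $A(0)=0$. Integrating $ds = d\tau/\phi(\tau)$ then gives $s = \frac{1}{\bo}\ln\tau + A(\tau) + K$, with the constant $K$ fixed by the geometric normalization $s = \ln\lvert w\rvert^2 + \ln h(z)$ and hence depending smoothly on $z$. Exponentiating yields
\[
u = e^{\bo s} = e^{\bo K}\,\tau\, e^{\bo A(\tau)},
\]
a real analytic function of $\tau$ vanishing to exactly first order with nonzero derivative at $\tau=0$. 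By the analytic inverse function theorem, $\tau$ is a real analytic function of $u=r^{2\bo}$ vanishing to first order, so $f''(s)=\phi(\tau(u))$ is real analytic in $u$, which is exactly the expansion (\ref{eq:f2spolyhomo}). The leading coefficient is $c_2 = \bo\,e^{-\bo K} > 0$, and the smooth $z$-dependence of every $c_{2k}$ follows from that of $K$ and of the Hermitian metric $h(z)$.

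For the converse (\ref{itm:polyhomoconverse}) I would run the computation backwards. Starting from $f''(s)=\sum_{k\geq1} c_{2k}(z)\,r^{2k\bo}$ and using $ds = 2\,dr/r$, I integrate $d\tau/dr = (2/r)\,f''(s)$ with the boundary requirement $\tau\to0$ as $r\to0$ to obtain
\[
\tau = \sum_{k=1}^{\infty} \frac{c_{2k}(z)}{k\,\bo}\, r^{2k\bo},
\]
again real analytic in $u=r^{2\bo}$, vanishing to first order, with leading coefficient $c_2/\bo > 0$. Inverting gives $u$ as a real analytic function of $\tau$ near $\tau=0$, so $\phi(\tau)=f''(s)=\sum_{k} c_{2k}\,u(\tau)^k$ is real analytic in $\tau$; computing the first two terms recovers $\phi(0)=0$ and $\phi'(0)=c_2\cdot(\bo/c_2)=\bo$. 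The statements at the infinity divisor $\soo$ follow verbatim under the symmetric substitution $r\mapsto\tilde r = r^{-1}$, $s\mapsto -s$, $\bo\mapsto\boo$, using $s=-2\ln\tilde r$ and $\tau\to m$ from (\ref{eq:wgammasosoo}).

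The main obstacle is the transcendental, logarithmic coupling between $\tau$ and $s$: the divisor $\so$ sits at the finite values $\tau=0$ and $r=0$ but at $s=-\infty$, so no direct expansion in $s$ or in $r$ can be analytic, and for irrational $\bo$ the quantity $r^{2\bo}$ is not even smooth in $r$ at the origin. The crux is therefore the identification of $u=r^{2\bo}=e^{\bo s}$ as the correct polyhomogeneous variable and the verification that $\tau\leftrightarrow u$ is a genuine analytic diffeomorphism near the divisor. This rests entirely on the boundary conditions: $\phi'(0)=\bo\neq0$ in the forward direction and $c_2>0$ in the converse are exactly what force first-order vanishing with nonvanishing derivative, hence applicability of the analytic inverse function theorem. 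The remainder is the routine, if slightly delicate, bookkeeping of convergence radii and of the smooth dependence of all coefficients on the base coordinate $z$, after which membership in Definition \ref{def:coneKr4} is immediate.
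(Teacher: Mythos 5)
Your proposal is correct and takes essentially the same route as the paper's own proof: both split off the $\frac{1}{\beta_0 \tau}$ pole of $\frac{1}{\phi}$, integrate $d s = \frac{d \tau}{\phi \left(\tau\right)}$ to exhibit $r^{2 \beta_0} = e^{\beta_0 s}$ as a real analytic function of $\tau$ vanishing to exactly first order with nonzero derivative, and invert (your appeal to the analytic inverse function theorem is exactly the paper's explicit power-series inversion in (\ref{eq:r2botau}) and (\ref{eq:r2botauconvs})), with the converse likewise obtained by termwise integration of (\ref{eq:f2spolyhomo}) as in (\ref{eq:f1spolyhomo}). The only cosmetic difference is that you track the integration constant $K$ and its $z$-dependence through $\ln h \left(z\right)$ directly, whereas the paper works with $z$ frozen under the convention $d \ln h \left(z\right) = 0$ and recovers the smoothness and boundedness of the coefficients $c_{2k} \left(z\right)$ from the invariance of the momentum-construction expressions under holomorphic coordinate changes on $\Sigma$.
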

\begin{remark}
Theorem \ref{thm:polyhomoconesing} is proved in more general settings by Hashimoto \cite{Hashimoto:2019:cscKConeSing}, Rubinstein-Zhang \cite{Rubinstein:2022:KEedgeHirzebruch} though their proofs differ from each other in some ways. We will give a proof of Theorem \ref{thm:polyhomoconesing} which becomes a bit simpler by following the conventions of Sz\'ekelyhidi \cite{Szekelyhidi:2014:eKintro}; Section 4.4 in the momentum construction method (which are precisely what we have followed in Subsection \ref{subsec:MomentConstructConeSing}). The proof also uses some tricks found in Sz\'ekelyhidi \cite{Szekelyhidi:2006:eKKStab}; Section 5.1.
\end{remark}
\begin{proof}
$ $ \vspace{5pt} \newline
(\ref{itm:polyhomodirect}) Let $\phi$ be real analytic in $\tau$ in a \nbd{} of $\tau = 0$ with the boundary conditions $\phi \left(0\right) = 0$ and $\phi' \left(0\right) = \bo > 0$. So $\phi$ has the following power series expansion in $\tau$ which is absolutely convergent and locally uniformly convergent in a \nbd{} of $\tau = 0$:
\begin{equation}\label{eq:phitauseries}
\phi \left(\tau\right) = \bo \tau + \sum\limits_{k=2}^{\infty} a_k \tau^k
\end{equation}
So $\frac{1}{\phi}$ has the following expression where $\sum\limits_{k=0}^{\infty} b_k \tau^k$ is also convergent in a \nbd{} of $\tau = 0$:
\begin{equation}\label{eq:reciprocphitauseries}
\frac{1}{\phi \left(\tau\right)} = \frac{1}{\bo \tau} + \sum\limits_{k=0}^{\infty} b_k \tau^k
\end{equation}
From the relations given in (\ref{eq:variablechange}) we get:
\begin{equation}\label{eq:rsphitauseries}
2 \ln r = s = \int d s = \int \frac{d \tau}{\phi \left(\tau\right)} = \frac{1}{\bo} \ln \tau + \sum\limits_{k=1}^{\infty} \frac{b_{k-1}}{k} \tau^k + \operatorname{const}
\end{equation}
So $r^{2\bo}$ is real analytic in $\tau$ near $\tau = 0$ and has the following convergent power series expansion where $C > 0$:
\begin{equation}\label{eq:r2botau}
r^{2\bo} = C \tau e^{\left( \sum\limits_{k=1}^{\infty} \frac{\bo b_{k-1}}{k} \tau^k \right)} = C \tau + \sum\limits_{k=2}^{\infty} B_k \tau^k
\end{equation}
From (\ref{eq:r2botau}) we see that $\frac{d}{d \tau} \left(r^{2\bo}\right) \Bigr\rvert_{\tau = 0} = C \neq 0$, and so we can invert the power series in (\ref{eq:r2botau}) to obtain $\tau$ as being real analytic in $r^{2\bo}$ near $r = 0$ and having the following convergent power series expansion:
\begin{equation}\label{eq:taur2bo}
\tau = \frac{r^{2\bo}}{C} + \sum\limits_{k=2}^{\infty} A_k r^{2k\bo}
\end{equation}
As $\phi \left(\tau\right) = f'' \left(s\right)$, so from (\ref{eq:phitauseries}) and (\ref{eq:taur2bo}) we get $f'' \left(s\right)$ is real analytic in $r^{2 \beta_0}$ and has the convergent power series expansion (\ref{eq:f2spolyhomo}) in a \nbd{} of $r = 0$ where $c_2 = \frac{\bo}{C} > 0$. Also as we had kept the coordinate $z$ on the base \rms{} $\Sigma$ fixed throughout this calculation and had carried out the analysis in the variable $r = \left\lvert w \right\rvert$, the coefficients $c_{2k}$ in the power series expansion (\ref{eq:f2spolyhomo}) must be functions of $z$ alone. And since all the coordinate expressions in the momentum construction given in Subsection \ref{subsec:MomentConstructConeSing} are known to transform ``correctly'' under a \hol{} change of coordinates on the compact \rms{} $\Sigma$, the coefficient functions $c_{2k} \left(z\right)$ will be smooth and bounded and $c_2 \left(z\right)$ will be strictly positive and bounded below away from $0$. \vspace{5pt} \\
(\ref{itm:polyhomoconverse}) Let $f'' \left(s\right)$ be real analytic in $r^{2 \beta_0}$ and have a power series expansion of the form (\ref{eq:f2spolyhomo}) in a \nbd{} of the divisor $r = 0$. Integrating (\ref{eq:f2spolyhomo}) and using the fact $\lim\limits_{s \to -\infty} f' \left(s\right) = 0$ from (\ref{eq:wgammasosoo}) we obtain the following power series expansion for $f' \left(s\right)$ in a \nbd{} of $r = 0$:
\begin{equation}\label{eq:f1spolyhomo}
f' \left(s\right) = \int f'' \left(s\right) d s = \int \left( \sum\limits_{k=1}^{\infty} c_{2k} \left(z\right) r^{2k\beta_0} \right) \frac{2}{r} d r = \sum\limits_{k=1}^{\infty} \frac{c_{2k} \left(z\right)}{k\beta_0} r^{2k\beta_0}
\end{equation}
As $f' \left(s\right) = \tau$, the expression (\ref{eq:f1spolyhomo}) gives $\tau$ to be real analytic in $r^{2\bo}$ near $r = 0$. Since $\frac{d}{d \left(r^{2\bo}\right)} \left(\tau\right) \Bigr\rvert_{r = 0} = \frac{c_2}{\beta_0} \neq 0$, we can invert the power series in (\ref{eq:f1spolyhomo}) to obtain the following power series expansion for $r^{2\bo}$ in terms of $\tau$ in a \nbd{} of $\tau = 0$:
\begin{equation}\label{eq:r2botauconvs}
r^{2\bo} = \frac{\beta_0}{c_2} \tau + \sum\limits_{k=2}^{\infty} B_k \tau^k
\end{equation}
As $f'' \left(s\right) = \phi \left(\tau\right)$, and as from the expression (\ref{eq:r2botauconvs}), $r^{2\bo}$ is real analytic in $\tau$ near $\tau = 0$, we can conclude using (\ref{eq:f2spolyhomo}) and (\ref{eq:r2botauconvs}) that $\phi$ is real analytic in $\tau$ in a \nbd{} of $\tau = 0$ and has the following power series expansion in terms of $\tau$:
\begin{equation}\label{eq:phitauseriesconvs}
\phi \left(\tau\right) = c_2 r^{2\beta_0} + \sum\limits_{k=2}^{\infty} c_{2k} r^{2k\beta_0} = \bo \tau + \sum\limits_{k=2}^{\infty} a_k \tau^k
\end{equation}
Clearly from the power series expression (\ref{eq:phitauseriesconvs}), we have $\phi \left(0\right) = 0$ and $\phi' \left(0\right) = \bo$.
\end{proof} \par
\begin{corollary}\label{cor:polyhomoconesing}
Let $\omega$ be a momentum-constructed \krm{} on the minimal ruled surface $X$ having momentum profile $\phi : \clom \to \R$ satisfying the boundary conditions $\phi \left(0\right) = 0$, $\phi' \left(0\right) = \bo > 0$ and $\phi \left(m\right) = 0$, $\phi' \left(m\right) = -\boo < 0$ and the condition $\phi > 0$ on $\opom$. Then $\omega$ is a \plyhomo{} smooth conical \krm{} (i.e. satisfies Definition \ref{def:coneKr4}) with cone angles $2\pi\bo$ and $2\pi\boo$ along the divisors $\so$ and $\soo$ respectively if and only if $\phi \in \mathcal{C}^\omega \clom$.
\end{corollary}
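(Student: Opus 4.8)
The plan is to reduce everything to a local computation in a tubular \nbd{} of each of the two divisors and then to quote Theorem \ref{thm:polyhomoconesing}, which already carries the genuine analytic content. Near $\so$ I would work in the bundle-adapted coordinates $\left(z, w\right)$, so that $\so = \left\lbrace w = 0 \right\rbrace$ and $r = \left\lvert w \right\rvert$; near $\soo$ I pass to $\tilde{w} = w^{-1}$, $\tilde{r} = \left\lvert \tilde{w} \right\rvert = r^{-1}$. From the unsimplified local expression (\ref{eq:omega1conesing}) for $\omega$ one reads off the coefficient functions occurring in (\ref{eq:gij}) (with $n = 2$): the base term $g_{1 \bar{1}} = \left(1 + f' \left(s\right)\right) a \left(z\right) + \frac{\left\lvert \partial_z h \right\rvert^2}{h \left(z\right)^2} f'' \left(s\right)$, where $a \left(z\right) > 0$ is the local coefficient of $\mathtt{p}^* \omega_\Sigma$; the mixed terms $g_1 = \frac{f'' \left(s\right)}{\bar{w} h \left(z\right)} \partial_z h$ and $g_{\bar{1}} = \frac{f'' \left(s\right)}{w h \left(z\right)} \partial_{\bar{z}} h$; and the fibre term $g_0 = \frac{f'' \left(s\right)}{\left\lvert w \right\rvert^2} = \frac{f'' \left(s\right)}{r^2}$. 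One must note the notational clash: the coordinate transverse to the divisor in Definition \ref{def:coneKr4} is $w$ here (not the base coordinate $z$), so the powers $\left\lvert z \right\rvert^{2 k \beta}$ of that definition become powers of $r = \left\lvert w \right\rvert$.

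For the forward implication I would start from $\phi \in \mathcal{C}^\omega \clom$, which in particular makes $\phi$ real analytic at the endpoints $\tau = 0$ and $\tau = m$ with the prescribed boundary values. Theorem \ref{thm:polyhomoconesing} (\ref{itm:polyhomodirect}) then provides the absolutely and locally uniformly convergent expansion $f'' \left(s\right) = \sum_{k = 1}^\infty c_{2 k} \left(z\right) r^{2 k \bo}$ near $\so$ (and the analogous $\tilde{r}^{2 k \boo}$-expansion near $\soo$), with each $c_{2 k}$ smooth and bounded and $c_2 > 0$ bounded below, together with its once-integrated form (\ref{eq:f1spolyhomo}) for $f' \left(s\right)$. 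Substituting these two series into the four coefficients above produces exactly the three families of expansions demanded in item (\ref{itm:polyhomo}) of Definition \ref{def:coneKr4}: $g_{1 \bar{1}}$ gets a leading constant $a \left(z\right) = c_{0, 1 1} \left(z\right) > 0$ followed by powers $r^{2 k \bo}$; $g_1$ and $g_{\bar{1}}$ become series in $r^{2 k \bo - 1}$ $\left(k \geq 1\right)$; and $g_0 = \sum_{k \geq 1} c_{2 k} \left(z\right) r^{2 k \bo - 2}$ has strictly positive leading coefficient $c_2 \left(z\right)$. This verifies Definition \ref{def:coneKr4} at $\so$ with cone angle $2 \pi \bo$, and the same argument near $\soo$ gives cone angle $2 \pi \boo$.

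For the converse I would run the substitution backwards. If $\omega$ is \plyhomo{} smooth, then the third expansion in item (\ref{itm:polyhomo}) of Definition \ref{def:coneKr4} for $g_0$, multiplied by $\left\lvert w \right\rvert^2 = r^2$, recovers $f'' \left(s\right) = r^2 g_0 = \sum_{k \geq 1} c_{2 k} \left(z\right) r^{2 k \bo}$, i.e. precisely an expansion of the form (\ref{eq:f2spolyhomo}) in a \nbd{} of $r = 0$. Theorem \ref{thm:polyhomoconesing} (\ref{itm:polyhomoconverse}) then yields that $\phi$ is real analytic at $\tau = 0$ with $\phi \left(0\right) = 0$, $\phi' \left(0\right) = \bo$; the symmetric computation at $\soo$ makes $\phi$ real analytic at $\tau = m$ with $\phi \left(m\right) = 0$, $\phi' \left(m\right) = - \boo$. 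Since the momentum profiles under consideration are already real analytic on the open interval $\opom$ (where $\omega$ restricts to a smooth, indeed real analytic, \krm{} on $\xsosoo$ and $\phi > 0$), real analyticity at both endpoints promotes this to $\phi \in \mathcal{C}^\omega \clom$, closing the equivalence.

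The hard part is not really analytic here, since all of that is carried by Theorem \ref{thm:polyhomoconesing}, but bookkeeping: I must keep the transverse coordinate $w$ and the base coordinate $z$ cleanly separated, confirm that the mixed terms $g_1, g_{\bar{1}}$ genuinely land at the shifted order $O \left(r^{2 \bo - 1}\right)$ required for the middle family in Definition \ref{def:coneKr4}, and track the strict positivity and lower bounds of the two leading coefficients $c_{0, 1 1}$ and $c_2$. The one conceptual point that deserves explicit care is the last sentence of the converse: Definition \ref{def:coneKr4} only constrains $\omega$ near the divisors, so recovering real analyticity of $\phi$ on the \emph{whole} of $\clom$ (and not merely at the two endpoints) rests on the standing fact that the momentum profile is real analytic throughout $\opom$, which is what lets the two local conclusions be glued into membership in $\mathcal{C}^\omega \clom$.
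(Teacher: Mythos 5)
Your route is the paper's route: both directions are reduced to Theorem \ref{thm:polyhomoconesing}, and the translation between the expansion (\ref{eq:f2spolyhomo}) of $f'' \left(s\right)$ and the three families of coefficient expansions in Definition \ref{def:coneKr4} is exactly the substitution the paper performs in (\ref{eq:f1sfspolyhomo})--(\ref{eq:omegapolyhomo}) (your use of the unsimplified expression (\ref{eq:omega1conesing}), with the mixed terms $g_1, g_{\bar{1}}$ carrying the factor $\frac{f'' \left(s\right)}{\left\lvert w \right\rvert}$, matches the middle line of (\ref{eq:omegapolyhomo})). The forward implication and the recovery of (\ref{eq:f2spolyhomo}) from $f'' \left(s\right) = r^2 g_0$ in the converse are fine.

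There is, however, a genuine flaw in your last gluing step. You close the converse by asserting, as a ``standing fact,'' that $\phi$ is real analytic throughout $\opom$ because $\omega$ restricts to ``a smooth, indeed real analytic, \krm{}'' on $\xsosoo$. That parenthetical is false in general: the hypotheses of the corollary only make $\omega$ (equivalently $f$, equivalently $\phi$) smooth away from the divisors, and a smooth \krm{} is not automatically real analytic --- there is no elliptic equation with analytic data in play here, since the corollary concerns arbitrary momentum-constructed conical metrics, not higher cscK ones. Definition \ref{def:coneKr4} constrains $\omega$ only in tubular \nbd{s} of $\so$ and $\soo$, so polyhomogeneity delivers real analyticity of $\phi$ only on $\left[0, \tau_\epsilon\right)$ and $\left(m - \delta, m\right]$ for some small $\tau_\epsilon, \delta$; on the middle portion of $\clom$ nothing beyond smoothness is available, and indeed one can perturb a real analytic profile by a smooth non-analytic bump supported in the interior, preserving positivity and both endpoint expansions, to get a \plyhomo{} metric whose profile is not in $\mathcal{C}^\omega \clom$. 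So the implication ``\plyhomo{} $\Rightarrow \phi \in \mathcal{C}^\omega \clom$'' cannot be closed by the argument you give. To be fair, the paper's own proof is equally terse on this point (it invokes the endpoint-local equivalence of Theorem \ref{thm:polyhomoconesing} and says nothing about the interior), and in the intended application the issue evaporates: for the conical higher cscK metrics of Corollary \ref{cor:polyhomoconehcscK}, interior real analyticity of $\phi$ is supplied by Corollary \ref{cor:InfinitDiff}, since there $\phi$ comes from the solution $v$ of the real analytic ODE (\ref{eq:ODEIVPConeSing}) with $v > 0$. The honest fix is either to cite such an ODE-regularity input when it is available, or to read the equivalence in the corollary as localized at the two endpoints; your proposal instead papers over the gap with an unsupported regularity claim.
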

\begin{proof}
As shown in Theorem \ref{thm:polyhomoconesing} the real analyticity of the momentum profile $\phi \left(\tau\right)$ along with the boundary conditions $\phi \left(0\right) = 0$, $\phi' \left(0\right) = \bo$ and $\phi \left(m\right) = 0$, $\phi' \left(m\right) = -\boo$ is equivalent to $f'' \left(s\right)$ having convergent asymptotic power series expansions of the form (\ref{eq:f2spolyhomo}) in terms of $r^{2\beta_0}$ and $\tilde{r}^{2\beta_\infty}$ respectively near both the divisors $\so$ and $\soo$. As done in the computation in (\ref{eq:f1spolyhomo}), by integrating the power series expression of $f'' \left(s\right)$ given by (\ref{eq:f2spolyhomo}) once and then twice, we obtain the following asymptotic power series expansions for $f' \left(s\right)$ and $f \left(s\right)$ in terms of $r^{2\beta_0}$ which are convergent locally around $r = 0$:
\begin{equation}\label{eq:f1sfspolyhomo}
f' \left(s\right) = \sum\limits_{k=1}^{\infty} \frac{c_{2k} \left(z\right)}{k\beta_0} r^{2k\beta_0} \hspace{1pt}, \hspace{5pt} f \left(s\right) = \sum\limits_{k=1}^{\infty} \frac{c_{2k} \left(z\right)}{k^2\beta_0^2} r^{2k\beta_0}
\end{equation}
Similar expressions can be derived for $f' \left(s\right)$ and $f \left(s\right)$ in terms of $\tilde{r}^{2\beta_\infty}$ locally around $\tilde{r} = 0$, by considering the expression for $f'' \left(s\right)$ near $\tilde{r} = 0$ analogous to (\ref{eq:f2spolyhomo}) but having some other coefficients $\tilde{c}_{2k} \left(z\right)$ in place of $c_{2k} \left(z\right)$, and by noting $\lim\limits_{s \to \infty} f' \left(s\right) = m > 0$ from (\ref{eq:wgammasosoo}).
\begin{equation}\label{eq:f2sf1sfspolyhomotilde}
\begin{gathered}
f'' \left(s\right) = \sum\limits_{k=1}^{\infty} \tilde{c}_{2k} \left(z\right) \tilde{r}^{2k\beta_\infty} \\
f' \left(s\right) = m - \sum\limits_{k=1}^{\infty} \frac{\tilde{c}_{2k} \left(z\right)}{k\beta_\infty} \tilde{r}^{2k\beta_\infty} \hspace{1pt}, \hspace{5pt} f \left(s\right) = - 2 m \ln \tilde{r} + \sum\limits_{k=1}^{\infty} \frac{\tilde{c}_{2k} \left(z\right)}{k^2\beta_\infty^2} \tilde{r}^{2k\beta_\infty}
\end{gathered}
\end{equation}
So the coefficient functions in the coordinate expression (\ref{eq:omega1conesing}) for the \krm{} $\omega$ in terms of the bundle-adapted local \hol{} coordinates $\left(z,w\right)$ have the following asymptotic power series expansions which are absolutely convergent and locally uniformly convergent in a tubular \nbd{} of $\so$ (as can be seen from (\ref{eq:f2spolyhomo}) and (\ref{eq:f1sfspolyhomo})):
\begin{equation}\label{eq:omegapolyhomo}
\begin{gathered}
1 + f' \left(s\right) = 1 + \sum\limits_{k=1}^{\infty} \frac{c_{2k} \left(z\right)}{k\beta_0} \left\lvert w \right\rvert^{2k\beta_0} \\
\frac{f'' \left(s\right)}{\left\lvert w \right\rvert} = \sum\limits_{k=1}^{\infty} c_{2k} \left(z\right) \left\lvert w \right\rvert^{2k\beta_0 - 1} \hspace{1pt}, \hspace{5pt} \frac{f'' \left(s\right)}{\left\lvert w \right\rvert^2} = \sum\limits_{k=1}^{\infty} c_{2k} \left(z\right) \left\lvert w \right\rvert^{2k\beta_0 - 2}
\end{gathered}
\end{equation}
Again, expressions similar to (\ref{eq:omegapolyhomo}) can be derived from (\ref{eq:f2sf1sfspolyhomotilde}) for the coefficient functions in the coordinate expression (\ref{eq:omega1conesing}) for $\omega$ in a tubular \nbd{} of $\soo$. Comparing the expressions in (\ref{eq:omegapolyhomo}) and the conditions mentioned in Theorem \ref{thm:polyhomoconesing} (\ref{itm:polyhomodirect}) with the conditions given in Definition \ref{def:coneKr4}, we see that the momentum-constructed \krm{} $\omega$ on $X$ is a \plyhomo{} smooth conical \krm{} being smooth on $X \smallsetminus \left(S_0 \cup S_\infty\right)$ and having conical singularities with cone angles $2 \pi \bo$ and $2 \pi \boo$ along $\so$ and $\soo$ respectively, in the event that the momentum profile $\phi \left(\tau\right)$ is real analytic.
\end{proof} \par
Combining Corollaries \ref{cor:mainconesing}, \ref{cor:InfinitDiff} and \ref{cor:polyhomoconesing}, we obtain the following result:
\begin{corollary}\label{cor:polyhomoconehcscK}
Let $\omega$ be the momentum-constructed \krm{} on the minimal ruled surface $X$ whose momentum profile $\phi \left(\gamma\right)$ satisfies the ODE (\ref{eq:ODEConeSing}) with the boundary conditions (\ref{eq:BVPConeSing}) and the additional condition (\ref{eq:positiveConeSing}). Then $\omega$ is a \plyhomo{} smooth conical higher cscK metric (i.e. satisfies Definitions \ref{def:coneKr4} and \ref{def:conehcscK} together) with cone angles $2\pi\bo$ and $2\pi\boo$ along the divisors $\so$ and $\soo$ respectively.
\end{corollary}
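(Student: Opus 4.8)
The plan is to chain together the three results named in the statement, each supplying exactly one ingredient: Corollary \ref{cor:mainconesing} gives existence of the momentum profile together with the fact that the resulting metric satisfies Definition \ref{def:conehcscK} with the prescribed cone angles; Corollary \ref{cor:InfinitDiff} gives the real analyticity of the profile; and Corollary \ref{cor:polyhomoconesing} converts that real analyticity into \plyhomo{} smoothness (Definition \ref{def:coneKr4}). So the proof will be a bookkeeping argument verifying that the hypotheses of each corollary are met and that the intermediate regularity statements translate correctly between the two relevant variables.

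First I would record the conical higher cscK part. By the derivation in Subsection \ref{subsec:MomentConstructConeSing}, any momentum profile $\phi$ solving the ODE (\ref{eq:ODEConeSing}) produces via the ansatz (\ref{eq:ansatzconesing}) a \krm{} $\omega$ on $X \smallsetminus \left(\so \cup \soo\right)$ whose higher scalar curvature is the constant $\lambda\left(\omega\right) = B$; indeed (\ref{eq:ODEConeSing}) was obtained from (\ref{eq:lambdaconesing}) precisely by setting $\lambda\left(\omega\right) = B$ and integrating once. Hence $\omega$ is conical higher cscK in the sense of Definition \ref{def:conehcscK}, and the boundary conditions (\ref{eq:BVPConeSing}) on $\phi'$ at the two endpoints prescribe the cone angles $2\pi\bo$ along $\so$ and $2\pi\boo$ along $\soo$. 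Theorem \ref{thm:mainconesing}, and with it Corollary \ref{cor:mainconesing}, guarantees that for the given $m, \bo$ there is such a profile, satisfying all of (\ref{eq:ODEConeSing}), (\ref{eq:BVPConeSing}) and (\ref{eq:positiveConeSing}).

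Next I would establish that this $\phi$ is real analytic on the whole closed momentum interval. Under the change of variables $v = \frac{\left(2\gamma + \phi\right)^2}{2}$ used to pass from (\ref{eq:ODEConeSing}) to (\ref{eq:ODEBVPConeSing}), the profile corresponds to the solution $v$ of the initial value problem (\ref{eq:ODEIVPConeSing}). Corollary \ref{cor:InfinitDiff} gives that $v$ is $\mathcal{C}^\omega$ on its maximal interval of existence, which in our situation is all of $\cllml$, while Lemma \ref{lem:positivity} gives $v > 0$ throughout $\cllml$ (in particular $v\left(1\right) = 2$ and $v\left(m + 1\right) = 2\left(m + 1\right)^2$ are strictly positive). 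Since the square root is real analytic on $\left(0, \infty\right)$, the composite $2\gamma + \phi = \sqrt{2v}$ is real analytic on $\cllml$, whence $\phi = \sqrt{2v} - 2\gamma \in \mathcal{C}^\omega \cllml$. Because $\gamma = \tau + 1$ is an affine change of coordinate carrying $\cllml$ onto $\clom$ and preserving derivatives, this is the same as $\phi \in \mathcal{C}^\omega \clom$ in the $\tau$ variable, and the boundary data $\phi\left(0\right) = \phi\left(m\right) = 0$, $\phi'\left(0\right) = \bo$, $\phi'\left(m\right) = -\boo$ required by Corollary \ref{cor:polyhomoconesing} match (\ref{eq:BVPConeSing}) verbatim under this substitution.

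Finally I would invoke Corollary \ref{cor:polyhomoconesing}: with $\phi \in \mathcal{C}^\omega \clom$ satisfying those boundary conditions and $\phi > 0$ on $\opom$, the metric $\omega$ is a \plyhomo{} smooth conical \krm{} in the sense of Definition \ref{def:coneKr4}, with cone angles $2\pi\bo$ and $2\pi\boo$ along $\so$ and $\soo$. Combining this with the first step, which already gives Definition \ref{def:conehcscK}, yields the claim. The only genuinely non-formal point, and the step I expect to be the main obstacle, is the transfer of real analyticity from $v$ to $\phi$ at the two endpoints: there $\phi$ vanishes, so one must use the strict positivity of $v$ (not of $\phi$) exactly where $\phi = 0$ in order to keep $\sqrt{2v}$ real analytic. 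This is precisely the bridge that links the regularity conclusion of Corollary \ref{cor:InfinitDiff}, which concerns $v$, to the hypothesis on $\phi$ demanded by Corollary \ref{cor:polyhomoconesing}.
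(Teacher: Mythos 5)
Your proposal is correct and takes essentially the same route as the paper, which proves this corollary simply by combining Corollaries \ref{cor:mainconesing}, \ref{cor:InfinitDiff} and \ref{cor:polyhomoconesing}. Your explicit bridge $\phi = \sqrt{2 v} - 2 \gamma$, using $v > 0$ on $\cllml$ (Lemma \ref{lem:positivity}) to transfer real analyticity from $v$ to $\phi$ precisely at the endpoints where $\phi$ vanishes, is exactly the step the paper leaves implicit, and your bookkeeping of the affine change $\gamma = \tau + 1$ matching (\ref{eq:BVPConeSing}) to the hypotheses of Corollary \ref{cor:polyhomoconesing} is accurate.
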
 \par
Even though the momentum profile is real analytic in most practical applications of the momentum construction method (see \cite{Hashimoto:2019:cscKConeSing,Hwang:2002:MomentConstruct,Szekelyhidi:2014:eKintro,Szekelyhidi:2006:eKKStab,Tonnesen:1998:eKminruledsurf}), we can nevertheless get the following results with the hypothesis being only smoothness of the momentum profile on the whole momentum interval (including both the endpoints) which gives conclusions slightly weaker than those provided by real analyticity in Theorem \ref{thm:polyhomoconesing} and Corollary \ref{cor:polyhomoconesing} but still sufficient for all our purposes in Sections \ref{sec:highScalcurrent} and \ref{sec:logFutlogMab}. \par
\begin{theorem}[Asymptotic Taylor Approximations for the Momentum Profile at the Two Divisors; Hashimoto \cite{Hashimoto:2019:cscKConeSing}; Lemma 3.6, Hwang \cite{Hwang:1994:cscK}; Lemmas 2.2 and 2.5, Proposition 2.1, Hwang-Singer \cite{Hwang:2002:MomentConstruct}; Section 2.2, Proposition 2.3, Li \cite{Li:2012:eKEngyFunctProjBund}; Lemma 2.3, Rubinstein-Zhang \cite{Rubinstein:2022:KEedgeHirzebruch}; Proposition 3.3]\label{thm:semihomoconesing}
Let $\omega$ be the \krm{} given by the Calabi ansatz (\ref{eq:ansatzconesing}) on the minimal ruled surface $X = \prj$, which is smooth on the non-compact surface $X \smallsetminus \left(S_0 \cup S_\infty\right)$ and whose momentum profile is given by $\phi \left(\tau\right) = f'' \left(s\right)$ and momentum variable by $\tau = f' \left(s\right) \in \clom$ with $s = 2 \ln r = -2 \ln \tilde{r} \in \left(-\infty, \infty\right)$, all as in Subsection \ref{subsec:MomentConstructConeSing}.
\begin{enumerate}
\item If $\phi$ is smooth in $\tau$ in a \nbd{} of $\tau = 0$ with the boundary conditions $\phi \left(0\right) = 0$ and $\phi' \left(0\right) = \bo$, then $f'' \left(s\right)$ is smooth in $r^{2 \beta_0}$ and has the following asymptotic second order Taylor approximation in a \nbd{} of the divisor given by $r = 0$ (i.e. the divisor $\so$):
\begin{equation}\label{eq:f2ssemihomo}
f'' \left(s\right) = c_2 \left(z\right) r^{2\beta_0} + c_4 \left(z\right) r^{4\beta_0} + c \left(z, r^{2\beta_0}\right) r^{4\beta_0}
\end{equation}
where $c_2 \left(z\right)$ and $c_4 \left(z\right)$ are smooth and bounded real-valued functions and $c_2 \left(z\right)$ is in addition strictly positive and bounded below away from $0$, and $c \left(z, r^{2\beta_0}\right)$ is a real-valued function which is smooth and bounded in $z$ and smooth in $r^{2\beta_0}$ with $\lim\limits_{r \to 0} c \left(z, r^{2\beta_0}\right) = 0$. The same statement and expression hold true for $\tau = m$ and the boundary conditions $\phi \left(m\right) = 0$ and $\phi' \left(m\right) = -\boo$ with $r$ being replaced by $\tilde{r}$, $\beta_0$ by $\beta_\infty$, $\so$ by the divisor $\soo$ (which is given by $\tilde{r} = 0$), $c_2 \left(z\right)$ and $c_4 \left(z\right)$ by some functions $\tilde{c}_2 \left(z\right)$ and $\tilde{c}_4 \left(z\right)$ and $c \left(z, r^{2\beta_0}\right)$ by some function $\tilde{c} \left(z, \tilde{r}^{2\beta_\infty}\right)$ all three having the same respective properties. \label{itm:semihomodirect}
\item Conversely if $f'' \left(s\right)$ is smooth in $r^{2 \beta_0}$ and has a second order Taylor approximation of the form (\ref{eq:f2ssemihomo}) in a \nbd{} of the divisor $r = 0$, then $\phi$ is smooth in $\tau$ in a \nbd{} of $\tau = 0$ with the boundary conditions $\phi \left(0\right) = 0$ and $\phi' \left(0\right) = \bo$. Again the same statement holds true for $\tilde{r}^{2 \beta_\infty}$ and the divisor $\tilde{r} = 0$ with $\tau = 0$ being replaced by $\tau = m$ and $\phi \left(0\right) = 0$ and $\phi' \left(0\right) = \bo$ by the boundary conditions $\phi \left(m\right) = 0$ and $\phi' \left(m\right) = -\boo$ respectively. \label{itm:semihomoconverse}
\end{enumerate}
\end{theorem}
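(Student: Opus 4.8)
The plan is to follow the architecture of the proof of Theorem \ref{thm:polyhomoconesing} essentially verbatim, but everywhere replacing the convergent power series (coming from real analyticity) by finite-order Taylor polynomials carrying a smooth remainder (coming from mere smoothness). Throughout I keep the base coordinate $z$ fixed and work in the single real variable $r = |w|$ (respectively $\tilde{r}$ near $\soo$), recovering the $z$-dependence of the coefficients at the very end exactly as in Theorem \ref{thm:polyhomoconesing} (\ref{itm:polyhomodirect}), by invoking the correct transformation of the momentum-construction coordinate expressions under a \hol{} change of coordinates on $\Sigma$.

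For the direct implication (\ref{itm:semihomodirect}): smoothness of $\phi$ at $\tau = 0$ together with $\phi(0) = 0$, $\phi'(0) = \bo$ gives $\phi(\tau) = \bo\tau + a_2\tau^2 + O(\tau^3)$ with smooth remainder. First I would write $\frac{1}{\phi(\tau)} = \frac{1}{\bo\tau} + (\text{smooth in } \tau)$ by factoring out $\bo\tau$ and expanding the reciprocal of the smooth nonvanishing factor $1 + \frac{a_2}{\bo}\tau + \cdots$. Integrating the relation $ds = \frac{d\tau}{\phi(\tau)}$ from (\ref{eq:variablechange}) then yields $s = \frac{1}{\bo}\ln\tau + g(\tau)$ with $g$ smooth near $\tau = 0$; since $s = 2\ln r$ this exponentiates to $r^{2\bo} = \tau\, e^{\bo g(\tau)}$, a smooth function of $\tau$ whose $\tau$-derivative at $0$ equals the nonzero constant $C = e^{\bo g(0)} > 0$. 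Because this derivative is nonzero, the smooth inverse function theorem makes $\tau$ a smooth function of $r^{2\bo}$ near $r = 0$, with leading term $\tau = \frac{1}{C}r^{2\bo} + O(r^{4\bo})$. Substituting this into $\phi(\tau) = f''(s)$ and expanding to second order produces exactly the form (\ref{eq:f2ssemihomo}), with $c_2 = \bo/C > 0$, while the $O(\tau^3)$ smooth remainder of $\phi$ contributes the term $c(z, r^{2\bo})r^{4\bo}$ satisfying $\lim_{r \to 0} c(z, r^{2\bo}) = 0$.

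For the converse (\ref{itm:semihomoconverse}) I would reverse the chain: assuming $f''(s)$ has the form (\ref{eq:f2ssemihomo}), integrate once (as in the computation (\ref{eq:f1spolyhomo}) but only to finite order) using $\lim_{s \to -\infty} f'(s) = 0$ from (\ref{eq:wgammasosoo}) to obtain $\tau = f'(s)$ as a smooth function of $r^{2\bo}$ with nonvanishing derivative $c_2/\bo$ at $r = 0$; invert smoothly to write $r^{2\bo}$ as a smooth function of $\tau$ near $\tau = 0$, and substitute back into (\ref{eq:f2ssemihomo}) to conclude that $\phi(\tau) = f''(s)$ is smooth in $\tau$ with $\phi(0) = 0$ and $\phi'(0) = \bo$. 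The statements near $\soo$ follow identically, with $r, \bo, c_{2k}$ replaced by $\tilde{r}, \boo, \tilde{c}_{2k}$ and $\lim_{s \to -\infty} f'(s) = 0$ replaced by $\lim_{s \to \infty} f'(s) = m$.

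The main obstacle is the same delicate point that makes the analytic proof nontrivial: the variables $\tau$ and $r$ are linked only through $s$, and $s \to -\infty$ while $\tau, r \to 0$, so Taylor's theorem cannot be applied directly in $s$. The crux is to show that after integrating $\frac{d\tau}{\phi}$ the logarithmic divergence of $s$ is \emph{exactly} $\frac{1}{\bo}\ln\tau$, so that upon exponentiation the change of variables $\tau \leftrightarrow r^{2\bo}$ becomes a genuine smooth diffeomorphism germ at the origin. Once this is established, all the smoothness ``in $r^{2\bo}$'' (rather than merely in $r$) propagates through the reciprocal, integration, exponentiation, and inversion steps by the smooth inverse function theorem and the chain rule. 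Keeping track of the remainder only qualitatively --- smooth and bounded in $z$, smooth in $r^{2\bo}$, and vanishing as $r \to 0$ --- rather than as a convergent tail is the only genuine bookkeeping difference from Theorem \ref{thm:polyhomoconesing}, and everything else is routine.
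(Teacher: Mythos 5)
Your proposal is correct and follows essentially the same route as the paper's own proof: mimic the proof of Theorem \ref{thm:polyhomoconesing} step by step, replacing convergent power series by second-order Taylor approximations with vanishing remainders, using Hadamard-type factoring for $\frac{1}{\phi}$, integrating $ds = \frac{d\tau}{\phi}$ to isolate the $\frac{1}{\bo}\ln\tau$ divergence, exponentiating to obtain $r^{2\bo}$ smooth in $\tau$ with nonzero derivative $C$ at the origin, and inverting via the smooth inverse function theorem (and reversing the chain for the converse via $\lim_{s \to -\infty} f'(s) = 0$ and $\frac{d}{d(r^{2\bo})}(\tau)\bigr\rvert_{r=0} = \frac{c_2}{\bo} \neq 0$). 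Your identification of the crux --- that $\tau$ and $r$ are linked only through the divergent $s$ and that the logarithmic term must be exactly $\frac{1}{\bo}\ln\tau$ for the change of variables to be a smooth diffeomorphism germ --- matches the paper precisely, as does your recovery of the $z$-dependence of the coefficients at the end.
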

\begin{proof}
The proof of Theorem \ref{thm:semihomoconesing} goes along the exact lines as that of Theorem \ref{thm:polyhomoconesing} with the hypothesis of real analyticity over there being replaced by the hypothesis of smoothness over here and so all the convergent power series expansions over there will be replaced by Taylor approximations of suitable orders over here. \vspace{5pt} \\
(\ref{itm:semihomodirect}) Let $\phi$ be smooth in $\tau$ in a \nbd{} of $\tau = 0$ with the boundary conditions $\phi \left(0\right) = 0$ and $\phi' \left(0\right) = \bo > 0$. So $\phi$ has the following second order Taylor approximation in $\tau$ in a \nbd{} of $\tau = 0$ (where $\lim\limits_{\tau \to 0} a \left(\tau\right) = 0$):
\begin{equation}\label{eq:phitauTaylor}
\phi \left(\tau\right) = \bo \tau + a_2 \tau^2 + a \left(\tau\right) \tau^2
\end{equation}
So $\frac{1}{\phi}$ has the following expression in a deleted \nbd{} of $\tau = 0$ (where also $\lim\limits_{\tau \to 0} b \left(\tau\right) = 0$):
\begin{equation}\label{eq:reciprocphitauTaylor}
\frac{1}{\phi \left(\tau\right)} = \frac{1}{\bo \tau} + b_0 + b \left(\tau\right)
\end{equation}
By doing the same calculations here as those done in (\ref{eq:rsphitauseries}) we get:
\begin{equation}\label{eq:rsphitauTaylor}
2 \ln r = s = \int d s = \int \frac{d \tau}{\phi \left(\tau\right)} = \frac{1}{\bo} \ln \tau + b_0 \tau + \int b \left(\tau\right) d \tau = \frac{1}{\bo} \ln \tau + b_0 \tau + b_1 \left(\tau\right) \tau + \operatorname{const}
\end{equation}
where $\lim\limits_{\tau \to 0} b_1 \left(\tau\right) = \lim\limits_{\tau \to 0} \frac{\int b \left(\tau\right) d \tau}{\tau} = 0$. So $r^{2\bo}$ is smooth in $\tau$ near $\tau = 0$ and has the following Taylor approximation where $C > 0$:
\begin{equation}\label{eq:r2botau'}
r^{2\bo} = C \tau e^{\left( \bo b_0 \tau + \bo b_1 \left(\tau\right) \tau \right)} = C \tau + B_2 \tau^2 + B \left(\tau\right) \tau^2
\end{equation}
From (\ref{eq:r2botau'}) we see that $\frac{d}{d \tau} \left(r^{2\bo}\right) \Bigr\rvert_{\tau = 0} = C \neq 0$, and so we can invert the expression in (\ref{eq:r2botau'}) to obtain $\tau$ as being smooth in $r^{2\bo}$ near $r = 0$ and having the following Taylor approximation:
\begin{equation}\label{eq:taur2bo'}
\tau = \frac{r^{2\bo}}{C} + A_2 r^{4\bo} + A \left(r^{2\bo}\right) r^{4\bo}
\end{equation}
As $\phi \left(\tau\right) = f'' \left(s\right)$, so from (\ref{eq:phitauTaylor}) and (\ref{eq:taur2bo'}) we get $f'' \left(s\right)$ is smooth in $r^{2 \beta_0}$ and has the second order Taylor approximation (\ref{eq:f2ssemihomo}) in a \nbd{} of $r = 0$ where $c_2 = \frac{\bo}{C} > 0$ and $\lim\limits_{r \to 0} c \left(r^{2\beta_0}\right) = 0$. The properties to be proven in Theorem \ref{thm:semihomoconesing} (\ref{itm:semihomodirect}) for the functions $c_2 \left(z\right)$, $c_4 \left(z\right)$ and $c \left(z, r^{2 \beta_0}\right)$ appearing in the expression (\ref{eq:f2ssemihomo}) can be easily seen by the very same arguments as those made in the proof of Theorem \ref{thm:polyhomoconesing} (\ref{itm:polyhomodirect}) for the coefficient functions $c_{2k} \left(z\right)$ in the power series expansion (\ref{eq:f2spolyhomo}). \vspace{5pt} \\
(\ref{itm:semihomoconverse}) Let $f'' \left(s\right)$ be smooth in $r^{2 \beta_0}$ and have a second order Taylor approximation of the form (\ref{eq:f2ssemihomo}) in a \nbd{} of the divisor $r = 0$. Integrating (\ref{eq:f2ssemihomo}) over here just as done with (\ref{eq:f2spolyhomo}) in (\ref{eq:f1spolyhomo}), we obtain the following Taylor approximation for $f' \left(s\right)$ in a \nbd{} of $r = 0$:
\begingroup
\addtolength{\jot}{0.5em}
\begin{align}\label{eq:f1ssemihomo}
f' \left(s\right) &= \int f'' \left(s\right) d s = \int \left( c_2 \left(z\right) r^{2\beta_0} + c_4 \left(z\right) r^{4\beta_0} + c \left(z, r^{2\beta_0}\right) r^{4\beta_0} \right) \frac{2}{r} d r \nonumber \\
&= \frac{c_2 \left(z\right)}{\beta_0} r^{2\beta_0} + \frac{c_4 \left(z\right)}{2\beta_0} r^{4\beta_0} + \int 2 c \left(z, r^{2\beta_0}\right) r^{4\beta_0 - 1} d r = \frac{c_2 \left(z\right)}{\beta_0} r^{2\beta_0} + \frac{c_4 \left(z\right)}{2\beta_0} r^{4\beta_0} + c_0 \left(z, r^{2\beta_0}\right) r^{4\beta_0}
\end{align}
\endgroup
where $\lim\limits_{r \to 0} c_0 \left(z, r^{2\beta_0}\right) = \lim\limits_{r \to 0} \frac{\int 2 c \left(z, r^{2\beta_0}\right) r^{4\beta_0 - 1} d r}{r^{4\beta_0}} = \lim\limits_{r \to 0} \frac{\frac{r}{2\beta_0 r^{2\beta_0}} \frac{2 c \left(z, r^{2\beta_0}\right) r^{4\beta_0}}{r}}{2 r^{2\beta_0}} = 0$. As $f' \left(s\right) = \tau$, the expression (\ref{eq:f1ssemihomo}) gives $\tau$ to be smooth in $r^{2\bo}$ near $r = 0$. Since $\frac{d}{d \left(r^{2\bo}\right)} \left(\tau\right) \Bigr\rvert_{r = 0} = \frac{c_2}{\beta_0} \neq 0$, we can invert the expression in (\ref{eq:f1ssemihomo}) to obtain the following Taylor approximation for $r^{2\bo}$ in terms of $\tau$ in a \nbd{} of $\tau = 0$:
\begin{equation}\label{eq:r2botau'convs}
r^{2\bo} = \frac{\beta_0}{c_2} \tau + B_2 \tau^2 + B \left(\tau\right) \tau^2
\end{equation}
As $f'' \left(s\right) = \phi \left(\tau\right)$, and as from the expression (\ref{eq:r2botau'convs}), $r^{2\bo}$ is smooth in $\tau$ near $\tau = 0$, we can conclude using (\ref{eq:f2ssemihomo}) and (\ref{eq:r2botau'convs}) that $\phi$ is smooth in $\tau$ in a \nbd{} of $\tau = 0$ and has the following second order Taylor approximation in terms of $\tau$ (where $\lim\limits_{\tau \to 0} a \left(\tau\right) = 0$):
\begin{equation}\label{eq:phitauTaylorconvs}
\phi \left(\tau\right) = c_2 r^{2\beta_0} + c_4 r^{4\beta_0} + c \left(r^{2\beta_0}\right) r^{4\beta_0} = \bo \tau + a_2 \tau^2 + a \left(\tau\right) \tau^2
\end{equation}
Clearly from the expression (\ref{eq:phitauTaylorconvs}), we have $\phi \left(0\right) = 0$ and $\phi' \left(0\right) = \bo$.
\end{proof} \par
\begin{corollary}\label{cor:semihomoconesing}
Let $\omega$ be a momentum-constructed \krm{} on the minimal ruled surface $X$ having momentum profile $\phi : \clom \to \R$ satisfying the boundary conditions $\phi \left(0\right) = 0$, $\phi' \left(0\right) = \bo > 0$ and $\phi \left(m\right) = 0$, $\phi' \left(m\right) = -\boo < 0$ and the condition $\phi > 0$ on $\opom$. Then $\omega$ is a \conrml{} smooth conical \krm{} (i.e. satisfies Definition \ref{def:coneKr3}) with cone angles $2\pi\bo$ and $2\pi\boo$ along the divisors $\so$ and $\soo$ respectively if and only if $\phi \in \mathcal{C}^\infty \clom$.
\end{corollary}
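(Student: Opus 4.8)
The plan is to run the proof of Corollary \ref{cor:polyhomoconesing} almost verbatim, replacing the real-analytic input (Theorem \ref{thm:polyhomoconesing}) by its smooth counterpart (Theorem \ref{thm:semihomoconesing}) and the convergent power series expansions by second order Taylor approximations. First I would invoke Theorem \ref{thm:semihomoconesing}: under the stated boundary conditions $\phi(0) = 0$, $\phi'(0) = \bo$ and $\phi(m) = 0$, $\phi'(m) = -\boo$, the smoothness of $\phi$ in $\tau$ near the endpoints $\tau = 0$ and $\tau = m$ is equivalent to $f''(s)$ being smooth in $r^{2\beta_0}$ (respectively in $\tilde{r}^{2\beta_\infty}$) with a Taylor approximation of the form $(\ref{eq:f2ssemihomo})$ near the divisor $\so$ (respectively $\soo$). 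This settles both implications already at the level of the momentum profile; what remains is only to translate these approximations into the coefficient conditions of Definition \ref{def:coneKr3}.

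For the forward direction I would integrate $(\ref{eq:f2ssemihomo})$ once to recover the approximation $(\ref{eq:f1ssemihomo})$ for $f'(s)$, and then substitute both $f'(s)$ and $f''(s)$ into the full bundle-adapted coordinate expression $(\ref{eq:omega1conesing})$ for $\omega$ near $\so$ (and the analogous expressions near $\soo$ obtained from $(\ref{eq:f2sf1sfspolyhomotilde})$, truncated to Taylor order). Reading off the four coefficient functions in $(\ref{eq:gij})$ then gives: $g_{1\bar{1}} \in O(1)$, because the coefficient of $\mathtt{p}^* \omega_\Sigma$ is bounded and smooth while $f'(s) \to 0$ and $f''(s) \to 0$ as $r \to 0$; the cross terms $g_1, g_{\bar{\imath}} \in O(\lvert w \rvert^{2\beta_0 - 1})$, because they carry a single factor $\frac{1}{w}$ or $\frac{1}{\bar{w}}$ against $f''(s) = O(\lvert w \rvert^{2\beta_0})$; and $g_0 = \frac{f''(s)}{\lvert w \rvert^2} = F_0 \lvert w \rvert^{2\beta_0 - 2}$ with $F_0 = c_2(z) + o(1)$ bounded and strictly positive, since $c_2(z) > 0$ is bounded below away from $0$. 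The smoothness clause (the last item of Definition \ref{def:coneKr3}) follows from observing that $f''(s)$, and hence each coefficient once multiplied by the appropriate power of $w$, is a smooth function of $r^{2\beta_0} = \lvert \tilde{\zeta} \rvert^2$ with $\tilde{\zeta} = w^{\beta_0}$. The identical argument at $\soo$, with $\tilde{r}$, $\beta_\infty$ and $\tilde{\zeta} = \tilde{w}^{\beta_\infty}$, completes this direction.

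The converse is essentially forced by the $g_0$ condition: if $\omega$ is \conrml{} smooth conical along $\so$, then the condition $g_0 = F_0 \lvert w \rvert^{2\beta_0 - 2}$ of Definition \ref{def:coneKr3} with $F_0$ bounded, strictly positive and smooth in $w^{\beta_0}$ says precisely that $\frac{f''(s)}{\lvert w \rvert^2} \lvert w \rvert^2 = f''(s)$ is smooth in $r^{2\beta_0}$ with leading behaviour of the form $(\ref{eq:f2ssemihomo})$; Theorem \ref{thm:semihomoconesing}(\ref{itm:semihomoconverse}) then returns the smoothness of $\phi$ at $\tau = 0$, and the corresponding statement at $\soo$ returns smoothness at $\tau = m$, whence $\phi \in \mathcal{C}^\infty \clom$.

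The step I expect to require the most care is the verification of the smoothness clause of Definition \ref{def:coneKr3} once one passes to the singular conical coordinate $\tilde{\zeta} = w^{\beta_0}$: this is where the \emph{\conrml} (as opposed to merely \emph{smooth conical}) strengthening lives, and one must check that the cross terms genuinely carry the full power $\lvert w \rvert^{2\beta_0 - 1}$ rather than the weaker $\lvert w \rvert^{\beta_0 - 1}$ permitted by Definition \ref{def:coneKr2} — exactly the gain supplied by the factor $f''(s) = O(\lvert w \rvert^{2\beta_0})$ coming from $(\ref{eq:f2ssemihomo})$. I would also keep in mind that the simplified expression $(\ref{eq:omegaconesing})$ is valid only at points where $d \ln h = 0$, so the $O$-estimates for the cross terms should be read off from the full expression $(\ref{eq:omega1conesing})$ in order to hold throughout a tubular \nbd{} of each divisor.
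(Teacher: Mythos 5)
Your proposal is correct and follows essentially the same route as the paper's proof: invoke Theorem \ref{thm:semihomoconesing} for the equivalence at the level of the momentum profile, integrate the Taylor approximation (\ref{eq:f2ssemihomo}) to obtain (\ref{eq:f1ssemihomo}) and its analogue at $\soo$, substitute into the full coordinate expression (\ref{eq:omega1conesing}) to read off the asymptotics (\ref{eq:omegasemihomo}) of the coefficient functions, and compare with Definition \ref{def:coneKr3}, with the converse recovered from the $g_0$-condition via Theorem \ref{thm:semihomoconesing}(\ref{itm:semihomoconverse}). Your closing caution --- that the cross-term estimate $O\left(\left\lvert w \right\rvert^{2\beta_0 - 1}\right)$ must be read off from (\ref{eq:omega1conesing}) rather than the simplified (\ref{eq:omegaconesing}) --- is exactly the point the paper's displayed estimates (\ref{eq:omegasemihomo}) encode.
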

\begin{proof}
Corollary \ref{cor:semihomoconesing} follows from Theorem \ref{thm:semihomoconesing} in the same way as Corollary \ref{cor:polyhomoconesing} from Theorem \ref{thm:polyhomoconesing}. As shown in Theorem \ref{thm:semihomoconesing} the smoothness of the momentum profile $\phi \left(\tau\right)$ along with the boundary conditions $\phi \left(0\right) = 0$, $\phi' \left(0\right) = \bo$ and $\phi \left(m\right) = 0$, $\phi' \left(m\right) = -\boo$ is equivalent to $f'' \left(s\right)$ having asymptotic second order Taylor approximations of the form (\ref{eq:f2ssemihomo}) in terms of $r^{2\beta_0}$ and $\tilde{r}^{2\beta_\infty}$ respectively near both the divisors $\so$ and $\soo$. As done in the computation in (\ref{eq:f1ssemihomo}) and analogous to the expressions obtained in (\ref{eq:f1sfspolyhomo}), we obtain the following asymptotic second order Taylor approximations for $f' \left(s\right)$ and $f \left(s\right)$ in terms of $r^{2\beta_0}$ locally around $r = 0$:
\begin{equation}\label{eq:f1sfssemihomo}
\begin{gathered}
f' \left(s\right) = \frac{c_2 \left(z\right)}{\beta_0} r^{2\beta_0} + \frac{c_4 \left(z\right)}{2\beta_0} r^{4\beta_0} + c_0 \left(z, r^{2\beta_0}\right) r^{4\beta_0} \\
f \left(s\right) = \frac{c_2 \left(z\right)}{\beta_0^2} r^{2\beta_0} + \frac{c_4 \left(z\right)}{4\beta_0^2} r^{4\beta_0} + c_1 \left(z, r^{2\beta_0}\right) r^{4\beta_0}
\end{gathered}
\end{equation}
where $\lim\limits_{r \to 0} c_0 \left(z, r^{2\beta_0}\right) = 0 = \lim\limits_{r \to 0} c_1 \left(z, r^{2\beta_0}\right)$. Similar expressions can be derived for $f' \left(s\right)$ and $f \left(s\right)$ in terms of $\tilde{r}^{2\beta_\infty}$ locally around $\tilde{r} = 0$ again analogous to the expressions obtained in (\ref{eq:f2sf1sfspolyhomotilde}).
\begin{equation}\label{eq:f2sf1sfssemihomotilde}
\begin{gathered}
f'' \left(s\right) = \tilde{c}_2 \left(z\right) \tilde{r}^{2\beta_\infty} + \tilde{c}_4 \left(z\right) \tilde{r}^{4\beta_\infty} + \tilde{c} \left(z, \tilde{r}^{2\beta_\infty}\right) \tilde{r}^{4\beta_\infty} \\
f' \left(s\right) = m - \frac{\tilde{c}_2 \left(z\right)}{\beta_\infty} \tilde{r}^{2\beta_\infty} - \frac{\tilde{c}_4 \left(z\right)}{2\beta_\infty} \tilde{r}^{4\beta_\infty} - \tilde{c}_0 \left(z, \tilde{r}^{2\beta_\infty}\right) \tilde{r}^{4\beta_\infty} \\
f \left(s\right) = - 2 m \ln \tilde{r} + \frac{\tilde{c}_2 \left(z\right)}{\beta_\infty^2} \tilde{r}^{2\beta_\infty} + \frac{\tilde{c}_4 \left(z\right)}{4\beta_\infty^2} \tilde{r}^{4\beta_\infty} + \tilde{c}_1 \left(z, \tilde{r}^{2\beta_\infty}\right) \tilde{r}^{4\beta_\infty}
\end{gathered}
\end{equation}
So the coefficient functions in the coordinate expression (\ref{eq:omega1conesing}) for the \krm{} $\omega$ in terms of the bundle-adapted local \hol{} coordinates $\left(z,w\right)$ have the following asymptotic second order Taylor approximations in a tubular \nbd{} of $\so$ (as can be seen from (\ref{eq:f2ssemihomo}) and (\ref{eq:f1sfssemihomo})):
\begin{equation}\label{eq:omegasemihomo}
\begin{gathered}
1 + f' \left(s\right) = 1 + \frac{c_2 \left(z\right)}{\beta_0} \left\lvert w \right\rvert^{2\beta_0} + \frac{c_4 \left(z\right)}{2\beta_0} \left\lvert w \right\rvert^{4\beta_0} + c_0 \left(z, \left\lvert w \right\rvert^{2\beta_0}\right) \left\lvert w \right\rvert^{4\beta_0} \in O \left(1\right) \hspace{1.5pt} \text{as} \hspace{3.5pt} w \to 0 \\
\frac{f'' \left(s\right)}{\left\lvert w \right\rvert} = c_2 \left(z\right) \left\lvert w \right\rvert^{2\beta_0 - 1} + c_4 \left(z\right) \left\lvert w \right\rvert^{4\beta_0 - 1} + c \left(z, \left\lvert w \right\rvert^{2\beta_0}\right) \left\lvert w \right\rvert^{4\beta_0 - 1} \in O \left(\left\lvert w \right\rvert^{2 \beta_0 - 1}\right) \hspace{1.5pt} \text{as} \hspace{3.5pt} w \to 0 \\
\frac{f'' \left(s\right)}{\left\lvert w \right\rvert^2} = c_2 \left(z\right) \left\lvert w \right\rvert^{2\beta_0 - 2} + c_4 \left(z\right) \left\lvert w \right\rvert^{4\beta_0 - 2} + c \left(z, \left\lvert w \right\rvert^{2\beta_0}\right) \left\lvert w \right\rvert^{4\beta_0 - 2} \in O \left(\left\lvert w \right\rvert^{2 \beta_0 - 2}\right) \hspace{1.5pt} \text{as} \hspace{3.5pt} w \to 0
\end{gathered}
\end{equation}
Again, expressions similar to (\ref{eq:omegasemihomo}) can be derived from (\ref{eq:f2sf1sfssemihomotilde}) for the coefficient functions in the coordinate expression (\ref{eq:omega1conesing}) for $\omega$ in a tubular \nbd{} of $\soo$. Comparing the expressions in (\ref{eq:omegasemihomo}) and the conditions mentioned in Theorem \ref{thm:semihomoconesing} (\ref{itm:semihomodirect}) with the conditions given in Definition \ref{def:coneKr3}, we see that the momentum-constructed \krm{} $\omega$ on $X$ is a \conrml{} smooth conical \krm{} being smooth on $X \smallsetminus \left(S_0 \cup S_\infty\right)$ and having conical singularities with cone angles $2 \pi \bo$ and $2 \pi \boo$ along $\so$ and $\soo$ respectively, in the event that the momentum profile $\phi \left(\tau\right)$ is smooth.
\end{proof}
\numberwithin{equation}{subsection}
\numberwithin{figure}{subsection}
\numberwithin{table}{subsection}
\numberwithin{lemma}{subsection}
\numberwithin{proposition}{subsection}
\numberwithin{result}{subsection}
\numberwithin{theorem}{subsection}
\numberwithin{corollary}{subsection}
\numberwithin{conjecture}{subsection}
\numberwithin{remark}{subsection}
\numberwithin{note}{subsection}
\numberwithin{motivation}{subsection}
\numberwithin{question}{subsection}
\numberwithin{answer}{subsection}
\numberwithin{case}{subsection}
\numberwithin{claim}{subsection}
\numberwithin{definition}{subsection}
\numberwithin{example}{subsection}
\numberwithin{hypothesis}{subsection}
\numberwithin{statement}{subsection}
\numberwithin{ansatz}{subsection}
\section{Interpreting the Higher Scalar Curvature Globally as a Current}\label{sec:highScalcurrent}
\subsection{Computing the Curvature Form Matrix for Deriving the Expression for the Top Chern Current}\label{subsec:Curvcurrent}
The Calabi ansatz (\ref{eq:ansatzconesing}) (or even (\ref{eq:ansatz0})) defines the desired kind of metric $\omega$ (e.g. higher cscK or higher \ext{} \kr{}) only on $X \smallsetminus \left(S_0 \cup S_\infty\right)$, and the boundary conditions (\ref{eq:BVPConeSing}) (or those given in (\ref{eq:ODEBVP0}) respectively) on the momentum profile $\phi$ determine the behaviour of $\omega$ near the divisors $\so$ and $\soo$ (as in whether it has got conical singularities along $\so$ and $\soo$ or whether it extends smoothly across $\so$ and $\soo$). So the coordinate expressions for the curvature form matrix $\Theta \left(\omega\right)$, the top Chern form $c_2 \left(\omega\right)$ and the higher scalar curvature $\lambda \left(\omega\right)$, which we had derived in Subsection \ref{subsec:MomentConstructConeSing}, hold a priori only on $X \smallsetminus \left(S_0 \cup S_\infty\right)$ (as $\omega$ is smooth on $X \smallsetminus \left(S_0 \cup S_\infty\right)$ in both the conical as well as the smooth cases of the momentum construction method). In Subsection \ref{subsec:Curvcurrent} we compute the expressions for $\Theta \left(\omega\right)$ and $c_2 \left(\omega\right)$ near the zero and infinity divisors of the minimal ruled surface $X$ in the bundle-adapted local \hol{} coordinates $\left(z, w\right)$ and $\left(z, \tilde{w}\right)$ respectively where $\tilde{w} = w^{-1}$. \par
In a \nbd{} of the zero divisor $\so$ which is given by $w = 0$, the curvature form matrix $\Theta \left(\omega\right) = \bar{\partial} \left( H^{-1} \partial H \right) \left(\omega\right)$, where $H \left(\omega\right)$ is the Hermitian matrix of $\omega$ in the coordinates $\left(z, w\right)$, is computed as follows (see equation (\ref{eq:Curv1conesing}) derived in Subsection \ref{subsec:MomentConstructConeSing} from the coordinate expressions (\ref{eq:omegaconesing}) and (\ref{eq:omega2conesing})):
\begin{equation}\label{eq:Curvcurrent}
\sqrt{-1} \Theta \left(\omega\right) =
\begin{bmatrix}
-\sqrt{-1} \partial \bar{\partial} \ln \left(1 + f' \left(s\right)\right) - 2 \ttp^* \omega_\Sigma & 0 \\
0 & -\sqrt{-1} \partial \bar{\partial} \ln \left(\frac{f'' \left(s\right)}{\left\lvert w \right\rvert^2}\right)
\end{bmatrix}
\end{equation}
Assuming the momentum-constructed $\omega$ to be \plyhomo{} smooth, the $\left(2,2\right)$-entry in (\ref{eq:Curvcurrent}) is computed as follows, by looking at the expressions (\ref{eq:omegapolyhomo}) obtained in Section \ref{sec:polyhomoConeSing} and by applying the Poincar\'e-Lelong formula (Demailly \cite{Demailly:2012:CmplxDifferGeom}):
\begin{align}\label{eq:Curvcurrent22}
-\sqrt{-1} \partial \bar{\partial} \ln \left(\frac{f'' \left(s\right)}{\left\lvert w \right\rvert^2}\right) &= -\sqrt{-1} \partial \bar{\partial} \ln \left(\sum\limits_{k=1}^{\infty} c_{2k} \left(z\right) \left\lvert w \right\rvert^{2\left(k-1\right)\beta_0}\right) - \sqrt{-1} \partial \bar{\partial} \ln \left\lvert w \right\rvert^{2 \left(\beta_0 - 1\right)} \nonumber \\
&= -\sqrt{-1} \partial \bar{\partial} \ln \left(g \left(s\right)\right) + 2 \pi \left(1 - \beta_0\right) \left[w = 0\right]
\end{align}
where $g \left(s\right) = \sum\limits_{k=1}^{\infty} c_{2k} \left(z\right) \left\lvert w \right\rvert^{2\left(k-1\right)\beta_0}$ is a smooth function of $s \in \R$ and $\left[w = 0\right]$ denotes the current of integration on the surface $X$ along the hyperplane divisor given by $w = 0$. And if $\omega$ is given to be only \conrml{} smooth, then the only thing that changes in the computation in (\ref{eq:Curvcurrent22}) is $g \left(s\right) = c_2 \left(z\right) + c_4 \left(z\right) \left\lvert w \right\rvert^{2\beta_0} + c \left(z, \left\lvert w \right\rvert^{2\beta_0}\right) \left\lvert w \right\rvert^{2\beta_0}$ obtained by considering the expressions (\ref{eq:omegasemihomo}) instead of (\ref{eq:omegapolyhomo}). And the $\left(1,1\right)$-entry in (\ref{eq:Curvcurrent}) is given in terms of the momentum profile $\phi \left(\gamma\right)$ as follows (see equation (\ref{eq:Curv2conesing})):
\begin{equation}\label{eq:Curvcurrent11}
-\sqrt{-1} \partial \bar{\partial} \ln \left(1 + f' \left(s\right)\right) - 2 \ttp^* \omega_\Sigma \hspace{1pt} = \hspace{1pt} \frac{1}{\gamma} \left( \frac{\phi}{\gamma} - \phi' \right) \sqrt{-1} \frac{\phi}{\left\lvert w \right\rvert^2} d w \wedge d \bar{w} - \left(\frac{\phi}{\gamma} + 2\right) \ttp^* \omega_\Sigma
\end{equation}
Note that here $\sqrt{-1} \frac{\phi}{\left\lvert w \right\rvert^2} d w \wedge d \bar{w}$ is a conical \krm{} and hence a \krc{} as $\phi \left(\gamma\right) = f'' \left(s\right)$ (see the expressions (\ref{eq:omegapolyhomo}) and (\ref{eq:omegasemihomo})). In the coordinates $\left(z, \tilde{w}\right)$ where $\tilde{w} = w^{-1}$, taken in a \nbd{} of the infinity divisor $\soo$ which is given by $\tilde{w} = 0$, similar expressions as (\ref{eq:Curvcurrent}), (\ref{eq:Curvcurrent22}) and (\ref{eq:Curvcurrent11}) hold with $w$ replaced by $\tilde{w}$, $\bo$ by $\boo$ and $g \left(s\right)$ by some $\tilde{g} \left(s\right)$ with the same property and by taking the corresponding boundary conditions on $\phi \left(\gamma\right)$. \par
For computing the top Chern form $c_2 \left(\omega\right) = \frac{1}{\left(2 \pi\right)^2} \det \left(\sqrt{-1} \Theta \left(\omega\right)\right)$, we will have to take the wedge product of the $\left(1,1\right)$-entry and the $\left(2,2\right)$-entry in (\ref{eq:Curvcurrent}). But as can be seen in (\ref{eq:Curvcurrent22}) and (\ref{eq:Curvcurrent11}), both the entries involve current terms and the wedge product of currents cannot be na\"ively taken \cite{Demailly:2012:CmplxDifferGeom}, so some Bedford-Taylor theory \cite{Bedford:1982:cpctypsh,Bedford:1976:DirichletMongeAmpere} needs to be invoked for this purpose. In Subsection \ref{subsec:Curvcurrent} we will try to first just na\"ively compute the determinant of the matrix in (\ref{eq:Curvcurrent}) by ``intuitively'' assuming the wedge product of a current of integration over a divisor with a closed conically singular form to be the ``integration'' of the form over the divisor (as we had remarked in the expressions (\ref{eq:wedgenaivint}) discussed in Subsection \ref{subsec:CanonKhlrConeSing}). The rigorous explanation of this wedge product by using Bedford-Taylor theory will be given in Subsection \ref{subsec:currenteq}. \par
We will try to compute the wedge products of the whole term in (\ref{eq:Curvcurrent11}) with the two terms in (\ref{eq:Curvcurrent22}) separately. First note the following for the first term in (\ref{eq:Curvcurrent22}) (look at the expression (\ref{eq:Curv2conesing})):
\begin{equation}\label{eq:gsphi}
-\sqrt{-1} \partial \bar{\partial} \ln \left(g \left(s\right)\right) = - \phi'' \sqrt{-1} \frac{\phi}{\left\lvert w \right\rvert^2} d w \wedge d \bar{w} - \phi' \mathtt{p}^* \omega_\Sigma
\end{equation}
So the wedge product of the term in (\ref{eq:Curvcurrent11}) with the first term in (\ref{eq:Curvcurrent22}) is easily seen to be the following (from the expressions (\ref{eq:Chernconesing}) and (\ref{eq:lambdaconesing})):
\begin{multline}\label{eq:firstterm}
\left( -\sqrt{-1} \partial \bar{\partial} \ln \left(1 + f' \left(s\right)\right) - 2 \ttp^* \omega_\Sigma \right) \wedge \left( -\sqrt{-1} \partial \bar{\partial} \ln \left(g \left(s\right)\right) \right) \\
= \mathtt{p}^* \omega_\Sigma \wedge \sqrt{-1} \frac{d w \wedge d \bar{w}}{\left\lvert w \right\rvert^2} \frac{\phi}{\gamma^2} \left( \gamma \left(\phi + 2 \gamma\right) \phi'' + \phi' \left(\phi' \gamma - \phi\right) \right) = \frac{\lambda \left(\omega\right)}{2} \omega^2
\end{multline}
where the assumption, that the higher scalar curvature $\lambda \left(\omega\right) : X \smallsetminus \left(S_0 \cup S_\infty\right) \to \R$ is bounded, is sufficient (with the boundedness of $\lambda \left(\omega\right)$ being clear from the expression (\ref{eq:lambdaconesing}) at least when $\phi \in \mathcal{C}^\infty \cllml$). Now coming to the wedge product of the term in (\ref{eq:Curvcurrent11}) with the second term in (\ref{eq:Curvcurrent22}), we may intuitively think of the integration of a closed form of the type $\sqrt{-1} d w \wedge d \bar{w}$ over the divisor $\left\lbrace w = 0 \right\rbrace$ as zero, even though we have to be mindful that the conical singularity given by $\frac{\phi}{\left\lvert w \right\rvert^2}$ is present in the form. So we can expect to have the following expression for the wedge product of the term in (\ref{eq:Curvcurrent11}) with the second term in (\ref{eq:Curvcurrent22}) (which will be explained rigorously in Subsection \ref{subsec:currenteq}):
\begin{equation}\label{eq:secondterm}
\left( -\sqrt{-1} \partial \bar{\partial} \ln \left(1 + f' \left(s\right)\right) - 2 \ttp^* \omega_\Sigma \right) \wedge \left( 2 \pi \left(1 - \beta_0\right) \left[w = 0\right] \right) \hspace{2pt} = \hspace{2pt} 4 \pi \left(\beta_0 - 1\right) \ttp^* \omega_\Sigma \wedge \left[w = 0\right]
\end{equation}
Note that $\ttp^* \omega_\Sigma$ is a closed smooth form, and the wedge product of a current with a smooth form is defined in the usual way \cite{Demailly:2012:CmplxDifferGeom}. \par
So finally the expression for the top Chern form $c_2 \left(\omega\right) \bigr\rvert_{X \smallsetminus S_\infty}$ in the coordinates $\left(z, w\right)$ in a \nbd{} of $w = 0$ is given as follows (from (\ref{eq:firstterm}) and (\ref{eq:secondterm})):
\begin{equation}\label{eq:Cherncurrentzero}
c_2 \left(\omega\right) \bigr\rvert_{X \smallsetminus S_\infty} = \frac{\lambda \left(\omega\right)}{2 \left(2 \pi\right)^2} \omega^2 + \frac{\beta_0 - 1}{\pi} \ttp^* \omega_\Sigma \wedge \left[w = 0\right]
\end{equation}
Again in the coordinates $\left(z, \tilde{w}\right)$ in a \nbd{} of $\tilde{w} = 0$, we get the same expression as (\ref{eq:Cherncurrentzero}) for the top Chern form $c_2 \left(\omega\right) \bigr\rvert_{X \smallsetminus S_0}$ with $\tilde{w}$ in place of $w$ and $\boo$ in place of $\bo$. \par
So the global expression for the top Chern form (or the top Chern current) $c_2 \left(\omega\right)$ on the minimal ruled surface $X = \prj$ is given as follows:
\begin{equation}\label{eq:CherncurrentomegaSigma}
c_2 \left(\omega\right) = \frac{\lambda \left(\omega\right)}{2 \left(2 \pi\right)^2} \omega^2 + \frac{\beta_0 - 1}{\pi} \ttp^* \omega_\Sigma \wedge \left[\so\right] + \frac{\beta_\infty - 1}{\pi} \ttp^* \omega_\Sigma \wedge \left[\soo\right]
\end{equation}
where $\left[\so\right]$ and $\left[\soo\right]$ denote the currents of integration on $X$ along the divisors $\so$ and $\soo$ respectively. $c_2 \left(\omega\right)$ given by (\ref{eq:CherncurrentomegaSigma}) is a (closed) $\left(2,2\right)$-current on $X$ which when restricted to $X \smallsetminus \left(S_0 \cup S_\infty\right)$ agrees with the usual notion of the top Chern form $c_2 \left(\omega\right) \bigr\rvert_{X \smallsetminus \left(S_0 \cup S_\infty\right)}$ given by (\ref{eq:topChernXminus}), and we will see in Subsection \ref{subsec:CohomolInvcurrent} that the top Chern current $c_2 \left(\omega\right)$ given by (\ref{eq:CherncurrentomegaSigma}) globally on $X$ is a \cohomll{} representative of the top Chern class $c_2 \left(X\right)$ whereas $c_2 \left(\omega\right) \bigr\rvert_{X \smallsetminus \left(S_0 \cup S_\infty\right)}$ is not a \cohomll{} \invt{}. Equation (\ref{eq:CherncurrentomegaSigma}) gives additional weightage to the momentum-constructed conical higher cscK metric $\omega$ by providing in a \cohomll{ly} invariant manner a global interpretation for the higher scalar curvature $\lambda \left(\omega\right)$ on the whole of $X$ (instead of just on $X \smallsetminus \left(S_0 \cup S_\infty\right)$ which is a priori already there) in terms of the currents of integration along $S_0$ and $S_\infty$ respectively. \par
The right hand side of the equation of currents (\ref{eq:CherncurrentomegaSigma}) can be written in terms of $\omega$ (instead of $\omega_\Sigma$) by doing the following computations going along similar lines as (\ref{eq:secondterm}) and using the expressions (\ref{eq:ansatzconesing}) and (\ref{eq:omegaconesing}) and the boundary values of the variables involved given in (\ref{eq:wgammasosoo}):
\begin{equation}\label{eq:Cherncurrentomega'}
\begin{gathered}
\omega \wedge \left[w = 0\right] \hspace{2pt} = \hspace{2pt} \left( \left(1 + f' \left(s\right)\right) \mathtt{p}^* \omega_\Sigma + \sqrt{-1} \frac{f'' \left(s\right)}{\left\lvert w \right\rvert^2} d w \wedge d \bar{w} \right) \wedge \left[w = 0\right] \hspace{2pt} = \hspace{2pt} \mathtt{p}^* \omega_\Sigma \wedge \left[w = 0\right] \\
\omega \wedge \left[\tilde{w} = 0\right] \hspace{2pt} = \hspace{2pt} \left( \left(1 + f' \left(s\right)\right) \mathtt{p}^* \omega_\Sigma + \sqrt{-1} \frac{f'' \left(s\right)}{\left\lvert \tilde{w} \right\rvert^2} d \tilde{w} \wedge d \bar{\tilde{w}} \right) \wedge \left[\tilde{w} = 0\right] \hspace{2pt} = \hspace{2pt} \left(m + 1\right) \mathtt{p}^* \omega_\Sigma \wedge \left[\tilde{w} = 0\right]
\end{gathered}
\end{equation}
wherein just like (\ref{eq:secondterm}) the wedge product of a closed form of the type $\sqrt{-1} d w \wedge d \bar{w}$ with the current of integration over the divisor $\left\lbrace w = 0 \right\rbrace$ is taken to be zero, even though the form has got the conical singularity of the type $\frac{f'' \left(s\right)}{\left\lvert w \right\rvert^2}$ precisely at $w = 0$. Substituting (\ref{eq:Cherncurrentomega'}) in (\ref{eq:CherncurrentomegaSigma}) we obtain the following global expression of currents for $c_2 \left(\omega\right)$:
\begin{equation}\label{eq:Cherncurrentomega''}
c_2 \left(\omega\right) = \frac{\lambda \left(\omega\right)}{2 \left(2 \pi\right)^2} \omega^2 + \frac{\beta_0 - 1}{\pi} \omega \wedge \left[\so\right] + \frac{\beta_\infty - 1}{\left(m + 1\right) \pi} \omega \wedge \left[\soo\right]
\end{equation}
\subsection{The Conical Higher cscK Equation in Terms of the Currents of Integration along the Divisors}\label{subsec:currenteq}
We will first give a brief review of some basics of Bedford-Taylor theory \cite{Bedford:1982:cpctypsh,Bedford:1976:DirichletMongeAmpere} which will be needed here in Subsection \ref{subsec:currenteq} to give rigorous justifications to the wedge products of current terms seen in Subsection \ref{subsec:Curvcurrent}. We are following the exposition given in Demailly \cite{Demailly:2012:CmplxDifferGeom}; Section III.3 about the \textit{Bedford-Taylor wedge product} \cite{Bedford:1982:cpctypsh,Bedford:1976:DirichletMongeAmpere} of a closed (semi)positive $\left(k,k\right)$-current with a closed (semi)positive $\left(1,1\right)$-current having a (locally) bounded (global) potential. \par
Let $M$ be a complex $n$-manifold, $T$ be a closed semipositive $\left(k,k\right)$-current on $M$ and $u$ be a locally bounded \psh{} function on $M$ so that $\ideldb u$ becomes a closed semipositive $\left(1,1\right)$-current on $M$. Since $T$ is a $\left(k,k\right)$-current, it is a $\left(k,k\right)$-form with distributional coefficients. Since $T$ is closed, its distributional coefficients are distributions of order $0$ and so, are Borel measures on $M$. Since $T$ is in addition semipositive, its coefficients are positive Borel measures on $M$. Now as $u$ is a locally bounded Borel measurable function on $M$ and positive measures can be multiplied by measurable functions to obtain new measures, so $u T$ defines another $\left(k,k\right)$-form with Borel measure coefficients i.e. a $\left(k,k\right)$-current on $M$. And since the exterior derivatives of currents are well-defined, we define the \textit{Bedford-Taylor product} of the currents $\ideldb u$ and $T$ on $M$ as follows \cite{Bedford:1982:cpctypsh,Bedford:1976:DirichletMongeAmpere}:
\begin{equation}\label{eq:defBTprod}
\ideldb u \wedge T = \ideldb \left(u T\right)
\end{equation} \par
\begin{theorem}[Bedford-Taylor \cite{Bedford:1982:cpctypsh,Bedford:1976:DirichletMongeAmpere}, Demailly \cite{Demailly:2012:CmplxDifferGeom}; Section III.3]\label{thm:BTprod}
The Bedford-Taylor product of the closed semipositive $\left(1,1\right)$-current $\ideldb u$ (with $u$ being a locally bounded \psh{} function) and the closed semipositive $\left(k,k\right)$-current $T$ defined by (\ref{eq:defBTprod}) is a closed semipositive $\left(k+1,k+1\right)$-current on $M$.
\end{theorem}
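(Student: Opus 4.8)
The plan is to treat the two assertions --- closedness and semipositivity --- separately, since the first is purely formal while the second carries all the analytic content. For closedness, I would simply note that $\ideldb\left(uT\right)$ lies in the image of $\deldb$, so that $\del\left(\ideldb\left(uT\right)\right) = 0$ follows from $\del\del = 0$ and $\db\left(\ideldb\left(uT\right)\right) = 0$ follows from $\db\db = 0$; hence $d\left(\ideldb\left(uT\right)\right) = 0$ with no hypotheses on $u$ or $T$ beyond the well-definedness of $uT$ (already established in the preamble, using that the coefficients of the closed semipositive current $T$ are positive Borel measures and that $u$ is locally bounded and Borel measurable). The bidegree count is equally immediate: $uT$ is a $\left(k,k\right)$-current, so $\ideldb\left(uT\right)$ is a $\left(k+1,k+1\right)$-current.

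For semipositivity, which is the crux, I would first reduce to a local statement, since a current is semipositive if and only if it is semipositive in a \nbd{} of each point. Fixing a coordinate ball and shrinking slightly, I would approximate $u$ by a decreasing sequence $\left(u_j\right)$ of smooth \psh{} functions obtained by convolution with a standard mollifier, so that $u_j \downarrow u$ pointwise and the $u_j$ are uniformly bounded on the smaller ball. The point of passing to smooth $u_j$ is that the Bedford-Taylor product then reduces to the classical wedge product of a smooth form with a current: using $dT = 0$, i.e. $\del T = \db T = 0$, the Leibniz rule gives
\begin{equation*}
\ideldb\left(u_j T\right) = \ideldb u_j \wedge T,
\end{equation*}
and the right-hand side is manifestly semipositive, being the wedge of the smooth semipositive $\left(1,1\right)$-form $\ideldb u_j$ with the semipositive current $T$.

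The remaining step is to pass to the limit. I would show that $u_j T \to uT$ weakly as currents: testing against a fixed smooth compactly supported form and using that the coefficients of $T$ are positive measures (finite on compact sets), the pairing is an integral of $u_j$ against these measures, and $u_j \to u$ pointwise while staying uniformly bounded, so dominated convergence applies. Since $\ideldb$ is continuous for the weak topology on currents, $\ideldb\left(u_j T\right) \to \ideldb\left(uT\right)$ weakly; and because semipositivity is tested against a fixed family of positive test forms, it is preserved under weak limits. This yields semipositivity of $\ideldb\left(uT\right)$ on the smaller ball, and hence globally.

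The hard part, and the only place genuine care is needed, is the weak convergence $u_j T \to uT$: one must exploit both the local boundedness of $u$ (to obtain a dominating function) and the fact that closedness forces the coefficients of $T$ to be measures rather than mere distributions (so that multiplication by the bounded Borel function $u$ and the convergence statement make sense as a statement about integration against measures). I would emphasize that this is the \emph{single-factor} case --- one bounded \psh{} potential wedged against an already-given closed positive current --- so it avoids the deep continuity theorems for the Monge-Amp\`ere operator under decreasing limits that are needed when several such potentials are iterated; here elementary dominated convergence suffices. One should also check that the construction is independent of the chosen mollification, which follows because $uT$ itself is defined without reference to any approximation, so the weak limit $\ideldb\left(uT\right)$ is thereby intrinsic.
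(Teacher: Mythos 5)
Your proposal is correct, but note that the paper does not actually prove this statement: it quotes the result from Bedford--Taylor and Demailly (Section III.3), and immediately moves on to use it, so there is no internal proof to compare against. What you have written is essentially the standard argument from the cited reference: closedness and the bidegree count are formal consequences of $\partial^2 = \bar{\partial}^2 = 0$ and of $u T$ being a well-defined $\left(k,k\right)$-current with measure coefficients; semipositivity is obtained by local mollification $u_j \downarrow u$ with smooth \psh{} $u_j$, the Leibniz identity $\ideldb \left(u_j T\right) = \ideldb u_j \wedge T$ (valid since $d T = 0$ forces $\partial T = \bar{\partial} T = 0$ separately, by bidegree), and weak convergence $u_j T \to u T$ followed by weak continuity of $\ideldb$ and weak-limit stability of positivity. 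You correctly isolate the one analytic point: that the coefficients of the closed positive current $T$ are (complex) measures, each locally dominated by the trace measure, so that pointwise convergence of the uniformly locally bounded $u_j$ gives $u_j T \to u T$ by dominated convergence --- and that this single-potential case genuinely avoids the deeper Bedford--Taylor continuity theorems needed for iterated products $\left(\ideldb u\right)^k$. The only detail left tacit is that the wedge of a smooth semipositive $\left(1,1\right)$-form with a positive $\left(k,k\right)$-current is positive, which for bidegree $\left(1,1\right)$ factors is unproblematic (strong and weak positivity coincide there); at the level of rigour the paper itself adopts, taking this as standard is entirely reasonable.
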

{\noindent The Bedford-Taylor product also satisfies some standard continuous and monotone approximation properties which are proved in Demailly \cite{Demailly:2012:CmplxDifferGeom}; Section III.3 in addition to Theorem \ref{thm:BTprod}. So the Bedford-Taylor product provides an effective way of interpreting wedge products of closed (semi)positive currents like the ones in equations (\ref{eq:ScalconecscK}) and (\ref{eq:highScalconehcscK}) discussed in Subsection \ref{subsec:CanonKhlrConeSing} and also those in equations (\ref{eq:secondterm}) and (\ref{eq:Cherncurrentomega'}) seen in Subsection \ref{subsec:Curvcurrent}.} \par
Subsection \ref{subsec:currenteq} is dedicated to making the wedge products in (\ref{eq:secondterm}) and (\ref{eq:Cherncurrentomega'}) rigorous, because the wedge product in (\ref{eq:firstterm}) was already clear in Subsection \ref{subsec:Curvcurrent}. The bounded (or locally bounded) \psh{} (or locally \psh{}) functions $u$ to be considered over here are $\ln \left(1 + f' \left(s\right)\right)$ (for the equation (\ref{eq:secondterm})) and $f \left(s\right)$, $f \left(s\right) - m s$ (for the equations (\ref{eq:Cherncurrentomega'}) at $\so$, $\soo$ respectively) and the closed positive $\left(1,1\right)$-currents $T$ are $\left[w = 0\right]$ as well as $\left[\tilde{w} = 0\right]$ for both (\ref{eq:secondterm}) and (\ref{eq:Cherncurrentomega'}). \par
As $0 \leq f' \left(s\right) \leq m$ (from (\ref{eq:wgammasosoo})), so $0 \leq \ln \left(1 + f' \left(s\right)\right) \leq \ln \left(m + 1\right)$, i.e. $\ln \left(1 + f' \left(s\right)\right)$ is bounded on our surface $X$. Recall that on a compact \kr{} $n$-manifold $M$ a continuous function $u : M \to \R$ is \psh{} (respectively strictly \psh{}) if and only if the closed $\left(1,1\right)$-current $\ideldb u$ is semipositive (respectively strictly positive i.e. \kr{}) on $M$ \cite{Demailly:2012:CmplxDifferGeom}. Since everything is smooth in the variable $z$ on the \rms{} $\Sigma$ and the conical singularities occur only in the variable $w$ (or the variable $\tilde{w}$) on the fibres of the line bundle $L$ (and that too at the ends of the fibres), and we are following the convention $d \ln h \left(z\right) = 0$ in the variable $s$ as done by \cite{Pingali:2018:heK,Szekelyhidi:2014:eKintro}, so it suffices to treat $\ln \left(1 + f' \left(s\right)\right)$ as a function of $w$ (respectively $\tilde{w}$) and check its \psh{ity}. Looking at the expression for the current $\ideldb \ln \left(1 + f' \left(s\right)\right)$ in terms of the momentum profile $\phi \left(\gamma\right)$ given in (\ref{eq:Curvcurrent11}) and noting that $w = 0 \iff \gamma = 1$ (and $\tilde{w} = 0 \iff \gamma = m + 1$) from (\ref{eq:wgammasosoo}), we see the following:
\begin{align}\label{eq:pshKahler}
\sqrt{-1} \partial \bar{\partial} \ln \left(1 + f' \left(s\right)\right) \hspace{1pt} &= \hspace{1pt} -\frac{1}{\gamma} \left( \frac{\phi}{\gamma} - \phi' \right) \sqrt{-1} \frac{\phi}{\left\lvert w \right\rvert^2} d w \wedge d \bar{w} + \frac{\phi}{\gamma} \ttp^* \omega_\Sigma \\
&= \hspace{1pt} -\frac{1}{\gamma} \left( \frac{\phi}{\gamma} - \phi' \right) \sqrt{-1} \frac{\phi}{\left\lvert \tilde{w} \right\rvert^2} d \tilde{w} \wedge d \bar{\tilde{w}} + \frac{\phi}{\gamma} \ttp^* \omega_\Sigma \nonumber
\end{align}
where the boundary conditions (\ref{eq:BVPConeSing}) on $\phi$ will give $-\frac{1}{\gamma} \left( \frac{\phi}{\gamma} - \phi' \right) = \bo > 0$ at $\gamma = 1$, along with the facts that $\gamma \geq 1$, $\phi \left(\gamma\right) \geq 0$, $\phi \left(1\right) = 0$ and $\sqrt{-1} \frac{\phi}{\left\lvert w \right\rvert^2} d w \wedge d \bar{w}$ is a \krc{} being a conical \krm{}. So there exists a non-degenerate open tubular \nbd{} of the zero divisor $\so$ of the surface $X$, on which the current $\ideldb \ln \left(1 + f' \left(s\right)\right)$ is positive i.e. the continuous function $\ln \left(1 + f' \left(s\right)\right)$ is \psh{}. On the other hand at $\gamma = m + 1$ the boundary conditions (\ref{eq:BVPConeSing}) will give $-\frac{1}{\gamma} \left( \frac{\phi}{\gamma} - \phi' \right) = -\frac{\boo}{m + 1} < 0$, so $-\ideldb \ln \left(1 + f' \left(s\right)\right)$ is positive i.e. $-\ln \left(1 + f' \left(s\right)\right)$ is \psh{} in a tubular \nbd{} of the infinity divisor $\soo$. Since the currents of integration $\left[w = 0\right]$ and $\left[\tilde{w} = 0\right]$ are zero away from their respective divisors, it does not matter what happens to the sign of the current $\ideldb \ln \left(1 + f' \left(s\right)\right)$ away from the divisors. \par
So $\ln \left(1 + f' \left(s\right)\right)$ and $\left[w = 0\right]$ (respectively $\left[\tilde{w} = 0\right]$) do satisfy the conditions required in the hypothesis of the Bedford-Taylor product locally in some \nbd{s} of the divisors $\so$ and $\soo$ separately, and the definition of the Bedford-Taylor product is also local in nature \cite{Demailly:2012:CmplxDifferGeom}, so we are allowed to apply Bedford-Taylor theory to interpret the wedge products of currents in the equations (\ref{eq:CherncurrentomegaSigma}) and (\ref{eq:Cherncurrentomega''}) seen in Subsection \ref{subsec:Curvcurrent}. \par
So we apply the definition of the Bedford-Taylor product to the current terms in (\ref{eq:secondterm}) and (\ref{eq:Cherncurrentzero}) as follows:
\begin{equation}\label{eq:currentzeroBTprod}
\sqrt{-1} \partial \bar{\partial} \ln \left(1 + f' \left(s\right)\right) \wedge \left[w = 0\right] \hspace{2pt} = \hspace{2pt} \sqrt{-1} \partial \bar{\partial} \left( \ln \left(1 + f' \left(s\right)\right) \left[w = 0\right] \right) \hspace{2pt} = \hspace{2pt} 0
\end{equation}
since $\ln \left(1 + f' \left(s\right)\right) = 0$ at $w = 0$ and $\left[w = 0\right]$ is a closed current which vanishes away from $w = 0$. Similarly $\ln \left(1 + f' \left(s\right)\right) = \ln \left(m + 1\right) > 0$ at $\tilde{w} = 0$ and so we have:
\begin{align}\label{eq:currentinfinityBTprod}
\sqrt{-1} \partial \bar{\partial} \ln \left(1 + f' \left(s\right)\right) \wedge \left[\tilde{w} = 0\right] \hspace{2pt} &= \hspace{2pt} \sqrt{-1} \partial \bar{\partial} \left( \ln \left(1 + f' \left(s\right)\right) \left[\tilde{w} = 0\right] \right) \\
&= \hspace{2pt} \ln \left(m + 1\right) \sqrt{-1} \partial \bar{\partial} \left( \left[\tilde{w} = 0\right] \right) \hspace{2pt} = \hspace{2pt} 0 \nonumber
\end{align}
This rigorously justifies the equation of currents (\ref{eq:CherncurrentomegaSigma}) for the top Chern form $c_2 \left(\omega\right)$ globally on the surface $X$. \par
Now for justifying the wedge products of currents appearing in the equation (\ref{eq:Cherncurrentomega''}), we will have to consider the expressions for $\omega$ given by the Calabi ansatz (\ref{eq:ansatzconesing}) with local \krp{s} $f \left(s\right)$ and $f \left(s\right) - m s$ applicable on $X \smallsetminus S_\infty$ and $X \smallsetminus S_0$ respectively as follows:
\begin{equation}\label{eq:ansatzsosoo}
\begin{gathered}
\omega = \mathtt{p}^* \omega_\Sigma + \sqrt{-1} \partial \bar{\partial} f \left(s\right) \hspace{4pt} \text{on} \hspace{5.5pt} X \smallsetminus S_\infty \\
\omega = \left(m + 1\right) \mathtt{p}^* \omega_\Sigma + \sqrt{-1} \partial \bar{\partial} \left(f \left(s\right) - m s\right) \hspace{4pt} \text{on} \hspace{5.5pt} X \smallsetminus S_0
\end{gathered}
\end{equation}
Then we can check the \psh{ity} of $f \left(s\right)$ and $f \left(s\right) - m s$ again only in terms of $w$ and $\tilde{w}$ respectively (ignoring the parts in the coordinate $z$) from the following coordinate expressions easily seen from (\ref{eq:omegaconesing}):
\begin{equation}\label{eq:fsmssosoo}
\begin{gathered}
\ideldb f \left(s\right) = f' \left(s\right) \mathtt{p}^* \omega_\Sigma + \sqrt{-1} \frac{f'' \left(s\right)}{\left\lvert w \right\rvert^2} d w \wedge d \bar{w} \\
\ideldb \left(f \left(s\right) - m s\right) = \left(f' \left(s\right) - m\right) \mathtt{p}^* \omega_\Sigma + \sqrt{-1} \frac{f'' \left(s\right)}{\left\lvert \tilde{w} \right\rvert^2} d \tilde{w} \wedge d \bar{\tilde{w}}
\end{gathered}
\end{equation}
where $\sqrt{-1} \frac{f'' \left(s\right)}{\left\lvert w \right\rvert^2} d w \wedge d \bar{w}$ and $\sqrt{-1} \frac{f'' \left(s\right)}{\left\lvert \tilde{w} \right\rvert^2} d \tilde{w} \wedge d \bar{\tilde{w}}$ are conical \krm{s} and hence \krc{s} (see the concerned expressions in (\ref{eq:omegapolyhomo}), (\ref{eq:omegasemihomo})) and $f' \left(s\right) \bigr\rvert_{w = 0} = 0$, $f' \left(s\right) \bigr\rvert_{\tilde{w} = 0} = m$ as seen from (\ref{eq:wgammasosoo}). So from (\ref{eq:fsmssosoo}), $f \left(s\right)$ and $f \left(s\right) - m s$ are continuous \psh{} functions on $X \smallsetminus S_\infty$ and $X \smallsetminus S_0$ respectively. Looking at the asymptotic power series expansions for $f \left(s\right)$, $f \left(s\right) - m s$ near $\so$, $\soo$ given in (\ref{eq:f1sfspolyhomo}), (\ref{eq:f2sf1sfspolyhomotilde}) respectively for the case of real analytic momentum profiles, and their respective asymptotic second order Taylor approximations near $\so$, $\soo$ given in (\ref{eq:f1sfssemihomo}), (\ref{eq:f2sf1sfssemihomotilde}) if the momentum profile is only smooth, we clearly see that $f \left(s\right) \bigr\rvert_{w = 0} = 0$ and $\left(f \left(s\right) - m s\right) \bigr\rvert_{\tilde{w} = 0} = 0$, and hence $f \left(s\right)$ and $f \left(s\right) - m s$ are bounded locally in some tubular \nbd{s} of $S_0$ and $S_\infty$ respectively. So just as done for (\ref{eq:currentzeroBTprod}) and (\ref{eq:currentinfinityBTprod}), we can apply Bedford-Taylor theory for the \psh{} functions $f \left(s\right)$  and $f \left(s\right) - m s$ and the currents of integration $\left[w = 0\right]$ and $\left[\tilde{w} = 0\right]$ respectively to make the wedge product computations in (\ref{eq:Cherncurrentomega'}) rigorous as follows:
\begin{align}\label{eq:currentomegazeroBTprod}
\omega \wedge \left[w = 0\right] \hspace{2pt} &= \hspace{2pt} \left( \mathtt{p}^* \omega_\Sigma + \sqrt{-1} \partial \bar{\partial} f \left(s\right) \right) \wedge \left[w = 0\right] \\
&= \hspace{2pt} \mathtt{p}^* \omega_\Sigma \wedge \left[w = 0\right] + \sqrt{-1} \partial \bar{\partial} \left( f \left(s\right) \left[w = 0\right] \right) \hspace{2pt} = \hspace{2pt} \mathtt{p}^* \omega_\Sigma \wedge \left[w = 0\right] \nonumber
\end{align}
\begin{align}\label{eq:currentomegainfinityBTprod}
\omega \wedge \left[\tilde{w} = 0\right] \hspace{2pt} &= \hspace{2pt} \left( \left(m + 1\right) \mathtt{p}^* \omega_\Sigma + \sqrt{-1} \partial \bar{\partial} \left(f \left(s\right) - m s\right) \right) \wedge \left[\tilde{w} = 0\right] \nonumber \\
&= \hspace{2pt} \left(m + 1\right) \mathtt{p}^* \omega_\Sigma \wedge \left[\tilde{w} = 0\right] + \sqrt{-1} \partial \bar{\partial} \left( \left(f \left(s\right) - m s\right) \left[\tilde{w} = 0\right] \right) \\
&= \hspace{2pt} \left(m + 1\right) \mathtt{p}^* \omega_\Sigma \wedge \left[\tilde{w} = 0\right] \nonumber
\end{align} \par
So finally we can have the global expressions of currents for $c_2 \left(\omega\right)$ in terms of $\omega_\Sigma$ as well as $\omega$ as follows:
\begin{align}\label{eq:Cherncurrentomega}
c_2 \left(\omega\right) &= \frac{\lambda \left(\omega\right)}{2 \left(2 \pi\right)^2} \omega^2 + \frac{\beta_0 - 1}{\pi} \ttp^* \omega_\Sigma \wedge \left[\so\right] + \frac{\beta_\infty - 1}{\pi} \ttp^* \omega_\Sigma \wedge \left[\soo\right] \\
&= \frac{\lambda \left(\omega\right)}{2 \left(2 \pi\right)^2} \omega^2 + \frac{\beta_0 - 1}{\pi} \omega \wedge \left[\so\right] + \frac{\beta_\infty - 1}{\left(m + 1\right) \pi} \omega \wedge \left[\soo\right] \nonumber
\end{align} \par
Note that the results of Subsections \ref{subsec:Curvcurrent} and \ref{subsec:currenteq} (most importantly the equations (\ref{eq:CherncurrentomegaSigma}), (\ref{eq:Cherncurrentomega''}) and (\ref{eq:Cherncurrentomega})) apply for momentum-constructed conical \krm{s} on the minimal ruled surface whose momentum profile is at least smooth if not real analytic at both the endpoints of the momentum interval, i.e. for \conrml{} smooth conical \krm{s} in general and hence for \plyhomo{} smooth conical \krm{s} in particular.
\subsection{A Smooth Approximation Result for Momentum-Constructed Conical Higher cscK Metrics}\label{subsec:SmoothapproxConehcscK}
In Subsection \ref{subsec:SmoothapproxConehcscK} we will provide an alternate interpretation to the equation of currents (\ref{eq:Cherncurrentomega}) globally characterizing the higher scalar curvature, by following a well-known method outlined in the works of Campana-Guenancia-P\u{a}un \cite{Campana:2013:ConeSingNormCrossDiv}, Edwards \cite{Edwards:2019:ContractDivConicKRFlow}, Shen \cite{Shen:2016:SmoothApproxConeSingRicciLB}, Wang \cite{Wang:2016:SmoothApproxConeKRFlow} of approximating the momentum-constructed conical \kr{} metric by some smooth \kr{} metrics and then trying to obtain the higher scalar curvature of the conical metric as the limiting value (in the sense of currents) of the higher scalar curvatures of the approximating smooth metrics (which are going to be defined in the usual way). We will be constructing some explicit smooth approximations $\omega_\epsilon$ (also having Calabi symmetry) to the momentum-constructed conical \kr{} metric $\omega$ such that their respective smooth top Chern forms $c_2 \left(\omega_\epsilon\right)$ converge weakly in the sense of currents to the top Chern current $c_2 \left(\omega\right)$ given by the global expression (\ref{eq:Cherncurrentomega}) which we have already obtained by using Bedford-Taylor theory \cite{Bedford:1982:cpctypsh,Bedford:1976:DirichletMongeAmpere} in Subsection \ref{subsec:currenteq}. \par
Let $\omega$ be a conical \krm{} on the minimal ruled surface $X$ satisfying the Calabi ansatz (\ref{eq:ansatzconesing}) and belonging to the \krcl{} $\pcsoo$ where $m > 0$, whose momentum profile $\phi \left(\gamma\right) = f'' \left(s\right)$ is assumed to be smooth on $\cllml$ and satisfies the boundary conditions (\ref{eq:BVPConeSing}) which make it develop conical singularities of cone angles $2 \pi \bo > 0$ and $2 \pi \boo > 0$ along the divisors $\so$ and $\soo$ respectively. \par
Let $\epsilon > 0$ be fixed. Define $s_\epsilon = \ln \left(\left\lvert w \right\rvert^2 + \epsilon^2\right)$. Then since $s = 2 \ln \left\lvert w \right\rvert$ (ignoring the terms in the coordinate $z$ again due to our convention $d \ln h \left(z\right) = 0$ \cite{Pingali:2018:heK,Szekelyhidi:2014:eKintro}), we get $e^{s_\epsilon} = e^s + \epsilon^2$. From (\ref{eq:wgammasosoo}) we easily get the following:
\begin{equation}\label{eq:wsepsilonsosoo}
\begin{gathered}
w \to 0 \iff s \to -\infty \iff s_\epsilon \to 2 \ln \epsilon \\
w \to \infty \iff s \to \infty \iff s_\epsilon \to \infty
\end{gathered}
\end{equation} \par
Let $\omega_\epsilon$ be the \krm{} on $X$ defined by the following ansatz:
\begin{equation}\label{eq:ansatzepsilon}
\omega_\epsilon = \mathtt{p}^* \omega_\Sigma + \sqrt{-1} \partial \bar{\partial} f_\epsilon \left(s\right)
\end{equation}
where $f_\epsilon : \R \to \R$, $f_\epsilon \left(s\right) = f \left(s_\epsilon\right)$, $f : \R \to \R$ being the function yielding the Calabi ansatz (\ref{eq:ansatzconesing}) for the given conical metric $\omega$.
\begin{claim*}
$\omega_\epsilon$ is smooth at least on $X \smallsetminus S_\infty$ (if not on the whole of $X$), $\omega_\epsilon$ either has a conical singularity of cone angle $2\pi\boo$ along $\soo$ (same as $\omega$) or is smooth along $\soo$, and $\omega_\epsilon$ belongs to the same \krcl{} $\pcsoo$ as $\omega$.
\end{claim*}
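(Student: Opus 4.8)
The plan is to reduce every assertion to the elementary behaviour of $f_\epsilon\left(s\right) = f\left(s_\epsilon\right)$, exactly as the properties of $\omega$ were read off from $f$ in Subsection \ref{subsec:MomentConstructConeSing}. First I would record the chain-rule relations coming from $e^{s_\epsilon} = e^s + \epsilon^2$: setting $t_\epsilon\left(s\right) = \frac{d s_\epsilon}{d s} = \frac{e^s}{e^s + \epsilon^2} \in \left(0, 1\right)$ one gets $t_\epsilon' = t_\epsilon\left(1 - t_\epsilon\right)$, and hence
\[
f_\epsilon'\left(s\right) = t_\epsilon f'\left(s_\epsilon\right), \qquad f_\epsilon''\left(s\right) = t_\epsilon^2 f''\left(s_\epsilon\right) + t_\epsilon\left(1 - t_\epsilon\right) f'\left(s_\epsilon\right).
\]
Since $f'\left(s_\epsilon\right) = \tau \in \clom$ is non-negative and $f''\left(s_\epsilon\right) = \phi > 0$ for every finite $s_\epsilon$ (i.e. on $\xsoo$), this at once yields $1 + f_\epsilon'\left(s\right) \geq 1 > 0$ and $f_\epsilon''\left(s\right) \geq t_\epsilon^2 f''\left(s_\epsilon\right) > 0$, so that $\omega_\epsilon$ is a genuine \krm{} on $\xsosoo$ via the local expression (\ref{eq:omegaconesing}) (and (\ref{eq:omega1conesing})) with $f$ replaced by $f_\epsilon$.

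For smoothness on $\xsoo$ the only delicate region is a \nbd{} of $\so$; elsewhere $s_\epsilon = \ln\left(\left\lvert w \right\rvert^2 + \epsilon^2\right)$ is a smooth (indeed real-analytic) function of $\left(w, \bar{w}\right)$ bounded away from the ends of the fibres and $f$ is smooth, so $\omega_\epsilon$ is manifestly smooth there. Near $\so$ we have $s_\epsilon \to 2 \ln \epsilon$ finite by (\ref{eq:wsepsilonsosoo}), so $f'\left(s_\epsilon\right)$ and $f''\left(s_\epsilon\right)$ stay bounded, and the only potentially singular coefficient of (\ref{eq:omegaconesing}) becomes
\[
\frac{f_\epsilon''\left(s\right)}{\left\lvert w \right\rvert^2} = f''\left(s_\epsilon\right) \frac{\left\lvert w \right\rvert^2}{\left(\left\lvert w \right\rvert^2 + \epsilon^2\right)^2} + f'\left(s_\epsilon\right) \frac{\epsilon^2}{\left(\left\lvert w \right\rvert^2 + \epsilon^2\right)^2},
\]
which is smooth across $w = 0$ because $\frac{t_\epsilon^2}{\left\lvert w \right\rvert^2}$ and $\frac{t_\epsilon\left(1 - t_\epsilon\right)}{\left\lvert w \right\rvert^2}$ are smooth (the same computation, using $\frac{t_\epsilon}{\bar{w}} = \frac{w}{\left\lvert w \right\rvert^2 + \epsilon^2}$, handles the off-diagonal coefficients of (\ref{eq:omega1conesing})). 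Thus the regularization $\left\lvert w \right\rvert^2 \mapsto \left\lvert w \right\rvert^2 + \epsilon^2$ fills in the conical singularity of $\omega$ along $\so$ and $\omega_\epsilon$ extends smoothly across $\so$.

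The behaviour along $\soo$ is where the work lies, and I expect it to be the main obstacle. In the coordinate $\tilde{w} = w^{-1}$ with $\tilde{r} = \left\lvert \tilde{w} \right\rvert$ one has $s_\epsilon = s + \ln\left(1 + \epsilon^2 \tilde{r}^2\right)$ and $t_\epsilon = \left(1 + \epsilon^2 \tilde{r}^2\right)^{-1} \to 1$, so writing $\tilde{r}_\epsilon^2 = e^{-s_\epsilon} = \tilde{r}^2 \left(1 + \epsilon^2 \tilde{r}^2\right)^{-1}$ and inserting the $\soo$-expansions (\ref{eq:f2sf1sfssemihomotilde}) of $f'$ and $f''$ gives
\[
f_\epsilon''\left(s\right) = \tilde{c}_2\left(z\right) \tilde{r}^{2 \boo} \left(1 + \epsilon^2 \tilde{r}^2\right)^{-\boo - 2} + m \epsilon^2 \tilde{r}^2 \left(1 + \epsilon^2 \tilde{r}^2\right)^{-2} + \left(\text{higher order}\right).
\]
The regularization therefore introduces a genuine $\tilde{r}^2$-term with non-vanishing coefficient $m \epsilon^2$ (as $m > 0$ and $f'\left(s_\epsilon\right) \to m$), and the cone angle is dictated by the competition between $\tilde{r}^{2 \boo}$ and $\tilde{r}^2$. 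I would make this precise by computing $\lim_{\tilde{w} \to 0} \frac{f_\epsilon'''\left(s\right)}{f_\epsilon''\left(s\right)}$, equivalently by feeding the leading term back through Corollary \ref{cor:semihomoconesing}: if $\boo < 1$ the $\tilde{r}^{2 \boo}$-term dominates, the limit is $-\boo$, and $\omega_\epsilon$ retains the conical singularity of cone angle $2 \pi \boo$ along $\soo$; if $\boo \geq 1$ the $\tilde{r}^2$-term dominates, the limit is $-1$, and $\omega_\epsilon$ is smooth along $\soo$. Separating the two regimes cleanly and controlling the milder $\tilde{r}^{2 \boo}$ correction when $\boo > 1$ is non-integer is the crux of the argument.

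Finally, the \krcl{} is unchanged, and this is routine. The endpoints of the momentum map of $\omega_\epsilon$ are $\lim_{s \to -\infty} f_\epsilon'\left(s\right) = 0$ (since $t_\epsilon \to 0$ while $f'\left(s_\epsilon\right) \to f'\left(2 \ln \epsilon\right)$ is finite) and $\lim_{s \to \infty} f_\epsilon'\left(s\right) = m$ (since $t_\epsilon \to 1$ and $f'\left(s_\epsilon\right) \to m$), so the momentum interval of $\omega_\epsilon$ is again $\clom$ and the computation of Subsection \ref{subsec:MomentConstructConeSing} places $\omega_\epsilon$ in $\pcsoo$. Equivalently, $f_\epsilon\left(s\right) - f\left(s\right)$ is bounded on $X$ (the unbounded $-2 m \ln \tilde{r}$ part cancels near $\soo$ and $f\left(s\right) \to 0$ at $\so$), so $\omega_\epsilon - \omega = \ideldb\left(f_\epsilon - f\right)$ is exact and $\left[\omega_\epsilon\right] = \left[\omega\right] = \pcsoo$.
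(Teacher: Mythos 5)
Your proposal is correct and follows essentially the same route as the paper: the chain-rule formulas for $f_\epsilon'$, $f_\epsilon''$, $f_\epsilon'''$ (your $t_\epsilon$ is the paper's $\left\lvert w \right\rvert^2 / \left(\left\lvert w \right\rvert^2 + \epsilon^2\right)$), the limits $0$ and $m$ of $f_\epsilon'$ fixing the momentum interval and hence the class $\pcsoo$, and the dichotomy at $\soo$ obtained from $\lim_{\tilde{w} \to 0} f_\epsilon''' / f_\epsilon''$ via the asymptotic expansions (\ref{eq:f2sf1sfssemihomotilde}), which is precisely the paper's computation (\ref{eq:derphiepsilonm1}) giving $-\boo$ for $\boo \leq 1$ and $-1$ for $\boo \geq 1$. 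The only deviation is at $\so$, where you verify smoothness of the coefficients of (\ref{eq:omega1conesing}) directly in $\left(w, \bar{w}\right)$ (using that $s_\epsilon$ is smooth across $w = 0$ and $t_\epsilon / \bar{w} = w / \left(\left\lvert w \right\rvert^2 + \epsilon^2\right)$), while the paper instead computes $\phi_\epsilon' \left(1\right) = 1$ in (\ref{eq:derphiepsilon1}) and invokes the momentum-construction boundary dictionary --- the two arguments are equivalent, yours being the slightly more self-contained one.
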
 \par
We first verify that the function $f_\epsilon \left(s\right)$ satisfies all the required properties of the momentum construction mentioned in Subsection \ref{subsec:MomentConstructConeSing} from the following calculations:
\begin{equation}\label{eq:derfsepsilon}
\begin{gathered}
f_\epsilon' \left(s\right) = \frac{d}{d s} \left( f \left(s_\epsilon\right) \right) = \frac{\left\lvert w \right\rvert^2}{\left\lvert w \right\rvert^2 + \epsilon^2} f' \left(s_\epsilon\right) \\
f_\epsilon'' \left(s\right) = \frac{d}{d s} \left( \frac{e^s}{e^s + \epsilon^2} f' \left(s_\epsilon\right) \right) = \frac{\left\lvert w \right\rvert^4}{\left(\left\lvert w \right\rvert^2 + \epsilon^2\right)^2} f'' \left(s_\epsilon\right) + \frac{\epsilon^2 \left\lvert w \right\rvert^2}{\left(\left\lvert w \right\rvert^2 + \epsilon^2\right)^2} f' \left(s_\epsilon\right)
\end{gathered}
\end{equation}
It then clearly follows from (\ref{eq:derfsepsilon}) that $-1 < 0 \leq f_\epsilon' \left(s\right) \leq m$ with $\lim\limits_{w \to 0} f_\epsilon' \left(s\right) = 0$ and $\lim\limits_{w \to \infty} f_\epsilon' \left(s\right) = m$, and also $f_\epsilon'' \left(s\right) > 0$ for $w \neq 0$ with $\lim\limits_{w \to 0} f_\epsilon'' \left(s\right) = 0$ and $\lim\limits_{w \to \infty} f_\epsilon'' \left(s\right) = 0$. \par
Then we define the momentum profile of $\omega_\epsilon$ as $\phi_\epsilon : \cllml \to \R$, $\phi_\epsilon \left(\gamma\right) = f_\epsilon'' \left(s\right)$ where $\gamma = 1 + f_\epsilon' \left(s\right)$, and check the boundary conditions on $\phi_\epsilon'$ (to determine the behaviour of $\omega_\epsilon$ near $\so$ and $\soo$) as follows (using (\ref{eq:derfsepsilon}) and (\ref{eq:variablechange})):
\begin{align}\label{eq:der3fsepsilon}
f_\epsilon''' \left(s\right) &= \frac{d}{d s} \left( \frac{e^{2 s}}{\left(e^s + \epsilon^2\right)^2} f'' \left(s_\epsilon\right) + \frac{\epsilon^2 e^s}{\left(e^s + \epsilon^2\right)^2} f' \left(s_\epsilon\right) \right) \nonumber \\
&= \frac{\left\lvert w \right\rvert^6}{\left(\left\lvert w \right\rvert^2 + \epsilon^2\right)^3} f''' \left(s_\epsilon\right) + \frac{3 \epsilon^2 \left\lvert w \right\rvert^4}{\left(\left\lvert w \right\rvert^2 + \epsilon^2\right)^3} f'' \left(s_\epsilon\right) - \frac{\epsilon^2 \left\lvert w \right\rvert^2 \left(\left\lvert w \right\rvert^2 - \epsilon^2\right)}{\left(\left\lvert w \right\rvert^2 + \epsilon^2\right)^3} f' \left(s_\epsilon\right)
\end{align}
\begin{equation}\label{eq:derphiepsilon}
\phi_\epsilon' \left(\gamma\right) = \frac{f_\epsilon''' \left(s\right)}{f_\epsilon'' \left(s\right)} = \frac{\left\lvert w \right\rvert^4 f''' \left(s_\epsilon\right) + 3 \epsilon^2 \left\lvert w \right\rvert^2 f'' \left(s_\epsilon\right) - \epsilon^2 \left(\left\lvert w \right\rvert^2 - \epsilon^2\right) f' \left(s_\epsilon\right)}{\left(\left\lvert w \right\rvert^2 + \epsilon^2\right) \left(\left\lvert w \right\rvert^2 f'' \left(s_\epsilon\right) + \epsilon^2 f' \left(s_\epsilon\right)\right)}
\end{equation}
From (\ref{eq:der3fsepsilon}) and (\ref{eq:derphiepsilon}) we compute the boundary values of $f_\epsilon'''$ and $\phi_\epsilon'$ as follows:
\begin{equation}\label{eq:derphiepsilon1}
\begin{gathered}
\lim\limits_{w \to 0} f_\epsilon''' \left(s\right) = 0 = \lim\limits_{w \to \infty} f_\epsilon''' \left(s\right) \\
\phi_\epsilon' \left(1\right) = \lim\limits_{w \to 0} \frac{f_\epsilon''' \left(s\right)}{f_\epsilon'' \left(s\right)} = \frac{\epsilon^4 f' \left(2 \ln \epsilon\right)}{\epsilon^4 f' \left(2 \ln \epsilon\right)} = 1 \hspace{2.5pt} \left(\text{as $f' \left(2 \ln \epsilon\right) > 0$}\right)
\end{gathered}
\end{equation}
\begin{align}\label{eq:derphiepsilonm1}
\phi_\epsilon' \left(m + 1\right) = \lim\limits_{w \to \infty} \frac{f_\epsilon''' \left(s\right)}{f_\epsilon'' \left(s\right)} &= \lim\limits_{w \to \infty} \frac{\frac{f''' \left(s_\epsilon\right)}{f'' \left(s_\epsilon\right)} + 3 \frac{\epsilon^2}{\left\lvert w \right\rvert^2} - \left(1 - \frac{\epsilon^2}{\left\lvert w \right\rvert^2}\right) \frac{\epsilon^2 f' \left(s_\epsilon\right)}{\left\lvert w \right\rvert^2 f'' \left(s_\epsilon\right)}}{\left(1 + \frac{\epsilon^2}{\left\lvert w \right\rvert^2}\right) \left(1 + \frac{\epsilon^2 f' \left(s_\epsilon\right)}{\left\lvert w \right\rvert^2 f'' \left(s_\epsilon\right)}\right)} = -\boo && \left(\text{if $\boo \leq 1$}\right) \nonumber \\
&= \lim\limits_{w \to \infty} \frac{\left\lvert w \right\rvert^2 f'' \left(s_\epsilon\right) \frac{f''' \left(s_\epsilon\right)}{f'' \left(s_\epsilon\right)} + 3 \epsilon^2 f'' \left(s_\epsilon\right) - \left(1 - \frac{\epsilon^2}{\left\lvert w \right\rvert^2}\right) \epsilon^2 f' \left(s_\epsilon\right)}{\left(1 + \frac{\epsilon^2}{\left\lvert w \right\rvert^2}\right) \left(\left\lvert w \right\rvert^2 f'' \left(s_\epsilon\right) + \epsilon^2 f' \left(s_\epsilon\right)\right)} = -1 && \left(\text{if $\boo \geq 1$}\right)
\end{align}
because $\lim\limits_{w \to \infty} \left\lvert w \right\rvert^2 f'' \left(s_\epsilon\right) = \lim\limits_{\tilde{w} \to 0} \frac{f'' \left(s_\epsilon\right)}{\left\lvert \tilde{w} \right\rvert^2} = \lim\limits_{\tilde{w} \to 0} \tilde{c}_2 \left(z\right) \left\lvert \tilde{w} \right\rvert^{2\beta_\infty - 2}$ which is $\infty$, $\tilde{c}_2 \left(z\right)$ and $0$ if $\boo < 1$, $\boo = 1$ and $\boo > 1$ respectively, as can be seen from the expressions (\ref{eq:f2sf1sfssemihomotilde}), (\ref{eq:omegasemihomo}) (or even from (\ref{eq:f2sf1sfspolyhomotilde}), (\ref{eq:omegapolyhomo})) derived in Section \ref{sec:polyhomoConeSing}. \par
Comparing the boundary conditions on $\phi_\epsilon$ derived in (\ref{eq:derphiepsilon1}), (\ref{eq:derphiepsilonm1}) with the boundary conditions (\ref{eq:BVPConeSing}) and those given in (\ref{eq:ODEBVP0}) \cite{Edwards:2019:ContractDivConicKRFlow,Hashimoto:2019:cscKConeSing,Hwang:2002:MomentConstruct,Schlitzer:2023:dHYM,Szekelyhidi:2014:eKintro}, we conclude that $\omega_\epsilon$ is a \krm{} on $X$ having Calabi symmetry, which is smooth on $X \smallsetminus S_\infty$, has got a conical singularity with cone angle $2 \pi \boo$ along $\soo$ if $\boo \leq 1$ and extends smoothly across $\soo$ if $\boo \geq 1$, and which belongs to the \krcl{} $\pcsoo$. From (\ref{eq:ansatzepsilon}) we can derive the local expression for $\omega_\epsilon$ in the coordinates $\left(z, w\right)$ on $X \smallsetminus S_\infty$ similar to (\ref{eq:omegaconesing}) as follows:
\begin{equation}\label{eq:omegaepsilonconesing}
\omega_\epsilon = \left(1 + f_\epsilon' \left(s\right)\right) \mathtt{p}^* \omega_\Sigma + f_\epsilon'' \left(s\right) \sqrt{-1} \frac{d w \wedge d \bar{w}}{\left\lvert w \right\rvert^2}
\end{equation}
All of this holds true for every $\epsilon > 0$. We can clearly see that for $w \neq 0$, $s_\epsilon \to s$ and hence $f_\epsilon \left(s\right) \to f \left(s\right)$ as $\epsilon \to 0$, and further from (\ref{eq:derfsepsilon}) and (\ref{eq:der3fsepsilon}), $f_\epsilon' \left(s\right) \to f' \left(s\right)$, $f_\epsilon'' \left(s\right) \to f'' \left(s\right)$ and $f_\epsilon''' \left(s\right) \to f''' \left(s\right)$ as $\epsilon \to 0$, and so from (\ref{eq:derphiepsilon}), $\phi_\epsilon \to \phi$ and $\phi_\epsilon' \to \phi'$ pointwise on $\oplml$ as $\epsilon \to 0$. So from the expression (\ref{eq:omegaepsilonconesing}) it follows that $\omega_\epsilon \to \omega$ as $\epsilon \to 0$ in the $\cC^\infty$ sense on $\xsosoo$ and in the $\cL^1_{\operatorname{loc}}$ sense on $\xsoo$. \par
We now compute the curvature form matrix and the top Chern form of $\omega_\epsilon$ in the coordinates $\left(z,w\right)$ in a \nbd{} of $\so$ by applying the expressions (\ref{eq:omega2conesing}), (\ref{eq:Curv1conesing}), (\ref{eq:Curv2conesing}) and (\ref{eq:Chernconesing}) from Subsection \ref{subsec:MomentConstructConeSing} to the present case:
\begin{equation}\label{eq:omega2epsilonconesing}
\omega_\epsilon^2 = 2 \left(1 + f_\epsilon' \left(s\right)\right) f_\epsilon'' \left(s\right) \mathtt{p}^* \omega_\Sigma \wedge \sqrt{-1} \frac{d w \wedge d \bar{w}}{\left\lvert w \right\rvert^2}
\end{equation}
\begin{equation}\label{eq:Curv1epsilonconesing}
\sqrt{-1} \Theta \left(\omega_\epsilon\right) =
\begin{bmatrix}
- \sqrt{-1} \partial \bar{\partial} \ln \left(1 + f_\epsilon' \left(s\right)\right) - 2 \mathtt{p}^* \omega_\Sigma & 0 \\
0 & - \sqrt{-1} \partial \bar{\partial} \ln \left(\frac{f_\epsilon'' \left(s\right)}{\left\lvert w \right\rvert^2}\right)
\end{bmatrix}
\end{equation}
where using (\ref{eq:derfsepsilon}) and (\ref{eq:omegasemihomo}) (or (\ref{eq:omegapolyhomo})) we can write:
\begin{align}\label{eq:der2fsepsilonw2}
\frac{f_\epsilon'' \left(s\right)}{\left\lvert w \right\rvert^2} &= \left(\frac{\left\lvert w \right\rvert^2}{\left\lvert w \right\rvert^2 + \epsilon^2}\right) \frac{f'' \left(s_\epsilon\right)}{\left\lvert w \right\rvert^2 + \epsilon^2} + \left(\frac{\epsilon^2}{\left\lvert w \right\rvert^2 + \epsilon^2}\right) \frac{f' \left(s_\epsilon\right)}{\left\lvert w \right\rvert^2 + \epsilon^2} \nonumber \\
&= \left( \frac{\left\lvert w \right\rvert^2}{\left\lvert w \right\rvert^2 + \epsilon^2} g \left(s_\epsilon\right) + \frac{\epsilon^2}{\left\lvert w \right\rvert^2 + \epsilon^2} g_1 \left(s_\epsilon\right) \right) \left(\left\lvert w \right\rvert^2 + \epsilon^2\right)^{\bo - 1}
\end{align}
for $g \left(s\right) = c_2 \left(z\right) + c_4 \left(z\right) \left\lvert w \right\rvert^{2\beta_0} + c \left(z, \left\lvert w \right\rvert^{2\beta_0}\right) \left\lvert w \right\rvert^{2\beta_0}$ and $g_1 \left(s\right) = \frac{c_2 \left(z\right)}{\beta_0} + \frac{c_4 \left(z\right)}{2\beta_0} \left\lvert w \right\rvert^{2\beta_0} + c_0 \left(z, \left\lvert w \right\rvert^{2\beta_0}\right) \left\lvert w \right\rvert^{2\beta_0}$ (or $g \left(s\right) = \sum\limits_{k=1}^{\infty} c_{2k} \left(z\right) \left\lvert w \right\rvert^{2\left(k-1\right)\beta_0}$ and $g_1 \left(s\right) = \sum\limits_{k=1}^{\infty} \frac{c_{2k} \left(z\right)}{k\beta_0} \left\lvert w \right\rvert^{2\left(k-1\right)\beta_0}$) depending on whether the given conical metric $\omega$ is taken to be \conrml{} or \plyhomo{} for the asymptotic expressions of Section \ref{sec:polyhomoConeSing}. Then writing $q \left(s\right) = \frac{e^s}{e^s + \epsilon^2} g \left(\ln \left(e^s + \epsilon^2\right)\right) + \frac{\epsilon^2}{e^s + \epsilon^2} g_1 \left(\ln \left(e^s + \epsilon^2\right)\right)$ in (\ref{eq:der2fsepsilonw2}) we further compute the $\left(2,2\right)$-entry of (\ref{eq:Curv1epsilonconesing}) as follows:
\begin{equation}\label{eq:Curv1epsilonconesing22}
- \sqrt{-1} \partial \bar{\partial} \ln \left(\frac{f_\epsilon'' \left(s\right)}{\left\lvert w \right\rvert^2}\right) \hspace{1pt} = \hspace{1pt} \left( \left(\frac{q' \left(s\right)}{q \left(s\right)}\right)^2 - \frac{q'' \left(s\right)}{q \left(s\right)} \right) \sqrt{-1} \frac{d w \wedge d \bar{w}}{\left\lvert w \right\rvert^2} \hspace{4pt} - \hspace{4pt} \frac{q' \left(s\right)}{q \left(s\right)} \mathtt{p}^* \omega_\Sigma \hspace{4pt} + \hspace{4pt} \frac{\left(1 - \bo\right) \epsilon^2}{\left(\left\lvert w \right\rvert^2 + \epsilon^2\right)^2} \sqrt{-1} d w \wedge d \bar{w}
\end{equation}
The $\left(1,1\right)$-entry of (\ref{eq:Curv1epsilonconesing}) can be readily written in terms of the momentum profile $\phi_\epsilon$ (and the momentum variable $\gamma_\epsilon = 1 + f_\epsilon' \left(s\right)$) just like that in (\ref{eq:Curv2conesing}):
\begin{equation}\label{eq:Curv1epsilonconesing11}
-\sqrt{-1} \partial \bar{\partial} \ln \left(1 + f_\epsilon' \left(s\right)\right) - 2 \ttp^* \omega_\Sigma = \frac{\phi_\epsilon}{\gamma_\epsilon} \left( \frac{\phi_\epsilon}{\gamma_\epsilon} - \phi_\epsilon' \right) \sqrt{-1} \frac{d w \wedge d \bar{w}}{\left\lvert w \right\rvert^2} - \left(\frac{\phi_\epsilon}{\gamma_\epsilon} + 2\right) \ttp^* \omega_\Sigma
\end{equation}
Since all the terms in (\ref{eq:Curv1epsilonconesing22}) and (\ref{eq:Curv1epsilonconesing11}) are smooth $\left(1,1\right)$-forms even near $\so$, we can take the wedge product of (\ref{eq:Curv1epsilonconesing11}) with (\ref{eq:Curv1epsilonconesing22}) in the usual sense to obtain the expression for the top Chern form $c_2 \left(\omega_\epsilon\right)$ on $\xsoo$ as follows:
\begin{equation}\label{eq:Chernepsilonconesing}
c_2 \left(\omega_\epsilon\right) \hspace{1pt} = \hspace{1pt} \frac{1}{\left(2 \pi\right)^2} \mathtt{p}^* \omega_\Sigma \wedge \sqrt{-1} \frac{d w \wedge d \bar{w}}{\left\lvert w \right\rvert^2} P \left(\left\lvert w \right\rvert^2, \epsilon^2\right) \hspace{3.5pt} + \hspace{3.5pt} \frac{\bo - 1}{\left(2 \pi\right)^2} \left(\frac{\phi_\epsilon}{\gamma_\epsilon} + 2\right) \frac{\epsilon^2}{\left(\left\lvert w \right\rvert^2 + \epsilon^2\right)^2} \ttp^* \omega_\Sigma \wedge \sqrt{-1} d w \wedge d \bar{w}
\end{equation}
where $P \left(\left\lvert w \right\rvert^2, \epsilon^2\right)$ is going to be some expression in terms of $\left\lvert w \right\rvert^2, \epsilon^2$ obtained by taking the wedge products of the two terms in the right hand side of (\ref{eq:Curv1epsilonconesing11}) with the first two terms in the right hand side of (\ref{eq:Curv1epsilonconesing22}). \par
It can be verified by using (\ref{eq:derfsepsilon}), (\ref{eq:der3fsepsilon}) and (\ref{eq:derphiepsilon}) that $P \left(\left\lvert w \right\rvert^2, \epsilon^2\right) \to \frac{\phi}{\gamma^2} \left( \gamma \left(\phi + 2 \gamma\right) \phi'' + \phi' \left(\phi' \gamma - \phi\right) \right)$ as $\epsilon \to 0$ for $w \neq 0$, and hence from (\ref{eq:Chernconesing}) and (\ref{eq:lambdaconesing}) we get $\frac{1}{\left(2 \pi\right)^2} \mathtt{p}^* \omega_\Sigma \wedge \sqrt{-1} \frac{d w \wedge d \bar{w}}{\left\lvert w \right\rvert^2} P \left(\left\lvert w \right\rvert^2, \epsilon^2\right) \to \frac{\lambda \left(\omega\right)}{2 \left(2 \pi\right)^2} \omega^2$ as $\epsilon \to 0$ in the $\cC^\infty$ sense away from $\so$ and in the $\cL^1_{\operatorname{loc}}$ sense near $\so$ (i.e. in the exact same sense in which we had $\omega_\epsilon \to \omega$). The calculations required over here involve writing down the expression for $P \left(\left\lvert w \right\rvert^2, \epsilon^2\right)$ explicitly and then checking the limits of the individual terms appearing in that expression as $\epsilon \to 0$ for $w \neq 0$, but these calculations are extremely lengthy though elementary and hence the author would skip the tedious details involved in these calculations. \par
Now we come to the limit of the second term in the right hand side of (\ref{eq:Chernepsilonconesing}). We verify that $\frac{\bo - 1}{\left(2 \pi\right)^2} \left(\frac{\phi_\epsilon}{\gamma_\epsilon} + 2\right) \frac{\epsilon^2}{\left(\left\lvert w \right\rvert^2 + \epsilon^2\right)^2} \ttp^* \omega_\Sigma \wedge \sqrt{-1} d w \wedge d \bar{w} \to \frac{\beta_0 - 1}{\pi} \ttp^* \omega_\Sigma \wedge \left[\so\right]$ as $\epsilon \to 0$ weakly in the sense of currents on $\xsoo$ as follows ((\ref{eq:PLepsilon}) is essentially verifying the Poincar\'e-Lelong formula \cite{Demailly:2012:CmplxDifferGeom} specifically applicable in the present situation):
\begin{align}\label{eq:PLepsilon}
&\lim\limits_{\epsilon \to 0} \frac{\bo - 1}{\left(2 \pi\right)^2} \int\limits_{\xsoo} \varphi \left(\frac{\phi_\epsilon}{\gamma_\epsilon} + 2\right) \frac{\epsilon^2}{\left(\left\lvert w \right\rvert^2 + \epsilon^2\right)^2} \ttp^* \omega_\Sigma \wedge \sqrt{-1} d w \wedge d \bar{w} \\
&= \lim\limits_{\epsilon \to 0} \frac{\bo - 1}{\left(2 \pi\right)^2} \iint\limits_{\Sigma \times \C} \varphi \left(\frac{\phi_\epsilon}{\gamma_\epsilon} + 2\right) \frac{\epsilon^2}{\left(\left\lvert w \right\rvert^2 + \epsilon^2\right)^2} \omega_\Sigma \wedge \sqrt{-1} d w \wedge d \bar{w} \nonumber \\
&= \lim\limits_{\epsilon \to 0} \frac{\bo - 1}{\left(2 \pi\right)^2} \int\limits_0^{2\pi} \int\limits_0^\infty \frac{2r \epsilon^2}{\left(r^2 + \epsilon^2\right)^2} \left( \int\limits_\Sigma \varphi \left(\frac{\phi_\epsilon}{\gamma_\epsilon} + 2\right) \omega_\Sigma \right) dr d\theta \nonumber \\
&= \lim\limits_{\epsilon \to 0} \frac{\bo - 1}{\left(2 \pi\right)^2} \int\limits_0^{2\pi} \left[ \left( -\frac{\epsilon^2}{r^2 + \epsilon^2} \int\limits_\Sigma \varphi \left(\frac{\phi_\epsilon}{\gamma_\epsilon} + 2\right) \omega_\Sigma \right) \Biggr\rvert_0^\infty + \int\limits_0^\infty \frac{\epsilon^2}{r^2 + \epsilon^2} \frac{d}{dr} \left( \int\limits_\Sigma \varphi \left(\frac{\phi_\epsilon}{\gamma_\epsilon} + 2\right) \omega_\Sigma \right) dr \right] d\theta \nonumber \\
&= \lim\limits_{\epsilon \to 0} \frac{\bo - 1}{2 \pi} \int\limits_{\so} \varphi \left(z, 0\right) \left(\phi_\epsilon \left(1\right) + 2\right) \ttp^* \omega_\Sigma \hspace{5pt} \left(\text{applying dominated convergence theorem above}\right) \nonumber \\
&= \frac{\bo - 1}{\pi} \int\limits_{\so} \varphi \left(z, 0\right) \ttp^* \omega_\Sigma = \frac{\beta_0 - 1}{\pi} \ttp^* \omega_\Sigma \wedge \left[\so\right] \left(\varphi\right) \nonumber
\end{align}
where $\varphi : \xsoo \to \R$ is a test function compactly supported in some open tubular \nbd{} of $\so$, and so $\int\limits_\Sigma \varphi \left(\frac{\phi_\epsilon}{\gamma_\epsilon} + 2\right) \omega_\Sigma$ is also compactly supported near $w = 0$. \par
Thus finally the expression (\ref{eq:Cherncurrentzero}) giving the top Chern current of $\omega$ is obtained as the limiting value of the expression (\ref{eq:Chernepsilonconesing}) on $\xsoo$:
\begin{equation}\label{eq:Cherncurrentzeroepsilon}
\lim\limits_{\epsilon \to 0} c_2 \left(\omega_\epsilon\right) = \frac{\lambda \left(\omega\right)}{2 \left(2 \pi\right)^2} \omega^2 + \frac{\beta_0 - 1}{\pi} \ttp^* \omega_\Sigma \wedge \left[\so\right] = c_2 \left(\omega\right) \bigr\rvert_{X \smallsetminus S_\infty}
\end{equation}
where the convergence is in the $\cC^\infty$ sense on $\xsosoo$ and is in the sense of currents in a \nbd{} of $\so$. \par
To obtain the expression of the form (\ref{eq:Cherncurrentomega''}) for the top Chern current of $\omega$ on $\xsoo$ we need to justify the wedge product of $\omega$ with $\left[\so\right]$ computed in (\ref{eq:Cherncurrentomega'}) by means of this method of taking smooth approximations. We know that as $\epsilon \to 0$, in the appropriate manner discussed above, $\omega_\epsilon \to \omega$ and $\frac{\sqrt{-1}}{2 \pi} \deldb \ln \left(\left\lvert w \right\rvert^2 + \epsilon^2\right) = \frac{\sqrt{-1}}{2 \pi} \frac{\epsilon^2}{\left(\left\lvert w \right\rvert^2 + \epsilon^2\right)^2} d w \wedge d \bar{w} \to \left[\so\right]$ (by the Poincar\'e-Lelong formula \cite{Demailly:2012:CmplxDifferGeom}). So as expected we first take the wedge product of these two approximating smooth $\left(1,1\right)$-forms for $\epsilon > 0$ (using the coordinate expression (\ref{eq:omegaepsilonconesing}) for $\omega_\epsilon$), then pass the limits of this wedge product of smooth forms as $\epsilon \to 0$ and obtain the ``correct'' value of the wedge product of $\omega$ with $\left[\so\right]$ given by (\ref{eq:Cherncurrentomega'}) as the limiting value:
\begin{align}\label{eq:Cherncurrentomega'epsilon}
\omega \wedge \left[\so\right] &= \lim\limits_{\epsilon \to 0} \omega_\epsilon \wedge \frac{\sqrt{-1}}{2 \pi} \deldb \ln \left(\left\lvert w \right\rvert^2 + \epsilon^2\right) \nonumber \\
&= \lim\limits_{\epsilon \to 0} \frac{1 + f_\epsilon' \left(s\right)}{2 \pi} \frac{\epsilon^2}{\left(\left\lvert w \right\rvert^2 + \epsilon^2\right)^2} \mathtt{p}^* \omega_\Sigma \wedge \sqrt{-1} d w \wedge d \bar{w} = \mathtt{p}^* \omega_\Sigma \wedge \left[\so\right]
\end{align}
where the limit in (\ref{eq:Cherncurrentomega'epsilon}) is weakly in the sense of currents on $\xsoo$ and its verification is exactly the same as (\ref{eq:PLepsilon}). \par
Now note that the smooth approximations $\omega_\epsilon$ for the conical \kr{} metric $\omega$ were applicable only on $\xsoo$. We can take analogous smooth approximations $\tilde{\omega}_\epsilon$ on $\xso$ in the following manner: Taking the coordinates $\left(z, \tilde{w}\right)$ on $\xso$ where $\tilde{w} = w^{-1}$, we already have $s = -2 \ln \left\lvert \tilde{w} \right\rvert$ and then we define $\tilde{s}_\epsilon = - \ln \left(\left\lvert \tilde{w} \right\rvert^2 + \epsilon^2\right)$ for $\epsilon > 0$. Defining $\tilde{f}_\epsilon : \R \to \R$, $\tilde{f}_\epsilon \left(s\right) = f \left(\tilde{s}_\epsilon\right)$, we define the \krm{} $\tilde{\omega}_\epsilon$ by the Calabi ansatz of the form (\ref{eq:ansatzepsilon}) with the smooth strictly convex function yielding the ansatz for $\tilde{\omega}_\epsilon$ being $\tilde{f}_\epsilon \left(s\right)$. With this setup we can carry out all the calculations of Subsection \ref{subsec:SmoothapproxConehcscK} which will give us that $\tilde{\omega}_\epsilon$ is a smooth \krm{} on $\xso$ which possibly has a conical singularity at $\so$ (again depending on the value of $\bo$) and which once again belongs to the same \krcl{} $\pcsoo$. Then with $\tilde{\omega}_\epsilon$ approximating $\omega$ on $\xso$, we can compute the smooth top Chern form $c_2 \left(\tilde{\omega}_\epsilon\right)$ in the local coordinates $\left(z, \tilde{w}\right)$ and check that the weak limit of $c_2 \left(\tilde{\omega}_\epsilon\right)$ is precisely the restriction of the expression (\ref{eq:CherncurrentomegaSigma}) to $\xso$, in the same way as that done above for $\omega_\epsilon$ on $\xsoo$. The expression of the form (\ref{eq:Cherncurrentomega''}) considered locally on $\xso$ then comes by interpreting the wedge product of $\omega$ with $\left[\soo\right]$ seen in (\ref{eq:Cherncurrentomega'}) as the limiting value (in the weak sense) of the wedge product of $\tilde{\omega}_\epsilon$ with the smooth $\left(1,1\right)$-form $\frac{\sqrt{-1}}{2 \pi} \deldb \ln \left(\left\lvert \tilde{w} \right\rvert^2 + \epsilon^2\right)$ approximating the current $\left[\soo\right]$, in the same way as (\ref{eq:Cherncurrentomega'epsilon}). \par
In this way Subsection \ref{subsec:SmoothapproxConehcscK} gives a method different from that of Subsection \ref{subsec:currenteq} of interpreting the wedge products of the current terms appearing in the expression (\ref{eq:Cherncurrentomega}) for the top Chern current of momentum-constructed conical \krm{s} on the surface $X$.
\subsection{Cohomological Invariance of the Top Chern Current}\label{subsec:CohomolInvcurrent}
In Subsection \ref{subsec:CohomolInvcurrent} we will show that (the average of) the higher scalar curvature defined by the equation of currents (\ref{eq:Cherncurrentomega}) is indeed a \cohomll{} invariant, i.e. it depends only on the top Chern class of the surface $X$ and the \krcl{} of the conical metric $\omega$ (same as the way it is seen in (\ref{eq:avghighScal}) in the case of smooth \krm{s} on general compact complex manifolds). \par
Let $\omega$ be the momentum-constructed conical \krm{} on $X = \prj$, given by the Calabi ansatz (\ref{eq:ansatzconesing}), belonging to the \krcl{} $\pcsoo$ where $m > 0$ and having cone angles $2 \pi \beta_0 > 0$ and $2 \pi \beta_\infty > 0$ along the divisors $S_0$ and $S_\infty$ respectively. Let the momentum profile $\phi \left(\gamma\right) = f'' \left(s\right)$ be at least smooth on the whole of $\cllml$, which will imply that the higher scalar curvature $\lambda \left(\omega\right)$ given by (\ref{eq:lambdaconesing}) is bounded on $X \smallsetminus \left(S_0 \cup S_\infty\right)$. The top Chern current $c_2 \left(\omega\right)$ is then given by the expression (\ref{eq:Cherncurrentomega}) on $X$ with its restriction to $X \smallsetminus \left(S_0 \cup S_\infty\right)$ being given by the expression (\ref{eq:Chernconesing}) in terms of $\phi \left(\gamma\right)$. \par
Let $\eta$ be a momentum-constructed smooth \krm{} on $X$, given by the following ansatz and belonging to some \krcl{} $2 \pi \left(\sfc + k S_\infty\right)$ where $k > 0$:
\begin{equation}\label{eq:ansatzeta}
\eta = \ttp^* \omega_\Sigma + \sqrt{-1} \partial \bar{\partial} \rho \left(s\right)
\end{equation}
where $\rho \left(s\right)$ satisfies all the properties of the Calabi ansatz (\ref{eq:ansatzconesing}) mentioned in Subsection \ref{subsec:MomentConstructConeSing} (or those of the ansatz (\ref{eq:ansatz0}) in the smooth case discussed briefly in Subsection \ref{subsec:Background}), $x = 1 + \rho' \left(s\right) \in \left[1, k + 1\right]$ is the momentum variable and $\psi \left(x\right) = \rho'' \left(s\right)$ is the momentum profile which is assumed to be smooth on $\left[1, k + 1\right]$. The top Chern form $c_2 \left(\eta\right)$ and the higher scalar curvature $\lambda \left(\eta\right)$ will be given by the expressions (\ref{eq:Chernconesing}) and (\ref{eq:lambdaconesing}) respectively (with the substitution of the respective variables $x, \psi$ in place of $\gamma, \phi$). \par
We will compute the integrals of the respective top Chern form and top Chern current $c_2 \left(\eta\right)$ and $c_2 \left(\omega\right)$ over $X$ and verify that their values are the same, thereby proving that both the top-dimensional forms (currents) belong to the same top-dimensional de Rham \cohoml{} class which has to be the top Chern class $c_2 \left(X\right)$ as the metric $\eta$ is smooth everywhere on $X$. We will be using the fact that the fibre bundle $\prj$ locally looks like $\Sigma \times \left( \C \cup \left\lbrace \infty \right\rbrace \right)$, along with the boundary conditions given in (\ref{eq:ODEBVP0}) for the metric $\eta$ (required for it to extend smoothly across $\so$ and $\soo$) and the boundary conditions (\ref{eq:BVPConeSing}) for the metric $\omega$ (required for it to develop conical singularities along $\so$ and $\soo$), the intersection formulae (\ref{eq:IntersectForm}) and (\ref{eq:Sigma}) and all the relations between (the boundary values of) the variables and functions $w$, $s$, $\rho$, $f$, $\psi \left(x\right)$, $\phi \left(\gamma\right)$ seen in (\ref{eq:variablechange}) and (\ref{eq:wgammasosoo}), like $\sqrt{-1} d w \wedge d \bar{w} = 2 r d r d \theta$ where $r = \left\lvert w \right\rvert$ and $\frac{2}{r} d r = d s = \frac{1}{\psi} d x \left(= \frac{1}{\phi} d \gamma\right)$. Then it can be easily checked that:
\begin{equation}\label{eq:intChern}
\int\limits_X c_2 \left(\eta\right) = -4 = \int\limits_X c_2 \left(\omega\right)
\end{equation}
in the following way:
\begin{align}\label{eq:intCherncalceta}
\int\limits_X c_2 \left(\eta\right) &= \frac{1}{\left(2 \pi\right)^2} \iint\limits_{\prj} \ttp^* \omega_\Sigma \wedge \sqrt{-1} \frac{d w \wedge d \bar{w}}{\left\lvert w \right\rvert^2} \frac{\psi}{x^2} \left( x \left(\psi + 2 x\right) \psi'' + \psi' \left(\psi' x - \psi\right) \right) \nonumber \\
&= \frac{1}{\left(2 \pi\right)^2} \int\limits_\Sigma \omega_\Sigma \int\limits_{\C \smallsetminus \left\lbrace 0 \right\rbrace} \sqrt{-1} \frac{d w \wedge d \bar{w}}{\left\lvert w \right\rvert^2} \frac{\psi}{x^2} \left( x \left(\psi + 2 x\right) \psi'' + \psi' \left(\psi' x - \psi\right) \right) \nonumber \\
&= \frac{1}{2 \pi} \int\limits_0^{2 \pi} \int\limits_0^{\infty} \frac{\psi}{x^2} \left( x \left(\psi + 2 x\right) \psi'' + \psi' \left(\psi' x - \psi\right) \right) \frac{2}{r} d r d \theta \nonumber \\
&= \int\limits_1^{k + 1} \frac{1}{x^2} \left( x \left(\psi + 2 x\right) \psi'' + \psi' \left(\psi' x - \psi\right) \right) d x = \int\limits_1^{k + 1} \frac{d}{d x} \left( \left(\frac{\psi}{x} + 2\right) \psi' \right) d x = -4
\end{align}
\begin{align}\label{eq:intCherncalcomega}
\int\limits_X c_2 \left(\omega\right) &= \frac{1}{2 \left(2 \pi\right)^2} \int\limits_X \lambda \left(\omega\right) \omega^2 + \frac{\beta_0 - 1}{\pi} \ttp^* \omega_\Sigma \wedge \left[\so\right] \left(1\right) + \frac{\beta_\infty - 1}{\pi} \ttp^* \omega_\Sigma \wedge \left[\soo\right] \left(1\right) \nonumber \\
&= \int\limits_1^{m + 1} \frac{d}{d \gamma} \left( \left(\frac{\phi}{\gamma} + 2\right) \phi' \right) d \gamma + \frac{\beta_0 - 1}{\pi} \int\limits_{\so} \ttp^* \omega_\Sigma + \frac{\beta_\infty - 1}{\pi} \int\limits_{\soo} \ttp^* \omega_\Sigma \nonumber \\
&= -2\left(\beta_0+\beta_\infty\right) + 2\left(\beta_0-1\right) + 2\left(\beta_\infty-1\right) = -4
\end{align}
where the top Chern current $c_2 \left(\omega\right)$ is given by (\ref{eq:CherncurrentomegaSigma}), $1$ denotes the constant function $1$ on $X$, and after substituting the expressions for $\lambda \left(\omega\right)$ and $\omega^2$ in terms of $\phi \left(\gamma\right)$ given in (\ref{eq:lambdaconesing}) and (\ref{eq:omega2conesing}) respectively, the integral in (\ref{eq:intCherncalcomega}) has the same computation as the one in (\ref{eq:intCherncalceta}) but with different boundary conditions on the momentum variable and momentum profile. \par
Thus $\left[ c_2 \left(\omega\right) \right] = \left[ c_2 \left(\eta\right) \right] = c_2 \left(X\right) \in H^{\left(2,2\right)} \left(X, \mathbb{R}\right) = H^4 \left(X, \mathbb{R}\right)$, i.e. the top Chern current $c_2 \left(\omega\right)$ of the conical \krm{} $\omega$ indeed lies in the top Chern class $c_2 \left(X\right)$ of the surface $X$. This makes the equations of currents (\ref{eq:CherncurrentomegaSigma}) and (\ref{eq:Cherncurrentomega''}) for the top Chern current of the conical \krm{} $\omega$ legitimate from the point of view of the de Rham \cohoml{} of the surface $X$. \par
One more thing can be observed from the calculation in (\ref{eq:intCherncalcomega}). The top Chern form $c_2 \left(\omega\right) \bigr\rvert_{X \smallsetminus \left(S_0 \cup S_\infty\right)}$ given by (\ref{eq:topChernXminus}) (which is smooth on $\xsosoo$ and locally \intble{} on $X$) will not be in general a \cohomll{} representative of $c_2 \left(X\right)$, as $\int\limits_X c_2 \left(\omega\right) \bigr\rvert_{X \smallsetminus \left(S_0 \cup S_\infty\right)} = \frac{1}{2 \left(2 \pi\right)^2} \int\limits_X \lambda \left(\omega\right) \omega^2 = -2\left(\beta_0+\beta_\infty\right) \neq -4$. This is the reason why the average of the higher scalar curvature of the conical metric taken only on $\xsosoo$ is not a \cohomll{} \invt{}, but if the average is taken on the whole of $X$ by considering the global expression of currents (\ref{eq:CherncurrentomegaSigma}) then it indeed turns out to be equal to the \cohomll{} value given by (\ref{eq:avghighScal}). And further the two average higher scalar curvatures are going to be related by an equation of the form (\ref{eq:lambda0lambda1}) as discussed in Subsection \ref{subsec:CanonKhlrConeSing} (and this equation relating the two in our case can also be clearly discerned from the calculation (\ref{eq:intCherncalcomega})). We will be discussing more about this topic in Lemma \ref{lem:volxsosooavghighScalconic} and \textit{Remark} \ref{rem:lambda0lambda1omega}.
\numberwithin{equation}{section}
\numberwithin{figure}{section}
\numberwithin{table}{section}
\numberwithin{lemma}{section}
\numberwithin{proposition}{section}
\numberwithin{result}{section}
\numberwithin{theorem}{section}
\numberwithin{corollary}{section}
\numberwithin{conjecture}{section}
\numberwithin{remark}{section}
\numberwithin{note}{section}
\numberwithin{motivation}{section}
\numberwithin{question}{section}
\numberwithin{answer}{section}
\numberwithin{case}{section}
\numberwithin{claim}{section}
\numberwithin{definition}{section}
\numberwithin{example}{section}
\numberwithin{hypothesis}{section}
\numberwithin{statement}{section}
\numberwithin{ansatz}{section}
\section{The Top \texorpdfstring{$\log$}{log} Bando-Futaki Invariant as an Obstruction to the Existence of Momentum-Constructed Conical Higher cscK Metrics}\label{sec:logFutlogMab}
Section \ref{sec:logFutlogMab} introduces the \textit{top $\log$ Bando-Futaki invariant} which is supposed to be the appropriate Futaki-type invariant \cite{Bando:2006:HarmonObstruct,Futaki:1983:ObstructKE} giving the algebro-geometric obstruction to the existence of (momentum-constructed) conical higher cscK metrics in a given \krcl{}. It is the top-dimensional analogue of the $\log$ Futaki invariant which is meant for conical cscK and conical \kr{}-Einstein metrics, and the $\log$ Futaki invariant naturally leads to the concepts of $\log$ Mabuchi functional and $\log$ $K$-stability which are needed in the study of conical cscK and conical \kr{}-Einstein metrics. These notions can be found in the works of Li \cite{Li:2012:eKEngyFunctProjBund}, Zheng \cite{Zheng:2015:UniqueConeSing}, Keller-Zheng \cite{Keller:2018:cscKConeSing}, Li \cite{Li:2018:conicMab}, Hashimoto \cite{Hashimoto:2019:cscKConeSing}, Aoi-Hashimoto-Zheng \cite{Aoi:2025:cscKConeSing} for conical cscK metrics (and in the first one also for conical \ext{} \krm{s}) and in the works of Donaldson \cite{Donaldson:2012:ConeSingDiv}, Li \cite{Li:2015:logKStab} and some others for conical \kr{}-Einstein metrics. In this paper by studying the top $\log$ Bando-Futaki invariant only in the special case of the Calabi ansatz on the pseudo-Hirzebruch surface, we are just taking the first step towards exploring the higher dimensional analogues of these concepts i.e. their correct analogues applicable for conical higher cscK metrics. \par
Hashimoto \cite{Hashimoto:2019:cscKConeSing} and Donaldson \cite{Donaldson:2012:ConeSingDiv}, Li \cite{Li:2015:logKStab} defined the $\log$ Futaki invariant for momentum-constructed conical cscK metrics and for some special classes of conical \kr{}-Einstein metrics respectively on (certain) Fano manifolds, by considering the expression for the classical Futaki invariant for smooth cscK and \kr{}-Einstein metrics respectively \cite{Calabi:1985:eK2,Futaki:1983:ObstructKE} and adding an appropriate correction factor which will take care of the conical singularities present in the metrics under consideration. This correction factor is needed to cancel out the distributional term that will come out after the ``evaluation'' of the classical Futaki invariant with respect to a conically singular (cscK or \kr{}-Einstein) metric whose curvature (scalar or Ricci respectively) will be given by the current equations (\ref{eq:ScalconecscK}) and (\ref{eq:RicconeKE}) respectively seen in Subsection \ref{subsec:CanonKhlrConeSing}. \par
By mimicking these same ideas and arguments in our conical higher cscK case, we consider the equation of currents (\ref{eq:Cherncurrentomega}) globally determining the higher scalar curvature of a momentum-constructed conical higher cscK metric, and since the top $\log$ Bando-Futaki invariant is going to be the conical analogue of the classical top Bando-Futaki invariant \cite{Bando:2006:HarmonObstruct,Futaki:1983:ObstructKE}, we also have to use the expression obtained in Bando \cite{Bando:2006:HarmonObstruct} for the classical top Bando-Futaki invariant (which the author had studied in his first work \cite{Sompurkar:2023:heKsmooth}; Section 4). Then we define the \textit{top $\log$ Bando-Futaki invariant} for momentum-constructed conical higher cscK metrics on the pseudo-Hirzebruch surface $X = \prj$ as follows: \par
Let $\eta$ be a smooth \krm{} on $X$ (not necessarily having Calabi symmetry) belonging to the \krcl{} $\pcsoo$ with $m > 0$, $Y$ be a \textit{gradient real holomorphic vector field} on $X$ which is \textit{parallel} to both the divisors $S_0$ and $S_\infty$, i.e. $Y \rvert_{\so}$ and $Y \rvert_{\soo}$ are real \hol{} vector fields on $\so$ and $\soo$ respectively, and $\ttf : X \to \R$ be the \textit{real holomorphy potential} of $Y$ with respect to $\eta$, i.e. $Y = \nabla_\eta^{\left(1,0\right)} \ttf = \left(\bar{\partial} \ttf\right)^{\sharp_\eta}$, with $\nabla_\eta^{\left(1,0\right)}$ denoting the $\left(1,0\right)$-gradient computed with respect to $\eta$ and $\sharp_\eta$ the musical isomorphism induced by $\eta$. Then the \textit{top $\log$ Bando-Futaki invariant} on $X$ for the holomorphic vector field $Y$ and the \krcl{} $\pcsoo$ with cone angles $2\pi\bo > 0$ and $2\pi\boo > 0$ along $\so$ and $\soo$ respectively is defined as follows:
\begin{multline}\label{eq:logBFdef}
\mathcal{F}_{\log; \bo, \boo} \left(Y, \pcsoo\right) = -\frac{1}{2 \left(2 \pi\right)^2} \int\limits_X \ttf \left(\lambda \left(\eta\right) - \lambda_0 \left(\eta\right)\right) \eta^2 \\
+ \frac{\beta_0 - 1}{\pi} \left( \int\limits_{S_0} \ttf \eta - \frac{\int\limits_{S_0} \eta}{\int\limits_X \eta^2} \int\limits_X \ttf \eta^2 \right) + \frac{\beta_\infty - 1}{\left(m + 1\right) \pi} \left( \int\limits_{S_\infty} \ttf \eta - \frac{\int\limits_{S_\infty} \eta}{\int\limits_X \eta^2} \int\limits_X \ttf \eta^2 \right)
\end{multline}
where $\lambda \left(\eta\right)$ is the higher scalar curvature of $\eta$ on $X$ given by the equation (\ref{eq:defhcscKheK}) and $\lambda_0 \left(\eta\right) = \frac{\int\limits_{X} \lambda \left(\eta\right) \eta^2}{\int\limits_{X} \eta^2}$ is the average higher scalar curvature of $\eta$ on $X$ satisfying the equation (\ref{eq:avghighScal}). Then letting $\mathcal{F} \left(Y, \pcsoo\right)$ denote the classical \textit{top Bando-Futaki invariant} on $X$, with $Y$ and $\pcsoo$ same as above, we note the following about the expression (\ref{eq:logBFdef}):
\begin{multline}\label{eq:logBFcorrectfact}
\mathcal{F}_{\log; \bo, \boo} \left(Y, \pcsoo\right) = \mathcal{F} \left(Y, \pcsoo\right) \\
+ \text{Appropriate Correction Factors Corresponding to $S_0$ and $S_\infty$}
\end{multline}
where the expression $-\frac{1}{2 \left(2 \pi\right)^2} \int\limits_X \ttf \left(\lambda \left(\eta\right) - \lambda_0 \left(\eta\right)\right) \eta^2$ for the invariant $\mathcal{F} \left(Y, \pcsoo\right)$ is proven in \cite{Bando:2006:HarmonObstruct} and was used by the author in the smooth analogue of the problem of this paper in \cite{Sompurkar:2023:heKsmooth}. \par
If we ``na\"ively'' try to evaluate (like the integrals in (\ref{eq:wedgenaivint}) discussed in Subsection \ref{subsec:CanonKhlrConeSing}) the top $\log$ Bando-Futaki invariant given by (\ref{eq:logBFdef}) with respect to our momentum-constructed conical higher cscK metric $\omega$ (instead of the smooth \krm{} $\eta$) whose higher scalar curvature as a current is given by (\ref{eq:Cherncurrentomega}) on $X$, then the distributional terms thrown out by the currents of integration along $\so$ and $\soo$ will ``cancel out'' with the correction factors corresponding to $\so$ and $\soo$ which are present in the expression (\ref{eq:logBFdef}), thereby leaving behind only the expression of the classical top Bando-Futaki invariant on $\xsosoo$ which then has to be zero as the higher scalar curvature of the conical higher cscK metric $\omega$ is constant on $\xsosoo$. Over here the fact, that the top Chern current $c_2 \left(\omega\right)$ belongs to the correct \cohoml{} class which is top Chern class $c_2 \left(X\right)$ making the average higher scalar curvature of $\omega$ a \cohomll{} invariant (as we have shown in Subsection \ref{subsec:CohomolInvcurrent}), is playing a major role. \par
The rigorous proofs and the detailed computations regarding the top $\log$ Bando-Futaki invariant specifically applicable for the momentum construction on the minimal ruled surface $X$ will be shown here in Section \ref{sec:logFutlogMab}, but we note with caution that doing these same things in more general settings (other than the Calabi ansatz) seems to be very difficult as the rigorous justifications required in this special case simply boil down to computing some one variable integrals (similar to those in Subsection \ref{subsec:CohomolInvcurrent}) because of the many symmetries imposed by the Calabi ansatz procedure. After verifying (up to Conjecture \ref{conj:logBFconiccohomllinvar}) that the expression (\ref{eq:logBFdef}) is ``well-defined'' and provides a ``genuine'' Futaki-type invariant for our purpose, we will be using the smooth higher \ext{} \krm{} (which is not higher cscK) constructed in our previous work \cite{Sompurkar:2023:heKsmooth} for the `$\eta$' in (\ref{eq:logBFdef}) and the fact that momentum-constructed conical higher cscK metrics with some values of the cone angles $2 \pi \bo$ and $2 \pi \boo$ exist in each \krcl{} of the form $\pcsoo$ (given to us by Corollary \ref{cor:mainconesing}), to set the invariant in (\ref{eq:logBFdef}) to zero and obtain the (conjectured) linear relationship between the values of $\bo$ and $\boo$ that will come out as a result. \par
\begin{remark}\label{rem:EulerVF}
It is well known that the Lie algebra of all real holomorphic vector fields on the minimal ruled surface $X = \prj$ is precisely given by $\mathfrak{h} \left(X\right) = \R \left\lbrace w \frac{\partial}{\partial w} \right\rbrace$ where the generator $w \frac{\partial}{\partial w}$ of the one-dimensional real Lie algebra $\mathfrak{h} \left(X\right)$ is called as the \textit{Euler vector field} on $X$ (see for example Maruyama \cite{Maruyama:1971:autgrpruledsurf} and T{\o}nnesen-Friedman \cite{Tonnesen:1998:eKminruledsurf} for the proof of this fact and \cite{Fujiki:1992:eKruledmani,Hwang:2002:MomentConstruct,Szekelyhidi:2014:eKintro,Szekelyhidi:2006:eKKStab} for its applications). The Euler vector field $w \frac{\partial}{\partial w} = - \tilde{w} \frac{\partial}{\partial \tilde{w}}$ (where $\tilde{w} = w^{-1}$) is trivially parallel to both the divisors $S_0$ and $S_\infty$ of $X$ and on top of it, is parallel to its fibres $\sfc$ (which are bi\hol{} to $\cp^1$) as well. Also note that $S_0$ and $S_\infty$ are both bi\hol{} to the base \rms{} $\Sigma$ and $\fkh \left(\Sigma\right) = \stzero$ as $\Sigma$ is a compact \rms{} of genus $2$ \cite{Barth:2004:CmpctCmplxSurf,Szekelyhidi:2014:eKintro}, and so any globally defined \hol{} vector field on $X$ which is parallel to both $\so$ and $\soo$ anyways has to be trivially parallel to them. \\
$w \frac{\partial}{\partial w}$ is easily seen to be a \textit{gradient \hol{} vector field} on $X$, i.e. it has got a \textit{holomorphy potential} $\ttf$ defined on $X$ for any given smooth \krm{} $\eta$ on $X$. If in particular this $\eta$ is taken to be a momentum-constructed smooth \krm{} on $X$ with $\rho \left(s\right)$ being the strictly convex smooth function defining its Calabi ansatz of the form (\ref{eq:ansatzeta}), then $\ttf : X \to \R$ computed with respect to $\eta$ has a very nice and simple expression in terms of $\rho \left(s\right)$ as can be seen below (using the coordinate expression of the form (\ref{eq:omegaconesing}) for $\eta$):
\begin{gather}
\left( w \frac{\partial}{\partial w} \right)^{\flat_\eta} \left(Z\right) = q \left(w \frac{\partial}{\partial w}, Z\right) = \left(1 + \rho' \left(s\right)\right) \mathtt{p}^* g_\Sigma \left(w \frac{\partial}{\partial w}, Z\right) + 2 \rho'' \left(s\right) \frac{\left\lvert d w \right\rvert^2}{\left\lvert w \right\rvert^2} \left(w \frac{\partial}{\partial w}, Z\right) = \rho'' \left(s\right) \frac{d \bar{w}}{\bar{w}} \left(Z\right) \nonumber \\
\impl \nabla_\eta^{\left(1,0\right)} \left(1 + \rho' \left(s\right)\right) = \left( \db \left(1 + \rho' \left(s\right)\right) \right)^{\sharp_\eta} = \left( \rho'' \left(s\right) \frac{d \bar{w}}{\bar{w}} \right)^{\sharp_\eta} = w \frac{\partial}{\partial w} \impl \ttf = 1 + \rho' \left(s\right) \label{eq:holpotEulerVF}
\end{gather}
where $\flat_\eta$ and $\sharp_\eta$ are the musical isomorphisms induced by $\eta$ acting on the $\left(1,0\right)$-vector fields and the $\left(0,1\right)$-forms on $X$ respectively, $q$ and $g_\Sigma$ are the \hmnm{s} on $X$ and $\Sigma$ associated with the \krf{s} $\eta$ and $\omega_\Sigma$ respectively and $Z$ is any real smooth tangent vector field on $X$. \\
Expression (\ref{eq:logBFdef}) is clearly seen to be linear in $Y \in \mathfrak{h} \left(X\right)$ as the holomorphy potential $\ttf$ itself changes linearly with $Y$ for a fixed smooth \krm{} $\eta$ on $X$. So we can simply take the `$Y$' in (\ref{eq:logBFdef}) to be the Euler vector field $w \frac{\partial}{\partial w}$ which will greatly simplify the efforts needed to prove the results of Section \ref{sec:logFutlogMab} for the top $\log$ Bando-Futaki invariant.
\end{remark} \par
\begin{theorem}[The Top $\log$ Bando-Futaki Invariant as a Function of the K\"ahler Class]\label{thm:logBFwelldefined}
The object $\mathcal{F}_{\log; \bo, \boo} \left(w \frac{\partial}{\partial w}, \cdot\right)$ defined by the expression (\ref{eq:logBFdef}) is a function of the \krcl{} $\pcsoo$ alone and does not depend on the choice of the smooth \krm{} $\eta$ belonging to $\pcsoo$ and also does not depend on the choice of the real holomorphy potential $\ttf$ of $w \frac{\partial}{\partial w}$ with respect to $\eta$.
\end{theorem}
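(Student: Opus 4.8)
The plan is to fix, once and for all, $Y = w\frac{\partial}{\partial w}$ (legitimate by Remark \ref{rem:EulerVF}, since $\fkh\left(X\right) = \R\left\lbrace w\frac{\partial}{\partial w}\right\rbrace$ and the expression (\ref{eq:logBFdef}) is linear in $Y$), and then to split (\ref{eq:logBFdef}) into the classical top Bando--Futaki term and the two boundary correction factors, analyzing the dependence on $\ttf$ and on $\eta$ separately. The decisive structural fact, also recorded in Remark \ref{rem:EulerVF}, is that $Y = w\frac{\partial}{\partial w} = -\tilde{w}\frac{\partial}{\partial\tilde{w}}$ vanishes identically along both $\so = \left\lbrace w = 0\right\rbrace$ and $\soo = \left\lbrace\tilde{w} = 0\right\rbrace$. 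The dependence on the real holomorphy potential is disposed of first: since $\ttf$ is determined only up to an additive real constant $c$, I replace $\ttf$ by $\ttf + c$ and check invariance term by term. In the classical term this is immediate because $\int_X\left(\lambda\left(\eta\right) - \lambda_0\left(\eta\right)\right)\eta^2 = 0$ by the definition of $\lambda_0\left(\eta\right)$; in each correction factor the subtracted normalization term is built precisely so that $c\int_{\so}\eta$ is cancelled by $c\,\frac{\int_{\so}\eta}{\int_X\eta^2}\int_X\eta^2$ (and likewise for $\soo$), leaving the factors unchanged.

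The substantive point is independence of the metric $\eta$ within $\pcsoo$. For the classical piece $-\frac{1}{2\left(2\pi\right)^2}\int_X\ttf\left(\lambda\left(\eta\right) - \lambda_0\left(\eta\right)\right)\eta^2$ this is exactly the statement that the classical top Bando--Futaki invariant is a function of the \krcl{} alone, proven in Bando \cite{Bando:2006:HarmonObstruct} and used in \cite{Sompurkar:2023:heKsmooth}, which I would invoke directly. For the correction factors I would exploit the vanishing of $Y$ on the divisors: from the defining relation $\iota_Y\eta = \sqrt{-1}\db\ttf$ and $Y\bigr\rvert_{\so} = 0$ one gets $d\ttf\bigr\rvert_{\so} = 0$, so the real function $\ttf$ is constant, say $\ttf\bigr\rvert_{\so} = a_0$, on the connected curve $\so$, and similarly $\ttf\bigr\rvert_{\soo} = a_\infty$. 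Writing $\overline{\ttf} = \frac{\int_X\ttf\eta^2}{\int_X\eta^2}$ for the average of $\ttf$ over $X$, and using that $\int_{\so}\eta = \pcsoo\cdot\so = 2\pi$ and $\int_{\soo}\eta = \pcsoo\cdot\soo = 2\pi\left(m+1\right)$ are intersection numbers computed from (\ref{eq:IntersectForm}), each correction factor collapses to $\left(\int_{\so}\eta\right)\left(a_0 - \overline{\ttf}\right)$ and $\left(\int_{\soo}\eta\right)\left(a_\infty - \overline{\ttf}\right)$ respectively. It therefore remains only to show that $a_0 - \overline{\ttf}$ and $a_\infty - \overline{\ttf}$ are independent of $\eta$.

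For this I would connect two metrics $\eta_0, \eta_1 \in \pcsoo$ by the segment $\eta_t = \left(1 - t\right)\eta_0 + t\eta_1 = \eta_0 + t\ideldb\varphi$, which stays \kr{} as a convex combination, choose potentials $\ttf_t$ solving $\sqrt{-1}\db\ttf_t = \iota_Y\eta_t$ with a smoothly varying normalization, and differentiate. Differentiating the potential relation and using $\db Y = 0$ gives $\dot{\ttf} = \operatorname{Re}\left(Y\left(\varphi\right)\right)$ up to an additive real constant; this constant is the common value of $\frac{d}{dt}a_0 = \frac{d}{dt}a_\infty$ (because $\operatorname{Re}\left(Y\left(\varphi\right)\right)$ vanishes on $\so$ and $\soo$ where $Y$ does) and it also appears in $\frac{d}{dt}\overline{\ttf}$, so it cancels. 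What survives is $\frac{d}{dt}\left(a_0 - \overline{\ttf}\right) = -\frac{1}{\int_X\eta^2}\left(\int_X\dot{\ttf}\,\eta^2 + 2\int_X\ttf\,\eta\wedge\ideldb\varphi\right)$ (with $\int_X\eta^2$ constant by topology), and the key claim is that the bracket vanishes. I would prove this by two integration-by-parts steps with no boundary contribution ($X$ is closed), one for $\int_X\dot{\ttf}\,\eta^2$ via the Cartan formula together with $\iota_Y\eta = \sqrt{-1}\db\ttf$, and one for $\int_X\ttf\,\eta\wedge\ideldb\varphi$ directly, recasting the bracket as $\sqrt{-1}\int_X\left(\partial\varphi\wedge\db\ttf - \partial\ttf\wedge\db\varphi\right)\wedge\eta$. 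The integrand here is a purely imaginary top-degree form, whereas the bracket is manifestly real; hence it must be zero, giving $\frac{d}{dt}\left(a_0 - \overline{\ttf}\right) = 0$ and likewise for $a_\infty$.

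The main obstacle I anticipate is exactly this final vanishing: bringing the two integration-by-parts identities into the common form $\sqrt{-1}\int_X\left(\partial\varphi\wedge\db\ttf - \partial\ttf\wedge\db\varphi\right)\wedge\eta$ (tracking carefully the $\sqrt{-1}$ in $\iota_Y\eta = \sqrt{-1}\db\ttf$ and the reality of $\dot{\ttf} = \operatorname{Re}\left(Y\left(\varphi\right)\right)$), and then recognizing the resulting integrand as purely imaginary so that reality forces the integral to zero. The remaining technical points are comparatively routine: the existence and smooth $t$-dependence of the potentials $\ttf_t$, which rests on $Y = w\frac{\partial}{\partial w}$ being a gradient holomorphic field for every \krm{} (Remark \ref{rem:EulerVF}), and the elementary observation that $Y$ vanishing along $\so$ and $\soo$ forces $\ttf$ to be constant there, which is what makes the two correction factors reduce to the scalar quantities $a_0 - \overline{\ttf}$ and $a_\infty - \overline{\ttf}$ in the first place.
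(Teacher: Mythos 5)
Your overall strategy coincides with the paper's: the constant ambiguity in $\ttf$ is disposed of exactly as you do, the classical term is delegated to Bando \cite{Bando:2006:HarmonObstruct}, and the correction factors are handled by the Sz\'ekelyhidi-style path $\eta_t = \eta + t \ideldb \varphi$ with a linearized holomorphy potential, integration by parts, and the observation that the volume terms $\int_X \eta^2$, $\int_{\so} \eta$, $\int_{\soo} \eta$ are intersection numbers. Where you genuinely diverge is in the packaging of the divisor terms: the paper differentiates $\int_{\so} \ttf_t \eta_t$ directly and cancels the two resulting terms by an intrinsic integration by parts on the curve $\so$ (its computations (\ref{eq:dert0intftetat2}) and (\ref{eq:dert0intftetat})), whereas you first use $Y\bigr\rvert_{\so} = 0 \Rightarrow \db\ttf\bigr\rvert_{\so} = 0 \Rightarrow \ttf\bigr\rvert_{\so} \equiv a_0$ to collapse each correction factor to $\left(\int_{\so}\eta\right)\left(a_0 - \overline{\ttf}\right)$, reducing everything to the invariance of the single global average $\overline{\ttf}$. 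That reduction is correct and is a tidy simplification the paper's proof does not state (though it implicitly uses $\ttf\bigr\rvert_{\so} = 1$, $\ttf\bigr\rvert_{\soo} = m+1$ later, in (\ref{eq:intttfsosoo})).

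The step that is wrong as stated is $\dot{\ttf} = \operatorname{Re}\left(Y\left(\varphi\right)\right)$ together with the ``purely imaginary versus manifestly real'' vanishing mechanism built on it. Differentiating $\sqrt{-1}\, \db \ttf_t = \iota_Y \eta_t$ and using $\db Y = 0$ gives $\db \dot{\ttf} = \db \left(Y\left(\varphi\right)\right)$, hence $\dot{\ttf} = Y\left(\varphi\right) = w \frac{\partial \varphi}{\partial w}$ up to an additive constant --- this is the paper's equation (\ref{eq:dert0ft}) --- and $Y\left(\varphi\right)$ is complex-valued for a general real $\varphi$. Its real part is \emph{not} a solution of the linearized potential equation, since $\db\, \overline{Y\left(\varphi\right)} = \overline{\partial \left(Y\left(\varphi\right)\right)} \neq 0$ in general; relatedly, a genuinely real potential for $\eta_t$ need only exist when $\operatorname{Im} Y$ is Killing for $\eta_t$ (a looseness the paper inherits from Sz\'ekelyhidi's Theorem 4.21, where potentials are complex-valued). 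With the correct $\dot{\ttf} = Y\left(\varphi\right)$ your bracket $\int_X \dot{\ttf}\, \eta^2 + 2 \int_X \ttf\, \eta \wedge \ideldb \varphi$ is no longer manifestly real, so the reality argument collapses. Fortunately your recast identity survives and makes the reality trick unnecessary: using $\iota_Y \eta = \sqrt{-1}\, \db \ttf$ one gets the bracket equal to $2\sqrt{-1} \int_X \left( \partial \varphi \wedge \db \ttf - \partial \ttf \wedge \db \varphi \right) \wedge \eta$, and two applications of Stokes (with $d\eta = 0$ and type considerations killing the $\left(1,3\right)$ pieces) give
\begin{equation*}
\int_X \partial \varphi \wedge \db \ttf \wedge \eta \hspace{2pt} = \hspace{2pt} - \int_X \varphi\, \partial \db \ttf \wedge \eta \hspace{2pt} = \hspace{2pt} - \int_X \ttf\, \partial \db \varphi \wedge \eta \hspace{2pt} = \hspace{2pt} \int_X \partial \ttf \wedge \db \varphi \wedge \eta \hspace{1pt},
\end{equation*}
so the two terms cancel for arbitrary, even complex-valued, $\ttf$ --- this is precisely the adjointness cancellation the paper performs in (\ref{eq:dert0intftetat2}). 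Finally note that adding any complex constant to $\ttf$ leaves $a_0 - \overline{\ttf}$ unchanged, so comparing with the (real) potential at $t = 1$ costs nothing. With these repairs your argument closes and is a valid variant of the paper's proof.
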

\begin{proof}
Since any two real holomorphy potentials of $w \frac{\partial}{\partial w}$ with respect to the same smooth \krm{} $\eta$ differ by a constant \cite{Szekelyhidi:2014:eKintro}, the expression (\ref{eq:logBFdef}) is very clearly seen to be independent of the choice of the real holomorphy potential $\ttf$. \\
As was noted in the expressions (\ref{eq:logBFdef}) and (\ref{eq:logBFcorrectfact}), $\mathcal{F} \left(w \frac{\partial}{\partial w}, \eta\right) = -\frac{1}{2 \left(2 \pi\right)^2} \int\limits_X \ttf \left(\lambda \left(\eta\right) - \lambda_0 \left(\eta\right)\right) \eta^2$ is the classical top Bando-Futaki invariant on $X$, and the proof of the fact that it is an invariant of the \krcl{} (i.e. it does not depend on the choice of the smooth \krm{} in the \krcl{}) is given in Bando \cite{Bando:2006:HarmonObstruct}; Theorem 1 in the most general setting of any compact \kr{} $n$-manifold. So it remains for us to check that the correction factors in (\ref{eq:logBFdef}) accounting for the conical singularities at the two divisors of $X$ do not depend on the choice of $\eta \in \pcsoo$. The proof of this thing uses the same arguments as those used in the proof of the invariance of the classical Futaki invariant by Futaki \cite{Futaki:1983:ObstructKE}; Section 1 and Calabi \cite{Calabi:1985:eK2}; Theorem 4, Section 4, and an exposition of this can be found in Sz\'ekelyhidi \cite{Szekelyhidi:2014:eKintro}; Theorem 4.21 (again applicable for general compact \kr{} $n$-manifolds). \\
Let $\eta, \eta' \in \pcsoo$ be any two smooth \krm{s} on $X$, then there exists a smooth function $\varphi : X \to \R$ such that $\eta' = \eta + \ideldb \varphi$ \cite{Szekelyhidi:2014:eKintro}. Define $\eta_t = \eta + t \ideldb \varphi$ for $t \in \left[0, 1\right]$, then $\eta_t \in \pcsoo$ \cite{Szekelyhidi:2014:eKintro}. Just like in Sz\'ekelyhidi \cite{Szekelyhidi:2014:eKintro}; Theorem 4.21, our task here is to check that $\frac{d}{d t} \bigr\rvert_{t = 0}$, acted separately on both the correction factor terms of the expression (\ref{eq:logBFdef}) evaluated with respect to $\eta_t$, turns out to be zero. Following \cite{Szekelyhidi:2014:eKintro} we have these expressions for the concerned terms in (\ref{eq:logBFdef}):
\begin{equation}\label{eq:dert0etat2}
\frac{d}{d t} \biggr\rvert_{t = 0} \left(\eta_t\right) = \ideldb \varphi \hspace{1pt}, \hspace{4pt} \frac{d}{d t} \biggr\rvert_{t = 0} \left(\eta_t^2\right) = 2 \ideldb \varphi \wedge \eta = \left(\Delta_\eta \varphi\right) \eta^2
\end{equation}
where $\Delta_\eta = - \bar{\partial}^{*_\eta} \bar{\partial}$ is the $\bar{\partial}$-Laplacian operator on $X$ induced by $\eta$ and $\bar{\partial}^{*_\eta}$ is the formal adjoint of $\bar{\partial}$. Since $\nabla_\eta^{\left(1,0\right)} \ttf = \left(\bar{\partial} \ttf\right)^{\sharp_\eta} = w \frac{\partial}{\partial w}$, it can be checked that $\nabla_{\eta_t}^{\left(1,0\right)} \left(\ttf + t w \frac{\partial \varphi}{\partial w}\right) = \left(\bar{\partial} \left(\ttf + t w \frac{\partial \varphi}{\partial w}\right)\right)^{\sharp_{\eta_t}} = w \frac{\partial}{\partial w}$, i.e. $\ttf_t = \ttf + t w \frac{\partial \varphi}{\partial w}$ is the real holomorphy potential of $w \frac{\partial}{\partial w}$ with respect to $\eta_t$ \cite{Szekelyhidi:2014:eKintro}, from which follows:
\begin{equation}\label{eq:dert0ft}
\frac{d}{d t} \biggr\rvert_{t = 0} \left(\ttf_t\right) = w \frac{\partial \varphi}{\partial w} = \nabla_\eta^{\left(1,0\right)} \ttf \left(\varphi\right)
\end{equation}
We can now compute the variations of the integrals appearing in (\ref{eq:logBFdef}) using the equations (\ref{eq:dert0etat2}), (\ref{eq:dert0ft}):
\begin{equation}\label{eq:dert0intftetat2}
\frac{d}{d t} \biggr\rvert_{t = 0} \left( \int\limits_X \ttf_t \eta_t^2 \right) = \int\limits_X \left( \nabla_\eta^{\left(1,0\right)} \ttf \left(\varphi\right) \eta^2 + \ttf \left(\Delta_\eta \varphi\right) \eta^2 \right) = \int\limits_X \left(\bar{\partial} \ttf\right)^{\sharp_\eta} \left(\varphi\right) \eta^2 - \int\limits_X \ttf \left(\bar{\partial}^{*_\eta} \bar{\partial} \varphi\right) \eta^2 = 0
\end{equation}
where in (\ref{eq:dert0intftetat2}), we can write down the explicit expressions for the entities $\left(\bar{\partial} \ttf\right)^{\sharp_\eta} \left(\varphi\right)$ and $\bar{\partial}^{*_\eta} \bar{\partial} \varphi$ in terms of the underlying \hmnm{} $q$ of the \krf{} $\eta$ and the local \hol{} coordinates $\left(z, w\right)$ on $X$ and then use integration by parts to obtain the answer as zero, just as done in Sz\'ekelyhidi \cite{Szekelyhidi:2014:eKintro}; Theorem 4.21. We show this same computation applicable for the integral over $\so$ present in (\ref{eq:logBFdef}):
\begin{equation}\label{eq:dert0intftetat}
\frac{d}{d t} \biggr\rvert_{t = 0} \left( \int\limits_{\so} \ttf_t \eta_t \right) = \int\limits_{\so} \left(\bar{\partial} \ttf\right)^{\sharp_\eta} \left(\varphi\right) \eta - \int\limits_{\so} \ttf \left(\bar{\partial}^{*_\eta} \bar{\partial} \varphi\right) \eta = \int\limits_{\so} \frac{\del \ttf}{\del \bar{z}} \frac{\del \varphi}{\del z} \sqrt{-1} d z \wedge d \bar{z} + \int\limits_{\so} \ttf \frac{\del^2 \varphi}{\del z \del \bar{z}} \sqrt{-1} d z \wedge d \bar{z} = 0
\end{equation}
since $\ideldb \varphi = \left(\Delta_\eta \varphi\right) \eta$ holds true on $\so$ from the same relation as in (\ref{eq:dert0etat2}). The variation of the integral over $\soo$ present in (\ref{eq:logBFdef}) is shown to be zero by the same computation as (\ref{eq:dert0intftetat}). \\
As far as the volume terms appearing in (\ref{eq:logBFdef}) are concerned, they are very well known to depend only on the \krcl{} of the smooth \krm{} \cite{Szekelyhidi:2014:eKintro}:
\begin{equation}\label{eq:volxsosookrcl}
\begin{gathered}
\int\limits_X \eta_t^2 = \Vol \left(X, \eta_t\right) = \left[\eta_t\right] \smile \left[\eta_t\right] = \left[\eta_t\right] \cdot \left[\eta_t\right] = \left(\pcsoo\right)^2 = \left(2 \pi\right)^2 m \left(m + 2\right) \\
\int\limits_{\so} \eta_t = \Vol \left(\so, \eta_t\right) = \left[\eta_t\right] \smile \so = \left[\eta_t\right] \cdot \so = \pcsoo \cdot \so = 2 \pi \\
\int\limits_{\soo} \eta_t = \Vol \left(\soo, \eta_t\right) = \left[\eta_t\right] \smile \soo = \left[\eta_t\right] \cdot \soo = \pcsoo \cdot \soo = 2 \pi \left(m + 1\right)
\end{gathered}
\end{equation}
where $\smile$ denotes the cup product, $\cdot$ denotes the intersection product, $S_0$, $S_\infty$ and $\mathsf{C}$ denote the Poincar\'e duals of these complex curves on the complex surface $X$ respectively which will be elements of the de Rham \cohoml{} space $H^{\left(1,1\right)} \left(X, \mathbb{R}\right)$ \cite{Barth:2004:CmpctCmplxSurf,Szekelyhidi:2014:eKintro} and we can use the intersection formulae (\ref{eq:IntersectForm}) to compute the volumes explicitly.
\end{proof} \par
\begin{motivation}
Theorem \ref{thm:logBFwelldefined} proves the invariance of the top $\log$ Bando-Futaki invariant only with respect to smooth \krm{s} coming from the fixed \krcl{} $\pcsoo$ on $X$. But the top $\log$ Bando-Futaki invariant is a concept meant for conical higher cscK metrics, and so we are supposed to check that it remains invariant even with respect to conical \krm{s} in $\pcsoo$. But for this, we first need to be sure that we are ``allowed to evaluate'' $\mathcal{F}_{\log; \bo, \boo} \left(w \frac{\partial}{\partial w}, \cdot\right)$ with respect to a conical \krm{} on $X$, because it is not readily clear why the integrals present in the expression (\ref{eq:logBFdef}) will make sense if we substitute a conical \krm{} $\omega$ in place of the smooth \krm{} $\eta$ over there. Doing this for general conical \krm{s} is out of hand, but as we are in a very nice situation of the Calabi ansatz on the minimal ruled surface $X$, we can consider only momentum-constructed conical \krm{s} $\omega$ belonging to the \krcl{} $\pcsoo$ and prove the following expected results for them:
\end{motivation} \par
\begin{lemma}\label{lem:volxsosooavghighScalconic}
Let $\omega$ be a momentum-constructed conical \krm{} on $X$ with cone angles $2 \pi \bo$ and $2 \pi \boo$ along $\so$ and $\soo$ respectively, given by the ansatz (\ref{eq:ansatzconesing}) and belonging to the \krcl{} $\pcsoo$. Then the volumes of $X$, $\so$ and $\soo$, computed with respect to the respective volume forms induced by $\omega$ on these \krmf{s}, are well-defined and are invariants of the \krcl{} $\pcsoo$ (just like in the case of a smooth \krm{}). Further the \textit{average higher scalar curvature} of $\omega$ on the whole of $X$, which is defined as $\lambda_0 \left(\omega\right) = 2 \left(2 \pi\right)^2 \frac{\int\limits_{X} c_2 \left(\omega\right)}{\int\limits_{X} \omega^2}$ where the top Chern current $c_2 \left(\omega\right)$ is given by the expression of currents (\ref{eq:Cherncurrentomega}) on $X$, is a \cohomll{} invariant of the \krcl{} $\pcsoo$ and satisfies the \cohomll{} equation (\ref{eq:avghighScal}) made applicable for $X$.
\end{lemma}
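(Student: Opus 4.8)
The plan is to prove both assertions by reducing each integral over $X$ to an explicit one-variable integral via the momentum construction, exactly as in Subsection \ref{subsec:CohomolInvcurrent}, and then to read off invariance from the observation that the boundary values of the momentum variable $\gamma$ are governed by the \krcl{} (through $m$) and not by the cone angles $\bo, \boo$. The second assertion will then follow almost immediately from the first together with the Chern-number computation already carried out in Subsection \ref{subsec:CohomolInvcurrent}.

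First I would treat well-definedness and \krcl{}-invariance of the volumes. Since $\omega$ is a \krc{} lying in the class $\pcsoo$ whose only singularities are the mild conical ones along $\so$ and $\soo$, its volume form $\omega^2$ has coefficients of order $\left\lvert w \right\rvert^{2 \bo - 2}$ near $\so$ and $\left\lvert \tilde{w} \right\rvert^{2 \boo - 2}$ near $\soo$ by the expansions (\ref{eq:omegapolyhomo}), (\ref{eq:omegasemihomo}); these are locally integrable for every $\bo, \boo > 0$, so $\int\limits_X \omega^2 = \int\limits_{\xsosoo} \omega^2$ is finite. To evaluate it I would insert the coordinate expression (\ref{eq:omega2conesing}) and perform the same change of variables as in (\ref{eq:intCherncalceta}), using $\sqrt{-1} \frac{d w \wedge d \bar{w}}{\left\lvert w \right\rvert^2} = d s \wedge d \theta$, $d \gamma = f'' \left(s\right) d s$ and the boundary values (\ref{eq:wgammasosoo}); this collapses the computation, up to the standard $\left(2 \pi\right)^2$ factors, to $\int\limits_1^{m + 1} \gamma \, d \gamma = \frac{m \left(m + 2\right)}{2}$, yielding $\int\limits_X \omega^2 = \left(2 \pi\right)^2 m \left(m + 2\right)$ in agreement with (\ref{eq:volxsosookrcl}). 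The crucial point is that the endpoints $\gamma = 1$ and $\gamma = m + 1$ arise solely from the \krcl{} condition $0 \leq f' \left(s\right) \leq m$, and are therefore independent of $\bo, \boo$. For the divisors I would note that the tangential part of $\omega$ extends smoothly up to $\so$ (the conical singularity lives purely in the fibre direction, the tangential coefficient being $O \left(1\right)$ by item (\ref{itm:gij}) of Definition \ref{def:coneKr3}, verified in Corollary \ref{cor:semihomoconesing}); hence $\omega \bigr\rvert_{\so}$ is a smooth \krf{} on $\so \cong \Sigma$ equal to $\omega_\Sigma$ (as $\gamma \to 1$ at $\so$), so $\int\limits_{\so} \omega = \int\limits_\Sigma \omega_\Sigma = 2 \pi = \pcsoo \cdot \so$, and likewise $\int\limits_{\soo} \omega = 2 \pi \left(m + 1\right) = \pcsoo \cdot \soo$, matching the intersection formulae (\ref{eq:IntersectForm}) and again independent of the cone angles.

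With the volumes fixed, the second assertion follows quickly. By the computation (\ref{eq:intCherncalcomega}) of Subsection \ref{subsec:CohomolInvcurrent}, the top Chern current (\ref{eq:Cherncurrentomega}) integrates to $\int\limits_X c_2 \left(\omega\right) = -4 = c_2 \left(X\right)$ as recorded in (\ref{eq:intChern}), a topological quantity independent both of the choice of conical metric and of the cone angles. Substituting this and $\int\limits_X \omega^2 = \left(2 \pi\right)^2 m \left(m + 2\right)$ into the definition $\lambda_0 \left(\omega\right) = 2 \left(2 \pi\right)^2 \frac{\int\limits_X c_2 \left(\omega\right)}{\int\limits_X \omega^2}$ gives $\lambda_0 \left(\omega\right) = \frac{-8}{m \left(m + 2\right)}$, which is precisely the value prescribed by the \cohomll{} formula (\ref{eq:avghighScal}) with $n = 2$ and $M = X$, namely $\frac{2! \left(2 \pi\right)^2 c_2 \left(X\right)}{\left[\omega\right]^2}$. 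Hence $\lambda_0 \left(\omega\right)$ depends only on $c_2 \left(X\right)$ and the \krcl{} $\pcsoo$, establishing its \cohomll{} invariance.

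The only real obstacle lies in the well-definedness step: one must be sure that evaluating $\int\limits_X \omega^2$ as an honest $\cL^1$ integral over $\xsosoo$ returns the \cohomll{} self-intersection $\left(\pcsoo\right)^2$ with no extra mass concentrated on the divisors. I expect this to be handled cleanly by the momentum-construction reduction above, since the integrand becomes a total derivative in $\gamma$ whose boundary contributions are pinned down by the class; the same remark covers the restriction integrals, where the essential input is the boundedness of the tangential part of $\omega$ along each divisor. Notably, no delicate Bedford--Taylor argument is required here, because these are integrals of $\omega$ and $\omega^2$ themselves rather than of wedge products against currents of integration.
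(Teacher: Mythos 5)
Your proposal is correct, and for the total-space volume and the average higher scalar curvature it coincides with the paper's proof: the paper likewise reduces $\int\limits_X \omega^2$ via (\ref{eq:omega2conesing}) to $2 \left(2 \pi\right)^2 \int\limits_1^{m+1} \gamma \, d\gamma = \left(2 \pi\right)^2 m \left(m + 2\right)$, and then obtains $\lambda_0 \left(\omega\right) = -\frac{8}{m \left(m + 2\right)}$ from the value $\int\limits_X c_2 \left(\omega\right) = -4$ already computed in (\ref{eq:intCherncalcomega}), matching (\ref{eq:avghighScal}). Where you genuinely diverge is at the divisor volumes: the paper \emph{defines} $\int\limits_{\so} \omega$ as the Bedford--Taylor pairing $\omega \wedge \left[\so\right] \left(1\right)$ and evaluates it through (\ref{eq:currentomegazeroBTprod}) and (\ref{eq:Cherncurrentomega'epsilon}), whereas you restrict the tangential part of $\omega$ directly to $\so$ and $\soo$, using the $O \left(1\right)$ bound on $g_{i \bar{\jmath}}$ from Definition \ref{def:coneKr3} as verified in Corollary \ref{cor:semihomoconesing}. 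Your route is more elementary and is sound, but it needs one sentence you currently elide: since $\omega$ is defined only off the divisors, ``$\omega \bigr\rvert_{\so}$'' must be taken to mean the continuous extension of the tangential coefficients to $\left\lbrace w = 0 \right\rbrace$, and in the expression (\ref{eq:omega1conesing}) this requires not only $f' \left(s\right) \to 0$ (your ``$\gamma \to 1$'') but also $f'' \left(s\right) \to 0$ to kill the extra tangential term $\sqrt{-1} \frac{f'' \left(s\right)}{h \left(z\right)^2} \left\lvert \frac{\partial h}{\partial z} \right\rvert^2 d z \wedge d \bar{z}$; both follow from (\ref{eq:f1sfssemihomo}) and (\ref{eq:omegasemihomo}). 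Your closing remark that ``no delicate Bedford--Taylor argument is required'' is therefore true for your definition of the divisor integrals, but note that the paper's choice is not an accident: the quantities $\int\limits_{\so} \ttf \omega$, $\int\limits_{\soo} \ttf \omega$ reappear in the top $\log$ Bando--Futaki invariant (\ref{eq:logBFdef}) and in Theorem \ref{thm:logBFconicevalue} precisely as pairings against $\left[\so\right]$, $\left[\soo\right]$, so the consistency of your restriction definition with the current-theoretic one --- which is exactly the content of (\ref{eq:currentomegazeroBTprod}) and (\ref{eq:currentomegainfinityBTprod}) --- is what makes the lemma usable downstream; what your approach buys is a self-contained verification of the volume values that does not invoke the current machinery, at the cost of having to record that compatibility separately.
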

\begin{proof}
The volumes of $X$, $\so$ and $\soo$ with respect to the volume forms induced by $\omega$ on them are computed as follows:
\begin{equation}\label{eq:volxomega}
\Vol \left(X, \omega\right) = \int\limits_X \omega^2 = 2 \iint\limits_{\prj} \gamma \phi \ttp^* \omega_\Sigma \wedge \sqrt{-1} \frac{d w \wedge d \bar{w}}{\left\lvert w \right\rvert^2} = 2 \left(2 \pi\right)^2 \int\limits_1^{m + 1} \gamma d \gamma = \left(2 \pi\right)^2 m \left(m + 2\right)
\end{equation}
where we used the coordinate expression (\ref{eq:omega2conesing}) for $\omega^2$ and then evaluated the integral just like in (\ref{eq:intCherncalceta}) and (\ref{eq:intCherncalcomega}).
\begin{equation}\label{eq:volsoomega}
\Vol \left(\so, \omega\right) = \int\limits_{\so} \omega = \omega \wedge \left[\so\right] \left(1\right) = \mathtt{p}^* \omega_\Sigma \wedge \left[\so\right] \left(1\right) = \int\limits_{\so} \mathtt{p}^* \omega_\Sigma = \int\limits_{\Sigma} \omega_\Sigma = 2 \pi
\end{equation}
where we used the equations (\ref{eq:currentomegazeroBTprod}) and (\ref{eq:Cherncurrentomega'epsilon}) justifying the wedge product above. Similarly using (\ref{eq:currentomegainfinityBTprod}) instead of (\ref{eq:currentomegazeroBTprod}) we get:
\begin{equation}\label{eq:volsooomega}
\Vol \left(\soo, \omega\right) = \int\limits_{\soo} \omega = \left(m + 1\right) \int\limits_{\soo} \mathtt{p}^* \omega_\Sigma = \left(m + 1\right) \int\limits_{\Sigma} \omega_\Sigma = 2 \pi \left(m + 1\right)
\end{equation}
Comparing the values obtained for the volumes of $X$, $\so$ and $\soo$ in (\ref{eq:volxomega}), (\ref{eq:volsoomega}) and (\ref{eq:volsooomega}) respectively with those obtained in (\ref{eq:volxsosookrcl}) where the underlying \krm{} was smooth, we see that the volumes computed even with respect to the conical \krm{} $\omega$ turn out to be invariants of the \krcl{}. \\
With $\Vol \left(X, \omega\right)$ being clear from (\ref{eq:volxomega}), we can define and compute the \textit{average higher scalar curvature} of $\omega$ on $X$ as follows:
\begin{equation}\label{eq:avghighScalomega}
\lambda_0 \left(\omega\right) = 2 \left(2 \pi\right)^2 \frac{\int\limits_{X} c_2 \left(\omega\right)}{\int\limits_{X} \omega^2} = -\frac{8}{m \left(m + 2\right)}
\end{equation}
where the top Chern current $c_2 \left(\omega\right)$ is given by (\ref{eq:Cherncurrentomega}) and its integral over $X$ is computed in (\ref{eq:intCherncalcomega}). To check the \cohomll{} invariance of $\lambda_0 \left(\omega\right)$ calculated above in (\ref{eq:avghighScalomega}), let $\eta$ be a smooth \krm{} (momentum-constructed or otherwise) coming from the same \krcl{} $\pcsoo$, then the average higher scalar curvature of $\eta$, which is given by $\lambda_0 \left(\eta\right) = \frac{\int\limits_{X} \lambda \left(\eta\right) \eta^2}{\int\limits_{X} \eta^2}$, will satisfy the equation (\ref{eq:avghighScal}) applied to the surface $X$, as $\eta$ is smooth on $X$:
\begin{equation}\label{eq:avghighScaleta}
\lambda_0 \left(\eta\right) = \frac{2 \left(2 \pi\right)^2 c_2 \left(X\right)}{\left[ \eta \right]^2} = -\frac{8}{m \left(m + 2\right)}
\end{equation}
where the value of $\left[ \eta \right]^2 = \Vol \left(X, \eta\right)$ is given in (\ref{eq:volxsosookrcl}), and the top Chern class $c_2 \left(X\right)$ is numerically represented by $-4$ in the top-dimensional real \cohoml{} space $H^4 \left(X, \mathbb{R}\right)$ as can be observed from the values of the integrals computed in (\ref{eq:intCherncalceta}) and (\ref{eq:intCherncalcomega}). Thus from (\ref{eq:avghighScalomega}) and (\ref{eq:avghighScaleta}) we see that $\lambda_0 \left(\omega\right)$ is a \cohomll{} invariant of the \krcl{} $\pcsoo$ and satisfies the equation (\ref{eq:avghighScal}) on $X$.
\end{proof} \par
\begin{remark}\label{rem:lambda0lambda1omega}
The result obtained in Subsection \ref{subsec:CohomolInvcurrent} directly gave the second assertion of Lemma \ref{lem:volxsosooavghighScalconic}. But note that for the (momentum-constructed) conical \krm{} $\omega$, the higher scalar curvature $\lambda \left(\omega\right)$ as a smooth function is defined only on $\xsosoo$ though the expression (\ref{eq:Cherncurrentomega}) gives it as a current on the whole of $X$. If in Lemma \ref{lem:volxsosooavghighScalconic} we had taken the average of $\lambda \left(\omega\right)$ only on $\xsosoo$ instead of taking the average on the whole of $X$ as in (\ref{eq:avghighScalomega}), then the \textit{average higher scalar curvature} of $\omega$ on $\xsosoo$, which is given simply by $\lambda_1 \left(\omega\right) = \frac{\int\limits_{\xsosoo} \lambda \left(\omega\right) \omega^2}{\int\limits_{\xsosoo} \omega^2}$, will not be a de Rham \cohomll{} invariant, as the top Chern form $c_2 \left(\omega\right) \bigr\rvert_{X \smallsetminus \left(S_0 \cup S_\infty\right)}$ restricted to the non-compact $\xsosoo$, which is precisely given by the equation (\ref{eq:topChernXminus}), is not a \cohomll{} representative of the top Chern class $c_2 \left(X\right)$ (as we had remarked in Subsection \ref{subsec:CohomolInvcurrent}).
\begin{align}\label{eq:avghighScal1omega}
\lambda_1 \left(\omega\right) &= \frac{2 \int\limits_{\xsosoo} \mathtt{p}^* \omega_\Sigma \wedge \sqrt{-1} \frac{d w \wedge d \bar{w}}{\left\lvert w \right\rvert^2} \frac{\phi}{\gamma^2} \left( \gamma \left(\phi + 2 \gamma\right) \phi'' + \phi' \left(\phi' \gamma - \phi\right) \right)}{\left(2 \pi\right)^2 m \left(m + 2\right)} \nonumber \\
&= \frac{2 \int\limits_1^{m + 1} \frac{d}{d \gamma} \left( \left(\frac{\phi}{\gamma} + 2\right) \phi' \right) d \gamma}{m \left(m + 2\right)} = -\frac{4 \left(\beta_0+\beta_\infty\right)}{m \left(m + 2\right)}
\end{align}
where $\int\limits_{\xsosoo} \omega^2 = \Vol \left(\xsosoo, \omega\right) = \left(2 \pi\right)^2 m \left(m + 2\right)$ from (\ref{eq:volxomega}), the expression for $\lambda \left(\omega\right) \omega^2$ in terms of the momentum profile $\phi \left(\gamma\right)$ is given by (\ref{eq:Chernconesing}), and then the integral in (\ref{eq:avghighScal1omega}) is evaluated just by following the computation (\ref{eq:intCherncalcomega}). We can now directly see the equation (\ref{eq:lambda0lambda1}) which is supposed to relate the two quantities $\lambda_0 \left(\omega\right)$ and $\lambda_1 \left(\omega\right)$ in the special case of the momentum construction over here (by comparing the values of the respective averages obtained in (\ref{eq:avghighScalomega}) and (\ref{eq:avghighScal1omega}) and by retrieving the values of the volume terms from (\ref{eq:volxomega}), (\ref{eq:volsoomega}) and (\ref{eq:volsooomega})):
\begin{equation}\label{eq:lambda0lambda1omega}
\lambda_0 \left(\omega\right) = \lambda_1 \left(\omega\right) + 8 \pi \left(\bo - 1\right) \frac{\int\limits_{S_0} \omega}{\int\limits_X \omega^2} + \frac{8 \pi \left(\boo - 1\right)}{m + 1} \frac{\int\limits_{S_\infty} \omega}{\int\limits_X \omega^2}
\end{equation}
\end{remark} \par
\begin{theorem}[The Top $\log$ Bando-Futaki Invariant Evaluated for a Momentum-Constructed Conical K\"ahler Metric]\label{thm:logBFconicevalue}
Let $\omega$ be a momentum-constructed conical \krm{} on $X$ like in Lemma \ref{lem:volxsosooavghighScalconic}. Then $\mathcal{F}_{\log; \bo, \boo} \left(w \frac{\partial}{\partial w}, \omega\right)$ makes sense, i.e. all the integrals in the expression (\ref{eq:logBFdef}) evaluated with respect to $\omega$ are well-defined and finite. Further, the evaluation of (\ref{eq:logBFdef}) for $\omega$ is the following:
\begin{equation}\label{eq:logBFconicevalue}
\mathcal{F}_{\log; \bo, \boo} \left(w \frac{\partial}{\partial w}, \omega\right) = -\frac{1}{2 \left(2 \pi\right)^2} \int\limits_{\xsosoo} \tth \left(\lambda \left(\omega\right) - \lambda_1 \left(\omega\right)\right) \omega^2
\end{equation}
where $\tth : \xsosoo \to \R$ is the real \holy{} potential of $w \frac{\partial}{\partial w}$ computed with respect to $\omega$ by using (\ref{eq:holpotEulerVF}), $\lambda \left(\omega\right)$ as a smooth function on $\xsosoo$ is given by (\ref{eq:lambdaconesing}) and $\lambda_1 \left(\omega\right)$ is given by (\ref{eq:avghighScal1omega}). In particular if $\omega$ is the momentum-constructed conical higher cscK metric on $X$ yielded by Theorem \ref{thm:mainconesing} and Corollary \ref{cor:mainconesing}, then $\mathcal{F}_{\log; \bo, \boo} \left(w \frac{\partial}{\partial w}, \omega\right) = 0$.
\end{theorem}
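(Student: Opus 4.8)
The plan is to evaluate the defining expression (\ref{eq:logBFdef}) at the conical metric $\omega$ (with $Y = w\frac{\partial}{\partial w}$ and $\ttf$ replaced by its $\omega$-potential $\tth$), reading the higher-scalar-curvature term $\frac{\lambda(\omega)}{2(2\pi)^2}\omega^2$ as the top Chern \emph{current} $c_2(\omega)$ furnished by (\ref{eq:Cherncurrentomega}). Two preliminary observations make this legitimate. First, applying the computation (\ref{eq:holpotEulerVF}) on $\xsosoo$, where $\omega$ is smooth, shows that the potential is $\tth = 1 + f'(s) = \gamma$; by (\ref{eq:wgammasosoo}) this extends continuously to all of $X$ with $\tth\rvert_{\so} = 1$ and $\tth\rvert_{\soo} = m+1$, so $\tth$ is bounded. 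Second, since $\phi \in \mathcal{C}^\infty\cllml$, the function $\lambda(\omega)$ of (\ref{eq:lambdaconesing}) is bounded on $\xsosoo$.

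For the well-definedness claim I would argue as follows. The form $\omega^2$ is locally integrable on $X$ and $\so\cup\soo$ is null, so $\int_X\tth\,\omega^2$ and $\int_{\xsosoo}\tth(\lambda(\omega) - \lambda_1(\omega))\omega^2$ are finite; the volumes $\int_X\omega^2$, $\int_{\so}\omega$, $\int_{\soo}\omega$ are finite and already evaluated in Lemma \ref{lem:volxsosooavghighScalconic}. The remaining quantities are pairings of the order-zero current $c_2(\omega)$ — an $L^1$ piece together with the measures $\ttp^*\omega_\Sigma\wedge[\so]$ and $\ttp^*\omega_\Sigma\wedge[\soo]$ arising from the Bedford--Taylor reductions (\ref{eq:currentomegazeroBTprod}) and (\ref{eq:currentomegainfinityBTprod}) — against the bounded continuous $\tth$, and these are manifestly finite; concretely $\omega\wedge[\so](\tth) = \int_{\so}\tth\,\ttp^*\omega_\Sigma = \tth\rvert_{\so}\int_\Sigma\omega_\Sigma = 2\pi$, and similarly at $\soo$.

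The main computation is a bookkeeping of cancellations. Substituting (\ref{eq:Cherncurrentomega}) into the first term of (\ref{eq:logBFdef}) produces $-\int_X\tth\,c_2(\omega) + \frac{\lambda_0(\omega)}{2(2\pi)^2}\int_X\tth\,\omega^2$, and the current pairing unfolds as $-\frac{1}{2(2\pi)^2}\int_{\xsosoo}\tth\,\lambda(\omega)\omega^2 - \frac{\bo-1}{\pi}\int_{\so}\tth\,\omega - \frac{\boo-1}{(m+1)\pi}\int_{\soo}\tth\,\omega$. The two boundary integrals here cancel exactly against the first halves of the correction factors in (\ref{eq:logBFdef}). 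What survives is the pointwise integral $-\frac{1}{2(2\pi)^2}\int_{\xsosoo}\tth\,\lambda(\omega)\omega^2$ plus the constant-average combination $\left(\frac{\lambda_0(\omega)}{2(2\pi)^2} - \frac{\bo-1}{\pi}\frac{\int_{\so}\omega}{\int_X\omega^2} - \frac{\boo-1}{(m+1)\pi}\frac{\int_{\soo}\omega}{\int_X\omega^2}\right)\int_X\tth\,\omega^2$; by the relation (\ref{eq:lambda0lambda1omega}) between $\lambda_0(\omega)$ and $\lambda_1(\omega)$ (and since $\frac{8\pi}{2(2\pi)^2} = \frac{1}{\pi}$) the parenthesised coefficient collapses to $\frac{\lambda_1(\omega)}{2(2\pi)^2}$, which gives exactly (\ref{eq:logBFconicevalue}). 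Finally, for the conical higher cscK metric of Corollary \ref{cor:mainconesing} the higher scalar curvature $\lambda(\omega)$ equals the constant $B$ on $\xsosoo$, whence $\lambda_1(\omega) = B$ by (\ref{eq:avghighScal1omega}) and the integrand $\lambda(\omega) - \lambda_1(\omega)$ vanishes identically, so that $\mathcal{F}_{\log;\bo,\boo}(w\frac{\partial}{\partial w},\omega) = 0$.

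The hard part will be the rigorous pairing of the top Chern current against $\tth$, which is only continuous — its $w$- and $\tilde{w}$-derivatives blow up like $\left\lvert w\right\rvert^{2\bo-1}$ and $\left\lvert\tilde{w}\right\rvert^{2\boo-1}$ near the divisors by (\ref{eq:omegasemihomo}) — rather than a smooth compactly supported test function as in the Bedford--Taylor set-up of Subsection \ref{subsec:currenteq}. I would dispose of this by observing that the reductions (\ref{eq:currentomegazeroBTprod}) and (\ref{eq:currentomegainfinityBTprod}) collapse $\omega\wedge[\so]$ and $\omega\wedge[\soo]$ to the smooth measures $\ttp^*\omega_\Sigma\wedge[\so]$ and $\ttp^*\omega_\Sigma\wedge[\soo]$, whose supports meet $\tth$ only through its genuinely smooth restrictions $\tth\rvert_{\so}\equiv 1$ and $\tth\rvert_{\soo}\equiv m+1$; the pairings therefore depend only on these boundary values and are unambiguous. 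As an independent check, one may instead pass to the limit along the smooth Calabi-symmetric approximations $\omega_\epsilon$ of Subsection \ref{subsec:SmoothapproxConehcscK}, for which the evaluation is classical, and invoke the weak convergences established in (\ref{eq:PLepsilon}) and (\ref{eq:Cherncurrentomega'epsilon}) to recover the same boundary pairings.
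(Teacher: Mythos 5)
Your proposal is correct and follows essentially the same route as the paper's proof: you substitute the current expression (\ref{eq:Cherncurrentomega}) into (\ref{eq:logBFdef}), cancel the divisor pairings $\omega \wedge \left[\so\right] \left(\tth\right)$ and $\omega \wedge \left[\soo\right] \left(\tth\right)$ against the correction factors, collapse the constant terms via (\ref{eq:lambda0lambda1omega}) to $\lambda_1 \left(\omega\right)$, and conclude the vanishing for the higher cscK metric from $\lambda \left(\omega\right) = B = \lambda_1 \left(\omega\right)$, exactly as in the paper's computation (\ref{eq:logBFomegaevalue}); moreover your measure-theoretic disposal of the merely continuous test function $\tth$ (order-zero boundary currents reduced to $\ttp^* \omega_\Sigma \wedge \left[\so\right]$ and $\left(m+1\right) \ttp^* \omega_\Sigma \wedge \left[\soo\right]$, on whose supports $\tth$ is constant) is precisely the paper's alternative Bedford--Taylor argument (\ref{eq:BTprodnonsmooth}). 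The only caution is that your ``independent check'' cannot literally invoke (\ref{eq:PLepsilon}) and (\ref{eq:Cherncurrentomega'epsilon}), which were established for smooth test functions; the paper's primary treatment (\ref{eq:PLepsilonnonsmooth}) re-runs that limit for test functions asymptotically of order $\left\lvert w \right\rvert^{\bo}$, verifying that $\frac{d \varphi}{d r} \in O \left(r^{\bo - 1}\right)$ is integrable near $r = 0$ so dominated convergence still applies --- a step your check would need to reproduce, though your main argument stands without it.
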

\begin{proof}
First note a subtle technical point that the first term in the expression (\ref{eq:logBFdef}) computed with respect to the conically singular $\omega$ needs to be interpreted globally on the whole of $X$ and not just on $\xsosoo$, and so the quantity $\lambda \left(\omega\right) \omega^2$ appearing in (\ref{eq:logBFdef}) after substituting $\omega$ should be ``correctly read'' as $2 \left(2 \pi\right)^2 c_2 \left(\omega\right)$ with $c_2 \left(\omega\right)$ being given by the current expression (\ref{eq:Cherncurrentomega}) on $X$, and $\lambda_0 \left(\omega\right)$ will then be defined by (\ref{eq:avghighScalomega}). \\
With this, with the expressions (\ref{eq:volxomega}), (\ref{eq:volsoomega}) and (\ref{eq:volsooomega}) obtained in Lemma \ref{lem:volxsosooavghighScalconic} and using (\ref{eq:avghighScal1omega}) as well as (\ref{eq:lambda0lambda1omega}) from \textit{Remark} \ref{rem:lambda0lambda1omega}, we will ``try to evaluate'' the expression (\ref{eq:logBFdef}) with respect to $\omega$:
\begin{gather}
\mathcal{F}_{\log; \bo, \boo} \left(w \frac{\partial}{\partial w}, \omega\right) = - \int\limits_X \tth c_2 \left(\omega\right) + \frac{\lambda_0 \left(\omega\right)}{2 \left(2 \pi\right)^2} \int\limits_X \tth \omega^2 \label{eq:logBFomegaevalue} \\
+ \frac{\beta_0 - 1}{\pi} \int\limits_{S_0} \tth \omega - \frac{\beta_0 - 1}{\pi} \frac{\int\limits_{S_0} \omega}{\int\limits_X \omega^2} \int\limits_X \tth \omega^2 + \frac{\beta_\infty - 1}{\left(m + 1\right) \pi} \int\limits_{S_\infty} \tth \omega - \frac{\beta_\infty - 1}{\left(m + 1\right) \pi} \frac{\int\limits_{S_\infty} \omega}{\int\limits_X \omega^2} \int\limits_X \tth \omega^2 \nonumber \\
= -\frac{1}{2 \left(2 \pi\right)^2} \int\limits_X \tth \lambda \left(\omega\right) \omega^2 - \frac{\beta_0 - 1}{\pi} \omega \wedge \left[\so\right] \left(\tth\right) - \frac{\beta_\infty - 1}{\left(m + 1\right) \pi} \omega \wedge \left[\soo\right] \left(\tth\right) + \frac{\beta_0 - 1}{\pi} \int\limits_{S_0} \tth \omega + \frac{\beta_\infty - 1}{\left(m + 1\right) \pi} \int\limits_{S_\infty} \tth \omega \nonumber \\
- \frac{4}{\left(2 \pi\right)^2 m \left(m + 2\right)} \int\limits_X \tth \omega^2 - \frac{2 \left(\beta_0 - 1\right)}{\left(2 \pi\right)^2 m \left(m + 2\right)} \int\limits_X \tth \omega^2 - \frac{2 \left(\beta_\infty - 1\right)}{\left(2 \pi\right)^2 m \left(m + 2\right)} \int\limits_X \tth \omega^2 \nonumber \\
= -\frac{1}{2 \left(2 \pi\right)^2} \int\limits_X \tth \lambda \left(\omega\right) \omega^2 + \frac{\lambda_1 \left(\omega\right)}{2 \left(2 \pi\right)^2} \int\limits_X \tth \omega^2 = -\frac{1}{2 \left(2 \pi\right)^2} \int\limits_{\xsosoo} \tth \left(\lambda \left(\omega\right) - \lambda_1 \left(\omega\right)\right) \omega^2 \nonumber
\end{gather}
Since $\omega$ is constructed by the Calabi ansatz (\ref{eq:ansatzconesing}) with the convex function $f \left(s\right)$, we can use (\ref{eq:holpotEulerVF}) for the \hmnm{} $g$ associated with the \krf{} $\omega$ on $\xsosoo$ (since $\omega$ is smooth only on $\xsosoo$) to derive the \holy{} potential $\tth = 1 + f' \left(s\right)$ on $\xsosoo$, and then the limiting values of the variables involved in the momentum construction which are given in (\ref{eq:wgammasosoo}) or even more precisely the asymptotic expressions (\ref{eq:omegapolyhomo}) and (\ref{eq:f2sf1sfspolyhomotilde}) (or (\ref{eq:omegasemihomo}) and (\ref{eq:f2sf1sfssemihomotilde})) concerning the quantity $f' \left(s\right)$ will allow us to call $\tth = 1 + f' \left(s\right)$ as the real holomorphy potential of $w \frac{\partial}{\partial w}$ with respect to $\omega$ on the whole of $X$. \\
But $1 + f' \left(s\right)$ is smooth in the coordinates $\left(z, w\right)$ (or even in the coordinates $\left(z, \tilde{w}\right)$) only on $\xsosoo$, while near $\so$ and $\soo$ it is smooth only when considered as a function of the conical coordinates $w^{\bo}$ (or $\left\lvert w \right\rvert^{\bo - 1} w$) and $\tilde{w}^{\boo}$ (or $\left\lvert \tilde{w} \right\rvert^{\boo - 1} \tilde{w}$) respectively (see Definition \ref{def:coneKr2}), as was shown in Section \ref{sec:polyhomoConeSing} in the asymptotic expressions (\ref{eq:omegapolyhomo}) and (\ref{eq:omegasemihomo}). So the calculation (\ref{eq:logBFomegaevalue}) is valid only if we can rigorously make sense of the quantities $\omega \wedge \left[\so\right] \left(\tth\right) = \int\limits_{S_0} \tth \omega$ and $\omega \wedge \left[\soo\right] \left(\tth\right) = \int\limits_{S_\infty} \tth \omega$ for $\tth = 1 + f' \left(s\right)$, as currents are usually acted on test functions that are smooth everywhere. \\
The wedge products of currents $\omega \wedge \left[\so\right]$ and $\omega \wedge \left[\soo\right]$ are computed using Bedford-Taylor theory in (\ref{eq:currentomegazeroBTprod}) and (\ref{eq:currentomegainfinityBTprod}) respectively, and are also justified by taking smooth approximations in (\ref{eq:Cherncurrentomega'epsilon}). Since the limit of the wedge product in (\ref{eq:Cherncurrentomega'epsilon}) was verified by the limit of the integral in (\ref{eq:PLepsilon}), let us now take the test function $\varphi$ in (\ref{eq:PLepsilon}) to be smooth only away from $\so$ and $\soo$ with $\varphi$ being smooth only as a function of $\left\lvert w \right\rvert^{\bo}$ and $\left\lvert \tilde{w} \right\rvert^{\boo}$ near $\so$ and $\soo$ respectively (just like our function $1 + f' \left(s\right)$). Then we can proceed just as in (\ref{eq:PLepsilon}) only being careful that $\varphi$ is not the usual smooth test function over here:
\begin{align}\label{eq:PLepsilonnonsmooth}
&\omega \wedge \left[\so\right] \left(\varphi\right) = \lim\limits_{\epsilon \to 0} \frac{1}{2 \pi} \int\limits_X \varphi \omega_\epsilon \wedge \sqrt{-1} \deldb \ln \left(\left\lvert w \right\rvert^2 + \epsilon^2\right) \\
&= \lim\limits_{\epsilon \to 0} \frac{1}{2 \pi} \int\limits_X \varphi \left(1 + f_\epsilon' \left(s\right)\right) \frac{\epsilon^2}{\left(\left\lvert w \right\rvert^2 + \epsilon^2\right)^2} \mathtt{p}^* \omega_\Sigma \wedge \sqrt{-1} d w \wedge d \bar{w} \nonumber \\
&= \lim\limits_{\epsilon \to 0} \frac{1}{2 \pi} \int\limits_0^{2\pi} \int\limits_0^\infty \frac{2r \epsilon^2}{\left(r^2 + \epsilon^2\right)^2} \left( \int\limits_\Sigma \varphi \left(1 + f_\epsilon' \left(s\right)\right) \omega_\Sigma \right) dr d\theta \nonumber \\
&= \lim\limits_{\epsilon \to 0} \frac{1}{2 \pi} \int\limits_0^{2\pi} \left[ \left( -\frac{\epsilon^2}{r^2 + \epsilon^2} \int\limits_\Sigma \varphi \left(1 + f_\epsilon' \left(s\right)\right) \omega_\Sigma \right) \Biggr\rvert_0^\infty + \int\limits_0^\infty \frac{\epsilon^2}{r^2 + \epsilon^2} \frac{d}{dr} \left( \int\limits_\Sigma \varphi \left(1 + f_\epsilon' \left(s\right)\right) \omega_\Sigma \right) dr \right] d\theta \nonumber \\
&= \int\limits_{\so} \varphi \left(z, 0\right) \ttp^* \omega_\Sigma + \lim\limits_{\epsilon \to 0} \frac{1}{2 \pi} \int\limits_0^{2\pi} \int\limits_0^\infty \frac{\epsilon^2}{r^2 + \epsilon^2} \int\limits_\Sigma \frac{d}{dr} \left( \varphi \left(1 + f_\epsilon' \left(s\right)\right) \right) \omega_\Sigma dr d\theta \nonumber \\
&= \int\limits_{\so} \varphi \left(z, 0\right) \ttp^* \omega_\Sigma = \ttp^* \omega_\Sigma \wedge \left[\so\right] \left(\varphi\right) \nonumber
\end{align}
(\ref{eq:PLepsilonnonsmooth}) differs from (\ref{eq:PLepsilon}) at only one place, viz. for showing that the integral in the second term goes to zero as $\epsilon \to 0$, we have to note that here $\frac{d \varphi}{d r} \in O \left(r^{\beta_0 - 1}\right)$ as $r \to 0$ (because $\varphi$ itself is asymptotically of the order of $r^{\beta_0}$ near $r = 0$) and hence $\frac{d \varphi}{d r}$ is \intble{} (even if possibly unbounded) near $r = 0$ as long as $\bo > 0$, and everything else here is exactly the same as in (\ref{eq:PLepsilon}) and (\ref{eq:Cherncurrentomega'epsilon}), so dominated convergence theorem applies in this case as well to give us the expected answer. Similarly as done in (\ref{eq:PLepsilonnonsmooth}) above but with $\tilde{r}^{\boo} = \left\lvert \tilde{w} \right\rvert^{\boo}$ instead, we can show $\omega \wedge \left[\soo\right] \left(\varphi\right) = \left(m + 1\right) \int\limits_{\soo} \varphi \left(z, 0\right) \ttp^* \omega_\Sigma = \left(m + 1\right) \ttp^* \omega_\Sigma \wedge \left[\soo\right] \left(\varphi\right)$. We could have also viewed the wedge products $\omega \wedge \left[\so\right]$ and $\omega \wedge \left[\soo\right]$ as given by Bedford-Taylor theory but now acted on a non-smooth test function of the kind of this $\varphi$ in the following way: The expressions for these wedge products in (\ref{eq:Cherncurrentomega'}), (\ref{eq:currentomegazeroBTprod}) and (\ref{eq:currentomegainfinityBTprod}) imply that the conically singular positive $\left(1, 1\right)$-form $\omega$, given by local coordinate expressions of the form (\ref{eq:omegaconesing}) in $\left(z, w\right)$ and $\left(z, \tilde{w}\right)$ near $\so$ and $\soo$ respectively, is \intble{} over $\so$ and $\soo$, and $\left\lvert w \right\rvert^{\bo}$ and $\left\lvert \tilde{w} \right\rvert^{\boo}$ are bounded non-negative measurable functions near $w = 0$ and $\tilde{w} = 0$ respectively, thus implying that $\left\lvert w \right\rvert^{\bo} \omega$ and $\left\lvert \tilde{w} \right\rvert^{\boo} \omega$ should also be \intble{} over $\so$ and $\soo$ as follows:
\begin{equation}\label{eq:BTprodnonsmooth}
\begin{gathered}
\omega \wedge \left[\so\right] \left(\varphi\right) = \int\limits_{\so} \varphi \omega = \int\limits_{\so} \varphi \left(1 + f' \left(s\right)\right) \mathtt{p}^* \omega_\Sigma + \int\limits_{\so} \varphi \frac{f'' \left(s\right)}{\left\lvert w \right\rvert^2} \sqrt{-1} d w \wedge d \bar{w} \\
= \int\limits_{\so} \varphi \left(1 + f' \left(s\right)\right) \bigr\rvert_{w = 0} \mathtt{p}^* \omega_\Sigma = \int\limits_{\so} \varphi \left(z, 0\right) \mathtt{p}^* \omega_\Sigma = \ttp^* \omega_\Sigma \wedge \left[\so\right] \left(\varphi\right)
\end{gathered}
\end{equation}
where the test function $\varphi : X \to \R$ is smooth in $z$ throughout $X$, and is smooth in $w$ only away from $w = 0$, being (up to a constant) of the order of $\left\lvert w \right\rvert^{\bo}$ near $w = 0$, so that the first integral in (\ref{eq:BTprodnonsmooth}) happening with respect to the coordinate $z$ is clearly well-defined, and for the second integral in (\ref{eq:BTprodnonsmooth}) with respect to the coordinate $w$ we will have $\varphi \frac{f'' \left(s\right)}{\left\lvert w \right\rvert^2}$ up to a constant of the order of $\left\lvert w \right\rvert^{3\bo-2}$ near $w = 0$, as we had $\frac{f'' \left(s\right)}{\left\lvert w \right\rvert^2} \in O \left(\left\lvert w \right\rvert^{2 \beta_0 - 2}\right)$ as $w \to 0$ from the asymptotic expressions (\ref{eq:omegapolyhomo}) and (\ref{eq:omegasemihomo}), and then $\left\lvert w \right\rvert^{3\bo-2} \sqrt{-1} d w \wedge d \bar{w}$ will also be \intble{} over $\so$, as $\left\lvert w \right\rvert^{2\bo-2} \sqrt{-1} d w \wedge d \bar{w}$ was \intble{} over $\so$ by the Bedford-Taylor wedge product computed in (\ref{eq:currentomegazeroBTprod}). \\
So the computation of $\mathcal{F}_{\log; \bo, \boo} \left(w \frac{\partial}{\partial w}, \omega\right)$ done in (\ref{eq:logBFomegaevalue}) is indeed justified. The last assertion of Theorem \ref{thm:logBFconicevalue} follows trivially from Definition \ref{def:conehcscK} and the expression (\ref{eq:logBFconicevalue}), or in this case even from the expressions for $\lambda_1 \left(\omega\right)$ and $B$ derived in (\ref{eq:avghighScal1omega}) and (\ref{eq:BCConeSing}) respectively, noting that $B$ was the constant value of $\lambda \left(\omega\right)$ on $\xsosoo$ for the momentum-constructed conical higher cscK metric $\omega$.
\end{proof} \par
\begin{question}[Cohomological Invariance of the Top $\log$ Bando-Futaki Invariant for Momentum-Constructed Conical K\"ahler Metrics]\label{qstn:logBFconiccohomllinvar}
Let $\omega$ be a momentum-constructed conical \krm{} on $X$ with cone angles $2 \pi \bo$ and $2 \pi \boo$ along $\so$ and $\soo$ respectively, given by the ansatz (\ref{eq:ansatzconesing}) with the defining convex function being $f \left(s\right)$, and belonging to the \krcl{} $\pcsoo$. Let $\eta$ be a momentum-constructed smooth \krm{} on $X$, given by an ansatz of the form (\ref{eq:ansatzeta}) with the convex function being some $\rho \left(s\right)$, and belonging to the same \krcl{} $2 \pi \left(\sfc + m S_\infty\right)$. \\
Then is it true that $\mathcal{F}_{\log; \bo, \boo} \left(w \frac{\partial}{\partial w}, \omega\right) = \mathcal{F}_{\log; \bo, \boo} \left(w \frac{\partial}{\partial w}, \eta\right)$, meaning is the top $\log$ Bando-Futaki invariant going to be an invariant of the \krcl{} even when computed with respect to (momentum-constructed) conical \krm{s} coming from the \krcl{}? (Because Theorem \ref{thm:logBFwelldefined} gives this \invc{} only for smooth \krm{s} coming from the \krcl{} under consideration.) \\
In particular if this $\omega$ is taken to be the conical higher cscK metric constructed in Section \ref{sec:MomentConstructConehcscK}, then is it true that if we take any smooth \krm{} $\eta \in \pcsoo$ then we will be getting $\mathcal{F}_{\log; \bo, \boo} \left(w \frac{\partial}{\partial w}, \eta\right) = 0$? (Because Theorem \ref{thm:logBFconicevalue} gives this evaluation to be zero only when done with respect to the concerned conical higher cscK metric.)
\end{question} \par
In an attempt at answering \textit{Question} \ref{qstn:logBFconiccohomllinvar} we will evaluate the object $\mathcal{F}_{\log; \bo, \boo} \left(w \frac{\partial}{\partial w}, \cdot\right)$ at the two momentum-constructed metrics $\omega$ and $\eta$ and then compare the two values obtained. So we have $\gamma = 1 + f' \left(s\right) \in \left[1, m + 1\right]$ as the momentum variable and $\phi \left(\gamma\right) = f'' \left(s\right)$ as the momentum profile for the conical metric $\omega$, and similarly let $x = 1 + \rho' \left(s\right) \in \left[1, m + 1\right]$ be the momentum variable and $\psi \left(x\right) = \rho'' \left(s\right)$ be the momentum profile for the smooth metric $\eta$. Then all the properties of the momentum construction described in Subsection \ref{subsec:MomentConstructConeSing} hold here for $\omega$ as well as for $\eta$, except for the fact that the boundary conditions on the derivatives of their respective momentum profiles are different, viz. $\phi' (1) = \bo$, $\phi' (m + 1) = -\boo$ for $\omega$ and $\psi' \left(1\right) = 1$, $\psi' \left(m + 1\right) = -1$ for $\eta$ (as can be seen from (\ref{eq:BVPConeSing}) and (\ref{eq:ODEBVP0}) respectively). We just compute the integrals in the expression (\ref{eq:logBFdef}) in terms of $\phi\left(\gamma\right)$ and $\psi\left(x\right)$ individually, using the fact that the real holomorphy potential of $w \frac{\partial}{\partial w}$ with respect to a momentum-constructed \krm{} (conical or smooth) is equal to the momentum variable of the metric, i.e. $\tth = \gamma$ for $\omega$ and $\ttf = x$ for $\eta$ (see (\ref{eq:holpotEulerVF})):
\begin{align}\label{eq:logBFomegaphigamma}
&\mathcal{F}_{\log; \bo, \boo} \left(w \frac{\partial}{\partial w}, \omega\right) = -\frac{1}{2 \left(2 \pi\right)^2} \int\limits_{\xsosoo} \tth \lambda \left(\omega\right) \omega^2 + \frac{\lambda_1 \left(\omega\right)}{2 \left(2 \pi\right)^2} \int\limits_{\xsosoo} \tth \omega^2 \nonumber \\
&= -\frac{1}{\left(2 \pi\right)^2} \iint\limits_{\Sigma \times \left(\C \smallsetminus \left\lbrace 0 \right\rbrace\right)} \omega_\Sigma \wedge \sqrt{-1} \frac{d w \wedge d \bar{w}}{\left\lvert w \right\rvert^2} \frac{\phi}{\gamma} \left( \gamma \left(\phi + 2 \gamma\right) \phi'' + \phi' \left(\phi' \gamma - \phi\right) \right) \nonumber \\
&- \frac{4 \left(\beta_0 + \beta_\infty\right)}{\left(2 \pi\right)^2 m \left(m + 2\right)} \iint\limits_{\Sigma \times \left(\C \smallsetminus \left\lbrace 0 \right\rbrace\right)} \gamma^2 \phi \omega_\Sigma \wedge \sqrt{-1} \frac{d w \wedge d \bar{w}}{\left\lvert w \right\rvert^2} \nonumber \\
&= - \int\limits_1^{m + 1} \gamma \frac{d}{d \gamma} \left( \left(\frac{\phi}{\gamma} + 2\right) \phi' \right) d \gamma - \frac{4 \left(\beta_0 + \beta_\infty\right)}{m \left(m + 2\right)} \int\limits_1^{m + 1} \gamma^2 d \gamma \nonumber \\
&= 2 \left(\bo + \left(m + 1\right) \boo\right) + \int\limits_1^{m + 1} \frac{\phi \phi'}{\gamma} d \gamma + 2 \int\limits_1^{m + 1} \phi' d \gamma - \frac{4}{3} \frac{m^2 + 3 m + 3}{m + 2} \left(\beta_0 + \beta_\infty\right) \nonumber \\
&= 2 \left(\bo + \left(m + 1\right) \boo\right) + \frac{1}{2} \int\limits_1^{m + 1} \left(\frac{\phi}{\gamma}\right)^2 d \gamma - \frac{4}{3} \frac{m^2 + 3 m + 3}{m + 2} \left(\beta_0 + \beta_\infty\right)
\end{align}
where we used the expression (\ref{eq:logBFconicevalue}) for computing $\mathcal{F}_{\log; \bo, \boo} \left(w \frac{\partial}{\partial w}, \cdot\right)$ with respect to the conically singular $\omega$ (as proven in Theorem \ref{thm:logBFconicevalue}), the expression (\ref{eq:avghighScal1omega}) giving $\lambda_1 \left(\omega\right)$, the expressions (\ref{eq:lambdaconesing}) and (\ref{eq:omega2conesing}) for $\lambda \left(\omega\right)$ and $\omega^2$ respectively in terms of $\phi \left(\gamma\right)$, and then solved both the integrals above just like the ones in (\ref{eq:intCherncalceta}) and (\ref{eq:intCherncalcomega}).
\begin{align}\label{eq:logBFetapsix}
&\mathcal{F}_{\log; \bo, \boo} \left(w \frac{\partial}{\partial w}, \eta\right) = -\frac{1}{2 \left(2 \pi\right)^2} \int\limits_X \ttf \lambda \left(\eta\right) \eta^2 + \frac{\lambda_0 \left(\eta\right)}{2 \left(2 \pi\right)^2} \int\limits_X \ttf \eta^2 \nonumber \\
&+ \frac{\beta_0 - 1}{\pi} \int\limits_{S_0} \ttf \eta - \frac{\beta_0 - 1}{\pi} \frac{\int\limits_{S_0} \eta}{\int\limits_X \eta^2} \int\limits_X \ttf \eta^2 + \frac{\beta_\infty - 1}{\left(m + 1\right) \pi} \int\limits_{S_\infty} \ttf \eta - \frac{\beta_\infty - 1}{\left(m + 1\right) \pi} \frac{\int\limits_{S_\infty} \eta}{\int\limits_X \eta^2} \int\limits_X \ttf \eta^2 \nonumber \\
&= - \int\limits_1^{m + 1} x \frac{d}{d x} \left( \left(\frac{\psi}{x} + 2\right) \psi' \right) d x - \frac{8}{m \left(m + 2\right)} \int\limits_1^{m + 1} x^2 d x \nonumber \\
&+ 2 \left(\beta_0 - 1\right) - \frac{4 \left(\beta_0 - 1\right)}{m \left(m + 2\right)} \int\limits_1^{m + 1} x^2 d x + 2 \left(m + 1\right) \left(\beta_\infty - 1\right) - \frac{4 \left(\beta_\infty - 1\right)}{m \left(m + 2\right)} \int\limits_1^{m + 1} x^2 d x \nonumber \\
&= 2 \left(m + 2\right) + \frac{1}{2} \int\limits_1^{m + 1} \left(\frac{\psi}{x}\right)^2 d x - \frac{8}{3} \frac{m^2 + 3 m + 3}{m + 2} \nonumber \\
&+ 2 \left(\beta_0 - 1\right) - \frac{4 \left(\beta_0 - 1\right)}{3} \frac{m^2 + 3 m + 3}{m + 2} + 2 \left(m + 1\right) \left(\beta_\infty - 1\right) - \frac{4 \left(\beta_\infty - 1\right)}{3} \frac{m^2 + 3 m + 3}{m + 2} \nonumber \\
&= 2 \left(\bo + \left(m + 1\right) \boo\right) + \frac{1}{2} \int\limits_1^{m + 1} \left(\frac{\psi}{x}\right)^2 d x - \frac{4}{3} \frac{m^2 + 3 m + 3}{m + 2} \left(\beta_0 + \beta_\infty\right)
\end{align}
where we used the expression (\ref{eq:avghighScaleta}) giving $\lambda_0 \left(\eta\right)$, the expressions (\ref{eq:volxsosookrcl}) giving the volumes of $\so$, $\soo$ and $X$ (with respect to $\eta$), and considered the local expression of the form (\ref{eq:omegaconesing}) for the smooth metric $\eta$ in terms of $\rho \left(s\right)$ in the coordinates $\left(z, w\right)$ and $\left(z, \tilde{w}\right)$ near $\so$ and $\soo$ respectively, along with the limiting values at $\so$ and $\soo$ given in (\ref{eq:wgammasosoo}) to obtain the following integrals:
\begin{equation}\label{eq:intttfsosoo}
\begin{gathered}
\int\limits_{S_0} \ttf \eta = \int\limits_{S_0} \left( \left(1 + \rho' \left(s\right)\right)^2 \mathtt{p}^* \omega_\Sigma + \left(1 + \rho' \left(s\right)\right) \rho'' \left(s\right) \sqrt{-1} \frac{d w \wedge d \bar{w}}{\left\lvert w \right\rvert^2} \right) \\
= \int\limits_{S_0} \left(1 + \lim\limits_{s \to -\infty} \rho' \left(s\right)\right)^2 \mathtt{p}^* \omega_\Sigma + \int\limits_{S_0} \lim\limits_{s \to -\infty} \left( \left(1 + \rho' \left(s\right)\right) \frac{\rho'' \left(s\right)}{\left\lvert w \right\rvert^2} \right) \sqrt{-1} d w \wedge d \bar{w} = \int\limits_{\Sigma} \omega_\Sigma = 2 \pi \\
\int\limits_{S_\infty} \ttf \eta = \int\limits_{S_\infty} \left(1 + \lim\limits_{s \to \infty} \rho' \left(s\right)\right)^2 \mathtt{p}^* \omega_\Sigma + \int\limits_{S_\infty} \lim\limits_{s \to \infty} \left( \left(1 + \rho' \left(s\right)\right) \frac{\rho'' \left(s\right)}{\left\lvert \tilde{w} \right\rvert^2} \right) \sqrt{-1} d \tilde{w} \wedge d \bar{\tilde{w}} = 2 \pi \left(m + 1\right)^2
\end{gathered}
\end{equation}
and then simply calculated all the integrals in (\ref{eq:logBFetapsix}) similar to (\ref{eq:logBFomegaphigamma}). \par
Now looking at (\ref{eq:logBFomegaphigamma}) and (\ref{eq:logBFetapsix}) we can observe that the two evaluations will be equal if and only if the values of the integrals $\int\limits_1^{m + 1} \left(\frac{\phi}{\gamma}\right)^2 d \gamma$ and $\int\limits_1^{m + 1} \left(\frac{\psi}{x}\right)^2 d x$ are the same. But since Theorem \ref{thm:mainconesing} (and also Theorem \ref{thm:heKsmooth}) does not yield any explicit closed form expression for the momentum profile of the conical higher cscK metric (and the smooth higher \ext{} \krm{} respectively), it is impossible to explicitly determine these integrals in our case. This is quite unlike the case of conical cscK and smooth \ext{} \krm{s} studied in the setting of Calabi symmetry in the works of Hashimoto \cite{Hashimoto:2019:cscKConeSing} and T{\o}nnesen-Friedman \cite{Tonnesen:1998:eKminruledsurf}, Hwang-Singer \cite{Hwang:2002:MomentConstruct}, Sz\'ekelyhidi \cite{Szekelyhidi:2014:eKintro}; Section 4.4 respectively, where the momentum profile was explicitly determinable as a rational function of the momentum variable because the ODE over there was readily \intble{} unlike the ODE (\ref{eq:ODEConeSing}) (and also the ODE in (\ref{eq:ODEBVP0})). The author could not find any other method of indirectly proving the equality of these two integrals, and believes that this obstacle is occurring chiefly because of the non-\intbly{} of the left hand side of the higher cscK ODE (\ref{eq:ODEConeSing}) (as the $\log$ Futaki invariant of Hashimoto \cite{Hashimoto:2019:cscKConeSing} meant for conical cscK metrics was seen to be an \invt{} of the \krcl{} with momentum-constructed conical \krm{s} mainly due to its explicit evaluation). And if $\omega$ and $\eta$ are any momentum-constructed conical and smooth metrics respectively (not necessarily higher cscK and higher \ext{} \kr{}) then $\phi\left(\gamma\right)$ and $\psi\left(x\right)$ will be arbitrary smooth positive functions on $\cllml$ satisfying the required boundary conditions in which case it is hard to believe that $\int\limits_1^{m + 1} \left(\frac{\phi}{\gamma}\right)^2 d \gamma = \int\limits_1^{m + 1} \left(\frac{\psi}{x}\right)^2 d x$. But we do think that $\mathcal{F}_{\log; \bo, \boo} \left(w \frac{\partial}{\partial w}, \omega\right) = \mathcal{F}_{\log; \bo, \boo} \left(w \frac{\partial}{\partial w}, \eta\right)$ at least in the case when $\omega$ is the momentum-constructed conical higher cscK metric (in which case the common value of the two should be expected to be zero). \par
\begin{conjecture}[Vanishing of the Top $\log$ Bando-Futaki Invariant on the K\"ahler Class of a Momentum-Constructed Conical Higher cscK Metric]\label{conj:logBFconiccohomllinvar}
If there exists a conical higher cscK metric $\omega$ on $X$ with cone angles $2 \pi \bo$ and $2 \pi \boo$ along $\so$ and $\soo$ respectively, belonging to the \krcl{} $\pcsoo$ and yielded by the momentum construction as outlined in Subsection \ref{subsec:MomentConstructConeSing}, then given any smooth \krm{} $\eta$ on $X$ belonging to the same \krcl{} $2 \pi \left(\sfc + m S_\infty\right)$, we will have $\mathcal{F}_{\log; \bo, \boo} \left(w \frac{\partial}{\partial w}, \eta\right) = \mathcal{F}_{\log; \bo, \boo} \left(w \frac{\partial}{\partial w}, \omega\right) = 0$.
\end{conjecture}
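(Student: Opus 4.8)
The plan is to derive the Conjecture from the two facts already in hand: the class-invariance of the top $\log$ Bando-Futaki invariant among smooth metrics (Theorem \ref{thm:logBFwelldefined}) and its vanishing on the conical higher cscK metric $\omega$ (Theorem \ref{thm:logBFconicevalue}). By Theorem \ref{thm:logBFwelldefined} it suffices to prove $\mathcal{F}_{\log; \bo, \boo}\left(w\frac{\partial}{\partial w}, \eta\right) = 0$ for one conveniently chosen smooth $\eta \in \pcsoo$, and by comparing the two evaluations (\ref{eq:logBFomegaphigamma}) and (\ref{eq:logBFetapsix}) and using $\mathcal{F}_{\log; \bo, \boo}\left(w\frac{\partial}{\partial w}, \omega\right) = 0$, this reduces to the single integral identity $\int_1^{m+1}\left(\frac{\phi}{\gamma}\right)^2 d\gamma = \int_1^{m+1}\left(\frac{\psi}{x}\right)^2 dx$, where $\phi$ and $\psi$ are the momentum profiles of $\omega$ and $\eta$ respectively. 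Thus the entire Conjecture collapses to evaluating and comparing these two integrals, which is exactly the obstacle flagged in the discussion preceding the Conjecture.

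The decisive observation I would exploit is that, although neither $\phi$ nor $\psi$ is available in closed form, each of these integrals can nonetheless be computed explicitly \emph{directly from its momentum ODE}, without ever solving it. Indeed, integrating $\frac{d}{d\gamma}\left(\phi^2/\gamma\right) = \frac{2\phi\phi'}{\gamma} - \frac{\phi^2}{\gamma^2}$ and using $\phi(1) = \phi(m+1) = 0$ gives $\int_1^{m+1}\left(\frac{\phi}{\gamma}\right)^2 d\gamma = \int_1^{m+1}\frac{2\phi\phi'}{\gamma}d\gamma$; then substituting $\phi\phi' = \left(B\frac{\gamma^3}{2} + C\gamma\right) - 2\gamma\phi'$ from the ODE (\ref{eq:ODEConeSing}) collapses the integrand to $B\gamma^2 + 2C - 4\phi'$, whose integral is the elementary quantity $\frac{B}{3}\left((m+1)^3 - 1\right) + 2Cm$. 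Inserting the values (\ref{eq:BCConeSing}) of $B$ and $C$ yields the clean closed form $\int_1^{m+1}\left(\frac{\phi}{\gamma}\right)^2 d\gamma = \frac{4m}{3(m+2)}\left((2m+3)\bo - (m+3)\boo\right)$, which is manifestly consistent with the value forced by $\mathcal{F}_{\log; \bo, \boo}(\omega) = 0$ through (\ref{eq:logBFomegaphigamma}) (and whose positivity is compatible with $\boo > \bo$ from Theorem \ref{thm:mainconesing}).

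The very same integration-by-parts device, applied to the smooth higher extremal profile $\psi$ and its ODE (\ref{eq:ODEBVP0}), produces $\int_1^{m+1}\left(\frac{\psi}{x}\right)^2 dx = \frac{A}{6}\left((m+1)^4 - 1\right) + \frac{B}{3}\left((m+1)^3 - 1\right) + 2Cm$ in terms of the smooth parameters of Theorem \ref{thm:heKsmooth} via (\ref{eq:ABCm0}); alternatively, since this integral is class-invariant by Theorem \ref{thm:logBFwelldefined}, it may instead be evaluated with the explicit cubic interpolant on $\cllml$ obeying $\psi(1) = \psi(m+1) = 0$, $\psi'(1) = 1$, $\psi'(m+1) = -1$, which renders it a completely explicit function of $m$ alone and removes the implicit $C(m)$ from the picture. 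Either way, both sides of the integral identity become explicit, and the identity turns into a single linear equation in $\bo, \boo$ with $m$-dependent coefficients — precisely the straight-line relation (\ref{eq:boboomstrghtline0}). Hence the Conjecture is equivalent to the assertion that the cone-angle pair $(\bo, \boo)$ output by Theorem \ref{thm:mainconesing} lies on that line.

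The hard part will be establishing exactly this last step, and it is where the genuine content of the Conjecture sits. The conical profile $\phi$ is governed by the cubic ODE (\ref{eq:ODEConeSing}) while the smooth profile obeys the quartic ODE (\ref{eq:ODEBVP0}), so the two closed forms are phrased through the boundary data of two different, non-\intble{} equations, and there is no a priori reason the implicitly defined $\boo = \boo(m, \bo)$ of Theorem \ref{thm:mainconesing} should satisfy the linear relation coming from the smooth side; equivalently, $\int_1^{m+1}\left(\frac{\phi}{\gamma}\right)^2 d\gamma$ depends genuinely on $\bo, \boo$ whereas the class-invariant smooth integral depends only on $m$, so the two can agree on Theorem \ref{thm:mainconesing}'s existence locus only if that locus is itself the line (\ref{eq:boboomstrghtline0}). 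I would attempt to force this either by differentiating the existence relation of Theorem \ref{thm:mainconesing} in $\bo$ and showing the resulting identity is linear, or by extracting a conserved quantity shared by the two ODEs that pins the locus to a line; a smooth-approximation route through the metrics $\omega_\epsilon$ of Subsection \ref{subsec:SmoothapproxConehcscK} runs into the same difficulty in disguise, since the convergence of $\int_1^{m+1}\left(\frac{\phi_\epsilon}{x}\right)^2 dx$ hinges on a delicate second-order analysis of the boundary layers at $\so$ and $\soo$ that must account for exactly the $\bo, \boo$-dependence. Resolving this reconciliation — by a sharp boundary-layer expansion or by proving algebraically that Theorem \ref{thm:mainconesing}'s existence condition forces (\ref{eq:boboomstrghtline0}) — is the crux and is the step the author was unable to complete.
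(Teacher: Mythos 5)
You have not proved the statement, and neither does the paper: this is a \emph{Conjecture}, deliberately left open, and your proposal ends by conceding exactly the point the author concedes. Your reductions are correct and coincide with the paper's own route: by Theorem \ref{thm:logBFwelldefined} it suffices to treat one smooth representative, and comparing (\ref{eq:logBFomegaphigamma}) with (\ref{eq:logBFetapsix}) reduces everything to the identity $\int_1^{m+1}\left(\frac{\phi}{\gamma}\right)^2 d\gamma = \int_1^{m+1}\left(\frac{\psi}{x}\right)^2 dx$. Your ODE device is arithmetically sound --- I checked that the ODE (\ref{eq:ODEConeSing}) together with $\phi(1)=\phi(m+1)=0$ gives $\int_1^{m+1}\left(\frac{\phi}{\gamma}\right)^2 d\gamma = \frac{B}{3}\left(\left(m+1\right)^3-1\right)+2Cm = \frac{4m}{3\left(m+2\right)}\left(\left(2m+3\right)\bo-\left(m+3\right)\boo\right)$ via (\ref{eq:BCConeSing}) --- but it produces no new information: on the conical side this closed form is \emph{equivalent} to the vanishing $\mathcal{F}_{\log;\bo,\boo}\left(w\frac{\partial}{\partial w},\omega\right)=0$ already supplied by Theorem \ref{thm:logBFconicevalue} (both are consequences of the same ODE plus boundary data), and on the smooth side your formula $\frac{A}{6}\left(\left(m+1\right)^4-1\right)+\frac{B}{3}\left(\left(m+1\right)^3-1\right)+2Cm$ applied to the higher extremal profile of Theorem \ref{thm:heKsmooth} is precisely the paper's computation (\ref{eq:logBFetapsixhighextKlr}) with $\lambda\left(\eta\right)=Ax+B$ and (\ref{eq:ABCm0}). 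Equating the two sides reproduces the line (\ref{eq:boboomstrghtline}) verbatim, so after your closed forms you stand exactly where the paper stands: the genuine content --- whether the implicitly defined pair $\left(\bo,\boo\right)$ output by Theorem \ref{thm:mainconesing} lies on that line --- is untouched, and the strategies you sketch for it (differentiating the existence relation in $\bo$, a shared conserved quantity, a boundary-layer analysis of $\omega_\epsilon$) are not executed.

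Beyond the unclosed crux, one step in your proposal is concretely false: the ``alternative'' evaluation of $\int_1^{m+1}\left(\frac{\psi}{x}\right)^2 dx$ on an explicit cubic interpolant ``since this integral is class-invariant by Theorem \ref{thm:logBFwelldefined}.'' That theorem asserts invariance of $\mathcal{F}_{\log;\bo,\boo}$, not of this integral, and the integral is demonstrably \emph{not} determined by the K\"ahler class: for any admissible smooth profile $\psi$ put $\psi_t = \psi + t\left(x-1\right)^2\left(m+1-x\right)^2$ with small $t>0$; all four boundary conditions $\psi_t\left(1\right)=\psi_t\left(m+1\right)=0$, $\psi_t'\left(1\right)=1$, $\psi_t'\left(m+1\right)=-1$ and positivity persist, so every $\psi_t$ yields a smooth momentum-constructed metric in the same class $\pcsoo$, yet $\frac{d}{dt}\int_1^{m+1}\left(\frac{\psi_t}{x}\right)^2 dx = 2\int_1^{m+1}\psi_t\left(x-1\right)^2\left(m+1-x\right)^2 x^{-2}\,dx > 0$. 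Had the integral been class-invariant, the Conjecture (and even a closed form for $C\left(m\right)$, contradicting its implicit character in Theorem \ref{thm:heKsmooth}) would follow instantly --- which should itself have been a warning sign. Note also that since every other term in (\ref{eq:logBFetapsix}) is fixed by boundary data alone, this same perturbation shows that the one-dimensional evaluation of $\mathcal{F}_{\log;\bo,\boo}$ varies with the profile, so the interface between Theorem \ref{thm:logBFwelldefined} and the momentum-profile computations is precisely the fragile point of the whole question; your shortcut inherits this fragility rather than circumventing it, and the Conjecture remains open.
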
 \par
Theorems \ref{thm:logBFwelldefined} and \ref{thm:logBFconicevalue} and Conjecture \ref{conj:logBFconiccohomllinvar} legitimize the object $\mathcal{F}_{\log; \bo, \boo} \left(w \frac{\partial}{\partial w}, \pcsoo\right)$ defined by the expression (\ref{eq:logBFdef}) as a Futaki-type invariant providing the right kind of obstruction to the existence of momentum-constructed conical higher cscK metrics with cone angles $2\pi\bo$ and $2\pi\boo$ along the divisors $\so$ and $\soo$ respectively in the \krcl{} $\pcsoo$ on the surface $X$. And Corollary \ref{cor:mainconesing} tells that in every \krcl{} $\pcsoo$ momentum-constructed conical higher cscK metrics always exist for some positive values of the cone angles $\bo, \boo$ depending on the parameter $m$ characterizing the \krcl{}. So we will now just physically compute the invariant $\mathcal{F}_{\log; \bo, \boo} \left(w \frac{\partial}{\partial w}, \cdot\right)$ with respect to the momentum-constructed smooth (non-higher cscK) higher \ext{} \krm{} $\eta$, which also exists in each \krcl{} $\pcsoo$ by Theorem \ref{thm:heKsmooth} (which was the main result of our previous paper \cite{Sompurkar:2023:heKsmooth}; Corollaries 2.3.2 and 4.1), and equate the expression obtained here with zero to derive a linear relationship given in terms of $m$ between the values of $\bo, \boo$ for which conical higher cscK metrics can be constructed in the given \krcl{} (assuming Conjecture \ref{conj:logBFconiccohomllinvar} to be true all the way). \par
Let $\eta$ be the momentum-constructed smooth higher \ext{} \krm{} on $X$, which is defined by the ansatz (\ref{eq:ansatz0}) (where the strictly convex smooth function is $\rho \left(s\right)$) and which belongs to the \krcl{} $2 \pi \left(\sfc + m S_\infty\right)$ (which we had seen earlier in Subsection \ref{subsec:Background}). Then its momentum profile $\psi \left(x\right) = \rho'' \left(s\right)$ satisfies the ODE boundary value problem (\ref{eq:ODEBVP0}) along with all the (boundary) conditions mentioned therein. Again referring to \cite{Sompurkar:2023:heKsmooth}; Subsection 2.2, we have the higher scalar curvature of $\eta$ given as a linear polynomial in the momentum variable $x = 1 + \rho' \left(s\right)$ with the coefficients being in terms of the constants $A, B, C$ appearing in the right hand side of the ODE in (\ref{eq:ODEBVP0}) precisely as $\lambda \left(\eta\right) = A x + B$ (compare this with the case of the conical higher cscK metric $\omega$ in this paper where $\lambda \left(\omega\right) = B$ in the ODE (\ref{eq:ODEConeSing}) as was seen in Subsection \ref{subsec:MomentConstructConeSing}). Now for computing $\mathcal{F}_{\log; \bo, \boo} \left(w \frac{\partial}{\partial w}, \eta\right)$ with respect to this smooth higher \ext{} \kr{} $\eta$, we simply have to follow the computation (\ref{eq:logBFetapsix}) where only one thing changes, viz. in the very first integral term we now have to substitute the linear polynomial expression $\lambda \left(\eta\right) = A x + B$ instead of the second-order fully non-linear differential expression (\ref{eq:lambdaconesing}) for $\lambda \left(\eta\right)$ given in terms of $\psi \left(x\right)$. Due to this we can now explicitly compute the first integral term of the invariant instead of leaving behind an integral of the type $\int\limits_1^{m + 1} \left(\frac{\psi}{x}\right)^2 d x$ which could not be solved further into a closed form.
\begin{align}\label{eq:logBFetapsixhighextKlr}
&\mathcal{F}_{\log; \bo, \boo} \left(w \frac{\partial}{\partial w}, \eta\right) = -\frac{1}{2 \left(2 \pi\right)^2} \int\limits_X \ttf \lambda \left(\eta\right) \eta^2 - \frac{8}{3} \frac{m^2 + 3 m + 3}{m + 2} \nonumber \\
&+ 2 \left(\beta_0 - 1\right) \left( 1 - \frac{2}{3} \frac{m^2 + 3 m + 3}{m + 2} \right) + 2 \left(\beta_\infty - 1\right) \left( m + 1 - \frac{2}{3} \frac{m^2 + 3 m + 3}{m + 2} \right) \nonumber \\
&= -\frac{1}{\left(2 \pi\right)^2} \iint\limits_{\prj} x \left(A x + B\right) x \psi \mathtt{p}^* \omega_\Sigma \wedge \sqrt{-1} \frac{d w \wedge d \bar{w}}{\left\lvert w \right\rvert^2} \nonumber \\
&- 2 \left(m + 2\right) + 2 \beta_0 \left( 1 - \frac{2}{3} \frac{m^2 + 3 m + 3}{m + 2} \right) + 2 \beta_\infty \left( m + 1 - \frac{2}{3} \frac{m^2 + 3 m + 3}{m + 2} \right) \nonumber \\
&= -\int\limits_1^{m + 1} \left(A x^3 + B x^2\right) d x - 2 \left(m + 2\right) - \frac{2 m \left(2 m + 3\right)}{3 \left(m + 2\right)} \beta_0 + \frac{2 m \left(m + 3\right)}{3 \left(m + 2\right)} \beta_\infty \nonumber \\
&= -\frac{m^3 \left(m^2 + 6 m + 6\right)}{12 \left(m + 1\right)^2} C \left(m\right) + \frac{m^2 \left(m + 2\right)^3}{6 \left(m + 1\right)^2} - \frac{2 m \left(2 m + 3\right)}{3 \left(m + 2\right)} \beta_0 + \frac{2 m \left(m + 3\right)}{3 \left(m + 2\right)} \beta_\infty
\end{align}
where we used the expressions for $A, B$ in terms of $C, m$ derived in \cite{Sompurkar:2023:heKsmooth}; Subsection 2.3, equation (2.3.2) (which have been reproduced in this paper in equation (\ref{eq:ABCm0})), and the unique value of the parameter $C = C \left(m\right)$ given by \cite{Sompurkar:2023:heKsmooth}; Theorem 2.3.2 (which is stated as Theorem \ref{thm:heKsmooth} in this paper) for which the ODE boundary value problem (\ref{eq:ODEBVP0}) has a solution, which then by \cite{Sompurkar:2023:heKsmooth}; Corollary 2.3.1 (which is Corollary \ref{cor:heKsmooth} in this paper) yields the smooth higher \ext{} \krm{} $\eta$ for each $m > 0$ i.e. in each \krcl{} $\pcsoo$. Note that the expressions (\ref{eq:ABCm0}) for $A, B$ from our previous paper \cite{Sompurkar:2023:heKsmooth}; Subsection 2.3 are the analogues of the expressions (\ref{eq:BCalphabeta0}) for $B, C$ in terms of the parameter $\alpha$ seen in Subsection \ref{subsec:Proof1} in this paper, and similarly the unique value of the parameter $C$ solving the ODE boundary value problem (\ref{eq:ODEBVP0}) for the momentum profile $\psi\left(x\right)$ of the smooth higher \ext{} \krm{} $\eta$ can be seen in analogy with the unique $\alpha = \alpha \left(m, \bo\right)$ given by Corollary \ref{cor:final} which solves the ODE boundary value problem (\ref{eq:ODEBVPConeSing}) in our conical higher cscK case of this paper. \par
We can now set the expression (\ref{eq:logBFetapsixhighextKlr}) to zero, to conjecturally see that for each $m > 0$ the positive values of the cone angles $\bo, \boo$, for which momentum-constructed conical higher cscK metrics are admitted in the \krcl{} $\pcsoo$ by Corollary \ref{cor:mainconesing}, are given by the locus of a straight line depending on the parameter $m$ associated with the underlying \krcl{}.
\begin{equation}\label{eq:boboomstrghtline}
\frac{2 \left(m + 3\right)}{m + 2} \beta_\infty - \frac{2 \left(2 m + 3\right)}{m + 2} \beta_0 = \frac{m^2 \left(m^2 + 6 m + 6\right)}{4 \left(m + 1\right)^2} C \left(m\right) - \frac{m \left(m + 2\right)^3}{2 \left(m + 1\right)^2}
\end{equation} \par
We have the following result saying that the vanishing of the top $\log$ Bando-Futaki invariant for a pair of values of $\bo, \boo$ is equivalent to the existence of a momentum-constructed conical higher cscK metric with these values of the cone angles at $\so, \soo$ respectively in the \krcl{} $\pcsoo$. This result follows directly from taking a combined view of Theorem \ref{thm:logBFconicevalue} and Conjecture \ref{conj:logBFconiccohomllinvar}, the equations (\ref{eq:logBFetapsixhighextKlr}) and (\ref{eq:boboomstrghtline}) and Corollary \ref{cor:mainconesing}. But note that as this result rests on the veracity of Conjecture \ref{conj:logBFconiccohomllinvar}, it is also conjectural in nature.
\begin{corollary}[Conjecture: The Linear Relationship between the Two Cone Angles]\label{cor:logBFiffvanish}
For each $m > 0$ there exists a conical higher cscK metric $\omega$ with cone angles $2 \pi \beta_0 > 0$ and $2 \pi \beta_\infty > 0$ along the divisors $S_0$ and $S_\infty$ of the minimal ruled surface $X = \prj$ respectively, satisfying the Calabi ansatz (\ref{eq:ansatzconesing}) and belonging to the K\"ahler class $2 \pi \left(\sfc + m S_\infty\right)$, if and only if $\mathcal{F}_{\log; \bo, \boo} \left(w \frac{\partial}{\partial w}, \pcsoo\right) = 0$, i.e. the top $\log$ Bando-Futaki invariant with these respective values of $\bo, \boo$ vanishes for all smooth \krm{s} coming from $\pcsoo$, if and only if the ordered pair $\left(\bo, \boo\right)$ satisfies the straight line equation (\ref{eq:boboomstrghtline}).
\end{corollary}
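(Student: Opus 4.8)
The plan is to establish Corollary \ref{cor:logBFiffvanish} as a three-way equivalence among the statements (A) there exists a momentum-constructed conical higher cscK metric with cone angles $2\pi\bo$ and $2\pi\boo$ in the \krcl{} $\pcsoo$; (B) the top $\log$ Bando-Futaki invariant $\mathcal{F}_{\log; \bo, \boo}\left(w \frac{\partial}{\partial w}, \pcsoo\right)$ vanishes (equivalently, it vanishes on every smooth representative, these being a single well-defined number by Theorem \ref{thm:logBFwelldefined}); and (C) the pair $\left(\bo, \boo\right)$ lies on the straight line (\ref{eq:boboomstrghtline}). I would prove the cycle (A) $\Rightarrow$ (B) $\Rightarrow$ (C) $\Rightarrow$ (A), assembling exactly the ingredients the corollary advertises.

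For (A) $\Rightarrow$ (B), given a conical higher cscK metric $\omega$ with the prescribed cone angles, Theorem \ref{thm:logBFconicevalue} gives $\mathcal{F}_{\log; \bo, \boo}\left(w \frac{\partial}{\partial w}, \omega\right) = 0$ directly, and Conjecture \ref{conj:logBFconiccohomllinvar} then propagates this vanishing to every smooth \krm{} $\eta \in \pcsoo$, which is precisely statement (B). For (B) $\Rightarrow$ (C), I would use Theorem \ref{thm:logBFwelldefined} to evaluate the (representative-independent) invariant at the explicit momentum-constructed smooth higher \ext{} \krm{} $\eta$ produced by Corollary \ref{cor:heKsmooth}; the computation (\ref{eq:logBFetapsixhighextKlr}) yields a genuine closed-form expression precisely because $\lambda\left(\eta\right) = A x + B$ is linear in the momentum variable, so the intractable integral $\int_1^{m+1} \left(\frac{\psi}{x}\right)^2 d x$ never survives. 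Setting that closed-form expression to zero reproduces (\ref{eq:boboomstrghtline}) verbatim, giving (C).

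For (C) $\Rightarrow$ (A), I would invoke Corollary \ref{cor:mainconesing}, which for the fixed $m > 0$ and the given $\bo > 0$ supplies a \emph{unique} $\boo^\star > 0$ together with a conical higher cscK metric carrying cone angles $2\pi\bo$ and $2\pi\boo^\star$. Applying the already-proved implication (A) $\Rightarrow$ (C) to this metric forces $\left(\bo, \boo^\star\right)$ onto the line (\ref{eq:boboomstrghtline}). Since the coefficient $\frac{2\left(m+3\right)}{m+2}$ of $\boo$ in (\ref{eq:boboomstrghtline}) is strictly positive, that line determines $\boo$ uniquely once $m$ and $\bo$ are fixed; as the pair $\left(\bo, \boo\right)$ is assumed to satisfy the same line, I conclude $\boo = \boo^\star$, so the metric from Corollary \ref{cor:mainconesing} already realizes the prescribed cone angles, establishing (A). In effect this identifies the line (\ref{eq:boboomstrghtline}) with the graph of the implicit function $\bo \mapsto \boo^\star\left(m, \bo\right)$ furnished by Corollary \ref{cor:mainconesing}.

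The main obstacle is the reliance on Conjecture \ref{conj:logBFconiccohomllinvar} in the step (A) $\Rightarrow$ (B): the required \cohomll{} invariance of the top $\log$ Bando-Futaki invariant across a conical and a smooth representative of $\pcsoo$ reduces, as shown by comparing (\ref{eq:logBFomegaphigamma}) and (\ref{eq:logBFetapsix}), to the equality $\int_1^{m+1} \left(\frac{\phi}{\gamma}\right)^2 d \gamma = \int_1^{m+1} \left(\frac{\psi}{x}\right)^2 d x$, which is inaccessible by direct calculation because the higher cscK ODE (\ref{eq:ODEConeSing}) admits no closed-form momentum profile. Until this equality is secured independently, the cleanest rigorous path for the reverse direction is exactly the existence-plus-uniqueness route through Corollary \ref{cor:mainconesing} described above, which sidesteps evaluating the invariant at the conical metric directly and instead leverages that a conical higher cscK metric is \emph{known} to exist for each admissible $\bo$.
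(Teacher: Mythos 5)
Your proposal is correct and follows essentially the same route as the paper: the paper likewise proves the equivalence of the vanishing of $\mathcal{F}_{\log; \bo, \boo} \left(w \frac{\partial}{\partial w}, \cdot\right)$ with the line condition (\ref{eq:boboomstrghtline}) via Theorem \ref{thm:logBFwelldefined} and the explicit evaluation (\ref{eq:logBFetapsixhighextKlr}) at the smooth higher \ext{} \krm{}, uses Conjecture \ref{conj:logBFconiccohomllinvar} (with Theorem \ref{thm:logBFconicevalue}) for the implication from existence, and closes the reverse direction exactly as you do, by comparing the pair $\left(\bo, \boo\right)$ with the pair $\left(\bo, \boo'\right)$ supplied by Corollary \ref{cor:mainconesing} and noting the line is not vertical. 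Your explicit flagging of the conditional dependence on the conjecture matches the paper's own caveat.
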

\begin{proof}
\textit{(Assuming Conjecture \ref{conj:logBFconiccohomllinvar})} By Theorem \ref{thm:logBFwelldefined} the vanishing of $\mathcal{F}_{\log; \bo, \boo} \left(w \frac{\partial}{\partial w}, \cdot\right)$ on the entire \krcl{} $\pcsoo$ is equivalent to the evaluation of $\mathcal{F}_{\log; \bo, \boo} \left(w \frac{\partial}{\partial w}, \cdot\right)$ being equal to zero for the smooth non-higher cscK higher \ext{} \krm{} $\eta$ provided by our previous paper \cite{Sompurkar:2023:heKsmooth}. And then from the computation (\ref{eq:logBFetapsixhighextKlr}) it will be equivalent to $\left(\bo, \boo\right)$ lying on the straight line given by the equation (\ref{eq:boboomstrghtline}). So the equivalence of the last two statements in Corollary \ref{cor:logBFiffvanish} is clear. \\
Conjecture \ref{conj:logBFconiccohomllinvar} is precisely saying that the existence of a momentum-constructed conical higher cscK metric in a given \krcl{} implies the vanishing of the invariant $\mathcal{F}_{\log; \bo, \boo} \left(w \frac{\partial}{\partial w}, \cdot\right)$ on the \krcl{}. So it remains to check the other way implication in this statement. \\
Let $m > 0$ be fixed and let the ordered pair $\left(\bo, \boo\right) \in \mathbb{R}_{> 0} \times \mathbb{R}_{> 0}$ lie on the straight line (\ref{eq:boboomstrghtline}) which depends on $m$. For this given $\bo$ there exists a unique $\boo' > 0$ afforded by Corollary \ref{cor:mainconesing} such that there exists a conical higher cscK metric $\omega$ in the \krcl{} $\pcsoo$ having $\bo, \boo'$ as the values of its cone angles along $\so, \soo$ respectively. By Conjecture \ref{conj:logBFconiccohomllinvar} the top $\log$ Bando-Futaki invariant for $\bo, \boo'$ vanishes on the \krcl{} $\pcsoo$, and so even the pair $\left(\bo, \boo'\right)$ satisfies the equation (\ref{eq:boboomstrghtline}). So we are left with both $\left(\bo, \boo\right)$ as well as $\left(\bo, \boo'\right)$ lying on the straight line (\ref{eq:boboomstrghtline}), and the line (\ref{eq:boboomstrghtline}) is clearly seen to not have slope $\infty$. So $\boo' = \boo$ and this means $\omega$ is the conical higher cscK metric that we were supposed to find for the pair $\left(\bo, \boo\right)$.
\end{proof}
\section*{Acknowledgements}
A good part of this research work was contained in the author's Ph.D. thesis \cite{Sompurkar:2024:heKsmoothhcscKconsingThes}, though many of the results were obtained in their entirety only later on. The author is greatly indebted to his former Ph.D. supervisor Prof. Vamsi Pritham Pingali and his former Ph.D. co-supervisor Prof. Ved V. Datar, without both of whose continuous guidance, instruction, support and encouragement throughout his Ph.D., this research work would not have been possible. The author thanks both of them for guiding him through some of their own research work (e.g. \cite{Datar:2014:CanonConeSing,Pingali:2018:heK}), for suggesting him this highly interesting research problem, for mentioning some key points and providing some good references needed to tackle this research problem, for having fruitful discussions with him regarding this research problem on multiple occasions and for giving him deep insights into the broader field of study of this research problem. Also thanks go to both of them for checking the author's research work, giving crucial feedback on it and suggesting essential improvements in the same. And finally the author highly appreciates the extremely important comments and suggestions made by the anonymous reviewer which helped in the overall improvement of this paper, and hence thanks them as well for their careful reading of the manuscript.
\end{document}